\DeclareMathAlphabet{\mathcalligra}{T1}{calligra}{c}{h}
\providecommand{\U}[1]{\protect\rule{.1in}{.1in}}
\newtheorem{theorem}{Theorem}[section]
\newtheorem{proposition}[theorem]{Proposition}
\newtheorem{lemma}[theorem]{Lemma}
\newtheorem{corollary}[theorem]{Corollary}
\newtheorem{remark}[theorem]{Remark}
\let\oldremark\remark
\renewcommand{\remark}{\oldremark\normalfont}
\newtheorem{example}[theorem]{Example}
\let\oldexample\example
\renewcommand{\example}{\oldexample\normalfont}
\def\<{{\langle}}
\def\>{{\rangle}}
\def\bea{\begin{eqnarray*} }
\def\eea{\end{eqnarray*} }
\def\be{\begin{equation} }
\def\ee{\end{equation} }
\def\qed{\ifhmode\unskip\nobreak\fi\ifmmode\ifinner
\else\hskip5 pt \fi\fi\hbox{\hskip5 pt \vrule width4 pt
height6 pt  depth1.5 pt \hskip 1pt }}
\DeclareMathOperator*{\Vol}{Vol}
\DeclareMathOperator*{\Area}{Area}
\DeclareMathOperator*{\supp}{supp}
\DeclareMathOperator*{\expo}{exp}
\DeclareMathOperator*{\Cut}{Cut}
\DeclareMathOperator*{\ess}{ess}
\DeclareMathOperator*{\ap}{ap}
\DeclareMathOperator*{\ad}{ad}
\DeclareMathOperator*{\grad}{grad}
\begin{document}
\title{On the spectrum of differential operators under Riemannian coverings}
\author{Panagiotis Polymerakis}
\date{}

\maketitle

\renewcommand{\thefootnote}{\fnsymbol{footnote}}
\footnotetext{\emph{Date:} \today} 
\renewcommand{\thefootnote}{\arabic{footnote}}

\renewcommand{\thefootnote}{\fnsymbol{footnote}}
\footnotetext{\emph{2010 Mathematics Subject Classification.} 58J50, 35P15, 53C99.}
\renewcommand{\thefootnote}{\arabic{footnote}}

\renewcommand{\thefootnote}{\fnsymbol{footnote}}
\footnotetext{\emph{Key words and phrases.} Spectrum of differential operator, amenable covering.}
\renewcommand{\thefootnote}{\arabic{footnote}}

\begin{abstract}
For a Riemannian covering $p \colon M_{2} \to M_{1}$, we compare the spectrum of an essentially self-adjoint differential operator $D_{1}$ on a bundle $E_{1} \to M_{1}$ with the spectrum of its lift $D_{2}$ on $p^{*}E_{1} \to M_{2}$.
We prove that if the covering is infinite sheeted and amenable, then the spectrum of $D_{1}$ is contained in the essential spectrum of any self-adjoint extension of $D_{2}$.
We show that if the deck transformations group of the covering is infinite and $D_{2}$ is essentially self-adjoint (or symmetric and bounded from below), then $D_{2}$ (or the Friedrichs extension of $D_{2}$) does not have eigenvalues of finite multiplicity and in particular, its spectrum is essential.
Moreover, we prove that if $M_{1}$ is closed, then $p$ is amenable if and only if it preserves the bottom of the spectrum of some/any Schr\"{o}dinger operator, extending a result due to Brooks.
\end{abstract}

\section{Introduction}


A basic problem in Geometric Analysis is the investigation of relations between the geometry of a manifold and the spectrum of a differential operator on it. In this direction, it is natural to study the behavior of the spectrum under maps which respect the geometry of the manifolds. In this paper, we deal with this problem for Riemannian coverings.

Let $p \colon M_2 \to M_1$ be a Riemannian covering of connected manifolds with (possibly empty) smooth boundary. A Schr\"{o}dinger operator $S_{1}$ on $M_{1}$ is an operator of the form $S_{1} = \Delta + V$, where $\Delta$ is the (non-negative definite) Laplacian and $V \colon M_{1} \to \mathbb{R}$ is smooth and bounded from below. For such an operator $S_{1}$ on $M_{1}$, its lift on $M_{2}$ is the operator $S_{2} = \Delta + V \circ p$.
The first results involving possibly infinite sheeted coverings and establishing connections between properties of the covering and the (Dirichlet) spectra of $S_{1}$ and $S_{2}$, are related to the change of the bottom (that is, the minimum) of the spectrum and were proved by Brooks \cite{MR783536,MR656213}.
He showed that if the underlying manifold is complete, of finite topological type, without boundary and the covering is normal and amenable, then the bottom of the spectrum of the Laplacian is preserved.
B\'{e}rard and Castillon \cite{MR3104995} extended this result by showing that if the covering is amenable and the underlying manifold is complete with finitely generated fundamental group and without boundary, then the bottom of the spectrum of any Schr\"{o}dinger operator is preserved. Recently, it was proved in \cite{BMP} that the bottom of the spectrum of a Schr\"{o}dinger operator is preserved under amenable coverings, without any topological or geometric assumptions.

In this paper, we prove a global result about this problem in a more general context. Instead of comparing the bottoms of the spectra, we prove inclusion of spectra under some reasonable assumptions. Moreover, our context allows us to impose various boundary conditions on the operators (for instance, Dirichlet, Neumann, mixed and Robin), while the former results involve only Dirichlet conditions. Furthermore, our theorems are applicable to a broad class of differential operators, including Schr\"{o}dinger operators with magnetic potential (that is, first order term), Dirac operators and Schr\"{o}dinger (or Laplace-type) operators on vector bundles. 
It is worth to point out that the Hodge-Laplacian is a special case of the latter ones.

In order to simplify the statements of our results, we need to set up some notation.
Consider a Riemannian or Hermitian vector bundle $E_{1} \to M_{1}$ endowed with a (not necessarily metric) connection $\nabla$. Let $D_{1}$ be a (not necessarily elliptic) differential operator of arbitrary order on $E_{1}$. We consider the pullback bundle $E_{2} := p^{*}E_{1} \to M_{2}$ endowed with the corresponding metric and connection, and the lift $D_{2}$ of $D_{1}$. 
 
As the domain of $D_{1}$ we consider the space of compactly supported smooth sections $\eta$, which (when $M_{1}$ has non-empty boundary) satisfy a boundary condition of the form $a \nabla_{n} \eta + b \eta = 0$, where $n$ is the inward pointing normal to the boundary and $a$, $b$ are functions on the boundary. The domain of $D_{2}$ is the space of compactly supported smooth sections, which (when the boundary of $M_{1}$ is non-empty) satisfy analogous boundary conditions to the sections in the domain of $D_{1}$. We consider the operators $D_{i}$ restricted to the above domains as densely defined operators in $L^{2}(E_{i})$, $i=1,2$.
 

For sake of simplicity, we present here special versions of our main results involving self-adjoint operators. The results are stated for infinite sheeted coverings, since this is the interesting case of amenable coverings. However, we also prove the analogous results for finite sheeted coverings.
Our first result provides inclusion of the spectrum $\sigma(\overline{D}_{1})$ of the closure of $D_{1}$, as long as it is self-adjoint, in the essential spectrum $\sigma_{\ess}(D_{2}^{\prime})$ of any self-adjoint extension $D_{2}^{\prime}$ of $D_{2}$.


\begin{theorem}\label{Inclusion of Spectrums self-adj}
Assume that $D_{1}$ is essentially self-adjoint and let $D_{2}^{\prime}$ be a self-adjoint extension of $D_{2}$. If the covering is infinite sheeted and amenable, then $\sigma(\overline{D}_{1}) \subset \sigma_{\ess}(D_{2}^{\prime})$.
\end{theorem}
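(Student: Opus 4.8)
The plan is to produce a singular Weyl sequence for $D_{2}^{\prime}$ at each point $\lambda \in \sigma(\overline{D}_{1})$ by taking an approximate eigensection of $\overline{D}_{1}$ at $\lambda$, lifting it to $M_{2}$, and cutting it off by F\o lner functions supplied by amenability; the existence of such a sequence forces $\lambda \in \sigma_{\ess}(D_{2}^{\prime})$.

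First I would set up the data downstairs. Fix $\lambda \in \sigma(\overline{D}_{1})$ and $\delta > 0$. Since $D_{1}$ is essentially self-adjoint, its domain is a core for the self-adjoint operator $\overline{D}_{1}$, and the spectrum of a self-adjoint operator coincides with its approximate point spectrum; hence there is $\eta \in \mathrm{dom}(D_{1})$ -- a smooth compactly supported section obeying the prescribed boundary conditions -- with $\|\eta\| = 1$ and $\|(D_{1} - \lambda)\eta\| < \delta$. Let $K = \supp \eta$, a compact subset of $M_{1}$.

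Next comes the core of the argument. Decompose $M_{2}$ into fundamental cells $\{\mathcal{D}_{\alpha}\}_{\alpha \in \mathcal{F}}$ over $M_{1}$, indexed by the fibre $\mathcal{F} = p^{-1}(x_{1})$; on each cell the lift $p^{*}\eta$ carries a copy of $\eta$ of full $L^{2}$-mass $1$. As $M_{2}$ is connected, the monodromy action of $\pi_{1}(M_{1})$ on $\mathcal{F}$ is transitive, and since the covering is infinite sheeted, $\mathcal{F}$ is infinite; amenability of the covering means exactly that this action has a F\o lner sequence. I would use it to build cutoffs $\chi_{j} \in C_{c}^{\infty}(M_{2},[0,1])$, smoothed over a band of cells of bounded width, such that: $\chi_{j} \equiv 1$ on $n_{j}$ cells with $n_{j} \to \infty$; all covariant derivatives of $\chi_{j}$ up to order $\mathrm{ord}(D_{1})$ are bounded by a fixed constant and supported on $m_{j}$ further ``boundary'' cells, with $m_{j}/n_{j} \to 0$; and $\chi_{j}$ is constant in a collar of $\partial M_{2}$, so that $\phi_{j} := \chi_{j}\, p^{*}\eta$ still satisfies the boundary conditions and therefore lies in $\mathrm{dom}(D_{2}) \subseteq \mathrm{dom}(D_{2}^{\prime})$. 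Then I would estimate: $\|\phi_{j}\|^{2} \ge n_{j}$ from the $n_{j}$ full copies of $\eta$, while on any fixed compact set $L \subset M_{2}$ the $L^{2}$-mass of $\phi_{j}$ is bounded uniformly in $j$, so $\phi_{j}/\|\phi_{j}\| \rightharpoonup 0$. Since $D_{2}^{\prime}$ agrees with the differential operator $D_{2}$ on $\phi_{j}$, we have $(D_{2} - \lambda)\phi_{j} = \chi_{j}\, p^{*}\!\big((D_{1}-\lambda)\eta\big) + [D_{2},\chi_{j}]\,p^{*}\eta$, where the commutator has order $< \mathrm{ord}(D_{1})$ and, by the choice of $\chi_{j}$, bounded coefficients supported on the $m_{j}$ boundary cells; hence $\|(D_{2}-\lambda)\phi_{j}\| \le \sqrt{n_{j}+m_{j}}\,\delta + C_{\eta}\sqrt{m_{j}}$. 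Dividing by $\|\phi_{j}\| \ge \sqrt{n_{j}}$ and using $m_{j}/n_{j} \to 0$ gives $\limsup_{j} \|(D_{2}-\lambda)\phi_{j}\|/\|\phi_{j}\| \le \delta$. Letting $\delta \to 0$ and passing to a diagonal sequence of normalised $\phi_{j}$'s yields $\Phi_{k} \in \mathrm{dom}(D_{2}^{\prime})$ with $\|\Phi_{k}\| = 1$, $\Phi_{k} \rightharpoonup 0$ and $\|(D_{2}^{\prime}-\lambda)\Phi_{k}\| \to 0$; by the Weyl criterion for the essential spectrum, $\lambda \in \sigma_{\ess}(D_{2}^{\prime})$.

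The hard part will be the construction of the cutoffs $\chi_{j}$: passing from the combinatorial F\o lner condition for the monodromy action on $\mathcal{F}$ to genuine smooth functions on $M_{2}$ with uniform bounds on \emph{all} derivatives up to $\mathrm{ord}(D_{1})$ (so the commutator term really is of strictly lower order and concentrated on a thin F\o lner layer), while respecting the boundary conditions and, crucially, coping with a support $K$ that need not be evenly covered -- so the cells must form an honest fundamental decomposition of the covering and the smoothing must be carried out over a finite band of cells, using that amenability is insensitive to bounded thickenings of F\o lner sets. This is precisely the step in which both hypotheses -- ``infinite sheeted'' (forcing $n_{j} \to \infty$) and ``amenable'' (forcing $m_{j}/n_{j} \to 0$) -- are used, and where the analogue for finite sheeted coverings necessarily takes a different form.
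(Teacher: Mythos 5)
Your proposal follows essentially the same route as the paper: reduce to the approximate point spectrum and the Weyl criterion via self-adjointness, lift an approximate eigensection, multiply by F\o lner cutoffs with uniform derivative bounds up to the order of $D_{1}$, and control the error by the ratio of ``boundary'' cells to ``interior'' cells. The paper's Section 4 implements exactly this (the uniform bound of Proposition \ref{Uniform Estimate} plays the role of your commutator estimate, and Lemma \ref{Pulling Up} gives the ratio $\#(Q_{-})/\#(Q_{+})<\varepsilon$). One point deserves a warning. You obtain $\Phi_{k}\rightharpoonup 0$ from $\|\phi_{j}\|\geq\sqrt{n_{j}}\to\infty$ together with locally bounded mass, and you attribute $n_{j}\to\infty$ to the covering being infinite sheeted. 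That is not automatic: amenability only guarantees \emph{some} nonempty finite F\o lner set for each finite $G\subset\pi_{1}(M_{1})$ and $\varepsilon$, and if the subgroup generated by $G$ has infinitely many finite orbits on the fibre (which can happen even for infinite sheeted amenable coverings), a single small orbit is already a perfect F\o lner set, so a naive choice gives bounded $n_{j}$ and your weak-convergence argument collapses. The paper resolves this with an orbit dichotomy (Lemma \ref{Orbit Cases}) and, in Proposition \ref{Pulling Up Strong Version}, arranges instead that the supports of the cutoffs escape every compact set --- in the finite-orbit case by choosing orbits far away (where $Q_{-}=\emptyset$), and in the infinite-orbit case by showing $\#(Q_{+})\geq 1/\delta$ automatically. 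Your version can be repaired either by taking unions of many disjoint finite orbits to force $n_{j}\to\infty$, or by switching to the escaping-support mechanism; but as written this step is a genuine gap rather than merely a deferred technicality. The remaining ``hard part'' you flag --- building smooth cutoffs with uniform bounds on all derivatives up to $\mathrm{ord}(D_{1})$, compatible with the boundary condition --- is indeed where most of the paper's work lies (the extension past the boundary in Proposition \ref{Extension of Covering}, the conformal completion, and the counting Lemmas \ref{Intersection Lemma} and \ref{Cardinality Estimate Lemma}), and your outline of it is consistent with what is actually done.
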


Recall that a Schr\"{o}dinger operator on a complete manifold is essentially self-adjoint on the space of compactly supported smooth functions vanishing on the boundary (if it is non-empty). Therefore, in the context of Schr\"{o}dinger operators, it follows that if the underlying manifold is complete and the covering is infinite sheeted and amenable, then the spectrum of $S_{1}$ is contained in the essential spectrum of $S_{2}$.

An important case where the above theorem cannot be applied is that of Schr\"{o}dinger operators on non-complete Riemannian manifolds.
A Schr\"{o}dinger operator on
such a manifold does not have a unique self-adjoint extension, when restricted to the above domain, and we are interested in the spectrum of its Friedrichs extension.
According to \cite{BMP}, if the covering is amenable, then the bottoms of the spectra of $S_{1}$ and $S_{2}$ coincide.
The amenability is used only to establish $\lambda_{0}(S_{2}) \leq \lambda_{0}(S_{1})$, since the inverse inequality holds for any covering, where $\lambda_{0}$ stands for the bottom of the spectrum. This motivates us to establish the following theorem, which compares the bottom $\lambda_{0}(D_{1}^{(F)})$ of the spectrum of the Friedrichs extension of $D_{1}$ with the bottom $\lambda_{0}^{\ess}(D_{2}^{(F)})$ of the essential spectrum of the Friedrichs extension of $D_{2}$, when the operators are symmetric and bounded from below.

\begin{theorem}\label{Friedrichs}
Assume that $D_{i}$ is symmetric and bounded from below, and denote by $D_{i}^{(F)}$ its Friedrichs extension, $i=1,2$. If the covering is infinite sheeted and amenable, then $\lambda_{0}^{\ess}(D_{2}^{(F)}) \leq \lambda_{0}(D_{1}^{(F)})$.
\end{theorem}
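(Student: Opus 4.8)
The plan is to reduce the statement to a Rayleigh-quotient estimate and then to invoke (or re-run) the amenability machinery already used to prove Theorem \ref{Inclusion of Spectrums self-adj}. Recall that for a symmetric operator $D$ bounded from below, the Friedrichs extension $D^{(F)}$ has the same quadratic-form domain as the form closure of $D$, and by the variational characterization of the bottom of the essential spectrum (a Persson-type formula),
\[
\lambda_{0}^{\ess}(D_{2}^{(F)}) = \sup_{K} \; \inf \left\{ \frac{\langle D_{2} \eta, \eta \rangle}{\|\eta\|^{2}} : \eta \in C_{c}^{\infty}(E_{2}) \text{ satisfying the boundary condition}, \; \supp \eta \subset M_{2} \setminus K \right\},
\]
where $K$ ranges over compact subsets of $M_{2}$. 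So it suffices to show that for every $\lambda > \lambda_{0}(D_{1}^{(F)})$ and every compact $K \subset M_{2}$, there is a test section $\eta$ supported off $K$ with Rayleigh quotient less than $\lambda$.

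First I would fix $\varepsilon > 0$ and pick, by the definition of $\lambda_{0}(D_{1}^{(F)})$ as the infimum of the Rayleigh quotient over the form core, a section $\xi \in C_{c}^{\infty}(E_{1})$ in the domain of $D_{1}$ with $\langle D_{1}\xi, \xi \rangle < (\lambda_{0}(D_{1}^{(F)}) + \varepsilon)\|\xi\|^{2}$. The heart of the argument is then to manufacture from $\xi$, using amenability of the covering, a section on $M_{2}$ that (i) has Rayleigh quotient for $D_{2}$ close to that of $\xi$, and (ii) is supported in the complement of the prescribed compact set $K$. For (i) one uses a Følner-type sequence $\{\Omega_{j}\}$ of subsets of the deck group (or of fibers), forms the average $\eta_{j} = \sum_{g \in \Omega_{j}} g \cdot \tilde{\xi}$ of translates of a lift $\tilde{\xi}$ of $\xi$, suitably normalized, and estimates $\langle D_{2}\eta_{j}, \eta_{j}\rangle / \|\eta_{j}\|^{2}$; the Følner condition forces the ``boundary'' contributions — the cross terms coming from translates whose supports are adjacent — to be negligible relative to the ``bulk'', exactly as in the proof of Theorem \ref{Inclusion of Spectrums self-adj} and in \cite{BMP}. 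For (ii), since the covering is infinite sheeted, one can discard the finitely many translates $g \cdot \tilde{\xi}$ whose support meets $K$ (there are only finitely many because $\tilde{\xi}$ has compact support and $p^{-1}(\supp\xi) \cap K$ is compact, hence meets only finitely many sheets); removing a fixed finite number of terms from a Følner average does not affect the limiting Rayleigh quotient. Combining, we obtain test sections supported off $K$ with Rayleigh quotient at most $\lambda_{0}(D_{1}^{(F)}) + 2\varepsilon$, whence $\lambda_{0}^{\ess}(D_{2}^{(F)}) \le \lambda_{0}(D_{1}^{(F)}) + 2\varepsilon$, and letting $\varepsilon \to 0$ finishes the proof.

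The step I expect to be the main obstacle is the Rayleigh-quotient estimate for the averaged section when $D$ is a general differential operator of arbitrary order that need not be elliptic or nonnegative — one must control $\langle D_{2}\eta_{j}, \eta_{j}\rangle$, not merely a Dirichlet energy, so the cancellation in the cross terms has to be extracted from the algebraic structure of $D_{2}$ as a differential operator with coefficients pulled back from $M_{1}$ (so that $D_{2}$ commutes with deck transformations), rather than from positivity. This is presumably handled by the same lemma that underlies Theorem \ref{Inclusion of Spectrums self-adj}; if that lemma is stated for the relevant class of operators, the present theorem follows by the localization-off-$K$ refinement described above. A secondary technical point is checking that the averaged sections genuinely lie in the form domain of the Friedrichs extension and satisfy the imposed boundary conditions, which holds because each translate $g\cdot\tilde{\xi}$ is a compactly supported smooth section satisfying the lifted boundary condition and the class of such sections is stable under finite linear combinations.
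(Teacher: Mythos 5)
Your overall strategy coincides with the paper's: reduce to Rayleigh quotients using the fact that the Friedrichs extension preserves the lower bound (equation (\ref{Bottom Friedrichs Extensions})), push a near-minimizing test section up to $M_{2}$ via a F\o{}lner-type cutoff so that its Rayleigh quotient increases by at most $\varepsilon$ while its support avoids a prescribed compact set, and convert this into a bound on $\lambda_{0}^{\ess}$. Two remarks on packaging. Where you invoke a Persson-type formula for $\lambda_{0}^{\ess}(D_{2}^{(F)})$, the paper instead runs over an exhausting sequence of compacts, obtains disjointly supported sections $\zeta_{k}$ with $\mathcal{R}_{D_{2}}(\zeta_{k})<\lambda_{0}(D_{1}^{(F)})+\varepsilon$, observes that by locality of $D_{2}$ the Rayleigh quotient of any finite linear combination is a weighted average of the $\mathcal{R}_{D_{2}}(\zeta_{k})$, and applies Donnelly's criterion (Proposition \ref{Donelly}) to the resulting infinite-dimensional subspace; this is precisely the proof of the one inequality of ``Persson'' you need, so your appeal is harmless, but for a non-elliptic operator of arbitrary order you should not cite Persson as a black box. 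Also, the average $\sum_{g\in\Omega_{j}}g\cdot\tilde{\xi}$ of deck translates only literally makes sense for normal coverings (Example \ref{infinite sheeted amenable trivial dtg} is an infinite amenable covering with trivial deck group); the paper instead multiplies the periodic lift $\theta$ by a cutoff $\chi=\sum_{y\in P}\varphi_{y}$ indexed by a F\o{}lner subset $P$ of the fiber, and the cross-term control you flag as the main obstacle is supplied by the uniform estimates of Proposition \ref{Uniform Estimate} and Corollary \ref{Rayleigh Quotient Uniform Estimate}, exactly as you anticipate.

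The one step that is genuinely too quick is the claim that ``removing a fixed finite number of terms from a F\o{}lner average does not affect the limiting Rayleigh quotient.'' A F\o{}lner set for the action of $\pi_{1}(M_{1})$ on the fiber can be a single finite orbit, hence of bounded cardinality, and deleting the finitely many elements near $K$ could then destroy it entirely. The paper resolves this with a dichotomy (Lemma \ref{Orbit Cases}): for suitable $r$, either every orbit of $\langle G_{r}\rangle$ is infinite --- in which case the boundary set $Q_{-}$ is automatically non-empty, which forces $\#(Q_{+})$ to be large compared with the number of fiber points near $K$, so the deletion is harmless --- or there are infinitely many finite orbits, in which case one simply takes $P$ to be a finite orbit disjoint from a neighborhood of $K$, for which $Q_{-}=\emptyset$. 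This is the content of Proposition \ref{Pulling Up Strong Version}, and it is the one ingredient your sketch asserts rather than proves; with it in hand, the remainder of your argument goes through as described.
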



In particular, for Schr\"{o}dinger operators, it follows that if the covering is infinite sheeted and amenable, then the bottom of the spectrum of $S_{1}$ is equal to the bottom of the essential spectrum of $S_{2}$, without any topological or geometric assumptions.



The above results involve amenable coverings. However, the deck transformations group of a (possibly non-amenable) covering provides information about the group of isometries of the covering space.
This motivates us to work in a more general context than Riemannian coverings and prove that under some symmetry assumptions, an essentially self-adjoint differential operator does not have eigenvalues of finite multiplicity and in particular, its
spectrum is essential. Moreover, we show the analogous result for the Friedrichs extension of a symmetric and bounded from below differential operator.
In the context of Riemannian coverings, we obtain the following immediate consequences.

\begin{corollary}\label{Infinite Deck Transformations self-adj}
Assume that $D_{2}$ is essentially self-adjoint. If the deck transformations group of the covering is infinite, then $\overline{D}_{2}$ does not have eigenvalues of finite multiplicity and in particular, $\sigma(\overline{D}_{2}) = \sigma_{\ess}(\overline{D}_{2})$.
\end{corollary}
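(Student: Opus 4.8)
The plan is to exploit the action of the deck transformations group $\Gamma = \mathrm{Deck}(p)$ on $M_2$ by isometries that lift to bundle isometries of $E_2 = p^*E_1$ commuting with $D_2$. The starting observation is that since $D_2$ is the lift of an operator downstairs, it is $\Gamma$-invariant: for every $\gamma \in \Gamma$ the induced unitary $U_\gamma$ on $L^2(E_2)$ preserves the domain of $D_2$ and satisfies $U_\gamma D_2 = D_2 U_\gamma$, hence the same holds for the closure $\overline{D}_2$ and, by uniqueness, for its (unique) self-adjoint extension. Consequently each spectral projection of $\overline{D}_2$ commutes with the whole unitary representation $\gamma \mapsto U_\gamma$ of $\Gamma$. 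The key point is that this representation has no nonzero finite-dimensional invariant subspace when $\Gamma$ is infinite: indeed $\Gamma$ acts freely and properly discontinuously on $M_2$, so for any nonzero $f \in L^2(E_2)$ the orbit $\{U_\gamma f : \gamma \in \Gamma\}$ contains infinitely many sections with pairwise (eventually) disjoint supports after cutting off, forcing $\mathrm{span}\{U_\gamma f\}$ to be infinite-dimensional; equivalently, the quasi-regular representation of an infinite group on $\ell^2(\Gamma)$ — which embeds in $L^2(E_2)$ by choosing $f$ supported in a fundamental domain — has no finite-dimensional subrepresentation.

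Carrying this out: first I would record the $\Gamma$-equivariance of $\overline{D}_2$ and note that essential self-adjointness is preserved by the isometries, so $\overline{D}_2$ is the unique self-adjoint extension and commutes with all $U_\gamma$. Second, suppose toward a contradiction that $\lambda$ is an eigenvalue of $\overline{D}_2$ of finite multiplicity $m < \infty$; let $W = \ker(\overline{D}_2 - \lambda) \subset L^2(E_2)$, a nonzero subspace of dimension $m$. Since $U_\gamma$ commutes with $\overline{D}_2$, each $U_\gamma$ maps $W$ onto $W$, so $W$ is a finite-dimensional subrepresentation of $\Gamma$. Third, I would derive a contradiction from the existence of a nonzero finite-dimensional $\Gamma$-invariant $W \subset L^2(E_2)$: pick $0 \neq w \in W$, and using that $\|w\|^2 = \sum_{\gamma} \|w|_{\gamma \mathcal{F}}\|^2$ over translates of a fundamental domain $\mathcal{F}$, choose a large finite $F \subset \Gamma$ with $\sum_{\gamma \in F}\|w|_{\gamma\mathcal{F}}\|^2 > (1 - \varepsilon)\|w\|^2$; then for any $\delta \in \Gamma$ far enough out, $U_\delta w$ is concentrated on $\delta F \mathcal{F}$, nearly disjoint from $F\mathcal{F}$, which shows $\|U_\delta w - \mathrm{proj}_{\mathrm{span}\{w\}} U_\delta w\|$ is bounded below; iterating gives $m+1$ almost-orthogonal unit vectors in $W$, contradicting $\dim W = m$. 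Hence $\overline{D}_2$ has no eigenvalue of finite multiplicity, and since the point spectrum then consists entirely of infinite-multiplicity eigenvalues (which belong to the essential spectrum) while the rest of $\sigma(\overline{D}_2)$ is essential by definition, we conclude $\sigma(\overline{D}_2) = \sigma_{\mathrm{ess}}(\overline{D}_2)$.

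The main obstacle is the third step: making precise and clean the "disjoint supports under translation" argument that an infinite group admits no nonzero finite-dimensional subrepresentation inside $L^2(E_2)$ with this particular geometric origin. One must handle the bundle coefficients (the fibers of $E_2$) uniformly and be careful that elements of $W$ need not be compactly supported, so the cutoff/tail estimates have to be done with the $L^2$ norm and a uniform choice of fundamental domain; an efficient route is to reduce to the abstract statement that $\Gamma$ infinite implies $\ell^2(\Gamma)$ has no finite-dimensional subrepresentation under the left regular representation (a standard fact), and then transfer it via the $\Gamma$-equivariant isometric inclusion $\ell^2(\Gamma) \otimes L^2(E_1|_{\mathcal{F}}) \cong L^2(E_2)$. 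This is also where the more general ``symmetry'' framework mentioned before the corollary does the real work, and the corollary is then just its specialization to $\Gamma = \mathrm{Deck}(p)$.
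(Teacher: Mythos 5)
Your proposal is correct and follows essentially the same route as the paper: the paper proves a general symmetry result (Theorem \ref{Equality of spectrum and essential spectrum}) showing that if a group of bundle automorphisms commuting with $D$ can move any compact set off itself, then $\overline{D}$ has no finite-multiplicity eigenvalues, and then specializes to $\Gamma=\mathrm{Deck}(p)$ and invokes Proposition \ref{Spectrum of Self-adj} to get $\sigma=\sigma_{\ess}$ --- exactly your plan. The only cosmetic difference is the final contradiction: you produce $m+1$ almost-orthogonal translates $U_{\delta_i}w$ in the $m$-dimensional eigenspace, whereas the paper extracts a norm-convergent subsequence $\theta_k\to\theta_0$ with $\|\theta_0\|=1$ from the compact unit sphere of the eigenspace and shows simultaneously that $\theta_k\rightharpoonup 0$; both are valid implementations of the same ``translate the eigensection to infinity'' idea.
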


\begin{corollary}\label{Infinite Deck Transformations Friedrichs}
Assume that $D_{2}$ is symmetric and bounded from below, and denote by $D_{2}^{(F)}$ its Friedrichs extension. If the deck transformations group of the covering is infinite, then $D_{2}^{(F)}$ does not have eigenvalues of finite multiplicity and $\sigma(D_{2}^{(F)}) = \sigma_{\ess}(D_{2}^{(F)})$.
\end{corollary}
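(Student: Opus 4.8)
The plan is to exploit the action of the deck transformations group $\Gamma := \mathrm{Deck}(p)$ on $M_{2}$, which lifts canonically to $E_{2}=p^{*}E_{1}$. First I would record the structural facts. The group $\Gamma$ acts on $M_{2}$ freely, isometrically, and properly discontinuously, the last in the strong sense that for any compact $K,L\subset M_{2}$ only finitely many $\gamma\in\Gamma$ satisfy $\gamma K\cap L\neq\emptyset$ (this holds because $M_{2}\to M_{2}/\Gamma$ is a covering onto a manifold, so the $\Gamma$-action is proper). Since $E_{2}$ is the pullback $p^{*}E_{1}$ and $p\circ\gamma=p$, each $\gamma$ acts linearly on $E_{2}$ preserving the metric and the connection; hence $\Gamma$ acts by unitaries on $L^{2}(E_{2})$. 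Because $D_{2}$ is the lift of $D_{1}$ and the boundary conditions defining its domain are pulled back from $M_{1}$, this unitary action preserves the domain of $D_{2}$ and commutes with $D_{2}$ there. From the form-theoretic description of the Friedrichs extension — $D_{2}^{(F)}$ is the self-adjoint operator associated to the closure of the quadratic form $u\mapsto\langle D_{2}u,u\rangle$ (shifted to be positive) on the domain of $D_{2}$ — together with the $\Gamma$-invariance of that form, it follows that $D_{2}^{(F)}$ commutes with every $\gamma\in\Gamma$; this is the one place where the Friedrichs hypothesis is genuinely used.

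The analytic core is the following weak-convergence lemma: if $\gamma_{n}\in\Gamma$ are pairwise distinct and $u\in L^{2}(E_{2})$, then $\gamma_{n}u\rightharpoonup 0$ weakly in $L^{2}(E_{2})$. I would prove it first for $u$ with compact support $K$: if $w\in L^{2}(E_{2})$ has compact support $L$, then $\langle\gamma_{n}u,w\rangle\neq 0$ forces $\gamma_{n}K\cap L\neq\emptyset$, which by proper discontinuity happens for only finitely many $n$; the general case follows from the density of compactly supported sections and $\|\gamma_{n}\|_{\mathrm{op}}=1$.

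Now suppose, for contradiction, that $\lambda$ is an eigenvalue of $D_{2}^{(F)}$ of finite multiplicity, and let $V\subset L^{2}(E_{2})$ be its eigenspace, so $0\neq\dim V<\infty$. Since $D_{2}^{(F)}$ commutes with $\Gamma$, the space $V$ is $\Gamma$-invariant and $\gamma\mapsto\gamma|_{V}$ is a homomorphism of $\Gamma$ into the compact group $U(V)$. As $\Gamma$ is infinite, choose pairwise distinct $\gamma_{n}\in\Gamma$; by compactness of $U(V)$ we may pass to a subsequence along which $\gamma_{n}|_{V}\to T$ in $U(V)$. Fixing $u\in V$ with $\|u\|=1$, we get $\gamma_{n}u\to Tu$ in $L^{2}(E_{2})$ with $\|Tu\|=1$, contradicting $\gamma_{n}u\rightharpoonup 0$ from the lemma. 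Hence $D_{2}^{(F)}$ has no eigenvalue of finite multiplicity; in particular its discrete spectrum is empty, i.e. $\sigma(D_{2}^{(F)})=\sigma_{\ess}(D_{2}^{(F)})$.

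I expect the only delicate point to be the bookkeeping that the Friedrichs extension inherits the $\Gamma$-equivariance from the quadratic form, as opposed to the symmetric operator $D_{2}$ itself, which in general has many non-equivariant self-adjoint extensions; everything else is soft functional analysis. The same argument proves Corollary \ref{Infinite Deck Transformations self-adj} verbatim after replacing $D_{2}^{(F)}$ by $\overline{D}_{2}$ and noting that the closure of an operator commuting with the unitaries $\gamma$ commutes with them as well. It is, of course, simply the Riemannian-covering instance of the more general statement — for an arbitrary infinite group acting by isometries of the relevant bundle — announced in the paragraph preceding the corollaries.
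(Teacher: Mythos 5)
Your proposal is correct and follows essentially the same route as the paper (Theorems 5.1 and 5.2 there, of which the corollary is the deck-group instance): both establish that the unitary $\Gamma$-action preserves $\mathcal{D}(D_{2}^{(F)})$ and commutes with $D_{2}^{(F)}$, and then rule out a finite-multiplicity eigenvalue by translating an eigensection off to infinity, playing weak convergence to $0$ against compactness coming from the finite-dimensional eigenspace. The only cosmetic differences are that you extract a convergent subsequence in the compact group $U(V)$ where the paper extracts a convergent subsequence of unit eigensections, and that your standalone weak-convergence lemma invokes the stronger properness of the deck action (finitely many $\gamma$ with $\gamma K\cap L\neq\emptyset$), whereas the paper's more general theorem only assumes that for each compact $K$ there is some $g$ with $gK\cap K=\emptyset$.
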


For Schr\"{o}dinger operators, it follows that if the deck transformations group of the covering is infinite, then the spectrum of $S_{2}$ is essential, without any assumptions on the manifolds.

All the above results provide information about the spectra from properties of the covering (amenability or infinite deck transformations group).
In the converse direction, Brooks \cite{MR656213} proved that
if a normal Riemannian covering of a closed manifold (that is, compact without boundary) preserves the bottom of the spectrum of the Laplacian, then the covering is amenable.
In this paper, we extend this result to Schr\"{o}dinger operators and to not necessarily normal coverings. In the following theorem, we denote by $h^{\ess}(M)$ the supremum of the Cheeger's constants over complements of compact and smoothly bounded domains of $M$. 

\begin{theorem}\label{Improved Brooks}
Let $p \colon M_{2} \to M_{1}$ be a Riemannian covering with $M_{1}$ closed. 
Then the following are equivalent:
\begin{enumerate}[topsep=0pt,itemsep=-1pt,partopsep=1ex,parsep=0.5ex,leftmargin=*, label=(\roman*), align=left, labelsep=0em]
\item $p$ is infinite sheeted and amenable,

\item $\sigma(S_{1}) \subset \sigma_{\ess}(S_{2})$ for some/any Schr\"{o}dinger operator $S_{1}$ on $M_{1}$ and its lift $S_{2}$,

\item $\lambda_{0}(S_{1}) = \lambda_{0}^{\ess}(S_{2})$ for some/any Schr\"{o}dinger operator $S_{1}$ on $M_{1}$ and its lift $S_{2}$,

\item $h^{\ess}(M_{2}) = 0$.

\end{enumerate}
\end{theorem}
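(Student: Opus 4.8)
The plan is to prove the cycle of implications $(i)\Rightarrow(ii)\Rightarrow(iii)\Rightarrow(iv)\Rightarrow(i)$, organised so that the chain issuing from $(i)$ automatically produces the ``any'' versions of $(ii)$ and $(iii)$, while $(ii)\Rightarrow(iii)\Rightarrow(iv)\Rightarrow(i)$ only ever uses the ``some'' versions as hypotheses. For $(i)\Rightarrow(ii)$, first note that since $M_1$ is closed, $M_2$ is complete and boundaryless, so every Schr\"{o}dinger operator $S_1$ on $M_1$ and its lift $S_2$ are essentially self-adjoint on compactly supported smooth functions; Theorem \ref{Inclusion of Spectrums self-adj}, applied with $D_1=S_1$ and the self-adjoint extension $D_2'=\overline{S_2}$, then gives $\sigma(S_1)\subset\sigma_{\ess}(S_2)$ for every such $S_1$. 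For $(ii)\Rightarrow(iii)$, suppose the inclusion holds for some $S_1$. Then $\sigma_{\ess}(S_2)\supset\sigma(S_1)\ne\emptyset$, so $M_2$ is non-compact and $p$ is infinite sheeted; taking infima yields $\lambda_0(S_1)\ge\lambda_0^{\ess}(S_2)$, while one always has $\lambda_0^{\ess}(S_2)\ge\lambda_0(S_2)\ge\lambda_0(S_1)$, the last inequality being valid for any Riemannian covering (cf. \cite{BMP}), so $\lambda_0(S_1)=\lambda_0^{\ess}(S_2)$.

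For $(iii)\Rightarrow(iv)$, assume $\lambda_0(S_1)=\lambda_0^{\ess}(S_2)$ for some $S_1=\Delta+V$; finiteness of $\lambda_0^{\ess}(S_2)$ again forces $M_2$ non-compact. I would use the ground-state transform: let $\varphi_1>0$ be the (smooth, positive) ground state of $S_1$ on the closed manifold $M_1$, which is bounded above and below by positive constants, and so is its lift $\varphi_2:=\varphi_1\circ p$ on $M_2$. Conjugation by $\varphi_2$ is a unitary equivalence of $S_2-\lambda_0(S_1)$, acting in $L^2(M_2)$, with the non-negative weighted Laplacian $L:=-\varphi_2^{-2}\diver(\varphi_2^2\,\nabla\,\cdot\,)$, acting in $L^2(M_2,\varphi_2^2\,dV)$; hence $\lambda_0^{\ess}(L)=\lambda_0^{\ess}(S_2)-\lambda_0(S_1)=0$. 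Since $\varphi_2$ is pinched between two positive constants, the quadratic form of $L$ is uniformly comparable to that of the Laplacian of $M_2$, so $\lambda_0^{\ess}(\Delta_{M_2})=0$ as well. Because $M_2$ is complete, the decomposition principle gives $\lambda_0^{\ess}(\Delta_{M_2})=\sup_K\lambda_0(M_2\setminus K)$ over compact domains $K\subset M_2$, and Cheeger's inequality applied to each complement yields $0=\lambda_0^{\ess}(\Delta_{M_2})\ge\tfrac14 h^{\ess}(M_2)^2$, that is, $h^{\ess}(M_2)=0$.

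For $(iv)\Rightarrow(i)$, if $p$ were finite sheeted then $M_2$ would be closed, and exhausting $M_2$ by complements of balls of radius tending to $0$ (whose Cheeger constants blow up) would force $h^{\ess}(M_2)=+\infty$; hence $p$ is infinite sheeted. To obtain amenability I would fix a finite cover of $M_1$ by relatively compact, simply connected open sets: its lifts tile $M_2$ into uniformly bi-Lipschitz pieces indexed compatibly by the fiber $p^{-1}(x_0)$, and adjacency of the tiles defines a Schreier graph of the monodromy action of $\pi_1(M_1)$ on the fiber. Choosing domains $\Omega_n\subset M_2$ with $\Omega_n$ escaping every compact set and $\area(\partial\Omega_n)/\vol(\Omega_n)\to 0$, one rounds each $\Omega_n$ to a union of whole tiles, corresponding to a finite set $A_n$ of fiber points; the bounded geometry inherited from the compact base $M_1$ controls the resulting changes of volume and of boundary area, giving $|\partial A_n|/|A_n|\to 0$ in the Schreier graph. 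This is the F\o lner condition for the monodromy action, hence that action is amenable and $p$ is amenable. Combined with the first paragraph this closes the cycle, and since Theorem \ref{Inclusion of Spectrums self-adj} and the argument for $(ii)\Rightarrow(iii)$ work for every Schr\"{o}dinger operator, the ``any'' versions of $(ii)$ and $(iii)$ follow as well.

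The hard part will be $(iv)\Rightarrow(i)$: converting a continuous family of almost-isoperimetric regions in $M_2$ into a genuinely combinatorial F\o lner sequence for a covering that need not be normal requires a careful rounding argument --- via the co-area formula and the uniform geometry of the tiles --- to reconcile metric boundary area with Schreier-graph boundary size. A secondary technical point is the rigorous justification on $M_2$ of the decomposition principle and of the ``essential'' form of Cheeger's inequality used in $(iii)\Rightarrow(iv)$.
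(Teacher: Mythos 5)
Your cycle $(i)\Rightarrow(ii)\Rightarrow(iii)\Rightarrow(iv)\Rightarrow(i)$ is exactly the paper's architecture, and the first three implications are essentially correct. $(i)\Rightarrow(ii)$ and $(ii)\Rightarrow(iii)$ coincide with the paper's argument (Theorem \ref{Inclusion of Spectrums self-adj} plus the covering inequality $\lambda_{0}(S_{1})\leq\lambda_{0}(S_{2})$ of Corollary \ref{Inequality of bottoms}). Your $(iii)\Rightarrow(iv)$ is a harmless variant: the paper also performs the ground-state transform, but then invokes the renormalized Cheeger inequality $\lambda_{0}^{\ess}(S_{2})-\lambda\geq\tfrac14 h^{\ess}_{\hat\varphi}(M_{2})^{2}$ (Corollary \ref{renormalized 2}) and uses the pinching of $\varphi$ only at the very end to pass from $h^{\ess}_{\hat\varphi}$ to $h^{\ess}$; you instead use the pinching to compare quadratic forms and then apply the ordinary Cheeger inequality on complements of compacta. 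Both routes work. Your finite-sheeted exclusion in $(iv)\Rightarrow(i)$ also works (taking $K=M_{2}\smallsetminus B(x,r)$ so that $h(B(x,r))\to\infty$), though note the paper only needs one fixed domain $U$ with $h(M_{2}\smallsetminus U)>0$.

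The genuine gap is the amenability half of $(iv)\Rightarrow(i)$, and it sits precisely where you place the difficulty but is not repaired by the tools you name. Your plan is: take domains $\Omega_{n}$ with $\Area(\partial\Omega_{n})/\Vol(\Omega_{n})\to0$, round them to unions of tiles, and bound the Schreier-graph boundary of the resulting fiber sets $A_{n}$. The obstruction is that the number of tiles met by $\partial\Omega_{n}$ is \emph{not} controlled by $\Area(\partial\Omega_{n})$: a long thin tentacle of $\Omega_{n}$ (or of its complement) of small cross-section traverses arbitrarily many tiles while contributing arbitrarily little boundary area, so $|\partial A_{n}|/|A_{n}|$ need not go to zero no matter how the rounding is set up. The co-area formula and uniform bi-Lipschitz geometry of the tiles do not fix this, because the problem is not choosing a good level set but the shape of the domain itself. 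What is needed is a preliminary modification of $\Omega_{n}$ --- Buser's ``cutting hairs'' lemma (Proposition \ref{Cutting hairs} in the paper, \cite[Lemma 7.2]{Buser}) --- which replaces $\Omega_{n}$ by a bounded open $U$ satisfying the strictly stronger estimate $\Vol(B(\partial U,r))/\Vol(U)\leq C(r)\,\Area(\partial\Omega_{n})/\Vol(\Omega_{n})$. Only with the \emph{tube} volume of the boundary under control does the counting of fiber points go through: the paper takes $r>2\diam(M_{1})$, sets $F$ (resp.\ $F^{\prime}$) to be the fiber points in $U\smallsetminus B(\partial U,r)$ (resp.\ in $B(\partial U,r)$), gets $\#(F^{\prime})/\#(F)<\varepsilon$ because each fundamental domain has volume $\Vol(M_{1})$ and diameter $<r$, and verifies $F\smallsetminus Fg\subset F^{\prime}g$ for $g\in G_{r}$ via Remark \ref{Orbits}. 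Without the Buser step your rounding argument, as sketched, fails.
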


It is worth to point out that Brooks proved his theorem in a quite complicated way, relying heavily on geometric measure theory. Our proof of the above theorem is significantly simpler and avoids the use of geometric measure theory.


Furthermore, Brooks \cite{MR783536} proved that under some more general (but still quite restrictive) assumptions, if the bottom of the spectrum of the Laplacian is preserved, then the covering is amenable. In particular, these assumptions imply that the bottom of the spectrum of the Laplacian on $M_{1}$ is not in the essential spectrum.
Moreover, he provided examples demonstrating that without these conditions, the bottom of the spectrum of the Laplacian may be preserved even if the covering is non-amenable. This suggests that under some assumptions on the geometry and the spectrum of the Laplacian on $M_{1}$, the bottom of the spectrum is preserved under a weaker assumption than amenability of the covering. In this direction we prove the following result.

\begin{corollary}\label{Weaker assumption}
Let $p \colon M_{2} \to M_{1}$ be a Riemannian covering with $M_{1}$ complete. Let $S_{1}$ be a Schr\"{o}dinger operator on $M_{1}$ with $\lambda_{0}(S_{1}) \in \sigma_{\ess}(S_{1})$, and $S_{2}$ its lift on $M_{2}$. If there exists a compact $K \subset M_{1}$, such that the image of the fundamental group of any connected component of $M_{1} \smallsetminus K$ in $\pi_{1}(M_{1})$ is amenable, then $\lambda_{0}(S_{1}) =\lambda_{0}(S_{2})$.
\end{corollary}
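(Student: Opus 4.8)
The plan is to localize the problem to the complement of the compact set $K$, where the covering is controlled by amenable subgroups, and then invoke Theorem~\ref{Friedrichs} (more precisely, its finite-sheeted companion and its proof mechanism) on each piece. Write $M_1 \smallsetminus K = \bigsqcup_{j} U_j$ for the (finitely many, since $M_1$ is complete, hence $K$ has a neighborhood with finitely many unbounded components — after enlarging $K$ to a smoothly bounded domain we may assume this) connected components, and let $V_j \subset M_2$ be a connected component of $p^{-1}(U_j)$. The restriction $p \colon V_j \to U_j$ is a Riemannian covering whose deck group is (a quotient related to) the image of $\pi_1(U_j)$ in $\pi_1(M_1)$, which is amenable by hypothesis; hence this covering is amenable. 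Since $\lambda_0(S_1) \in \sigma_{\ess}(S_1)$, there is a sequence of Weyl-type test functions for $S_1$ at level $\lambda_0(S_1)$ that can be chosen supported in $M_1 \smallsetminus K$, in fact concentrated in a single end, i.e. in one $U_j$.

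The core step is then the following: for that $U_j$, the amenability of $p \colon V_j \to U_j$ lets us approximate, in the Rayleigh-quotient sense, any test function $\varphi$ compactly supported in $U_j$ by sections on $V_j$ with almost the same Rayleigh quotient for $S_2$. This is exactly the inequality $\lambda_0(S_2) \le \lambda_0(S_1)$ in the presence of amenability, applied in the localized setting: one uses a F\o lner sequence for the deck group of $V_j \to U_j$ to build, out of $\varphi$, averaged lifts $\tilde\varphi$ on $V_j$ with $\mathcal{R}_{S_2}(\tilde\varphi) \le \mathcal{R}_{S_1}(\varphi) + \varepsilon$; since $\mathcal{R}_{S_1}(\varphi)$ can be taken arbitrarily close to $\lambda_0(S_1)$, and these $\tilde\varphi$ are admissible test sections for the Friedrichs extension $S_2$ on $M_2$, we get $\lambda_0(S_2) \le \lambda_0(S_1)$. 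The reverse inequality $\lambda_0(S_1) \le \lambda_0(S_2)$ holds for every Riemannian covering (a compactly supported test function on $M_2$ pushes down, after averaging over a fundamental domain, or more simply one uses that $S_1$'s quadratic form is bounded below by $\lambda_0(S_2)$ via the covering projection — this is standard and stated implicitly in the discussion preceding Theorem~\ref{Friedrichs}). Combining the two inequalities gives $\lambda_0(S_1) = \lambda_0(S_2)$.

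The main obstacle is the localization of the F\o lner/averaging argument: the existing amenable-covering machinery (from \cite{BMP} and Theorem~\ref{Friedrichs}) is stated for the global covering $p \colon M_2 \to M_1$, whose deck group need not be amenable here. One must check that the averaging construction only ever "sees" the fiber over the support of the test function, so that it suffices for the restricted covering over a neighborhood of that support to be amenable. Concretely, if $\varphi$ is supported in $U_j$, then the relevant combinatorial object is the Schreier graph of the $\pi_1(M_1)$-action restricted to the fiber $p^{-1}(x)$ for $x$ in the support — but since any loop traversed while building averaged lifts stays in $U_j$, the orbit structure that matters is that of the image of $\pi_1(U_j)$, which is amenable; hence a F\o lner sequence exists and the construction goes through. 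Making this precise — identifying the correct quotient group governing the component $V_j$ and verifying its amenability from amenability of $\mathrm{im}(\pi_1(U_j) \to \pi_1(M_1))$ — is the technical heart; the rest is assembling the two Rayleigh-quotient inequalities as above.
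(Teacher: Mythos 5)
Your argument is correct in outline, but it takes a genuinely different route from the paper's. The paper derives Corollary \ref{Weaker assumption} in one line from Theorem \ref{Theorem Weaker Assuption} and Corollary \ref{Inequality of bottoms}: it first proves $\sigma_{\ess}(S_{1}) \subset \sigma_{\ess}(\tilde{S})$ for the lift $\tilde{S}$ to the \emph{universal} cover $\tilde{M}$ --- by localizing the Weyl sequence to the ends via the Decomposition Principle (Proposition \ref{Zhislin}), lifting each $f_{k}$ to the intermediate quotient $M_{k}^{\prime} = \tilde{M}/\Gamma_{k}$ with $\Gamma_{k} = \mathrm{im}(\pi_{1}(U_{k}) \to \pi_{1}(M_{1}))$, and pulling up through the \emph{normal} amenable covering $\tilde{M} \to M_{k}^{\prime}$ --- and then sandwiches $\lambda_{0}(S_{2}) \leq \lambda_{0}(\tilde{S}) \leq \lambda_{0}(S_{1}) \leq \lambda_{0}(S_{2})$ along the tower $\tilde{M} \to M_{2} \to M_{1}$ using Corollary \ref{Inequality of bottoms} twice. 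You instead work directly with $p \colon M_{2} \to M_{1}$ and restrict it over each end, applying the F\o{}lner pull-up to a component $V_{j} \to U_{j}$. That works, and it avoids both the detour through $\tilde{M}$ and the sandwich, but the step you rightly flag as the technical heart must be phrased more carefully than ``the deck group is a quotient of $\mathrm{im}(\pi_{1}(U_{j}) \to \pi_{1}(M_{1}))$'': the covering $V_{j} \to U_{j}$ is in general not normal and has no transitive deck group. The correct statement, in the paper's own terms, is that the right action of $\pi_{1}(U_{j})$ on $\pi_{1}(V_{j}) \backslash \pi_{1}(U_{j})$ is identified with the monodromy action on the orbit of a fiber point inside $p^{-1}(x) \cong \pi_{1}(M_{2}) \backslash \pi_{1}(M_{1})$, and this action factors through the amenable group $\mathrm{im}(\pi_{1}(U_{j}) \to \pi_{1}(M_{1}))$; since any action of an amenable group on a non-empty countable set is amenable, the restricted covering is amenable. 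Two smaller points: Propositions \ref{Pull Up Lemma} and \ref{Rayleigh Quotient Pulling Up} are stated only for infinite sheeted coverings, so the finite sheeted case of $V_{j} \to U_{j}$ must be handled separately by a direct lift (as the paper does for its coverings $q_{k}$); and finiteness of the set of components of $M_{1} \smallsetminus K$ is not needed --- it suffices to restrict each test function to one component of its support, since the Rayleigh quotient of a function with disconnected support is at least the minimum of the quotients of its pieces. In exchange for its extra overhead, the paper's route yields the stronger conclusion $\sigma_{\ess}(S_{1}) \subset \sigma_{\ess}(S_{2})$ when $M_{2}$ is simply connected; yours gives exactly the stated equality of bottoms for arbitrary $M_{2}$ more directly.
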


The paper is organized as follows: In Section \ref{Preliminaries}, we give some preliminaries.
In Sections \ref{extension} and \ref{amenable section}, we present the construction which is used in order to prove Theorem \ref{Friedrichs} and a more general result (Theorem \ref{Inclusion of Spectrums}) than Theorem \ref{Inclusion of Spectrums self-adj}. The proofs are given in Section \ref{amenable section}, where we also present the analogous results for finite sheeted coverings. In Section \ref{high symmetry section}, we study manifolds with high symmetry and establish Corollaries \ref{Infinite Deck Transformations self-adj} and \ref{Infinite Deck Transformations Friedrichs}. In Section \ref{closed underlying section}, we present an alternative proof of Brooks' theorem \cite{MR656213}, extending it to not necessarily normal Riemannian coverings. In Section \ref{applications section}, we introduce the notion of renormalized Schr\"{o}dinger operators, which is used to prove Theorem \ref{Improved Brooks}. Moreover, in this section we establish Corollary \ref{Weaker assumption} and we present a simple example demonstrating that the behavior of the bottom of the spectrum of the connection Laplacian under a covering depends 
on the corresponding metric connection.
Therefore, a main point in our results is the independence from the vector bundles, the connections and the differential operators. \medskip

\textbf{Acknowledgements.} I would like to thank Werner Ballmann and Dorothee Sch\"{u}th for some very enlightening discussions and helpful remarks. I am also grateful to the Max Planck Institute for Mathematics in Bonn for its support and hospitality.

\section{Preliminaries}\label{Preliminaries}

We first recall some basic facts from functional analysis. For more details, see \cite{MR1361167}. Let $A \colon \mathcal{D}(A) \subset \mathcal{H} \to \mathcal{H} $ be a closed (linear) operator on a separable Hilbert space $\mathcal{H}$ over a field $\mathbb{F}$, where $\mathbb{F} = \mathbb{R}$ or $\mathbb{F} = \mathbb{C}$. The \emph{spectrum} of $A$ is given by
\[
\sigma(A) := \{ \lambda \in \mathbb{F} :(A - \lambda) \colon \mathcal{D}(A) \to \mathcal{H} \text{ not bijective} \}.
\]
The \emph{essential spectrum} of $A$ is defined as 
\[
\sigma_{\ess}(A) := \{ \lambda \in \mathbb{F} :(A - \lambda) \colon \mathcal{D}(A) \to \mathcal{H} \text{ not Fredholm} \}.
\]
Recall that an operator is called \emph{Fredholm} if its kernel is finite dimensional and its range is closed and of finite codimension. 
The \emph{discrete spectrum} of $A$ is the complement of the essential spectrum in the spectrum of $A$, that is, $\sigma_{d} (A) := \sigma(A) \smallsetminus \sigma_{\ess}(A)$.

The \emph{approximate point spectrum} of $A$, denoted by $\sigma_{\ap}(A)$, is defined as the set of all $\lambda \in \mathbb{F} $, such that there exists $(v_{k})_{k \in \mathbb{N}} \subset \mathcal{D}(A)$ with $\| v_{k} \| = 1$ and $(A - \lambda)v_{k} \rightarrow 0$ in $\mathcal{H}$. For $\lambda \in \mathbb{F}$, a \emph{Weyl sequence} for $A$ and $\lambda$ is a sequence $(v_{k})_{k \in \mathbb{N}} \subset \mathcal{D}(A)$, such that $\| v_{k} \| = 1$, $v_{k} \rightharpoonup 0$ and $(A - \lambda)v_{k} \rightarrow 0$ in $\mathcal{H}$, where ``$\rightharpoonup$" denotes the weak convergence in $\mathcal{H}$. The \emph{Weyl spectrum} of $A$, denoted by $\sigma_{W}(A)$, is the set of all $\lambda \in \mathbb{F}$, such that there exists a Weyl sequence for $A$ and $\lambda$.

The following proposition is the characterization of the spectrum of a self-adjoint operator as the set of approximate eigenvalues and the well-known Weyl's criterion for the essential spectrum.

\begin{proposition}\label{Spectrum of Self-adj}
If $A$ is self-adjoint, then $\sigma_{\ap}(A) = \sigma(A)$, $\sigma_{W}(A) = \sigma_{\ess}(A)$ and $\sigma_{d}(A)$ consists of isolated eigenvalues of $A$ of finite multiplicity.
\end{proposition}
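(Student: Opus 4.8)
The plan is to derive all three assertions from the spectral theorem for the self-adjoint operator $A$. Write $E$ for its projection-valued spectral measure and, for $\lambda \in \mathbb{F}$ and $\varepsilon > 0$, put $I_{\varepsilon} := (\lambda - \varepsilon, \lambda + \varepsilon)$ (when $\mathbb{F} = \mathbb{R}$ one first complexifies $A$, which changes none of the spectra in question). I would begin with the identity $\sigma_{\ap}(A) = \sigma(A)$. The inclusion $\sigma_{\ap}(A) \subset \sigma(A)$ is immediate, since a unit sequence $(v_{k})$ with $(A - \lambda)v_{k} \to 0$ rules out a bounded inverse for $A - \lambda$. For the converse, if $\lambda \notin \sigma_{\ap}(A)$ then $A - \lambda$ is bounded below, hence injective with closed range; as $A$ is self-adjoint we may take $\lambda$ real, and then $\rg(A - \lambda)^{\perp} = \ker (A - \lambda)^{*} = \ker(A - \lambda) = 0$, so $A - \lambda$ is bijective and $\lambda \notin \sigma(A)$. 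In particular this shows that $\lambda \in \sigma(A)$ if and only if $E(I_{\varepsilon}) \neq 0$ for every $\varepsilon > 0$, a fact I would use below.

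For the Weyl spectrum I would prove the two inclusions separately. To see $\sigma_{W}(A) \subset \sigma_{\ess}(A)$, argue by contraposition: if $A - \lambda$ is Fredholm, set $N := \ker(A - \lambda)$, which is finite dimensional, and note that $A - \lambda$ restricted to $\mathcal{D}(A) \cap N^{\perp}$ is injective with closed range, hence bounded below by some $c > 0$. For any sequence $(v_{k})$ with $\|v_{k}\| = 1$, $v_{k} \rightharpoonup 0$ and $(A - \lambda)v_{k} \to 0$, decompose $v_{k} = n_{k} + w_{k}$ with $n_{k} \in N$ and $w_{k} \in N^{\perp}$; since $N$ is finite dimensional, $n_{k} \to 0$ strongly, so $\|w_{k}\| \to 1$, which contradicts $c\|w_{k}\| \le \|(A - \lambda)w_{k}\| = \|(A - \lambda)v_{k}\| \to 0$. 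Hence no Weyl sequence exists for $\lambda$.

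For $\sigma_{\ess}(A) \subset \sigma_{W}(A)$, let $\lambda \in \sigma_{\ess}(A)$. I would first check that $\rg E(I_{\varepsilon})$ is infinite dimensional for every $\varepsilon > 0$: otherwise, using the $A$-invariant splitting $\mathcal{H} = \rg E(I_{\varepsilon}) \oplus \rg E(\mathbb{R} \smallsetminus I_{\varepsilon})$, the operator $A - \lambda$ would be Fredholm on the first summand (which is finite dimensional) and invertible on the second (there $|t - \lambda| \ge \varepsilon$, so $A - \lambda$ is bounded below and, being self-adjoint, onto), hence Fredholm, contrary to $\lambda \in \sigma_{\ess}(A)$. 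Since the intervals $I_{1/k}$ are nested and each $\rg E(I_{1/k})$ is infinite dimensional and contained in $\mathcal{D}(A)$, I would inductively choose an orthonormal sequence $v_{k} \in \rg E(I_{1/k})$; then $v_{k} \rightharpoonup 0$ and $\|(A - \lambda)v_{k}\|^{2} = \int |t - \lambda|^{2} \, d\|E_{t} v_{k}\|^{2} \le 1/k^{2} \to 0$, so $(v_{k})$ is a Weyl sequence and $\lambda \in \sigma_{W}(A)$. The same dichotomy yields the third assertion: if $\lambda \in \sigma_{d}(A)$, pick $\varepsilon_{0} > 0$ with $V := \rg E(I_{\varepsilon_{0}})$ finite dimensional (and nonzero, since $\lambda \in \sigma(A)$); the restriction of $A$ to $V$ is self-adjoint on a finite dimensional space with eigenvalues $\mu_{1}, \dots, \mu_{m} \in \sigma(A) \cap \overline{I_{\varepsilon_{0}}}$, while $A - \mu$ is invertible on $V^{\perp}$ for every $\mu \in I_{\varepsilon_{0}}$; hence $\sigma(A) \cap I_{\varepsilon_{0}} = \{\mu_{1}, \dots, \mu_{m}\}$, so $\lambda$ is isolated in $\sigma(A)$, equals some $\mu_{j}$, and thus is an eigenvalue with $\dim \ker(A - \lambda) \le \dim V < \infty$.

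Since the statement only records standard functional analysis, I do not expect a real obstacle; the one mildly delicate point is the extraction of the orthonormal Weyl sequence in the inclusion $\sigma_{\ess}(A) \subset \sigma_{W}(A)$. Routing this through the spectral theorem is precisely what keeps it clean: it replaces the case analysis that a direct Fredholm-theoretic argument would require (finite dimensional kernel versus non-closed range versus infinite codimension of the range) by the single construction above. For that reason I would use the spectral theorem throughout rather than attempting an ``elementary'' proof of Weyl's criterion.
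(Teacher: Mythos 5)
Your proof is correct: all three assertions are established cleanly via the spectral theorem, and the one delicate step (extracting an orthonormal Weyl sequence from the nested ranges $\rg E(I_{1/k})$, each infinite dimensional) is handled properly. The paper itself gives no proof of this proposition --- it is quoted as a standard fact with a reference to Hislop--Sigal --- and your argument is precisely the standard textbook one, so there is nothing to compare beyond noting that your write-up fills in what the paper leaves to the literature.
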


Since we are interested in closures of operators, we need the following elementary lemma, characterizing the approximate point spectrum and the Weyl spectrum of the closure in terms of the initial operator.

\begin{lemma}\label{Spectrum of Closure}
Assume that $A$ is the closure of an operator $B \colon \mathcal{D}(B) \subset \mathcal{H} \to \mathcal{H}$ and consider $\lambda \in \mathbb{F}$. Then:
\begin{enumerate}[topsep=0pt,itemsep=-1pt,partopsep=1ex,parsep=0.5ex,leftmargin=*, label=(\roman*), align=left, labelsep=0em]
\item $\lambda \in \sigma_{\ap}(A)$ if and only if there exists $(v_{k})_{k \in \mathbb{N}} \subset \mathcal{D}(B)$, such that $\| v_{k} \| = 1$ and $(B - \lambda)v_{k} \rightarrow 0$ in $\mathcal{H}$,
\item $\lambda \in \sigma_{W}(A)$ if and only if there exists $(v_{k})_{k \in \mathbb{N}} \subset \mathcal{D}(B)$, such that $\| v_{k} \| = 1$, $v_{k} \rightharpoonup 0$ and $(B - \lambda)v_{k} \rightarrow 0$ in $\mathcal{H}$.
\end{enumerate}
\end{lemma}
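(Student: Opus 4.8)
The plan is to prove both directions by a straightforward, essentially formal argument built on the definition of the closure. Recall that $A = \overline{B}$ means precisely that the graph of $A$ in $\mathcal{H} \times \mathcal{H}$ is the closure of the graph of $B$; equivalently, for each $v \in \mathcal{D}(A)$ there is a sequence $(u_j)_{j} \subset \mathcal{D}(B)$ with $u_j \to v$ and $Bu_j \to Av$ in $\mathcal{H}$.

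For the ``if'' direction of (i), suppose $(v_k) \subset \mathcal{D}(B)$ with $\|v_k\| = 1$ and $(B-\lambda)v_k \to 0$. Since $\mathcal{D}(B) \subset \mathcal{D}(A)$ and $A$ extends $B$, the same sequence lies in $\mathcal{D}(A)$ and satisfies $\|v_k\|=1$, $(A-\lambda)v_k = (B-\lambda)v_k \to 0$, so $\lambda \in \sigma_{\ap}(A)$; the same observation handles the ``if'' direction of (ii) since the condition $v_k \rightharpoonup 0$ is unchanged. The ``only if'' directions are where the closure is actually used. Given $\lambda \in \sigma_{\ap}(A)$, pick $(w_k) \subset \mathcal{D}(A)$ with $\|w_k\| = 1$ and $(A-\lambda)w_k \to 0$. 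For each fixed $k$, approximate $w_k$ in the graph norm by elements of $\mathcal{D}(B)$: choose $v_k \in \mathcal{D}(B)$ with $\|v_k - w_k\| < 1/k$ and $\|B v_k - A w_k\| < 1/k$; this is possible exactly because $w_k$ lies in the closure of the graph of $B$. Then $\|v_k\| \to 1$, and $(B-\lambda)v_k = (Bv_k - Aw_k) + (A-\lambda)w_k + \lambda(w_k - v_k) \to 0$. Normalizing $v_k$ (dividing by $\|v_k\|$, which is harmless once $k$ is large so that $v_k \neq 0$) gives the desired sequence in $\mathcal{D}(B)$, discarding the finitely many initial terms where $v_k$ might vanish.

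For (ii) the only extra point is arranging $v_k \rightharpoonup 0$. Start from a Weyl sequence $(w_k)$ for $A$ and $\lambda$, so in addition $w_k \rightharpoonup 0$. Using the same graph-norm approximation, we get $v_k \in \mathcal{D}(B)$ with $\|v_k - w_k\| \to 0$ and $(B-\lambda)v_k \to 0$. Since strong convergence $v_k - w_k \to 0$ implies weak convergence $v_k - w_k \rightharpoonup 0$, and $w_k \rightharpoonup 0$, we get $v_k \rightharpoonup 0$; after normalization (which multiplies by factors tending to $1$) weak convergence to $0$ is preserved. This yields a sequence in $\mathcal{D}(B)$ of the required form, proving the ``only if'' direction of (ii).

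There is no real obstacle here — the content is entirely bookkeeping with the definition of $\overline{B}$. The one point demanding a little care is the diagonal extraction: we approximate each $w_k$ by a single $v_k \in \mathcal{D}(B)$ to within $1/k$ in graph norm, rather than forming a doubly-indexed array, so that a single pass produces the sequence and no Cantor diagonal argument is needed. One should also note at the outset that $\lambda$ is fixed throughout, and that replacing $v_k$ by $v_k / \|v_k\|$ is legitimate for all large $k$ and affects none of the limits, since $\|v_k\| \to 1$; this lets us drop the finitely many bad indices and obtain genuinely unit-norm sequences.
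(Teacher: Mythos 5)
Your proof is correct, and it is precisely the elementary graph-norm approximation argument the paper has in mind (the paper states this lemma without proof, calling it elementary). The one step needing care — normalizing $v_{k}$ and checking that this perturbs neither the strong convergence of $(B-\lambda)v_{k}$ nor the weak convergence $v_{k} \rightharpoonup 0$ — is handled correctly via $\|v_{k}\| \rightarrow 1$.
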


For an operator $B \colon \mathcal{D}(B) \subset \mathcal{H} \to \mathcal{H}$ and $v \in \mathcal{D}(B) \smallsetminus \{0\}$, the \textit{Rayleigh quotient} of $v$ with respect to $B$ is defined as
\[
\mathcal{R}_{B}(v) := \frac{\langle Bv,v \rangle}{\| v \|^{2}}.
\]
If $B$ is symmetric, then $\mathcal{R}_{B}(v) \in \mathbb{R}$, for any $v \in \mathcal{D}(B) \smallsetminus \{0\}$, and $B$ is \textit{bounded from below} if the infimum of $\mathcal{R}_{B}(v)$, with $v \in \mathcal{D}(B) \smallsetminus \{0\}$, is finite. In this case, this infimum is called the \textit{lower bound} of $B$.

The spectrum of a self-adjoint operator $A$ is contained in $\mathbb{R}$ and the \textit{bottom} (that is, the minimum) of the spectrum and the bottom of the essential spectrum of $A$ are denoted by $\lambda_{0}(A)$ and $\lambda_{0}^{\ess}(A)$, respectively. The following characterization of the bottom of the spectrum is due to Rayleigh.

\begin{proposition}\label{Rayleigh}
If $A \colon \mathcal{D}(A) \subset \mathcal{H} \to \mathcal{H}$ is self-adjoint, then
\[
\lambda_{0}(A) = \inf_{v \in \mathcal{D}(A) \smallsetminus \{0\}} \mathcal{R}_{A}(v).
\]
If, in addition, $A$ is the closure of an operator $B \colon \mathcal{D}(B) \subset \mathcal{H} \to \mathcal{H}$, then the bottom of the spectrum of $A$ is given by
\[
\lambda_{0}(A) = \inf_{v \in \mathcal{D}(B) \smallsetminus \{0\}} \mathcal{R}_{B}(v).
\]
\end{proposition}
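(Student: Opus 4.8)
\emph{Proof proposal.} The plan is to establish the two displayed identities in turn, in each case reducing the nontrivial inequality to Proposition~\ref{Spectrum of Self-adj} (respectively Lemma~\ref{Spectrum of Closure}) together with purely formal manipulations of the definitions.

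Consider first the self-adjoint operator $A$ and set $\mu := \inf_{v \in \mathcal{D}(A) \smallsetminus \{0\}} \mathcal{R}_{A}(v)$. To see $\mu \leq \lambda_{0}(A)$, note that $\lambda_{0}(A) \in \sigma(A) = \sigma_{\ap}(A)$ by Proposition~\ref{Spectrum of Self-adj}, so there is a sequence $(v_{k}) \subset \mathcal{D}(A)$ with $\|v_{k}\| = 1$ and $(A - \lambda_{0}(A))v_{k} \to 0$; then $\mathcal{R}_{A}(v_{k}) = \lambda_{0}(A) + \langle (A - \lambda_{0}(A))v_{k}, v_{k}\rangle \to \lambda_{0}(A)$ by Cauchy--Schwarz, whence $\mu \leq \lambda_{0}(A)$.

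The reverse inequality $\mu \geq \lambda_{0}(A)$ is the one genuine point: it is the assertion that a self-adjoint operator whose spectrum lies in $[\lambda_{0}(A), \infty)$ satisfies $\langle Av, v\rangle \geq \lambda_{0}(A)\|v\|^{2}$ for every $v \in \mathcal{D}(A)$, i.e.\ that for a self-adjoint operator ``spectrum bounded below'' implies ``numerical range bounded below''. I would deduce this from the spectral theorem, writing $\langle Av, v\rangle = \int_{\sigma(A)} t\, d\langle E_{t}v, v\rangle \geq \lambda_{0}(A)\|v\|^{2}$, or argue elementarily: replacing $A$ by $A - \lambda_{0}(A)$ we may assume $\sigma(A) \subset [0,\infty)$; for $t > 0$ the resolvent $R_{t} := (A+t)^{-1}$ is bounded and self-adjoint with $\sigma(R_{t}) \subset [0,m]$, $m := \|R_{t}\|$, so $\|mI - R_{t}\| \leq m$ forces $\langle R_{t}u, u\rangle \geq 0$ for all $u$; taking $u := (A+t)v$ with $v \in \mathcal{D}(A)$ gives $\langle (A+t)v, v\rangle \geq 0$, and letting $t \to 0^{+}$ yields $\langle Av, v\rangle \geq 0$. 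This is the only step that is not a formal manipulation of the definitions, and it is where I expect the main (mild) obstacle to lie.

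Now assume in addition $A = \overline{B}$. Since $A$ extends $B$, we have $\mathcal{R}_{B}(v) = \mathcal{R}_{A}(v)$ for $v \in \mathcal{D}(B) \smallsetminus \{0\}$ and $\mathcal{D}(B) \subset \mathcal{D}(A)$, so $\inf_{v \in \mathcal{D}(B)\smallsetminus\{0\}} \mathcal{R}_{B}(v) \geq \inf_{v \in \mathcal{D}(A)\smallsetminus\{0\}} \mathcal{R}_{A}(v) = \lambda_{0}(A)$ by the first part. For the opposite inequality, apply Lemma~\ref{Spectrum of Closure}(i) to $\lambda_{0}(A) \in \sigma(A) = \sigma_{\ap}(A)$: there is $(v_{k}) \subset \mathcal{D}(B)$ with $\|v_{k}\| = 1$ and $(B - \lambda_{0}(A))v_{k} \to 0$, hence $\mathcal{R}_{B}(v_{k}) = \lambda_{0}(A) + \langle (B-\lambda_{0}(A))v_{k}, v_{k}\rangle \to \lambda_{0}(A)$ and $\inf_{v \in \mathcal{D}(B)\smallsetminus\{0\}} \mathcal{R}_{B}(v) \leq \lambda_{0}(A)$. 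Together these give the second identity. (Throughout, $\lambda_{0}(A)$ is assumed finite, i.e.\ $A$ bounded below; if $\sigma(A)$ is unbounded below both sides equal $-\infty$ by the same argument run along a sequence of spectral values tending to $-\infty$.)
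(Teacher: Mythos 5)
Your proof is correct. Note that the paper itself gives no proof of Proposition \ref{Rayleigh}: it is stated in the preliminaries as a standard fact attributed to Rayleigh, so there is nothing to compare against. Your argument — reducing both identities to the approximate-eigenvalue characterizations in Proposition \ref{Spectrum of Self-adj} and Lemma \ref{Spectrum of Closure}, and handling the one non-formal inequality $\langle Av,v\rangle \geq \lambda_{0}(A)\|v\|^{2}$ either by the spectral theorem or by your elementary resolvent argument — is the standard route and is complete, including the correct treatment of the case where $\sigma(A)$ is unbounded below.
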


Throughout the paper, manifolds are connected, with possibly empty, smooth and not necessarily connected boundary, unless otherwise stated.
Let $p \colon M_{2} \to M_{1}$ be a Riemannian covering of $m$-dimensional manifolds, $E_{1} \to M_{1}$ a Riemannian or Hermitian vector bundle of rank $\ell$ and $D_{1} \colon \Gamma (E_{1}) \to \Gamma(E_{1})$ a differential operator of order $d$. Consider the pullback bundle $E_{2} := p^{*}E_{1}$ on $M_{2}$, $y \in M_{2}$ and set $x := p(y)$. Let $U_{2}$ be an open neighborhood of $y$, such that the restriction $p|_{U_{2}}$ is an isometry onto its image $U_{1}$. The \emph{lift} $D_{2}  \colon \Gamma(E_{2}) \to \Gamma(E_{2})$ of $D_{1}$ is the differential operator defined by
\[
D_{2}\eta(z) := (p|_{U_{2}})^{*}(D_{1}((p|_{U_{2}}^{-1})^{*}\eta)(p(z))),
\]
for any $\eta \in \Gamma(E_{2})$ and $z \in U_{2}$.
Without loss of generality, we may assume that $U_{1}$ is contained in a coordinate neighborhood and there exists a trivialization $E_{1}|_{U_{1}} \to U_{1} \times \mathbb{F}^{\ell}$.
With respect to this coordinate system and trivialization, $D_{1}$ is expressed as
\begin{equation}\label{Local Expression}
D_{1} = \sum_{|\alpha| \leq d} A^{\alpha} \frac{\partial^{|\alpha|}}{\partial x^{\alpha}},
\end{equation}
where $A^{\alpha}$ are smooth maps defined on $U_{1}$, with values $\ell \times \ell$ matrices with entries in $\mathbb{F}$.
Then, with respect to the lifted coordinate system and the corresponding trivialization $E_{2}|_{U_{2}} \to U_{2} \times \mathbb{F}^{\ell}$, $D_{2}$ has the local expression
\[
D_{2} = \sum_{|\alpha| \leq d} (A^{\alpha} \circ p) \frac{\partial^{|\alpha|}}{\partial y^{\alpha}}.
\]
\begin{lemma}\label{closability}
Let $M$ be a Riemannian manifold, $E \to M$ a Riemannian or Hermitian vector bundle endowed with a connection $\nabla$ and $D \colon \Gamma(E) \to \Gamma(E)$ a differential operator. If $M$ has empty boundary, set $\mathcal{D}(D) := \Gamma_{c}(E)$. If $M$ has non-empty boundary, let $a, b$ be real or complex valued functions (depending on whether $E$ is Riemannian or Hermitian) defined on $\partial M$, let $n$ be the inward pointing normal to $\partial M$ and consider
\[
\mathcal{D}(D) := \{ \eta \in \Gamma_{c}(E) : a \nabla_{n} \eta + b \eta = 0 \text{ on } \partial M \}.
\]
Then the operator $D \colon \mathcal{D}(D) \subset L^{2}(E) \to L^{2}(E)$ is closable.
\end{lemma}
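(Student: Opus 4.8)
The plan is to use the elementary functional-analytic fact that a densely defined operator which admits a closed extension is closable, and to manufacture such a closed extension as the adjoint of a conveniently chosen densely defined operator. The natural candidate is the \emph{formal adjoint} $D^{\dagger}$ of $D$, but its domain has to be taken to be the sections supported in the interior $M \smallsetminus \partial M$: this kills the boundary terms in the integration by parts, and at the same time makes the whole argument indifferent to the particular boundary conditions defining $\mathcal{D}(D)$ and to whether the connection $\nabla$ is metric.

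Concretely, I would proceed as follows. First, recall that $D$ possesses a formal adjoint $D^{\dagger} \colon \Gamma(E) \to \Gamma(E)$, a differential operator of order at most $d$, uniquely characterized by
\[
\int_{M} \langle D\phi, \psi \rangle \, dV = \int_{M} \langle \phi, D^{\dagger}\psi \rangle \, dV, \qquad \phi, \psi \in \Gamma_{c}(E|_{M \smallsetminus \partial M});
\]
indeed, integrating by parts the local expression \eqref{Local Expression} of $D$ in a chart and trivialization produces the local formula for $D^{\dagger}$ (only the fibre metric and the Riemannian volume element enter, not any connection), and the integral identity above forces these local formulas to patch into a globally defined differential operator. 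Next, set $B := D^{\dagger}$ with domain $\mathcal{D}(B) := \Gamma_{c}(E|_{M \smallsetminus \partial M})$. Since $\partial M$ is a null set, $\mathcal{D}(B)$ is dense in $L^{2}(E)$, so $B$ is densely defined and its adjoint $B^{*}$ is a well-defined closed operator. The key step is then to check $D \subseteq B^{*}$: given $\eta \in \mathcal{D}(D)$ and $\psi \in \mathcal{D}(B)$, choose a smooth function $\chi$ with compact support in $M \smallsetminus \partial M$ and $\chi \equiv 1$ on a neighbourhood of $\supp \psi$; then $\chi\eta \in \Gamma_{c}(E|_{M \smallsetminus \partial M})$, and since $\chi \equiv 1$ near $\supp \psi \supset \supp(D^{\dagger}\psi)$ and both $D$ and $D^{\dagger}$ are local,
\[
\langle D\eta, \psi \rangle = \langle D(\chi\eta), \psi \rangle = \langle \chi\eta, D^{\dagger}\psi \rangle = \langle \eta, D^{\dagger}\psi \rangle = \langle \eta, B\psi \rangle,
\]
all inner products being those of $L^{2}(E)$. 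This says exactly (conjugating in the complex case) that $\eta \in \mathcal{D}(B^{*})$ with $B^{*}\eta = D\eta$; hence $D \subseteq B^{*}$, and since $B^{*}$ is closed, $D$ is closable, with $\overline{D} \subseteq B^{*}$.

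The only step that is not purely formal is the construction of the formal adjoint $D^{\dagger}$ as a genuine differential operator on all of $M$ — in particular the consistency of the local formulas, which is obtained via a partition of unity and is classical — so I do not anticipate a real obstacle here. Everything else is bookkeeping: the cutoff $\chi$ allows $\eta$ to violate no constraint on $\partial M$ without affecting the pairing, and because the test sections $\psi$ are supported in the interior, neither the boundary conditions $a\nabla_{n}\eta + b\eta = 0$ nor the choice of connection $\nabla$ ever enter the argument.
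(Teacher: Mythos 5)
Your proposal is correct and follows essentially the same route as the paper: both define the formal adjoint $D^{\ad}$ on sections compactly supported in the interior of $M$, observe that it is densely defined so that its adjoint is closed, and deduce $D \subset (D^{\ad})^{*}$, hence closability. Your cutoff argument merely fills in the detail, left implicit in the paper, of why the pairing identity holds for $\eta \in \mathcal{D}(D)$ not supported away from $\partial M$.
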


\begin{proof}
Consider the formal adjoint $D^{\ad}$ of $D$, defined by
\[
\langle D \eta , \theta \rangle = \langle \eta , D^{\ad} \theta \rangle,
\]
for all $\eta \in \mathcal{D}(D)$ and $\theta \in \Gamma_{cc}(E)$, where $\Gamma_{cc}(E)$ is the space of smooth sections, compactly supported in the interior of $M$. It is clear that the operator $D^{\ad} \colon \Gamma_{cc}(E) \subset L^{2}(E) \to L^{2}(E)$ is densely defined and its adjoint satisfies $D \subset (D^{\ad})^{*}$. Since the adjoint is closed, it follows that $D$ is closable. \qed
\end{proof}
\medskip


A \textit{Schr\"{o}dinger operator} on a possibly non-connected Riemannian manifold $M$ is an operator of the form $S := \Delta + V$, where $\Delta$ is the Laplacian and $V \colon M \to \mathbb{R}$ is smooth and bounded from below. 
If $M$ is complete and without boundary, then $S$ is essentially self-adjoint on $C^{\infty}_{c}(M)$, that is, the closure of $S \colon C^{\infty}_{c}(M) \subset L^{2}(M) \to L^{2}(M)$ is self-adjoint. If $M$ is complete with non-empty boundary, then $S$ is essentially self-adjoint on $\{ f \in C^{\infty}_{c}(M) : f = 0 \text{ on } \partial M \}$. If $M$ is non-complete, then $S$ restricted to the above domain, does not have a unique self-adjoint extension, and we are interested in the Friedrichs extension of $S$. By abuse of notation, the spectrum and the essential spectrum of the above described self-adjoint operator are denoted by $\sigma(S)$ and $\sigma_{\ess}(S)$, respectively, and their bottoms by $\lambda_{0}(S)$ and $\lambda_{0}^{\ess}(S)$, respectively. These sets and quantities for the Laplacian on $M$ are denoted by $\sigma(M)$, $\sigma_{\ess}(M)$ and $\lambda_{0}(M)$, $\lambda_{0}^{\ess}(M)$, respectively.

Let $p \colon M_{2} \to M_{1}$ be a Riemannian covering of complete manifolds without boundary. For $x \in M_{1}$ and $y \in p^{-1}(x)$, the \emph{fundamental domain} of $p$ centered at $y$ is defined by
\[
D_{y} := \{ z \in M_{2} : d(z,y) \leq d(z,y^{\prime}) \text{ for all } y^{\prime} \in p^{-1}(x) \}.
\]
Some basic properties of these fundamental domains are presented in \cite{BMP}. It is clear that $D_{y}$ is closed and $M_{2}$ is the union of $D_{y}$, with $y \in p^{-1}(x)$. Moreover, $\partial D_{y}$ and the cut locus $\Cut (x)$ of $x$ are of measure zero and $p \colon D_{y} \smallsetminus \partial D_{y} \to M_{1} \smallsetminus C_{0}$ is an isometry, where $C_{0}$ is a subset of $\Cut (x)$.
The following two lemmas are proved in \cite{BMP}. The lemma after these is proved similarly to Lemma \ref{Intersection Lemma}. In these lemmas and in the sequel, we denote open and closed balls by $B$ and $C$, respectively.
\begin{lemma}\label{preimage of compact}
If $K \subset B(x,r)$, then $p^{-1}(K) \cap D_{y} \subset B(y,r)$. In particular, if $K$ is compact, then $p^{-1}(K) \cap D_{y}$ is compact.
\end{lemma}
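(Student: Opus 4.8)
The plan is to combine the path-lifting property of the covering with the defining inequality of the fundamental domain. First I would fix an arbitrary point $z \in p^{-1}(K) \cap D_{y}$ and put $x' := p(z)$, so that $x' \in K \subset B(x,r)$ and hence $d(x',x) < r$. By the definition of the Riemannian distance there is a piecewise smooth curve $\gamma$ in $M_{1}$ from $x'$ to $x$ with $\mathrm{length}(\gamma) < r$ (one may even take a minimizing geodesic, $M_{1}$ being complete, but this is not needed).

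Next, using that $p$ is a covering map, I would lift $\gamma$ to a curve $\tilde{\gamma}$ in $M_{2}$ with $\tilde{\gamma}(0) = z$; its other endpoint $y'$ lies in $p^{-1}(x)$, and since $p$ is a local isometry, $\mathrm{length}(\tilde{\gamma}) = \mathrm{length}(\gamma) < r$, whence $d(z,y') < r$. Now comes the one genuinely lemma-specific step: since $z \in D_{y}$, the defining inequality of the fundamental domain gives $d(z,y) \le d(z,y') < r$, that is, $z \in B(y,r)$. As $z$ was arbitrary, this proves $p^{-1}(K) \cap D_{y} \subset B(y,r)$.

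For the last assertion, if $K$ is compact then it is bounded, so $K \subset B(x,r)$ for some $r > 0$, and the first part yields $p^{-1}(K) \cap D_{y} \subset B(y,r) \subset C(y,r)$. The set $p^{-1}(K)$ is closed, being the preimage of the closed set $K$ under the continuous map $p$, and $D_{y}$ is closed by construction, so $p^{-1}(K) \cap D_{y}$ is closed; since $M_{2}$ is complete (a Riemannian covering of a complete manifold is complete), Hopf--Rinow makes $C(y,r)$ compact, and a closed subset of a compact set is compact.

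I do not expect a real obstacle here: the only point requiring a little care is that the lift $\tilde{\gamma}$ is defined on the entire parameter interval of $\gamma$, which is precisely the unrestricted path-lifting property of covering maps, and that lengths are preserved along the lift, which is immediate from $p$ being a local isometry.
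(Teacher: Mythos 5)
Your proof is correct, and it is essentially the standard argument: the paper itself does not prove this lemma but defers to \cite{BMP}, where the proof is exactly this combination of path-lifting along the covering (a local isometry preserves lengths, so $d(z,y')<r$ for the lifted endpoint $y'\in p^{-1}(x)$) with the defining inequality $d(z,y)\le d(z,y')$ of the fundamental domain. Your compactness argument (closedness of $p^{-1}(K)\cap D_{y}$ plus Hopf--Rinow for the complete lifted metric) is also the intended one.
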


\begin{lemma}\label{Intersection Lemma}
For any $r >0$, there exists $N(r) \in \mathbb{N}$, such that any $z \in M_{2}$ is contained in at most $N(r)$ of the balls $C(y,r)$, with $y \in p^{-1}(x)$.
\end{lemma}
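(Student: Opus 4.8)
The plan is to reduce the statement to a counting problem for the fibre $p^{-1}(x)$, and then to bound that count by the number of homotopy classes of short loops in $M_1$. Since $z\in C(y,r)$ is equivalent to $y\in C(z,r)$, the number of balls $C(y,r)$, $y\in p^{-1}(x)$, containing a given $z$ equals $\#\big(p^{-1}(x)\cap C(z,r)\big)$; and if this set is non-empty, fixing $y_0$ in it, the triangle inequality puts every other element in $C(y_0,2r)$. Hence it suffices to bound $\#\big(p^{-1}(x)\cap C(y_0,\rho)\big)$ uniformly in $y_0\in p^{-1}(x)$, for each $\rho>0$; the bound for $\rho=2r$ is then the desired $N(r)$.

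To pass from the fibre to loops, note that $M_2$ is complete ($M_1$ is, and geodesics lift), so for each $y\in p^{-1}(x)\cap C(y_0,\rho)$ there is a minimizing geodesic $\gamma_y$ from $y_0$ to $y$ in $M_2$, whence $\bar\gamma_y:=p\circ\gamma_y$ is a loop at $x$ of length $d(y_0,y)\le\rho$. If $\bar\gamma_y$ and $\bar\gamma_{y'}$ were homotopic rel endpoints, their lifts starting at $y_0$ --- which are $\gamma_y$ and $\gamma_{y'}$ --- would end at the same point, forcing $y=y'$. So $y\mapsto[\bar\gamma_y]$ injects $p^{-1}(x)\cap C(y_0,\rho)$ into the set of homotopy classes of loops at $x$ admitting a representative of length $\le\rho$, and it remains to bound the size of that set.

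The main point is that this set is finite, with a bound independent of $y_0$ (indeed of the covering). A loop at $x$ of length $\le\rho$ stays in $\overline{B}(x,\rho/2)$, which is compact by the Hopf--Rinow theorem; cover this set by finitely many geodesically convex --- hence contractible --- balls, with positive Lebesgue number $\delta$. Any loop of length $\le\rho$ then decomposes into at most $\lceil\rho/\delta\rceil$ arcs, each lying in one of these balls, and its homotopy class rel $x$ depends only on the finite combinatorial record of which balls the successive arcs occupy; hence there are only finitely many such classes, and taking $N(r)$ to be this count for $\rho=2r$ finishes the proof. (Alternatively, when $M_1$ has a lower Ricci bound one can argue by volume: an evenly covered ball $B(x,\epsilon)$ has pairwise disjoint lifts of common volume $v>0$ and diameter $\le2\epsilon$, so $v\cdot\#\big(p^{-1}(x)\cap C(w,\rho)\big)\le\vol\big(C(w,\rho+2\epsilon)\big)$, which Bishop--Gromov bounds uniformly in $w$; but the loop-counting argument needs no geometric hypothesis beyond completeness.)
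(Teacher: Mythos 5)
Your proof is correct. The paper does not prove this lemma itself but defers to \cite{BMP}, where the argument is the same reduction you make: the points of $p^{-1}(x)\cap C(z,r)$ all lie in $C(y_0,2r)$ for any one of them $y_0$, and they inject into the set of deck transformations $g$ of the universal covering with $d(u,gu)\le 2r$ (equivalently, your homotopy classes of loops at $x$ of length $\le 2r$), a set that is finite by proper discontinuity. Your Lebesgue-number count of short loop classes is just a self-contained proof of that finiteness, valid under completeness alone, so the two routes are essentially identical.
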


\begin{lemma}\label{Cardinality Estimate Lemma}
Consider the universal coverings $p_{i} \colon \tilde{M} \to M_{i}$, $i=1,2$.
For any $r$, $r_{0} > 0$, there exists $\tilde{N}(r,r_{0}) \in \mathbb{N}$, such that $$\#\{ w \in p_{2}^{-1}(z) : B(w,r_{0}) \cap C(u,r) \neq \emptyset \} \leq \tilde{N}(r,r_{0}),$$ for all $u \in p_{1}^{-1}(x)$ and $z \in M_{2}$.
\end{lemma}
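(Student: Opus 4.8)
The plan is to reduce the statement for universal coverings to the already-established Lemma \ref{Intersection Lemma} applied to the intermediate covering $\tilde{M} \to M_{2}$. Recall that $p \colon M_{2} \to M_{1}$ together with the universal coverings $p_{i} \colon \tilde{M} \to M_{i}$ fit into a commutative diagram $p \circ p_{2} = p_{1}$ (after suitably identifying the deck groups); in particular $p_{2} \colon \tilde{M} \to M_{2}$ is itself a Riemannian covering, and for a fixed $z \in M_{2}$ the fiber $p_{2}^{-1}(z)$ is precisely the orbit of any chosen $w_{0} \in p_{2}^{-1}(z)$ under the deck group $\pi_{1}(M_{2})$ acting on $\tilde{M}$.

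First I would apply Lemma \ref{Intersection Lemma} to the covering $p_{2} \colon \tilde{M} \to M_{2}$: for the radius $r_{1} := r + r_{0}$ there is $N_{1} := N(r_{1}) \in \mathbb{N}$ such that every point of $\tilde{M}$ lies in at most $N_{1}$ of the closed balls $C(w, r_{1})$ with $w \in p_{2}^{-1}(z)$. Next I would observe the elementary containment: if $B(w, r_{0}) \cap C(u, r) \neq \emptyset$ for some $u \in p_{1}^{-1}(x)$, then picking a point $q$ in this intersection we get $d(u, w) \leq d(u, q) + d(q, w) < r + r_{0} = r_{1}$, so $u \in C(w, r_{1})$; equivalently $w \in C(u', r_{1})$ is not quite what we want — rather, the relevant statement is that $u$ lies in $C(w, r_{1})$. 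To turn this into a bound on the number of such $w$, I would instead run Lemma \ref{Intersection Lemma} in the form: each $u \in p_{1}^{-1}(x)$ is contained in at most $N_{1}$ of the balls $C(w, r_{1})$, $w \in p_{2}^{-1}(z)$. Hence for a fixed $u$, the set of $w \in p_{2}^{-1}(z)$ with $B(w, r_{0}) \cap C(u, r) \neq \emptyset$ is contained in $\{ w : u \in C(w, r_{1}) \}$, which has at most $N_{1}$ elements. Since this bound $N_{1}$ is independent of the choice of $u \in p_{1}^{-1}(x)$ and of $z \in M_{2}$, setting $\tilde{N}(r, r_{0}) := N_{1} = N(r + r_{0})$ completes the argument.

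The one subtlety — and the place where I would be most careful — is the uniformity over $z \in M_{2}$ and the legitimacy of applying Lemma \ref{Intersection Lemma} to $p_{2}$: that lemma is stated for a Riemannian covering of complete manifolds without boundary with a \emph{fixed} basepoint fiber, and here the fiber $p_{2}^{-1}(z)$ varies with $z$. However, inspecting the proof of Lemma \ref{Intersection Lemma} in \cite{BMP} (or arguing directly via a packing/volume-comparison estimate), the constant $N(r_{1})$ depends only on $r_{1}$ and on geometric data of $M_{2}$ that is invariant under deck transformations — indeed one may transport any configuration over $z$ to a configuration over a fixed basepoint by a deck transformation of $M_{2}$, which is an isometry — so the bound is genuinely uniform in $z$. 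Modulo this observation, which I would state as the analogue of the remark preceding the lemma ("proved similarly to Lemma \ref{Intersection Lemma}"), the proof is the short computation above.
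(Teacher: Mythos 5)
Your reduction to Lemma \ref{Intersection Lemma} applied to the covering $p_{2} \colon \tilde{M} \to M_{2}$ has a genuine gap at exactly the point you flag: the constant $N(r_{1})$ produced by that lemma is attached to a \emph{fixed} basepoint fiber, and for the covering $p_{2}$ with basepoint $z$ it is controlled by $\#\{\sigma \in \Gamma_{2} : d(w_{0},\sigma w_{0}) \leq 2r_{1}\}$ for $w_{0} \in p_{2}^{-1}(z)$. This quantity is \emph{not} uniformly bounded over $z \in M_{2}$: take $\tilde{M} = \mathbb{H}^{2}$ and $\Gamma_{2}$ containing a parabolic element $\sigma$; as $w_{0}$ approaches the fixed point at infinity, $d(w_{0},\sigma^{k}w_{0}) \to 0$ for every $k$, so the count blows up. Your proposed repair does not work either: a deck transformation of $p_{2}$ maps each fiber $p_{2}^{-1}(z)$ to \emph{itself}, so it cannot transport a configuration over $z$ to one over a fixed basepoint, and an element $\gamma \in \Gamma_{1}$ maps $p_{2}^{-1}(z) = \Gamma_{2}w_{0}$ to $\gamma\Gamma_{2}\gamma^{-1}(\gamma w_{0})$, which is not a fiber of $p_{2}$ unless $\gamma$ normalizes $\Gamma_{2}$. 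A telling symptom is that your argument never uses the hypothesis $u \in p_{1}^{-1}(x)$; it would prove the statement for arbitrary $u \in \tilde{M}$, which is false by the example above.

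The uniformity must come from measuring displacements at $u$, which lies in the fixed $\Gamma_{1}$-orbit $p_{1}^{-1}(x)$, rather than at a point of the varying fiber $p_{2}^{-1}(z)$. Concretely, and this is what ``proved similarly to Lemma \ref{Intersection Lemma}'' amounts to: your first step correctly shows that every $w$ in the set satisfies $d(w,u) < r + r_{0}$. Now if $w_{1},w_{2} \in p_{2}^{-1}(z)$ both satisfy $d(w_{i},u) < r + r_{0}$, then $w_{2} = \sigma w_{1}$ for a unique $\sigma \in \Gamma_{2}$, and since $\sigma$ is an isometry,
\[
d(u,\sigma u) \leq d(u,\sigma w_{1}) + d(\sigma w_{1},\sigma u) = d(u,w_{2}) + d(w_{1},u) < 2(r+r_{0}).
\]
Hence the set in question has cardinality at most $\#\{\gamma \in \Gamma_{1} : d(u,\gamma u) < 2(r+r_{0})\}$, which is finite by proper discontinuity of the $\Gamma_{1}$-action and independent of the choice of $u \in p_{1}^{-1}(x)$ because $d(\gamma' u, \gamma\gamma' u) = d(u,(\gamma')^{-1}\gamma\gamma' u)$. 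Replacing your appeal to Lemma \ref{Intersection Lemma} for $p_{2}$ by this displacement count closes the gap.
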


Finally, we recall the notions of amenable right action and amenable covering. For more details on amenable left actions, which are completely analogous to right actions, see \cite[Section 2]{MR3104995}. A right action of a countable group $\Gamma$ on a countable set $X$ is called \emph{amenable} if there exists a $\Gamma$-invariant mean on $L^{\infty}(X)$. The following characterization is due to F\o{}lner.

\begin{proposition}\label{Folner}
The right action of a countable group $\Gamma$ on a non-empty, countable set $X$ is amenable if and only if for any finite $G \subset \Gamma$ and $\varepsilon > 0$, there exists a non-empty, finite $F \subset X$, such that
\[
\#(F \smallsetminus Fg) < \varepsilon \#(F),
\]
for all $g \in G$. Such a set $F$ is called a \emph{F\o{}lner set} for $G$ and $\varepsilon$.
\end{proposition}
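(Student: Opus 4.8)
The plan is to establish both implications of this classical criterion by the Day--Namioka argument, adapted to a right action. Since $X$ is countable (with counting measure) I identify $L^{\infty}(X)$ with $\ell^{\infty}(X)$, so that $\ell^{1}(X) \subset L^{\infty}(X)^{\ast}$ and a \emph{mean} is a positive, unital element of $L^{\infty}(X)^{\ast}$. The $\Gamma$-action on $L^{\infty}(X)$ is $(g \cdot f)(x) = f(xg)$, and a mean $\mu$ is $\Gamma$-invariant precisely when $\langle \mu, g \cdot f \rangle = \langle \mu, f \rangle$ for all $g \in \Gamma$ and $f \in L^{\infty}(X)$. For $\phi \in \ell^{1}(X)$ let $\phi g$ denote the pushforward of $\phi$ under $x \mapsto xg$; a short computation gives $\langle \phi, g \cdot f \rangle = \langle \phi g, f \rangle$, and when $\phi = \tfrac{1}{\#F}\mathbf{1}_{F}$ one has $\phi g = \tfrac{1}{\#F}\mathbf{1}_{Fg}$, so almost-invariance of such a $\phi$ is exactly control of $\#(Fg \triangle F)$, the quantity appearing in the F\o{}lner condition.

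For ``F\o{}lner $\Rightarrow$ amenable'': exhaust $\Gamma = \bigcup_{n} G_{n}$ by an increasing sequence of finite sets, fix $\varepsilon_{n} \to 0$, and let $F_{n}$ be a F\o{}lner set for $G_{n}$ and $\varepsilon_{n}$. Put $\phi_{n} := \tfrac{1}{\#F_{n}}\mathbf{1}_{F_{n}}$, a mean. Because right translation by $g$ is a bijection of $X$, we have $\#(F_{n} \triangle F_{n}g) = 2\,\#(F_{n} \smallsetminus F_{n}g) < 2\varepsilon_{n}\,\#(F_{n})$, hence $\|\phi_{n} - \phi_{n}g\|_{1} < 2\varepsilon_{n}$ and $|\langle \phi_{n}, g \cdot f \rangle - \langle \phi_{n}, f \rangle| \le 2\varepsilon_{n}\|f\|_{\infty}$ for all $g \in G_{n}$. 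By Banach--Alaoglu the means form a weak-$\ast$ compact set; let $\mu$ be a weak-$\ast$ cluster point of $(\phi_{n})$. It is again a mean (positivity and $\langle\,\cdot\,,1\rangle = 1$ are weak-$\ast$ closed conditions), and since the indices along the approximating subnet tend to infinity, the last inequality passes to the limit to give $\langle \mu, g \cdot f \rangle = \langle \mu, f \rangle$ for every $g$ and $f$; thus $\mu$ is a $\Gamma$-invariant mean.

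For the converse ``amenable $\Rightarrow$ F\o{}lner'', fix a finite $G \subset \Gamma$ and $\varepsilon > 0$, and let $\mu$ be a $\Gamma$-invariant mean. I would first recall, via a Hahn--Banach separation argument in $L^{\infty}(X)^{\ast}$ with its weak-$\ast$ topology, that the convex set $P(X)$ of finitely supported probability densities is weak-$\ast$ dense in the set of all means; choose a net $(\phi_{\alpha}) \subset P(X)$ with $\phi_{\alpha} \to \mu$ weak-$\ast$, so that $\phi_{\alpha}g - \phi_{\alpha} \to g \cdot \mu - \mu = 0$ weak-$\ast$ for each $g \in G$, which on $\ell^{1}(X)$ is just the weak topology (as $X$ is countable). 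Consider $T \colon \ell^{1}(X) \to \bigoplus_{g \in G} \ell^{1}(X)$, $T\phi = (\phi g - \phi)_{g \in G}$, with the target normed by the sum of the $\ell^{1}$-norms; then $0$ is in the weak closure of the convex set $T(P(X))$, hence, by Mazur's theorem, in its norm closure, so there is $\phi \in P(X)$ with $\sum_{g \in G}\|\phi g - \phi\|_{1} < \varepsilon$. Finally, write $\phi = \int_{0}^{\infty}\mathbf{1}_{\{\phi > t\}}\,dt$ pointwise; since $\{\phi g > t\} = \{\phi > t\}g$, Fubini gives $\|\phi g - \phi\|_{1} = \int_{0}^{\infty}\#(\{\phi > t\}g \triangle \{\phi > t\})\,dt$ and $1 = \|\phi\|_{1} = \int_{0}^{\infty}\#\{\phi > t\}\,dt$. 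Summing over $g \in G$ and comparing integrands, there is a level $t_{0} > 0$ with $F := \{\phi > t_{0}\}$ non-empty, finite, and $\sum_{g \in G}\#(Fg \triangle F) < \varepsilon\,\#(F)$; in particular $\#(F \smallsetminus Fg) \le \#(F \triangle Fg) < \varepsilon\,\#(F)$ for every $g \in G$, which is the assertion.

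The only genuinely non-routine point is the convexity step in the converse: $\Gamma$-invariance of $\mu$ gives only that $0$ is \emph{weakly} approximable by the vectors $T\phi$, and it is Mazur's theorem, applied to the convex set $T(P(X))$, that upgrades this to the \emph{norm} approximation from which an almost-invariant density — and hence a F\o{}lner set, via the layer-cake step — can actually be extracted. Banach--Alaoglu, the weak-$\ast$ density of $P(X)$, and the layer-cake computation are all standard.
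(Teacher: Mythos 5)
Your proof is correct. Note that the paper does not actually prove this proposition: it is stated as F\o{}lner's classical characterization, with a pointer to B\'erard--Castillon for background on amenable actions, so there is no in-paper argument to compare against. What you supply is the standard Day--Namioka proof, and all the steps check out: the identity $\#(F \triangle Fg) = 2\,\#(F \smallsetminus Fg)$ (valid because right translation by $g$ is a bijection of $X$, so $\#(Fg)=\#(F)$), the Banach--Alaoglu cluster-point argument for the easy direction, and for the converse the weak-$\ast$ density of finitely supported probability densities in the means, the identification of weak-$\ast$ convergence on $\ell^{1}(X)$ with weak convergence, Mazur's theorem applied to the convex set $T(P(X))$ in the finite direct sum, and the layer-cake extraction of a level set $F=\{\phi>t_{0}\}$ with $\sum_{g\in G}\#(Fg\triangle F)<\varepsilon\,\#(F)$. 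You correctly flag the one genuinely non-routine point (the weak-to-norm upgrade via convexity); everything else is standard and is invoked accurately. The only cosmetic remark is that the averaging argument at the very end deserves the one-line justification that the pointwise comparison of integrands must fail somewhere on $\{t:\#\{\phi>t\}>0\}$, which you implicitly use to get $F$ non-empty; this is immediate from the strict inequality of the integrals.
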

A countable group $\Gamma$ is called \emph{amenable} if the right action of $\Gamma$ on itself is amenable. In this case, the right action of $\Gamma$ on any countable set $X$ is amenable. Moreover, it is clear that any right action on a non-empty, finite set is amenable.

A Riemannian covering $p \colon M_2 \to M_1$ is called \textit{amenable} if the right action of $\pi_{1}(M_1)$ on $\pi_{1}(M_{2}) \backslash \pi_{1}(M_{1})$ (that is, the set of right cosets of $\pi_{1}(M_{2})$ in $\pi_{1}(M_{1})$, when considered as deck transformations groups of the universal coverings) is amenable. Clearly, a normal covering is amenable if and only if its deck transformations group is amenable.
Furthermore, finite sheeted coverings are amenable.

The following criteria for amenability of groups are immediate consequences of the definition and Proposition \ref{Folner}.

\begin{corollary}\label{subexp growth}
Any finitely generated group of subexponential growth is amenable.
\end{corollary}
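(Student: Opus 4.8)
The plan is to exhibit F\o{}lner sets explicitly, taking them to be metric balls in $\Gamma$; once this is set up, the verification reduces to unwinding the definition of subexponential growth and invoking Proposition \ref{Folner}.

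First I would fix a finite symmetric generating set $S$ of $\Gamma$ and write $B_{n}$ for the ball of radius $n$ in the associated word metric, so that $B_{0} = \{e\}$ and $\#(B_{m+n}) \leq \#(B_{m})\#(B_{n})$. Subexponential growth means $\lim_{n} \#(B_{n})^{1/n} = 1$ (a property independent of the choice of $S$, and the limit exists by Fekete's subadditive lemma applied to $\log \#(B_{n})$). Given a finite $G \subset \Gamma$ and $\varepsilon > 0$, I would set $\ell := \max_{g \in G} |g|_{S}$, the maximal word length of an element of $G$ (with $\ell = 1$ in the trivial case $G \subseteq \{e\}$).

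The key step is the combinatorial estimate
$$ \#(B_{n} \smallsetminus B_{n}g) \leq \#(B_{n+\ell}) - \#(B_{n}) \qquad \text{for all } n \in \mathbb{N} \text{ and all } g \in G. $$
This holds because if $x \in B_{n} \smallsetminus B_{n}g$ then $|x|_{S} \leq n$ while $|xg^{-1}|_{S} \geq n+1$ (as $x \notin B_{n}g$ means $xg^{-1} \notin B_{n}$), and at the same time $|xg^{-1}|_{S} \leq |x|_{S} + |g^{-1}|_{S} \leq n+\ell$ since $S$ is symmetric; hence right multiplication by $g^{-1}$, which is a bijection of $\Gamma$, maps $B_{n} \smallsetminus B_{n}g$ injectively into the annulus $B_{n+\ell} \smallsetminus B_{n}$. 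It then suffices to produce a single $n$ with $\#(B_{n+\ell}) - \#(B_{n}) < \varepsilon \#(B_{n})$, for then $F := B_{n}$ is a F\o{}lner set for $G$ and $\varepsilon$, and Proposition \ref{Folner} gives amenability of $\Gamma$. Such an $n$ must exist: otherwise $\#(B_{n+\ell}) \geq (1+\varepsilon)\#(B_{n})$ for every $n$, so by induction $\#(B_{k\ell}) \geq (1+\varepsilon)^{k}$, whence $\limsup_{n} \#(B_{n})^{1/n} \geq (1+\varepsilon)^{1/\ell} > 1$, contradicting subexponential growth.

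The only mildly delicate point is the bookkeeping in the displayed estimate, in particular accommodating an arbitrary finite subset $G \subset \Gamma$ rather than only generators — this is exactly where the word-length bound $\ell$ enters and why right translation moves a point by at most $\ell$ in the metric. Beyond that there is no genuine obstacle; everything else is a direct application of the F\o{}lner criterion.
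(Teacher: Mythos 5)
Your proof is correct, and it follows exactly the route the paper has in mind: the paper offers no written proof, merely asserting that the corollary is an immediate consequence of Proposition \ref{Folner}, and your argument supplies the standard details of that deduction by exhibiting word-metric balls $B_{n}$ as F\o{}lner sets. Both the combinatorial estimate $\#(B_{n}\smallsetminus B_{n}g)\leq \#(B_{n+\ell})-\#(B_{n})$ and the growth argument producing a suitable $n$ are sound.
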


\begin{corollary}\label{finitely generated subgroups}
A countable group $\Gamma$ is amenable if and only if any finitely generated subgroup of $\Gamma$ is amenable
\end{corollary}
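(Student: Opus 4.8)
The plan is to prove the two implications separately, using the F\o{}lner characterization of Proposition \ref{Folner} for one direction and the invariant-mean definition of amenability for the other.

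For the implication ``every finitely generated subgroup of $\Gamma$ is amenable $\Rightarrow$ $\Gamma$ is amenable'', I would verify the F\o{}lner condition of Proposition \ref{Folner} for the right action of $\Gamma$ on itself. Fix a finite $G \subset \Gamma$ and $\varepsilon > 0$, and let $H := \langle G \rangle$ be the subgroup it generates, which is finitely generated and hence amenable by hypothesis. Applying Proposition \ref{Folner} to the right action of $H$ on itself, there is a non-empty finite $F \subset H$ with $\#(F \smallsetminus Fg) < \varepsilon \#(F)$ for all $g \in G$. Since $F$ and $Fg$ are both subsets of $H \subset \Gamma$, this inequality is unaffected when $F$ is regarded as a subset of $\Gamma$; thus $F$ is a F\o{}lner set for $G$ and $\varepsilon$ in $\Gamma$, and Proposition \ref{Folner} gives that $\Gamma$ is amenable. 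This direction is essentially immediate.

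For the converse it suffices to show that an arbitrary subgroup $H$ of an amenable group $\Gamma$ is amenable, and then apply this to the finitely generated subgroups. Let $\mu$ be a $\Gamma$-invariant mean on $L^{\infty}(\Gamma)$. Choose a transversal $R$ for the \emph{left} cosets, so that $\Gamma = \bigsqcup_{r \in R} rH$ and every $x \in \Gamma$ is written uniquely as $x = rh$ with $r \in R$ and $h \in H$. Define the extension operator $E \colon L^{\infty}(H) \to L^{\infty}(\Gamma)$ by $(Ef)(rh) := f(h)$. One checks at once that $E$ is linear, positive and unital, and that it intertwines the right $H$-actions: for $g \in H$ and $x = rh$ one has $xg = r(hg)$ with $hg \in H$, so $(Ef)(xg) = f(hg)$, i.e.\ the $g$-translate of $Ef$ equals $E$ applied to the $g$-translate of $f$. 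Setting $\nu := \mu \circ E$ then produces a mean on $L^{\infty}(H)$ which, because $\mu$ is invariant under every $g \in H \subset \Gamma$, is $H$-invariant; hence $H$ is amenable.

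The only delicate point is the bookkeeping in the converse direction: one must take the coset decomposition on the correct side, so that right multiplication by elements of $H$ preserves each coset $rH$ --- this is precisely what makes $E$ equivariant. Everything else (positivity and unitality of $E$, and the fact that precomposing an invariant mean with an equivariant unital positive map again yields an invariant mean) is formal, so I anticipate no genuine obstacle; the corollary is really a repackaging of the two standard facts that subgroups of amenable groups are amenable and that amenability is detected on finitely generated subgroups.
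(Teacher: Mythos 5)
Your proof is correct. The paper itself gives no argument for this corollary --- it is stated as an ``immediate consequence of the definition and Proposition \ref{Folner}'' --- so there is nothing to compare against line by line; your writeup supplies exactly the standard argument that the author is implicitly invoking. The forward direction via F\o{}lner sets for $\langle G \rangle$ is the right observation (a F\o{}lner set for $G$ inside the subgroup is verbatim a F\o{}lner set for $G$ inside $\Gamma$, since $Fg$ is computed the same way in both). For the converse you prove the stronger fact that \emph{every} subgroup $H$ of an amenable group is amenable, by pulling back the invariant mean along the extension operator $E \colon L^{\infty}(H) \to L^{\infty}(\Gamma)$ built from a left-coset transversal; your check that right multiplication by $g \in H$ preserves each coset $rH$, so that $E$ intertwines the right $H$-actions, is the one delicate point and you handle it correctly. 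This fits squarely within the paper's framework, which defines amenability via invariant means and characterizes it via Proposition \ref{Folner}.
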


\begin{corollary}\label{solvable}
Any countable solvable group is amenable.
\end{corollary}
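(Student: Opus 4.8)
The plan is to deduce the statement from Corollaries \ref{subexp growth} and \ref{finitely generated subgroups} together with one further elementary fact: amenability of countable groups is preserved under group extensions. So the first step is to prove the lemma that if $1 \to N \to G \to Q \to 1$ is a short exact sequence of countable groups with $N$ and $Q$ amenable, then $G$ is amenable. To see this, fix a set-theoretic section $s \colon Q \to G$ of the projection $\pi \colon G \to Q$, so that every $g \in G$ is written uniquely as $s(\pi(g))n$ with $n \in N$. Given a bounded function $f$ on $G$, for each $q \in Q$ let $f_{q}$ be the bounded function on $N$ with $f_{q}(n) := f(s(q)n)$, put $\bar{f}(q) := m_{N}(f_{q})$ for an invariant mean $m_{N}$ on $N$, and set $m_{G}(f) := m_{Q}(\bar{f})$ for an invariant mean $m_{Q}$ on $Q$ (for a group the existence of a left-invariant mean is equivalent to that of a right-invariant one, so the side is immaterial). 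One checks at once that $m_{G}$ is a mean; for its invariance one computes that translating $f$ by $g$ turns $f_{q}$ into an $N$-translate of $f_{\pi(g)^{-1}q}$, whence $\overline{g \cdot f} = \pi(g) \cdot \bar{f}$, and invariance of $m_{G}$ follows from that of $m_{N}$ and $m_{Q}$. (Alternatively one can glue F\o{}lner sets of $N$ and $Q$ along the section using Proposition \ref{Folner}, but the argument with means is cleaner.)

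The second step is to note that every countable abelian group $A$ is amenable: each finitely generated subgroup of $A$ is finitely generated abelian, hence of polynomial --- in particular subexponential --- growth, so it is amenable by Corollary \ref{subexp growth}, and then $A$ is amenable by Corollary \ref{finitely generated subgroups}.

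Finally, I would argue by induction on the derived length $n$ of a countable solvable group $G$. For $n \leq 1$ the group is abelian (or trivial), hence amenable by the second step. For $n \geq 2$, the last nontrivial term $G^{(n-1)}$ of the derived series is a countable abelian group, hence amenable by the second step, while $G/G^{(n-1)}$ is a countable solvable group of derived length $n-1$, hence amenable by the inductive hypothesis; applying the extension lemma to $1 \to G^{(n-1)} \to G \to G/G^{(n-1)} \to 1$ shows that $G$ is amenable.

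The only real work is in the extension lemma, the other two steps being direct applications of the corollaries established just above. Within that lemma the sole delicate point is the verification of translation-invariance of $m_{G}$, which is bookkeeping with the section $s$; linearity, positivity, normalization, and the boundedness of $\bar{f}$ are all routine.
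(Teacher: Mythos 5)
Your proposal is correct and follows essentially the same route as the paper: countable abelian groups are amenable by Corollaries \ref{subexp growth} and \ref{finitely generated subgroups}, amenability is closed under extensions, and one inducts on the derived length. The only difference is that you spell out the construction of the invariant mean for the extension step, which the paper dismisses as clear from the definition.
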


\begin{proof}
From Corollaries \ref{subexp growth} and \ref{finitely generated subgroups}, it follows that any countable abelian group is amenable. From the definition, it is clear that an extension of an amenable group by an amenable group is also amenable.\qed
\end{proof}

\section{Coverings of manifolds with boundary}\label{extension}

The aim of this section is to show the following proposition, according to which, any Riemannian covering of manifolds with boundary can be ``extended" to a Riemannian covering of manifolds without boundary.

\begin{proposition}\label{Extension of Covering}
Let $M$ be a Riemannian manifold with non-empty boundary. Then there exists a Riemannian manifold $N$ of the same dimension, without boundary and an isometric embedding $i \colon M \to N$, such that, after identifying $M$ with $i(M)$, any Riemannian covering $p \colon M^{\prime} \to M$ can be extended to a Riemannian covering $p \colon N^{\prime} \to N$.
\end{proposition}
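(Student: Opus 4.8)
The plan is to construct $N$ by attaching an open collar to the boundary of $M$, and then to show that any Riemannian covering of $M$ extends over this collar by lifting the collar structure. First I would use the smoothness of $\partial M$ together with the existence of a (one-sided) tubular neighborhood to fix, for each boundary component, a diffeomorphism of a neighborhood of that component onto $\partial M \times [0,\varepsilon)$ (with $\partial M \times \{0\}$ corresponding to the boundary). Then I would define the ``doubled collar'' $\partial M \times (-\varepsilon,\varepsilon)$ and glue it to $M$ along $\partial M \times [0,\varepsilon)$, producing a smooth manifold $N$ without boundary containing $M$ as a codimension-zero submanifold with $i$ the inclusion. To make $i$ isometric one cannot simply push forward an arbitrary metric; instead I would build a Riemannian metric on $N$ that restricts to the given metric on $M$. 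This is a standard partition-of-unity argument: extend the metric tensor smoothly across the collar (e.g. by using Fermi coordinates near the boundary, writing the metric on a neighborhood of $\partial M$ in $M$ as $dt^{2} + g_{t}$ and extending the family $g_{t}$ smoothly and symmetric-positive-definitely to $t \in (-\varepsilon,\varepsilon)$, then patching with the original metric away from the boundary). After this step $i \colon M \to N$ is an isometric embedding and $N$ is a boundaryless Riemannian manifold of the same dimension.

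Next I would address the covering. Given a Riemannian covering $p \colon M' \to M$, the preimage $p^{-1}(\partial M)$ is a smooth hypersurface in $M'$ (since $p$ is a local isometry, hence locally a diffeomorphism respecting the boundary), and $\partial M' = p^{-1}(\partial M)$. Restricting $p$ to a collar: the collar neighborhood $\partial M \times [0,\varepsilon) \subset M$ pulls back under $p$ to a collar neighborhood of $\partial M'$ in $M'$ of the form $\partial M' \times [0,\varepsilon)$, because $p$ is a covering and $[0,\varepsilon)$ is contractible, so the restriction $p^{-1}(\partial M \times [0,\varepsilon)) \to \partial M \times [0,\varepsilon)$ is (fiberwise over $[0,\varepsilon)$) just the covering $p|_{\partial M'} \colon \partial M' \to \partial M$ crossed with the identity. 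I would then form $N'$ by gluing the doubled collar $\partial M' \times (-\varepsilon,\varepsilon)$ to $M'$ along $\partial M' \times [0,\varepsilon)$, exactly mirroring the construction of $N$, and define the extended map $p \colon N' \to N$ to be the original $p$ on $M'$ and $(p|_{\partial M'}) \times \mathrm{id}$ on the doubled collar. These two definitions agree on the overlap $\partial M' \times [0,\varepsilon)$, so $p$ is a well-defined smooth map $N' \to N$; it is a covering since it is a covering on each of the two pieces of an open cover of $N$ and the pieces are compatible. Finally, to make it a \emph{Riemannian} covering, I would simply equip $N'$ with the pullback metric $p^{*}(\text{metric on }N)$; on $M' \subset N'$ this agrees with the original metric (since $p|_{M'}$ was a Riemannian covering onto $M$ and the metric on $M \subset N$ is the original one), so $N'$ genuinely extends $M'$ as a Riemannian manifold, and $p$ is by construction a local isometry.

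The main obstacle I anticipate is bookkeeping rather than a conceptual difficulty: one must make the collar construction genuinely \emph{functorial} enough that the choices made on $M$ (the collar coordinates, the extended metric family $g_{t}$ across the collar) lift consistently to $M'$ for \emph{every} covering $p$ simultaneously, as the statement demands ``after identifying $M$ with $i(M)$, \emph{any} covering extends.'' The clean way to handle this is to fix once and for all the collar neighborhood and the extended metric on $N$ \emph{downstairs}, and then to \emph{define} the upstairs collar, metric, and gluing purely by pulling back along $p$ — this automatically guarantees compatibility and equivariance (in particular deck transformations of $p \colon M' \to M$ extend to deck transformations of $p \colon N' \to N$, acting as the identity on the $(-\varepsilon,\varepsilon)$ factor composed with the original action on $\partial M'$). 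A secondary point requiring a little care is checking that the extended $p \colon N' \to N$ is a covering map near the glueing hypersurface $\partial M' \times \{0\}$: this follows because near any point of $\partial M$ the map $p$ looks like the trivial covering $\partial M \times (-\varepsilon,\varepsilon) \times (\text{discrete fiber})$, which is immediate from the collar trivialization and the covering property of $p|_{\partial M'}$.
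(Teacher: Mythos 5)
Your proposal is correct, but it takes a genuinely different route from the paper. The paper glues $\partial M \times [0,+\infty)$ onto $M$, extends the metric by a partition of unity (as you do), and then works entirely through the universal covering: it shows $N$ strongly deformation retracts onto $M$, deduces via the Lifting Theorem that the universal covering $q \colon \tilde{N} \to N$ restricts over $M$ to the universal covering of $M$, identifies the deck transformation groups $\Gamma_{M} \cong \Gamma_{N}$, and then extends an arbitrary covering $M' = \tilde{M}/\Gamma$ to $N' := \tilde{N}/\Gamma'$ by the subgroup correspondence. You instead extend each covering directly, by trivializing $p$ over a collar $\partial M \times [0,\varepsilon)$ (using that coverings of $X \times [0,\varepsilon)$ are products, or equivalently by lifting the collar vector field) and gluing a doubled collar onto $M'$ upstairs. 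Your approach is more hands-on: it avoids universal covers and the $\pi_{1}$ bookkeeping entirely, and it makes the deck-transformation equivariance and the local structure of the extended covering near $\partial M$ completely explicit. The paper's approach buys uniformity over all coverings at once (everything reduces to the single statement that $q^{-1}(M)$ is simply connected) and sidesteps any analysis of $p$ near the boundary. Two minor points to tidy in your version: if $\partial M$ is non-compact there need not be a collar of uniform width $\varepsilon$, so one should either reparametrize a collar $\{(x,t): 0 \le t < \varepsilon(x)\}$ to $\partial M \times [0,1)$ or work with a variable-width collar throughout; and one should note explicitly that $N'$ is connected (it is, since every attached collar component meets the connected $M'$), as the paper's standing convention requires coverings of connected manifolds.
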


In order to prove this proposition, we need to establish some auxiliary lemmas.

\begin{lemma}
Let $M$ be a Riemannian manifold with non-empty boundary. Then there exists a Riemannian manifold $N$ of the same dimension, without boundary, an isometric embedding $i \colon M \to N$ and a strong deformation retraction of $N$ onto $i(M)$.
\end{lemma}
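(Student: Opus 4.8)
The goal is, given a Riemannian manifold $M$ with non-empty boundary, to produce a boundaryless Riemannian manifold $N \supset M$ (via an isometric embedding $i$), together with a strong deformation retraction of $N$ onto $i(M)$. The natural approach is to glue an exterior collar onto $\partial M$. First I would recall the collar neighborhood theorem: there is a neighborhood of $\partial M$ in $M$ diffeomorphic to $\partial M \times [0,1)$, with $\partial M$ corresponding to $\partial M \times \{0\}$. The idea is then to form the topological/smooth manifold $N := M \cup_{\partial M} (\partial M \times (-1,0])$, i.e.\ attach an outward collar $\partial M \times (-1,0]$ along $\partial M = \partial M \times \{0\}$; this $N$ is a smooth manifold of the same dimension, without boundary, and $i \colon M \hookrightarrow N$ is the obvious (smooth) embedding.

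Next I would equip $N$ with a Riemannian metric extending that of $M$. On the inner collar $\partial M \times [0,1)$ the metric of $M$ has some expression $g_t$ depending on the collar parameter $t$; one can simply extend the family $\{g_t\}_{t \in [0,1)}$ to a smooth family $\{g_t\}_{t \in (-1,1)}$ of metrics on $\partial M$ (for instance, constantly equal to $g_0$ for $t \le 0$, smoothed near $t=0$ — but care is needed so that the extension is $C^\infty$-compatible with the given metric; a cleaner route is to extend all the metric coefficient functions from $\partial M \times [0,1)$ to $\partial M \times (-1,1)$ using a smooth extension/reflection and then patch with the metric of $M$ away from the collar via a partition of unity subordinate to $\{M,\ \partial M \times (-1,1)\}$). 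Either way one obtains a smooth Riemannian metric on $N$ whose restriction to $M$ is the original metric, so $i$ is an isometric embedding.

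Finally, the strong deformation retraction $r \colon N \times [0,1] \to N$ onto $i(M)$ is built from the collar structure: on the outer collar $\partial M \times (-1,0]$, define $r((x,s),\tau) = (x,\, (1-\tau)s)$, which slides the collar point radially toward $\partial M \times \{0\} \subset M$, and let $r$ be the identity on all of $M$ for every $\tau$. This is continuous (the two definitions agree on the overlap $\partial M \times \{0\}$), satisfies $r(\cdot,0) = \mathrm{id}_N$, $r(N \times \{1\}) = i(M)$, and fixes $i(M)$ pointwise throughout, so it is a strong deformation retraction.

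The steps are essentially routine, so there is no deep obstacle; the one point requiring genuine care is the \emph{smoothness} of the extended metric across $\partial M$ — a naive "constant extension" of the collar metric need not be smooth there. I would handle this by noting that it suffices to take \emph{any} smooth extension of the finitely many metric-coefficient functions across the hypersurface $\partial M \times \{0\}$ (such extensions exist by standard results, e.g.\ Whitney/Seeley extension, or simply by multiplying a smooth extension of the $1$-jet by a cutoff and correcting), and then checking positive-definiteness holds on a possibly smaller collar $\partial M \times (-\epsilon,0]$ by continuity, shrinking the outer collar if necessary. With that in hand, $N$, $i$, and $r$ are as required. \qed
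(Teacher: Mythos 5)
Your proposal is correct and follows essentially the same route as the paper: glue an exterior collar onto $\partial M$ (the paper uses $\partial M \times [0,+\infty)$ rather than a finite collar), extend the Riemannian metric by smoothly extending its coefficient functions across $\partial M$ and using that positive definiteness persists near the boundary (the paper does this with local frame fields and a partition of unity, putting an arbitrary metric far from $M$), and retract the attached collar linearly via $(x,s) \mapsto (x,(1-t)s)$. The smoothness issue you flag is exactly the point the paper also addresses, and both resolutions are equivalent.
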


\begin{proof}
Consider the space $\partial M \times [0, +\infty )$ and the map $\Psi \colon \partial M \to \partial M \times [0, +\infty )$, defined by $\Psi(x) := (x,0)$. Then $N := M \cup_{\Psi} (\partial M \times [0,+ \infty))$ is a smooth manifold and there exists a smooth embedding $i \colon M \to N$. Therefore, $M$ can be identified with $i(M)$. Since $M$ is connected, so is $N$, and there exists a strong deformation retraction of $N$ onto $M$, obtained by considering $F_{t}(x,r) := (x,(1-t)r)$ in the glued ends $\partial M \times [0,+\infty)$.

It remains to extend the Riemannian metric of $M$ to a Riemannian metric of $N$. Any $x \in \partial M$ has an open neighborhood $U_{x}$ in $N$, such that there exists a smooth frame field $\{e_{1}, \dots , e_{m}\}$ in $U_{x}$, where $m$ is the dimension of the manifolds. Let $g_{jk} := \langle e_{j}, e_{k} \rangle$, $1 \leq j,k \leq m$, be the components of the Riemannian metric of $M$. Since they are smooth up to the boundary of $M$, they can be extended smoothly to a neighborhood of $x$. After passing to a smaller neighborhood of $x$ if needed, we may assume that $g_{jk}$'s are smooth in $U_{x}$ and their matrix is symmetric and positive definite at any point of $U_{x}$. Hence, they express a Riemannian metric in $U_{x}$.

Clearly, $\partial M$ can be covered with such neighborhoods $U_{x}$. Consider the interior of $M$ as an open subset of $N$ endowed with its Riemannian metric and $N \smallsetminus M$ with an arbitrary Riemannian metric. Combining these Riemannian metrics via a partition of unity subordinate to this open cover of $N$, gives rise to a Riemannian metric of $N$, which is an extension of the Riemannian metric of $M$. \qed
\end{proof}

\begin{lemma}
Let $M$ be a Riemannian manifold with non-empty boundary. Consider $N$ as in the previous lemma and identify $M$ with $i(M)$. Let $q \colon \tilde{N} \to N$ be the universal covering of $N$. Then the restriction $q \colon q^{-1}(M) \to M$ is the universal covering of $M$.
\end{lemma}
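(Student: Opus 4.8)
The statement asserts that if $q\colon\tilde N\to N$ is the universal covering and $M\hookrightarrow N$ is the embedding from the previous lemma, then $q\colon q^{-1}(M)\to M$ is the universal covering of $M$. The key structural fact is that $N$ strong deformation retracts onto $M$ (by the previous lemma), so $M\hookrightarrow N$ is a homotopy equivalence; in particular the induced map $\pi_1(M)\to\pi_1(N)$ is an isomorphism. I would first record that $q^{-1}(M)$ is the covering space of $M$ associated to the subgroup $i_*^{-1}(\{1\})=\ker(\pi_1(M)\to\pi_1(N))$ under the covering correspondence — more precisely, restricting a covering to a subspace gives the covering of that subspace classified by the preimage of the chosen subgroup of $\pi_1(N)$. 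Since $i_*\colon\pi_1(M)\to\pi_1(N)$ is an isomorphism, this preimage is the trivial subgroup, so $q^{-1}(M)\to M$ is the universal (i.e.\ simply connected) covering — provided we also check $q^{-1}(M)$ is connected.

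First I would verify the elementary point that $q\colon q^{-1}(M)\to M$ is indeed a covering map: $q$ is a local homeomorphism and an open map, and evenly-covered neighborhoods in $N$ intersect $M$ in (relatively) evenly-covered neighborhoods, using that $M$ is a submanifold-with-boundary sitting nicely inside $N$ (one can take the evenly covered $U$ in $N$ small enough to be, near a boundary point of $M$, of the product form coming from the collar $\partial M\times[0,\varepsilon)$, whose intersection with $M$ is again connected and simply connected relative to the sheet structure). Then I would argue connectedness of $q^{-1}(M)$: since $M$ is connected and $q^{-1}(M)\to M$ is a covering, its components are all coverings of $M$; the deformation retraction $F_t$ of $N$ onto $M$ lifts (by the homotopy lifting property, since $F_0=\mathrm{id}_{\tilde N}$ lifts) to a homotopy $\tilde F_t\colon\tilde N\to\tilde N$ with $\tilde F_0=\mathrm{id}$ and $\tilde F_1(\tilde N)\subset q^{-1}(M)$; continuity of $t\mapsto\tilde F_t$ and connectedness of $\tilde N$ then force $\tilde F_1(\tilde N)$ into a single component of $q^{-1}(M)$, while $\tilde F_1$ is also surjective onto $q^{-1}(M)$ (because its restriction to $q^{-1}(M)$ is homotopic through $q^{-1}(M)$-... — more carefully: $q^{-1}(M)=\tilde F_1(q^{-1}(M))\subset\tilde F_1(\tilde N)$, since $F_1|_M=\mathrm{id}_M$ lifts to $\tilde F_1|_{q^{-1}(M)}=\mathrm{id}$ by uniqueness of lifts agreeing at a point). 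Hence $q^{-1}(M)$ is connected.

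Finally, for simple connectedness: $\tilde F_t$ restricted to $q^{-1}(M)$ gives a strong deformation retraction of $q^{-1}(M)$ onto $\tilde F_1(q^{-1}(M))=q^{-1}(M)$ — that is trivial — so instead I would argue directly. Given a loop $\gamma$ in $q^{-1}(M)$, view it as a loop in $\tilde N$; since $\tilde N$ is simply connected, $\gamma$ bounds a disk $h\colon D^2\to\tilde N$; composing with $\tilde F_1$ gives a disk $\tilde F_1\circ h\colon D^2\to q^{-1}(M)$, and its boundary is $\tilde F_1\circ\gamma$, which is homotopic to $\gamma$ within $q^{-1}(M)$ via $\tilde F_t\circ\gamma$ (this homotopy stays in $q^{-1}(M)$ because $F_t(M)\subset M$ for all $t$, as the retraction fixes $M$ pointwise and only moves the collar, which maps into $M$). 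Thus every loop in $q^{-1}(M)$ is null-homotopic there, so $q^{-1}(M)$ is simply connected. Combined with connectedness and the covering property, $q\colon q^{-1}(M)\to M$ is the universal covering.

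The main obstacle is making the deformation-retraction argument fully rigorous at the level of lifts: one must be careful that $F_t(M)\subset M$ holds for all $t$ (true here because $F_t$ is the identity on $M$ and collapses only the attached collar $\partial M\times[0,\infty)$ toward $\partial M\times\{0\}=\partial M\subset M$), so that lifted homotopies starting in $q^{-1}(M)$ remain in $q^{-1}(M)$; and one must invoke uniqueness of lifts to identify $\tilde F_1$ on $q^{-1}(M)$ with the identity. Everything else — the covering property of $q|_{q^{-1}(M)}$ and the bookkeeping with $\pi_1$ — is routine.
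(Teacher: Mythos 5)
Your proof is correct and follows essentially the same route as the paper: both use the strong deformation retraction to show $q^{-1}(M)$ is connected and then produce a retraction of the simply connected $\tilde N$ onto $q^{-1}(M)$, concluding that the latter is simply connected. The only cosmetic difference is that you lift the whole homotopy $F_t$ via the homotopy lifting property while the paper lifts just the map $r_M\circ q$ via the Lifting Theorem; note that your identification $\tilde F_1|_{q^{-1}(M)}=\mathrm{id}$ is best justified by uniqueness of path lifts applied to $t\mapsto\tilde F_t(z)$ for each fixed $z$ (this path projects to the constant path at $q(z)$ because $F_t$ fixes $M$ pointwise), which avoids invoking unique lifting on a set not yet known to be connected.
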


\begin{proof}
Since there exists a strong deformation retraction of $N$ onto $M$, every loop in $N$ can be homotoped to a loop in $M$. This implies that for any $x \in M$ and $y_{1}, y_{2} \in q^{-1}(x)$, there exists a path in $q^{-1}(M)$ from $y_{1}$ to $y_{2}$. Since $M$ is connected, it follows that so is $q^{-1}(M)$ and the restriction $ q \colon q^{-1}(M) \to M$ is a covering of (connected) manifolds.

Let $r_{M} \colon N \to M$ be a retraction. Then the map $r_{M} \circ q \colon \tilde{N} \to M$ is continuous and $r_{M} \circ q = q$ in $q^{-1}(M)$. From the Lifting Theorem, it has a continuous lift $\tilde{r}_{M} \colon \tilde{N} \to q^{-1}(M)$, with $\tilde{r}_{M}(y_{0}) = y_{0}$, for some $y_{0} \in q^{-1}(M)$. Since $\tilde{r}_{M}|_{q^{-1}(M)}$ is a deck transformation of the covering $q \colon q^{-1}(M) \to M$, it follows that $\tilde{r}_{M} \colon \tilde{N} \to q^{-1}(M)$ is a retraction. Since $\tilde{N}$ is simply connected, this yields that so is $q^{-1}(M)$.
\qed
\end{proof}
\medskip

\noindent{\emph{Proof of Proposition \ref{Extension of Covering}:}} Consider $N$ and $q \colon \tilde{N} \to N$ as in the above lemmas, identify $M$ with $i(M)$ and set $\tilde{M} := q^{-1}(M)$. Denote by $\Gamma_{N}$ and $\Gamma_{M}$ the deck transformations groups of $q \colon \tilde{N} \to N$ and $q \colon \tilde{M} \to M$, respectively. It is clear that for $g \in \Gamma_{N}$, we have $g|_{\tilde{M}} \in \Gamma_{M}$, and any $\gamma \in \Gamma_{M}$ has a unique extension $\gamma^{\prime} \in \Gamma_{N}$.
For any Riemannian covering $p \colon M^{\prime} \to M$, there exists a subgroup $\Gamma \subset \Gamma_{M}$, such that $M^{\prime} = \tilde{M} / \Gamma$. For $\Gamma^{\prime} := \{ \gamma^{\prime} \in \Gamma_{N} : \gamma \in \Gamma \}$ and $N^{\prime} := \tilde{N} / \Gamma^{\prime}$, the inclusion $\tilde{M} \hookrightarrow \tilde{N}$ descends to an isometric embedding $M^{\prime} \to N^{\prime}$, which completes the proof. \qed

\section{Spectrum of operators under amenable coverings}\label{amenable section}

Throughout this section, we work in the following context, which is briefly described in the Introduction.

Let $p \colon M_{2} \to M_{1}$ be a Riemannian covering, $E_{1} \to M_{1}$ a Riemannian or Hermitian vector bundle endowed with a connection $\nabla$ and $D_{1} \colon \Gamma(E_{1}) \to \Gamma(E_{1})$ a differential operator on $E_{1}$. Let $E_{2} \to M_{2}$ be the pullback bundle, endowed with the corresponding metric and connection $\nabla$, and $D_{2} \colon \Gamma(E_{2}) \to \Gamma(E_{2})$ the lift of $D_{1}$.
If $M_{1}$ has empty boundary, we consider the space of compactly supported smooth sections of $E_i$ as the domain of $D_{i}$, that is, $\mathcal{D}(D_{i}) := \Gamma_{c}(E_{i})$, $i=1,2$. If $M_{1}$ has non-empty boundary, the domain of $D_{i}$ is the space 
$$\mathcal{D}(D_{i}) := \{ \eta \in \Gamma_{c}(E_{i}) : a_{i} \nabla_{n_{i}} \eta + b_{i} \eta = 0 \text{ on } \partial M_{i} \},$$
where $n_{i}$ is the inward pointing normal to $\partial M_{i}$, $i=1,2$, $a_{1}$, $b_{1}$ are real or complex valued functions (depending on whether the bundles are Riemannian or Hermitian) on $\partial M_{1}$, and $a_{2} = a_{1} \circ p$, $b_{2} = b_{1} \circ p$. It is worth to point out that we do not impose any assumptions on $a_{1}$ and $b_{1}$.
When we refer to closability, symmetry or essential self-adjointness of $D_{i}$, we consider the operator $D_{i} \colon \mathcal{D}(D_{i}) \subset L^{2}(E_{i}) \to L^{2}(E_{i})$, $i=1,2$. From Lemma \ref{closability}, the operator $D_{i}$ is closable and we denote by $\overline{D}_{i}$ its closure, $i=1,2$.

Our aim in this section is prove Theorem \ref{Friedrichs} and the following more general version of Theorem \ref{Inclusion of Spectrums self-adj}.

\begin{theorem}\label{Inclusion of Spectrums}
Let $D_{2}^{\prime}$ be a closed extension of $D_{2}$. If the covering is infinite sheeted and amenable, then $\sigma_{\ap}(\overline{D}_{1}) \subset \sigma_{W}(D_{2}^{\prime})$.
\end{theorem}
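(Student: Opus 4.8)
The plan is to show that every $\lambda \in \sigma_{\ap}(\overline{D}_1)$ admits a Weyl sequence for $D_2'$, i.e.\ a sequence $(w_k) \subset \mathcal{D}(D_2)$ with $\|w_k\| = 1$, $w_k \rightharpoonup 0$, and $(D_2 - \lambda)w_k \to 0$ in $L^2(E_2)$; by Lemma~\ref{Spectrum of Closure}(ii) this gives $\lambda \in \sigma_W(D_2')$. By reducing via Proposition~\ref{Extension of Covering} we may assume that the manifolds have empty boundary (the extended covering has a fundamental domain and the boundary conditions lift compatibly, so sections in $\mathcal{D}(D_1)$ extend appropriately). Fix $\lambda \in \sigma_{\ap}(\overline{D}_1)$. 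By Lemma~\ref{Spectrum of Closure}(i) there is $\eta \in \mathcal{D}(D_1) = \Gamma_c(E_1)$ with $\|\eta\| = 1$ and $\|(D_1 - \lambda)\eta\|$ as small as we like; fix one such $\eta$ supported in a ball $B(x, r)$ with $\|(D_1 - \lambda)\eta\| < \varepsilon$, where I will let $\varepsilon \to 0$ at the end.

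The core construction is a standard "averaging over sheets" argument. Let $y \in p^{-1}(x)$ and, for each sheet, transplant $\eta$ to a section $\eta_g$ on $M_2$ supported near the corresponding preimage of $x$: concretely, for $g$ ranging over $X := \pi_1(M_2)\backslash\pi_1(M_1)$ (equivalently the fiber $p^{-1}(x)$, equivalently the fundamental domains $D_y$), let $\eta_g$ be the lift of $\eta$ to the copy of $B(x,r)$ sitting over the $g$-th fundamental domain. Since $p$ is infinite sheeted, $X$ is infinite, and since the covering is amenable, Proposition~\ref{Folner} gives, for a suitable finite $G \subset \pi_1(M_1)$ governing which sheets are "adjacent" (those within distance $2r$, finite in number by Lemma~\ref{Intersection Lemma}) and any $\delta > 0$, a finite F\o lner set $F \subset X$ with $\#(F \smallsetminus Fg) < \delta\, \#(F)$ for all $g \in G$. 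Set
\[
w_F := \frac{1}{\sqrt{\#(F)}} \sum_{g \in F} \eta_g .
\]
Because the supports of distinct $\eta_g$ overlap boundedly (Lemma~\ref{Intersection Lemma} bounds overlaps, and $p$ restricted to each interior of $D_y$ is an isometry onto $M_1 \smallsetminus C_0$), one gets $\|w_F\|^2 = \#(F)^{-1}\sum_{g,h}\langle \eta_g,\eta_h\rangle$, where only the boundedly many "adjacent" pairs contribute off-diagonal terms; the diagonal contributes exactly $1$, and the F\o lner property makes the off-diagonal boundary contribution $O(\delta)$, so $\|w_F\|$ is close to $1$. Since $D_2$ is a local operator whose local expression is the lift of that of $D_1$, on the interior of each $D_y$ we have $(D_2 - \lambda)\eta_g$ equal to the transplant of $(D_1-\lambda)\eta$, so $\|(D_2 - \lambda)w_F\| \le \#(F)^{-1/2}\,\sqrt{N(\text{overlap})}\,\#(F)\,\varepsilon$ — wait, more carefully: $(D_2-\lambda)w_F = \#(F)^{-1/2}\sum_g (D_2-\lambda)\eta_g$, a sum of transplants each of norm $<\varepsilon$ with bounded overlap, hence $\|(D_2-\lambda)w_F\| \le C\varepsilon$ with $C = C(r)$ independent of $F$, plus a possible error concentrated on the sheet-boundaries which is again controlled by the F\o lner property. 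Normalizing $w_k := w_{F_k}/\|w_{F_k}\|$ along a sequence with $\varepsilon = \varepsilon_k \to 0$ and $\delta = \delta_k \to 0$, we obtain $\|w_k\| = 1$ and $(D_2 - \lambda)w_k \to 0$.

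It remains to arrange $w_k \rightharpoonup 0$ in $L^2(E_2)$. The point is that the F\o lner sets $F_k$ can be chosen to "escape to infinity": since $X$ is infinite and amenable, F\o lner sets of arbitrarily large cardinality exist and (by translating) can be taken disjoint from any prescribed finite subset of $X$, so the supports of $w_k$ eventually leave every compact set of $M_2$ — more precisely, for any fixed compactly supported section $\theta$, $\langle w_k, \theta\rangle$ involves only the boundedly many $\eta_g$ meeting $\supp\theta$, whose number relative to $\#(F_k)$ tends to $0$, giving $\langle w_k,\theta\rangle \to 0$; density of $\Gamma_c(E_2)$ in $L^2(E_2)$ then yields weak convergence to $0$. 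Passing to a subsequence if necessary to reconcile all the asymptotic requirements, $(w_k)$ is the desired Weyl sequence for $D_2'$, and hence $\lambda \in \sigma_W(D_2')$.

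The main obstacle I anticipate is the bookkeeping of the \emph{off-diagonal and boundary error terms}: the transplanted sections $\eta_g$ are honestly defined and smooth only in the interior of each fundamental domain $D_y$, and a section supported in $B(x,r)$ lifts to $M_2$ as a genuine compactly supported smooth section only if one is careful about what happens when $B(x,r)$ meets the cut locus (so its lift to a single $D_y$ need not be smooth across $\partial D_y$). The clean way around this — and presumably what the paper does — is to not transplant into a single fundamental domain but to use the local isometry $p|_{U_2}$ on genuine evenly-covered neighborhoods: choose $r$ smaller than the injectivity radius on $\supp\eta$ so that each $p^{-1}(B(x,r))$ is a disjoint union of isometric copies indexed by the fiber, making each $\eta_g \in \Gamma_c(E_2)$ legitimately smooth; then the overlap estimate (Lemma~\ref{Intersection Lemma}) and the F\o lner estimate combine cleanly with no boundary terms at all, and the whole argument becomes the transplantation proof of $\lambda_0(S_2) \le \lambda_0(S_1)$ from \cite{BMP} upgraded from a single Rayleigh quotient to a Weyl sequence. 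Verifying that this can be done for $\eta$ with support of \emph{arbitrary} size (not just small balls) — needed because $\sigma_{\ap}(\overline{D}_1)$ is detected by sequences, not a single section — requires covering $\supp\eta$ by finitely many evenly-covered balls and summing, which is routine but is where the constant $C(r)$ and the choice of $G$ genuinely depend on $\eta$; this is harmless since $\eta$ is fixed before $F$ is chosen.
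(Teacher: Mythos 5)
Your overall strategy --- lift $\eta$, cut off over a F\o{}lner set of sheets, control the errors by bounded overlap, and push the supports to infinity to obtain a Weyl sequence --- is exactly the paper's, but there are two genuine gaps, both located precisely where the paper does its real work. First, the sections $\eta_g$ do not exist as you define them: when $\supp\eta$ is not contained in an evenly covered set, there is no consistent notion of ``the copy of $B(x,r)$ sitting over the $g$-th fundamental domain'' (monodromy glues the would-be sheets together; think of a function with full support on $S^{1}$ lifted to $\mathbb{R}$), and the restriction of the periodic lift $\theta$ to a single fundamental domain is not smooth across $\partial D_{y}$. Your proposed repair --- shrink to evenly covered balls ``and sum'' --- does not produce a transplant of $\eta$ to one sheet, for the same monodromy reason. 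The object that actually works is $\chi_{P}\theta$, where $\theta$ is the full periodic lift and $\chi_{P}=\sum_{y\in P}\varphi_{y}$ is a smooth cutoff subordinate to the F\o{}lner set; then $D_{2}(\chi_{P}\theta)\neq\chi_{P}D_{2}\theta$, and one must bound the commutator terms, which involve derivatives of $\chi_{P}$ up to the order $d$ of the operator, \emph{uniformly in} $P$. Constructing such a partition of unity (by pushing down bump functions from the universal cover) and proving the uniform $C^{d}$ bounds is the content of Proposition \ref{Uniform Estimate} together with Lemmas \ref{Intersection Lemma} and \ref{Cardinality Estimate Lemma}; your ``error concentrated on the sheet-boundaries, controlled by the F\o{}lner property'' is the conclusion of Lemma \ref{Pulling Up}, not a one-line observation.

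Second, your argument that $w_{k}\rightharpoonup 0$ is incomplete. F\o{}lner sets need not have cardinality tending to infinity: if the action of the relevant finitely generated subgroup on the fiber has a finite orbit, that orbit is a perfect F\o{}lner set of bounded size, and your pairing estimate $|\langle w_{k},\theta\rangle|\lesssim \#(F_{k})^{-1/2}$ gives nothing. Nor can you simply ``translate'' a F\o{}lner set off a prescribed finite subset of the fiber: since $\#(Fh\smallsetminus Fhg)=\#\bigl(F\smallsetminus F(hgh^{-1})\bigr)$, the translate $Fh$ is a F\o{}lner set for the conjugate $hGh^{-1}$ rather than for $G$, and the required $h$ depends on $F$. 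This is exactly why the paper proves the dichotomy of Lemma \ref{Orbit Cases} (using that the covering is infinite sheeted) and then Proposition \ref{Pulling Up Strong Version}: in the ``infinitely many finite orbits'' case one takes a whole far-away orbit as $P$ (for which $Q_{-}=\emptyset$), and in the ``only infinite orbits'' case one shows that deleting the finitely many sheets near a given compact set degrades the ratio $\#(Q_{-})/\#(Q_{+})$ only controllably, because $\#(Q_{+})$ is forced to be large. Without these two ingredients your proof does not close; with them, it becomes the paper's proof of Proposition \ref{Pull Up Lemma} followed by the short deduction via Lemma \ref{Spectrum of Closure}.
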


\subsection{Partition of unity}

In this subsection, we construct a special partition of unity, which is used in the sequel to obtain cut-off functions on $M_{2}$.

Consider the universal coverings
$p_{i} \colon \tilde{M} \to M_{i}$ and denote by $\Gamma_{i}$ the deck transformations group of $p_i$, $i=1,2$.
If $M_{1}$ has empty boundary, consider a Riemannian metric $\mathfrak{h}$, conformal to the original metric $\mathfrak{g}$, such that $(M_{1},\mathfrak{h})$ is complete. If $M_{1}$ has non-empty boundary, consider a Riemannian manifold $N_{1}$ containing $M_{1}$, as in Proposition \ref{Extension of Covering}, and a Riemannian metric $\mathfrak{h}$, conformal to the original metric $\mathfrak{g}$, such that $(N_{1},\mathfrak{h})$ is complete. From now on, geodesics are considered with respect to $\mathfrak{h}$ and its lifts.
We denote by $\grad f$ and $\grad_{\mathfrak{h}}f$ the gradient of a function $f$ with respect to $\mathfrak{g}$ and $\mathfrak{h}$ (or their lifts), respectively.
If $M_{1}$ has empty boundary, distances are considered with respect to $\mathfrak{h}$ or its lifts. In this case, we denote the open (respectively, closed) ball of radius $r$ around a point $z$ by $B(z,r)$ (respectively, $C(z,r)$).
If $M_{1}$ has non-empty boundary, the distance between two points is considered in $(N_{1},\mathfrak{h})$ or its corresponding covering space. In this case, $B(z,r)$ and $C(z,r)$ stand for the corresponding balls in $M_{1}$, $M_{2}$ or $\tilde{M}$. 

Consider $x \in M_{1}$, $u \in p_{1}^{-1}(x)$ and $r>0$ large enough, so that $B(u,r) \cap \partial \tilde{M} \neq \emptyset$, if $M_{1}$ has non-empty boundary.

\begin{lemma}
There exists a non-negative $\psi_{u} \in C^{\infty}_{c}(\tilde{M})$, such that $\supp \psi_{u} \subset C(u,r+1)$ and $\psi_{u} =1$ in $C(u,r+ 1/2)$. Moreover, if $M_{1}$ has non-empty boundary, $\psi_{u}$ can be chosen such that $\grad \psi_{u}$ is tangential to $\partial \tilde{M}$.
\end{lemma}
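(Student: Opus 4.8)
The plan is to reduce the assertion to the standard existence of cut-off functions on a complete Riemannian manifold, the only new ingredient being the behaviour at the boundary. The first observation is that $(\tilde{M},\mathfrak{h})$ sits inside the complete manifold $(\tilde{N},\mathfrak{h})$: by construction $(N_{1},\mathfrak{h})$ is complete (with $N_{1}=M_{1}$ when $M_{1}$ has empty boundary), and its universal covering $(\tilde{N},\mathfrak{h})$, being a Riemannian covering of a complete manifold, is again complete. Hence, by the Hopf--Rinow theorem, the closed ball $C(u,r+1)$ in $\tilde{N}$ is compact; when $M_{1}$ has non-empty boundary, $C(u,r+1)$ viewed as a subset of $\tilde{M}$ is its intersection with $\tilde{M}$, hence a closed subset of a compact set and therefore compact. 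Thus any function $\psi_{u}$ we construct with $\{\psi_{u}\neq 0\}\subset C(u,r+1)$ is automatically compactly supported, and it remains to arrange smoothness, the prescribed behaviour on the two balls $C(u,r+1/2)$ and $C(u,r+1)$, and, in the boundary case, the tangency of the gradient.

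When $M_{1}$ has empty boundary we have $\tilde{M}=\tilde{N}$ and the statement is the familiar one. Set $\rho:=d(\,\cdot\,,u)$, choose, by a standard smoothing of the distance function (or a partition of unity on a relatively compact neighbourhood of $C(u,r+1)$), some $\tilde{\rho}\in C^{\infty}(\tilde{N})$ with $\sup|\tilde{\rho}-\rho|<1/8$, fix $\chi\in C^{\infty}(\mathbb{R})$ with $0\le\chi\le 1$, $\chi\equiv 1$ on $(-\infty,r+5/8]$ and $\chi\equiv 0$ on $[r+7/8,+\infty)$, and put $\psi_{u}:=\chi\circ\tilde{\rho}$. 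The triangle inequality gives $\psi_{u}\equiv 1$ on $C(u,r+1/2)$ and $\supp\psi_{u}\subset C(u,r+1)$, while $0\le\psi_{u}\le 1$ is clear.

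When $M_{1}$ has non-empty boundary, I would first build in the same way, but leaving a little room, a function $\psi_{0}\in C^{\infty}_{c}(\tilde{N})$ with $0\le\psi_{0}\le 1$, $\psi_{0}\equiv 1$ on $C(u,r+9/16)$ and $\supp\psi_{0}\subset C(u,r+15/16)$, and then modify it near $\partial\tilde{M}$. The extra requirement, that $\grad\psi_{u}$ be tangential to $\partial\tilde{M}$, amounts to the vanishing of the normal derivative of $\psi_{u}$ along $\partial\tilde{M}$, and, since the $\mathfrak{g}$- and $\mathfrak{h}$-gradients are pointwise proportional, it is the same condition for either metric. Fix a collar $\Phi\colon O\times[0,2\varepsilon)\to\tilde{M}$ of $\partial\tilde{M}$ over a relatively compact open set $O\subset\partial\tilde{M}$ containing $C(u,r+1)\cap\partial\tilde{M}$ (so that $\supp\psi_{0}\cap\partial\tilde{M}\subset O$), chosen so that along $\partial\tilde{M}$ the vector $\partial_{t}$ is proportional to the inward pointing normal (for instance, the normal exponential map, reparametrised in $t$); write points in its image as $(x',t)$. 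Set $\psi_{u}:=\psi_{0}\circ R$, where $R$ is a smooth self-map of $\tilde{M}$ equal to the identity off the collar and of the form $(x',t)\mapsto(x',\nu_{x'}(t))$ on it, with $\nu_{x'}(0)=0$ for all $x'$, $\nu_{x'}(t)=t$ for $t\ge\varepsilon$, and $\nu_{x'}'(0)=0$ for $x'$ in a neighbourhood of $C(u,r+1)\cap\partial\tilde{M}$; the dependence on $x'$ serves only to taper the modification smoothly to the identity as $x'$ leaves a compact subset of $O$. Since $dR$ annihilates $\partial_{t}$ along $\partial\tilde{M}$ on that neighbourhood, while $\psi_{0}$ — hence $\psi_{u}$ — vanishes near $\partial\tilde{M}$ away from it, the normal derivative of $\psi_{u}$ vanishes on all of $\partial\tilde{M}$; and, taking the collar thin enough that $d_{\mathfrak{h}}(\Phi(x',s),\Phi(x',s'))<1/16$ for all $x'\in\overline{O}$ and $s,s'\in[0,2\varepsilon)$, the slack in the radii guarantees that $\psi_{u}\equiv 1$ on $C(u,r+1/2)$ and $\supp\psi_{u}\subset C(u,r+1)$ are preserved, while $0\le\psi_{u}\le 1$ is clear.

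The empty-boundary case is entirely routine, so the main obstacle is the boundary case — more precisely, the bookkeeping that makes the collar modification compatible with the prescribed behaviour on the balls $C(u,r+1/2)$ and $C(u,r+1)$, whose radii are measured in the distance of $(\tilde{N},\mathfrak{h})$, a distance which is neither a product near $\partial\tilde{M}$ nor smooth, together with the need to perform the modification globally and smoothly although only a relatively compact part of $\partial\tilde{M}$ is involved; both points are dealt with by keeping the collar thin and letting the modification taper to the identity, as above.
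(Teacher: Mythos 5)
Your proof is correct, and for the boundary case it takes a genuinely different route from the paper's. The paper also starts from an ordinary cut-off $\psi_{u}^{\prime}$ with the right behaviour on the two balls, but then fixes the boundary behaviour by \emph{interpolation}: it takes a cut-off $\tau$ on $\partial \tilde{M}$ (equal to $1$ on $\partial\tilde{M}\cap C(u,r+1/2+2\varepsilon)$, supported in $\partial\tilde{M}\cap C(u,r+1-2\varepsilon)$), extends it to $\tau^{\prime}$ constantly along the normal geodesics of a collar $\Phi\colon K\times[0,2\varepsilon)\to\tilde{M}$ with $\varepsilon<1/8$, and sets $\psi_{u}:=\nu\tau^{\prime}+(1-\nu)\psi_{u}^{\prime}$, where $\nu$ is a bump in the collar parameter; near $\partial\tilde{M}$ one then has $\psi_{u}=\tau^{\prime}$, which is constant in the normal direction on a full neighbourhood of the boundary, so $\grad_{\mathfrak{h}}\psi_{u}$ (hence $\grad\psi_{u}$, by conformality) is tangential. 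You instead \emph{precompose} a single ambient cut-off $\psi_{0}$ with a smooth self-map $R$ that flattens the normal direction at the boundary, so that $dR(\partial_{t})=0$ along the relevant part of $\partial\tilde{M}$ and the normal derivative of $\psi_{0}\circ R$ vanishes there. The trade-off: the paper's blend is constant along normal geodesics in a whole collar (slightly more than the lemma needs) at the cost of checking smoothness of the blend where $\nu$ and $\tau'$ are only defined or smooth on part of the manifold, while your composition is manifestly smooth but only kills the first normal derivative --- which is exactly the tangency condition asked for. Both arguments hinge on the same two points, which you identify explicitly: a collar over a (relatively) compact piece of $\partial\tilde{M}$ containing $\supp\psi_{u}\cap\partial\tilde{M}$, and keeping the collar thin relative to the slack $1/2$ between the radii $r+1/2$ and $r+1$ so that the modification does not disturb the conditions on the two balls. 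Your preliminary observations (compactness of $C(u,r+1)$ via completeness of $(\tilde{N},\mathfrak{h})$, and the conformal invariance of the tangency condition) are consistent with what the paper uses implicitly and in its closing sentence.
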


\begin{proof}
It is clear that there exists a non-negative $\psi_{u}^{\prime} \in C^{\infty}_{c}(\tilde{M})$ with $\supp \psi_{u}^{\prime} \subset C(u,r+1)$ and $\psi_{u}^{\prime} = 1$ in $C(u,r+ 1/2)$. If $M_{1}$ has empty boundary, this is the desired function. Otherwise, let $K := \partial \tilde{M} \cap C(u,r+2)$ and denote by $n$ the inward pointing normal to $\partial \tilde{M}$ with respect to the lift of $\mathfrak{h}$. Since $K$ is compact, there exists $\varepsilon > 0$, with $\varepsilon < 1/8$, such that the map $\Phi \colon K \times [0,2 \varepsilon) \to \tilde{M}$, defined by $\Phi(x,t) := \expo_{x}(tn)$ is a diffeomorphism onto its image $K_{\varepsilon}$.
Let $K_{1} := \partial \tilde{M} \cap C(u,r +1/2 + 2\varepsilon)$ and $K_{2} := \partial \tilde{M} \cap C(u,r+1 -2 \varepsilon)$. Clearly, there exists a non-negative $\tau \in C^{\infty}_{c}(\partial \tilde{M})$, with $\supp \tau \subset K_{2}$ and $\tau = 1$ in $K_{1}$. Extend it to $\tau^{\prime}$ in $K_{\varepsilon}$ by $\tau^{\prime}(\Phi(x,t)) := \tau(x)$, for all $(x,t) \in K \times [0,2\varepsilon)$. Consider a smooth $f \colon \mathbb{R} \to \mathbb{R}$, with $0 \leq f \leq 1$, $f(x) = 1$ for $x \leq \varepsilon$ and $f(x) = 0$ for $x \geq 3 \varepsilon/2$, and the function $\nu$ defined in $K_{\varepsilon}$ by $\nu(\Phi(x,t)) = f(t)$, for all $(x,t) \in K \times [0,2\varepsilon)$. Extend $\nu$ by zero outside $K_{\varepsilon}$ and set
$$
\psi_{u} := \nu \tau^{\prime} + (1-\nu) \psi_{u}^{\prime}.
$$
Since $\supp (\nu \tau^{\prime}) \subset C(u,r+1)$, $\supp \psi_{u}^{\prime} \subset C(u,r+1)$, it follows that $\supp \psi_{u} \subset C(u,r+1)$. Since $\varepsilon < 1/8$, the points where $\nu$ is not smooth are not in $C(u,r+1)$, which yields that $\psi_{u} \in C_{c}^{\infty}(\tilde{M})$. Since $\psi_{u}^{\prime} = 1$ in $C(u,r+1/2)$ and $\tau^{\prime} = 1$ in $C(u,r+1/2) \cap K_{\varepsilon}$, it follows that $\psi_{u} = 1$ in $C(u,r+1/2)$. In $\Phi(K \times [0,\varepsilon))$, which is a neighborhood of $\supp \psi_{u} \cap \partial \tilde{M}$, we have $\psi_{u} = \tau^{\prime}$. In particular, $\grad_{\mathfrak{h}} \psi_{u}$ is tangential to $\partial \tilde{M}$, and so is $\grad \psi_{u}$, since $\mathfrak{g}$ and $\mathfrak{h}$ are conformal. 
\qed
\end{proof}
\medskip

Let $\psi_{u}$ be a function as in the above lemma and for any $y \in p^{-1}(x)$, fix $u(y) \in p_{2}^{-1}(y)$ and $g(y) \in \Gamma_{1}$, such that $u(y) = g(y)u$. Consider the functions $\psi_{u(y)} := \psi_{u} \circ g(y)^{-1}$ in $\tilde{M}$ and $\psi_{y}$ in $M_{2}$ defined by
\begin{equation}\label{Definition of pushed-down}
\psi_{y}(z) := \sum_{w \in p_{2}^{-1}(z)} \psi_{u(y)}(w).
\end{equation}
It is clear that $\psi_{y} \in C^{\infty}_{c}(M_{2})$, $\supp \psi_{y} \subset C(y,r+1)$ and $\psi_{y} \geq 1$ in $C(y,r + 1/2)$, for any $y \in p^{-1}(x)$.
Moreover, if $M_{1}$ has non-empty boundary, then $\grad \psi_{y}$ is tangential to $\partial M_{2}$, for all $y \in p^{-1}(x)$.
From Lemma \ref{Intersection Lemma}, there exists $N(r+2) \in \mathbb{N}$, such that for any $z \in M_{2}$, the ball $B(z,1)$ intersects at most $N(r+2)$ of the supports of $\psi_{y}$, with $y \in p^{-1}(x)$. Therefore, $\sum_{y \in p^{-1}(x)} \psi_{y}$ is locally a finite sum and hence, well-defined and smooth. 

If $M_{1}$ is compact, we choose $r$ large enough, so that $\sum_{y \in p^{-1}(x)} \psi_{y} \geq 1$ in $M_{2}$. In this case, set $\psi_{1} := 0$ in $M_{2}$.
If $M_{1}$ is non-compact, consider $f_{1} \in C_{c}^{\infty}(M_{1})$ with $0 \leq f_{1} \leq 1$, $f_{1} = 1$ in $C(x,r)$, $\supp f_{1} \subset B(x,r+1/2)$, and let $\psi_{1}$ be the lift of $1 - f_{1}$ on $M_{2}$. Then $\psi_{1} \in C^{\infty}(M_{2})$, $\psi_{1} \geq 0$ in $M_{2}$ and $\psi_{1} = 0$ in $C(y,r)$, for all $y \in p^{-1}(x)$. Evidently, $\psi_{1} + \sum_{y \in p^{-1}(x)} \psi_{y} \geq 1$ in $M_{2}$.

Consider the smooth partition of unity consisting of the functions
\begin{equation}\label{Partition of Unity}
\varphi_{1}  := \frac{\psi_{1}}{\psi_{1} + \sum_{y^{\prime} \in p^{-1}(x)} \psi_{y^{\prime}}} \, \text{ and } \,
 \varphi_{y}  := \frac{\psi_{y}}{\psi_{1} + \sum_{y^{\prime} \in p^{-1}(x)} \psi_{y^{\prime}}},
\end{equation}
with $y \in p^{-1}(x)$.

\begin{remark}\label{Tangential Gradient}
It is clear that $\supp \varphi_{1} = \supp \psi_{1}$, $\supp \varphi_{y} = \supp \psi_{y}$, $\sum_{y^{\prime} \in p^{-1}(x)} \varphi_{y^{\prime}} = 1$ in $C(y,r)$ and $\varphi_{y} > 0$ in $C(y,r+1/2)$, for any $y \in p^{-1}(x)$. If $M_{1}$ has non-empty boundary, then for any $y,y^{\prime} \in p^{-1}(x)$, we have that $\grad \psi_{y}$ is tangential to $\partial M_{2}$ and $\psi_{1} = 0$ in $B(y^{\prime},r)$. This yields that $\grad \varphi_{y}$ is tangential to $\partial M_{2}$ in $B(y^{\prime},r)$, for all $y,y^{\prime} \in p^{-1}(x)$.
\end{remark}

Let $\eta \in \mathcal{D}(D_{1})$ and $\theta \in \Gamma(E_{2})$ be the lift of $\eta$. Fix $x \in M_{1}$, $u \in p_{1}^{-1}(x)$ and $r>0$, such that $\supp \eta \subset B(x,r)$. If $M_{1}$ has non-empty boundary, we choose $r$ large enough, so that $B(u,r) \cap \partial \tilde{M} \neq \emptyset$. Consider a partition of unity associated to $x$, $u$ and $r$ as in (\ref{Partition of Unity}) and for a finite $P \subset p^{-1}(x)$, set $\chi := \sum_{y \in P} \varphi_{y}$.

\begin{remark}\label{Pull Up in the Domain}
Since $P$ is finite, it follows that $\chi \in C^{\infty}_{c}(M_{2})$ and $\chi \theta \in \Gamma_{c}(E_{2})$. Since $\supp \eta \subset B(x,r)$, we have that $\supp \theta$ is contained in the union of the balls $B(y,r)$, with $y \in p^{-1}(x)$. Therefore, if $M_{1}$ has non-empty boundary, from Remark \ref{Tangential Gradient}, $\chi \theta$ satisfies analogous boundary conditions to $\eta$, that is, $\chi \theta \in \mathcal{D}(D_{2})$.
\end{remark}

\begin{proposition}\label{Uniform Estimate}
There exists a constant $C$, independent from $P$, such that
for any $z \in M_{2}$, we have $\| D_{2}(\chi \theta)(z) \| \leq C$.
\end{proposition}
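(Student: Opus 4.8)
The plan is to expand $D_2(\chi\theta)$ at an arbitrary point $z\in M_2$ using the local expression of $D_2$ and the Leibniz rule, and then estimate each resulting term by a constant that does not depend on the finite set $P$. The key observation is that $\chi=\sum_{y\in P}\varphi_y$, so that near $z$ the function $\chi$ agrees with a sum of at most $N(r+2)$ of the $\varphi_y$'s (by Lemma \ref{Intersection Lemma}, since $B(z,1)$ meets at most $N(r+2)$ of the supports $\supp\varphi_y\subset C(y,r+1)$); this bounds the combinatorial multiplicity uniformly in $P$. Thus I would first fix $z$, pass to a neighborhood on which $p$ is an isometry onto a neighborhood $U_1\subset M_1$, and write, in the lifted coordinate system and trivialization, $D_2=\sum_{|\alpha|\le d}(A^\alpha\circ p)\,\partial^{|\alpha|}/\partial y^\alpha$. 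Since the leading-order derivatives of $\chi\theta$ involve only derivatives of $\chi$ times derivatives of $\theta$, and $\theta$ is the lift of the fixed section $\eta\in\mathcal D(D_1)$, I would separate the contribution into (a) terms where all derivatives hit $\theta$, which equal $\chi\cdot(D_2\theta)$ plus lower-order commutator terms, and (b) terms where at least one derivative hits $\chi$.

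For (a): $D_2\theta$ is the lift of $D_1\eta$, so $\|D_2\theta(z)\| = \|D_1\eta(p(z))\|\le \sup_{M_1}\|D_1\eta\|<\infty$, a bound depending only on $\eta$; and $0\le\chi\le 1$. For (b): each such term is a product of a derivative $\partial^\beta\chi$ with $|\beta|\ge 1$, a coefficient $A^\alpha\circ p$, and a derivative $\partial^\gamma\theta$ with $|\gamma|<|\alpha|\le d$. Here $\theta$ and all its derivatives up to order $d$ are bounded on $M_2$ (again because $\theta$ is the lift of the fixed compactly supported $\eta$, so its local coordinate derivatives are bounded by the corresponding quantities for $\eta$, which are continuous on a compact set), and the coefficients $A^\alpha$ are smooth on the compact set $\supp\eta$, hence bounded. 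The only point requiring the uniformity in $P$ is the bound on $\partial^\beta\chi(z)$: since near $z$ we have $\chi=\sum_{y\in P\cap P_z}\varphi_y$ for the set $P_z$ of those $y$ with $C(y,r+1)\cap B(z,1)\ne\emptyset$, and $\#P_z\le N(r+2)$ by Lemma \ref{Intersection Lemma}, we get $\|\partial^\beta\chi(z)\|\le N(r+2)\sup_{y}\sup_{M_2}\|\partial^\beta\varphi_y\|$. Finally, because the $\varphi_y$ are obtained from the single function $\psi_u$ by the deck transformations $g(y)$ and the push-down \eqref{Definition of pushed-down}, and because $p$ is a local isometry, the coordinate derivatives of each $\varphi_y$ up to order $d$ are bounded by a constant depending only on $\psi_u$, $r$, $d$ and the geometry near $\supp\eta$ — in particular independent of $y$ and of $P$. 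Collecting these bounds and summing the finitely many multi-index terms yields a constant $C=C(\eta,\psi_u,r,d)$ with $\|D_2(\chi\theta)(z)\|\le C$ for all $z$ and all finite $P$.

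The main obstacle I anticipate is bookkeeping rather than conceptual: one must carry out the Leibniz expansion carefully so that the ``diagonal'' term is recognized as $\chi\,(D_2\theta)$ (controlled by $D_1\eta$) while every remaining term carries at least one derivative of $\chi$, and one must argue cleanly that the coordinate derivatives of the $\varphi_y$ are bounded uniformly in $y$ — which follows from the fact that each $\varphi_y$ is a normalized push-forward of a translate of the fixed $\psi_u$, the normalizing denominator $\psi_1+\sum_{y'}\psi_{y'}$ is $\ge 1$ with locally uniformly bounded derivatives (again by Lemma \ref{Intersection Lemma}), and $p$ is a local isometry so that passing between $\tilde M$, $M_2$ and the coordinate chart does not affect these bounds. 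One should also note that if $M_1$ is compact the term $\psi_1$ is absent and the argument is unchanged, while if $M_1$ has boundary the tangential-gradient property from Remark \ref{Tangential Gradient} is not needed for this estimate, only the uniform bounds on derivatives.
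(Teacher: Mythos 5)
Your proposal follows the same route as the paper: express $D_2$ in lifted coordinates, apply the Leibniz rule, bound the derivatives of $\theta$ by those of the fixed $\eta$ on the compact $\supp\eta$, and reduce everything to a uniform $C^d$-bound on $\chi$ obtained from a uniform $C^d$-bound on the individual $\varphi_y$ together with the bounded-multiplicity statement of Lemma \ref{Intersection Lemma}. (The phrase ``plus lower-order commutator terms'' in your step (a) is spurious --- the term in which all derivatives fall on $\theta$ is exactly $\chi\,D_2\theta$ --- but that is cosmetic.)

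There is one substantive point you gloss over, and it is precisely the point for which the paper has a dedicated lemma. The uniform $C^d$-bound on $\psi_y$ does not follow merely from ``$\varphi_y$ is a push-forward of a translate of $\psi_u$ and $p$ is a local isometry'': by \eqref{Definition of pushed-down}, $\psi_y(z)=\sum_{w\in p_2^{-1}(z)}\psi_{u(y)}(w)$ is a sum over the fiber of the covering $p_2\colon\tilde M\to M_2$, so in a chart around $z$ it is a sum of finitely many isometric copies of pieces of $\psi_{u(y)}$, and the derivative bound you get is the bound for $\psi_{u(y)}$ \emph{times the number of nonzero summands}. One must therefore bound, uniformly in $y\in p^{-1}(x)$ and $z\in M_2$, the number of $w\in p_2^{-1}(z)$ whose chart meets $\supp\psi_{u(y)}\subset C(u(y),r+1)$; this is Lemma \ref{Cardinality Estimate Lemma}, which concerns the fiber $p_2^{-1}(z)$ in $\tilde M$ and is a different count from Lemma \ref{Intersection Lemma}, which concerns the fiber $p^{-1}(x)$ in $M_2$ and is what you correctly invoke for the denominator and for $\chi$. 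Without that cardinality bound the constant controlling $\partial^\beta\psi_y$ could a priori depend on $y$, and the uniformity in $P$ would be lost. Once Lemma \ref{Cardinality Estimate Lemma} is inserted at that point, your argument coincides with the paper's proof.
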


\begin{proof}
Consider $\delta > 0$, such that for any $x^{\prime} \in C(x,r+1)$, the ball $B(x^{\prime},2\delta)$ is evenly covered and contained in a coordinate neighborhood, and $E_{1}|_{B(x^{\prime},2\delta)}$ is trivial.
Let $x_{1},\dots,x_{k} \in C(x,r+1)$, such that the balls $B(x_{i}, \delta)$, with $1\leq i\leq k$, cover $C(x,r+1)$.
In any ball $B(x_{i},2 \delta)$, $D_{1}$ has a local expression of the form (\ref{Local Expression}), with $A^{\alpha}$ smooth. This yields that in $B(x_{i},\delta)$, $D_{1}$ is expressed in the form (\ref{Local Expression}), with $A^{\alpha}$ smooth and bounded.
For any such ball, we fix a coordinate system (which can be extended to the corresponding ball of radius $2 \delta$) and a trivialization. Since $C(x,r+1)$ is covered by finitely many such balls, it follows that there exists $C_{1}>0$, such that in any of these balls,
we have $\|A^{\alpha}\| \leq C_{1}$, for all multi-indices $\alpha$ of absolute value less or equal to the order $d$ of $D_{1}$.

Since $\eta$ is smooth and compactly supported in $B(x,r)$, there exists $C_{2}>0$, such that in any of these balls,
denoting by $(\eta^{(1)} , \dots ,\eta^{(\ell)})$ the local expression of $\eta$, we have that
$$
\| \frac{\partial^{|\alpha|}}{\partial x^{\alpha}} (\eta^{(1)} , \dots , \eta^{(\ell)}) \| \leq C_{2},
$$
for all multi-indices $\alpha$ of absolute value less or equal to $d$, that is, we have \textit{uniform estimates up to order $d$ for $\eta$ (with respect to this system of trivializations)}.
We lift these balls and the corresponding coordinate systems and trivializations to $M_{2}$ and $\tilde{M}$. Similarly, if $\psi_{1} \neq 0$, we obtain uniform estimates up to order $d$ for $f_{1}$, which yield uniform estimates up to order $d$ for $\psi_{1}$ (with respect to the lifted system on $M_{2}$).

Since $\psi_{u}$ is smooth and compactly supported in $C(u,r+1)$, which intersects finitely many balls of the lifted system on $\tilde{M}$, there exist uniform estimates up to order $d$ for $\psi_{u}$. Since $\psi_{u(y)}$ is a composition of $\psi_{u}$ with an element of $\Gamma_{1}$, we obtain the same uniform estimates up to order $d$ for $\psi_{u(y)}$, for all $u(y)$.
Recall the definition of $\psi_{y}$ in (\ref{Definition of pushed-down}). Consider a ball $B(z^{\prime},\delta)$ of the lifted system on $M_{2}$, which intersects $\supp \psi_{y}$, and the corresponding coordinate system.
It is clear that for any $w \in p_{2}^{-1}(z^{\prime})$, the lifted system on $\tilde{M}$ contains the ball $B(w,\delta)$ and the corresponding coordinate system. From Lemma \ref{Cardinality Estimate Lemma}, there exists $\tilde{N}(r + 1,\delta) \in \mathbb{N}$, independent from $y$ and $z^{\prime}$, such that at most $\tilde{N}(r+1,\delta)$ such balls intersect the support of $\psi_{u(y)}$. Since we have uniform estimates up to order $d$ for $\psi_{u(y)}$, which are independent from $y \in p^{-1}(x)$, we obtain the same uniform estimates up to order $d$ for $\psi_{y}$, for all $y \in p^{-1}(x)$. From Lemma \ref{Intersection Lemma}, it follows that at most $N(r+1+ \delta)$ of the supports of $\psi_{y}$, with $y \in p^{-1}(x)$, intersect the open ball $B(z,\delta)$, for any $z \in M_{2}$. This yields that there exist uniform estimates up to order $d$ for $\psi_{1} + \sum_{y \in p^{-1}(x)} \psi_{y}$.

Recall the definition of $\varphi_{y}$ in (\ref{Partition of Unity}). Since the denominator is greater or equal to $1$ and we have uniform estimates (independent from $y$) up to order $d$ for the numerator and the denominator, we obtain the same uniform estimates up to order $d$ for $\varphi_{y}$, for all $y \in p^{-1}(x)$.
From Lemma \ref{Intersection Lemma}, at most $N(r+1+\delta)$ of the supports of $\varphi_{y}$, with $y \in p^{-1}(x)$, intersect the ball $B(z,\delta)$, for any $z \in M_{2}$. Therefore, we obtain uniform estimates up to order $d$ for $\chi$, which are independent from $P$

Clearly, for $z \in \supp (\chi \theta)$, we have that $z \in B(y,r)$, for some $y \in p^{-1}(x)$, and in particular, $z$ is contained in a ball of the system. With respect to the corresponding coordinate system and trivialization, denoting by $(\theta^{(1)},\dots,\theta^{(\ell)})$ the local expression of $\theta$, we have
\begin{eqnarray}
\| D_{2}(\chi \theta)(z) \| &=& \| \sum_{|\alpha|\leq d} (A^{\alpha} \circ p)(z) \frac{\partial^{|\alpha|}}{\partial y^{\alpha}} (\chi (\theta^{(1)},\dots , \theta^{(\ell)}  )) (z) \| \nonumber\\
&\leq&\sum_{|\alpha|\leq d} C_{1} \|\frac{\partial^{|\alpha|}}{\partial y^{\alpha}} (\chi (\theta^{(1)},\dots , \theta^{(\ell)}  )) (z) \| \nonumber \\
&\leq& C_{1}C_{2}C_{3}C(d,\ell), \nonumber
\end{eqnarray}
where 
$C_{3}$ is the uniform bound up to order $d$ for $\chi$ (which is independent from $P$) and $C(d,\ell)$ is a constant depending only on $d$ and $\ell$.
\qed
\end{proof}

\begin{corollary}\label{Rayleigh Quotient Uniform Estimate}
There exists a constant $C^{\prime}$, independent from $P$, such that for any $z \in M_{2}$, we have $| \langle D_{2}(\chi \theta)(z) , (\chi \theta)(z) \rangle | \leq C^{\prime}$.
\end{corollary}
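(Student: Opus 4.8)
The plan is to obtain this as an immediate consequence of Proposition \ref{Uniform Estimate} together with a pointwise Cauchy--Schwarz inequality. First I would apply the Cauchy--Schwarz inequality in the fiber of $E_{2}$ over $z$ to get
\[
| \langle D_{2}(\chi \theta)(z) , (\chi \theta)(z) \rangle | \leq \| D_{2}(\chi \theta)(z) \| \cdot \| (\chi \theta)(z) \|,
\]
for every $z \in M_{2}$.

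Next I would bound each factor separately. For the first factor, Proposition \ref{Uniform Estimate} provides a constant $C$, independent from $P$, with $\| D_{2}(\chi \theta)(z) \| \leq C$ for all $z$. For the second factor, recall that $\chi = \sum_{y \in P} \varphi_{y}$ where the $\varphi_{y}$ (together with $\varphi_{1}$) form a partition of unity; since $P \subset p^{-1}(x)$ and all these functions are non-negative, we have $0 \leq \chi \leq 1$ on $M_{2}$, so $\| (\chi \theta)(z) \| \leq \| \theta(z) \|$. As $\theta$ is the lift of $\eta$ and $p$ is a local isometry (so the bundle map $E_{2} \to E_{1}$ is fiberwise isometric), the pointwise norm of $\theta$ equals that of $\eta$ composed with $p$; since $\eta \in \mathcal{D}(D_{1})$ is smooth and compactly supported, $\sup_{M_{1}} \| \eta \| =: C''$ is finite, and it is independent from $P$. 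Hence $\| (\chi \theta)(z) \| \leq C''$ for all $z \in M_{2}$.

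Combining these, the claim follows with $C' := C \cdot C''$, which is independent from $P$. There is no real obstacle here: the only points requiring a word of care are the observation that $\chi$ takes values in $[0,1]$ because it is a partial sum of a partition of unity, and the fact that lifting preserves pointwise norms so that $\theta$ is pointwise bounded by the compactly supported datum $\eta$; both are routine. \qed
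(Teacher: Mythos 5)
Your proof is correct and is essentially the paper's argument: the paper's own proof is the single line ``Follows immediately from Proposition \ref{Uniform Estimate}'', and your pointwise Cauchy--Schwarz together with the bounds $0 \leq \chi \leq 1$ and $\|\theta(z)\| = \|\eta(p(z))\| \leq \sup_{M_1}\|\eta\|$ is exactly the routine elaboration that line presupposes. Nothing is missing.
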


\begin{proof}
Follows immediately from Proposition \ref{Uniform Estimate}.\qed
\end{proof}

\subsection{Amenable coverings}\label{spectrum subsection}

In this subsection we continue to work in the setting of the previous subsection, that is, we extend the covering $p \colon M_{2} \to M_{1}$ to a Riemannian covering $p \colon N_{2} \to N_{1}$ according to Proposition \ref{Extension of Covering} (if needed) and consider conformal Riemannian metrics, such that the manifolds become complete.
If $M_{1}$ has empty boundary, for $x \in M_{1}$ and $y \in p^{-1}(x)$, we denote by $D_{y}$ the fundamental domain of $p \colon M_{2} \to M_{1}$ centered at $y$, with respect to these conformal Riemannian metrics.
If $M_{1}$ has non-empty boundary, we denote by $D_{y}$ the part of the fundamental domain of $p \colon N_{2} \to N_{1}$ that lies in $M_{2}$. Furthermore, volumes, integrals and $L^{2}$-norms are with respect to the original Riemannian metrics.

As in the previous subsection, consider the universal coverings $p_{i} \colon \tilde{M} \to M_{i}$, denote by $\Gamma_{i}$ the deck transformations group of $p_i$, $i=1,2$, and fix $x \in M_{1}$ and $u \in p_{1}^{-1}(x)$. It is quite convenient to identify $\Gamma_{2} \backslash \Gamma_{1}$ with $p^{-1}(x)$, that is, $\Gamma_2 \gamma$ is identified with $p_2 (\gamma u)$, and study induced right action of $\Gamma_{1}$ on $p^{-1}(x)$. Clearly, if $y = p_{2}(\gamma u)$, for some $\gamma \in \Gamma_{1}$, then $y \cdot g = p_{2}(\gamma g u)$, for any $g \in \Gamma_{1}$. It is worth to point out that $p$ is amenable if and only if this right action of $\Gamma_{1}$ on $p^{-1}(x)$ is amenable.

For $r>0$, consider the finite set
\[
G_{r} := \{ g \in \Gamma_{1} : d(u,gu) < r \}
\]
and the subgroup $\langle G_{r} \rangle$ of $\Gamma_{1}$ generated by $G_{r}$. We are interested in the right action of $\langle G_{r} \rangle$ on $p^{-1}(x)$. The next remark is a simple description of the orbits of this action.

\begin{remark}\label{Orbits}
Let $y \in p^{-1}(x)$ and $g \in G_{r}$. Then there exists $\gamma \in \Gamma_{1}$, such that $y = p_{2}(\gamma u)$ and $y \cdot g = p_{2}(\gamma g u)$. Clearly, we have
\[
d(y, y \cdot g) = d(p_{2}(\gamma u) , p_{2}(\gamma gu)) \leq d(\gamma u , \gamma g u) = d(u, gu) < r.
\]
Conversely, let $y_{1} , y_{2} \in p^{-1}(x)$ with $d(y_{1}, y_{2}) < r$. Then there exist $\gamma_{1}, \gamma_{2} \in \Gamma_{1}$, such that $y_{i} = p_{2}(\gamma_{i} u)$, for $i=1,2$, and there exists $\sigma \in \Gamma_{2}$, such that
\[
d(\sigma \gamma_{1} u , \gamma_{2} u) = d (p_{2}(\gamma_{1}u) , p_{2}(\gamma_{2}u)) = d(y_{1},y_{2}) < r.
\]
This yields that $\gamma_{1}^{-1} \sigma^{-1} \gamma_{2} =:g \in G_{r}$. It follows that $\Gamma_{2} \gamma_{2} = \Gamma_{2} \gamma_{1} g$, that is, $y_{2} = y_{1} \cdot g$. 

Hence, two points $z_1,z_2 \in p^{-1}(x)$ are in the same orbit of the action of $\langle G_r \rangle$ on $p^{-1}(x)$ if and only if there exist $k \in \mathbb{N}$ and $y_1,\dots , y_k \in p^{-1}(x)$, such that $y_1 = z_1$, $y_k = z_2$ and $d(y_i,y_{i+1}) < r$, for $i=1,\dots, k-1$.
\end{remark}

\begin{lemma}\label{Orbit Cases}
If $p \colon M_{2} \to M_{1}$ is infinite sheeted, then there exists $R>0$, such that one of the following holds:
\begin{enumerate}[topsep=0pt,itemsep=-1pt,partopsep=1ex,parsep=0.5ex,leftmargin=*, label=(\roman*), align=left, labelsep=0em]
\item either for any $r \geq R$, the action of $\langle G_{r} \rangle$ on $p^{-1}(x)$ has only infinite orbits,
\item or for any $r \geq R$, the action of $\langle G_{r} \rangle$ on $p^{-1}(x)$ has infinitely many finite orbits.
\end{enumerate}
\end{lemma}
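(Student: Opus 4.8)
The plan is to analyze how the orbit structure of $\langle G_r \rangle$ acting on $p^{-1}(x)$ evolves as $r$ increases, exploiting monotonicity: since $G_r \subset G_{r'}$ whenever $r \leq r'$, every orbit of $\langle G_r \rangle$ is contained in an orbit of $\langle G_{r'} \rangle$, so orbits can only merge as $r$ grows. Using the combinatorial description from Remark \ref{Orbits}, two points of $p^{-1}(x)$ lie in the same $\langle G_r \rangle$-orbit exactly when they are joined by a chain of points with consecutive distances less than $r$; in particular, the orbit $O_r(y)$ of $y$ contains every point of $p^{-1}(x)$ lying in the same connected component of the graph on $p^{-1}(x)$ with edges $\{y_1,y_2 : d(y_1,y_2) < r\}$. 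First I would record two basic finiteness facts that follow from Lemma \ref{Intersection Lemma} (the uniform bound $N(\cdot)$ on overlaps of balls $C(y,r)$): for each $r$, every point of $p^{-1}(x)$ has only finitely many other points of $p^{-1}(x)$ within distance $r$, so the graph has finite degree; consequently a $\langle G_r \rangle$-orbit is finite if and only if it is, as a set, bounded.

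The next step is to set up the dichotomy. Fix a basepoint $y_0 \in p^{-1}(x)$. If for \emph{some} $r$ the orbit $O_r(y_0)$ is infinite, then for all larger $r$ it remains infinite (orbits only merge), and moreover by connectivity of $M_2$ together with the bounded-geometry chain argument, for $r$ large enough \emph{every} orbit becomes infinite: indeed, the whole set $p^{-1}(x)$ is the increasing union $\bigcup_r O_r(y_0)$ only when the action is cofinal from $y_0$, which need not happen, so instead I would argue that if $O_r(y_0)$ is infinite for all large $r$ while some other orbit $O_r(y_1)$ stays finite for arbitrarily large $r$, the finite orbit $O_r(y_1)$ is a bounded set fixed setwise by $\langle G_r \rangle$ for all those $r$; taking $r$ to exceed the diameter needed to connect $y_1$ to $y_0$ (possible since $p^{-1}(x)$ is a single orbit of $\Gamma_1$ and chains realize the connectivity of $M_2$) forces $O_r(y_1)$ to absorb $y_0$, a contradiction. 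Hence once one orbit is infinite for all $r \geq R$, all orbits are — case (i). Conversely, suppose for every $r$ all orbits of $\langle G_r \rangle$ are finite. Since $p$ is infinite sheeted, $p^{-1}(x)$ is infinite, so there must be infinitely many orbits for each such $r$; as $r$ grows these merge but, again because each orbit stays finite and finitely many merges at each stage cannot reduce an infinite partition into finitely many finite pieces, there remain infinitely many finite orbits for all $r \geq R := 1$ — case (ii). The only remaining logical possibility is that for every $R$ there is some $r \geq R$ with all orbits finite and some $r' \geq R$ with an infinite orbit; but by monotonicity "has an infinite orbit" is an upward-closed property of $r$, so this cannot happen, and one of (i), (ii) holds with a single threshold $R$.

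I expect the main obstacle to be the merging-contradiction argument showing that a persistently infinite orbit forces \emph{all} orbits infinite: one must rule out the scenario of one infinite orbit coexisting with a finite one for all large $r$. The resolution should come from the fact that $M_2$ is connected, so any two fibre points $y_0,y_1 \in p^{-1}(x)$ are joined by a path in $M_2$ which, lifted and discretized along the fundamental domains $D_y$, yields a chain $y_0 = z_0, z_1, \dots, z_k = y_1$ with each $d(z_i,z_{i+1})$ bounded by a constant depending only on that fixed path; for $r$ larger than this constant, $y_0$ and $y_1$ lie in the same $\langle G_r \rangle$-orbit, so no orbit can be simultaneously finite and "separated" from an infinite one once $r$ is large enough. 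Packaging this uniformly over the (countably many) pairs is the delicate point, but it only has to be applied to a single witness pair $(y_0, y_1)$, which makes it manageable. The rest — monotonicity of orbit merging, equivalence of finite and bounded orbits via Lemma \ref{Intersection Lemma}, and the bookkeeping that infinitely many finite orbits persist — is routine.
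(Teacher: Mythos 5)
Your overall mechanism (monotonicity of the orbits in $r$, plus merging a finite orbit into an infinite one by enlarging $r$ via Remark \ref{Orbits}) is the same one the paper exploits, but your case division contains a genuine error. You split according to whether some $\langle G_{r} \rangle$-orbit is infinite (Case A, claimed to yield (i)) or all orbits of all $\langle G_{r} \rangle$ are finite (Case B, yielding (ii)). Case B is fine. In Case A, however, the assertion ``once one orbit is infinite for all $r \geq R$, all orbits are --- case (i)'' is false: the presence of an infinite orbit is perfectly compatible with $\langle G_{r} \rangle$ having infinitely many finite orbits for every $r$, which is alternative (ii), not (i). Your contradiction argument only excludes a \emph{single} point $y_{1}$ whose orbit remains finite for arbitrarily large $r$ (indeed, for $r > d(y_{0},y_{1})$, Remark \ref{Orbits} already puts $y_{1}$ in the orbit of $y_{0}$). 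It does not exclude the scenario in which the set $F_{r}$ of points lying in finite orbits is non-empty for every $r$ while $\bigcap_{r} F_{r} = \emptyset$; since the sets $F_{r}$ only decrease in $r$, this happens exactly when every $F_{r}$ is infinite, i.e.\ when there are infinitely many finite orbits at every level. So your dismissal of the uniformity issue (``it only has to be applied to a single witness pair'') is precisely where the proof breaks: a single witness pair need not exist.

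The missing step is a further split of Case A according to whether some $\langle G_{r_{0}} \rangle$ has only finitely many finite orbits. If no such $r_{0}$ exists, then every $\langle G_{r} \rangle$ has infinitely many finite orbits and (ii) holds outright. If such an $r_{0}$ exists, with finite orbits $\mathcal{O}_{1}, \dots, \mathcal{O}_{k}$, then for every $r \geq r_{0}$ the points lying in finite orbits form a subset of the fixed finite set $\mathcal{O}_{1} \cup \dots \cup \mathcal{O}_{k}$ (a finite orbit of $\langle G_{r} \rangle$ is a union of finite orbits of $\langle G_{r_{0}} \rangle$), so finitely many applications of your witness-pair argument, one per $\mathcal{O}_{i}$, produce a uniform threshold $R$ beyond which all orbits are infinite. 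This is essentially what the paper does: assuming the lemma fails, it extracts a level $r_{0}$ with only finitely many finite orbits, notes that an infinite orbit $\mathcal{O}$ exists since the fiber is infinite, uses transitivity of $\Gamma_{1}$ on $p^{-1}(x)$ to choose $g_{1}, \dots, g_{k}$ sending a point of each $\mathcal{O}_{i}$ into $\mathcal{O}$, and takes $R$ with $G_{r_{0}} \cup \{g_{1},\dots,g_{k}\} \subset G_{R}$, so that every orbit of $\langle G_{R} \rangle$ contains an infinite orbit of $\langle G_{r_{0}} \rangle$ and (i) holds, a contradiction. With that additional reduction your argument closes; without it, it does not.
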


\begin{proof}
Assume to the contrary that the statement does not hold. Then there exists $r_{0} > 0$, such that the action of $\langle G_{r_{0}}\rangle$ on $p^{-1}(x)$ has only finitely many finite orbits $\mathcal{O}_{1}, \dots, \mathcal{O}_{k}$, for some $k \in \mathbb{N}$. Since $p$ is infinite sheeted, there exists also an infinite orbit $\mathcal{O}$. Since the action of $\Gamma_1$ on $p^{-1}(x)$ is transitive, for $y_{i} \in \mathcal{O}_{i}$, there exists $g_{i} \in \Gamma_1$, such that $y_i \cdot g_i \in \mathcal{O}$, for $i=1,\dots,k$. Then there exists $R>0$, such that $G_{r_{0}} \cup \{ g_1 , \dots , g_k\} \subset G_{R}$ and the action of $\langle G_{R} \rangle$ on $p^{-1}(x)$ has only infinite orbits. It is clear that so does the action of $\langle G_{r} \rangle$ on $p^{-1}(x)$, for any $r \geq R$, which is a contradiction. \qed
\end{proof}
\medskip

Let $r > 0$ large enough, so that $B(u,r) \cap \partial \tilde{M} \neq \emptyset$, if $M_{1}$ has non-empty boundary. If $p$ is infinite sheeted, we choose $r \geq R$, where $R$ is the constant from Lemma \ref{Orbit Cases}.
Consider a partition of unity consisting of the functions $\varphi_{1}$ and $\varphi_{y}$, with $y \in p^{-1}(x)$, associated to $x$, $u$ and $r$ as in (\ref{Partition of Unity}). For a finite $P \subset p^{-1}(x)$, let $\chi := \sum_{y \in P} \varphi_{y}$ and consider the sets
\begin{eqnarray}\label{definitions of Q}
Q_{+} &:=& \{ y \in p^{-1}(x) : \chi = 1 \text{ in } B(y,r) \} \nonumber \\
Q_{-} &:=& \{ y \in p^{-1}(x) : 0 <  \chi(z) < 1  \text{ for some } z \in B(y,r) \},  \\
Q&:=& Q_{+} \cup Q_{-} = \{ y \in p^{-1}(x) : \text{$\chi(z)\neq 0$ for some $z\in B(y,r)$} \}. \nonumber
\end{eqnarray}
Clearly, $\chi = 0$ in $B(y,r)$, for any $y \in p^{-1}(x) \smallsetminus Q$. Since $\chi$ is compactly supported, it follows that $Q$ is finite.
The proof of the following lemma is essentially presented in \cite{BMP}, but since we are in a different situation here, we repeat it.

\begin{lemma}\label{Pulling Up}
If $p$ is amenable, then for any $\varepsilon > 0$, there exists a non-empty, finite $P \subset p^{-1}(x)$, such that $$\frac{\#(Q_{-})}{ \#(Q_{+})} < \varepsilon.$$
\end{lemma}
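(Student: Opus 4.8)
The plan is to recast the conditions defining $Q_{+}$ and $Q_{-}$ as a purely combinatorial statement about $P$, regarded as a subset of $p^{-1}(x)$, and then to invoke F\o{}lner's criterion (Proposition~\ref{Folner}). Put $s := 2r+2$; in the infinite sheeted case $r \geq R$, so $s \geq R$ as well. Recall from Remark~\ref{Orbits} that for $y,y' \in p^{-1}(x)$ one has $d(y,y') < s$ precisely when $y' = yg$ for some $g \in G_{s}$; write $yG_{s} := \{\, yg : g \in G_{s} \,\}$, and note that $e \in G_{s}$ and $G_{s} = G_{s}^{-1}$. By Remark~\ref{Tangential Gradient}, $\sum_{y'' \in p^{-1}(x)} \varphi_{y''} = 1$ on $C(y,r) \supset B(y,r)$, for every $y \in p^{-1}(x)$. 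Two elementary observations follow. First, if every $y' \in p^{-1}(x)$ with $d(y,y') < s$ lies in $P$, then $\chi \equiv 1$ on $B(y,r)$, hence $y \in Q_{+}$: indeed, for $y' \notin P$ the hypothesis gives $d(y,y') \geq s$, so $B(y,r) \cap \supp\varphi_{y'} \subset B(y,r) \cap C(y',r+1) = \emptyset$, and therefore, the remaining terms vanishing on $B(y,r)$, $\chi \equiv \sum_{y'' \in p^{-1}(x)} \varphi_{y''} \equiv 1$ there. Second, suppose $y \in Q$; by definition there is $z \in B(y,r)$ with $\chi(z) \neq 0$, and if $y \in Q_{-}$ we may take such a $z$ with also $\chi(z) < 1$. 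Then $\varphi_{y''}(z) > 0$ for some $y'' \in P$, giving $d(y,y'') < s$; and when $\chi(z) < 1$, since $\sum_{y'' \in p^{-1}(x)} \varphi_{y''}(z) = 1$ we also get $\varphi_{y'}(z) > 0$ for some $y' \notin P$, giving $d(y,y') < s$ as well. Writing $\operatorname{int}_{s}P := \{\, y : yG_{s} \subset P \,\}$ and $\partial_{s}P := \{\, y : yG_{s} \cap P \neq \emptyset,\ yG_{s} \not\subset P \,\}$, these observations say $\operatorname{int}_{s}P \subset Q_{+}$ and $Q_{-} \subset \partial_{s}P$, so it suffices to produce a non-empty finite $P$ with $\#(\partial_{s}P) < \varepsilon\,\#(\operatorname{int}_{s}P)$.

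If $p$ is finite sheeted this is immediate: take $P := p^{-1}(x)$, so that $\operatorname{int}_{s}P = p^{-1}(x)$ and $\partial_{s}P = \emptyset$. If $p$ is infinite sheeted, I would split according to Lemma~\ref{Orbit Cases} applied to the action of $\langle G_{s} \rangle$ on $p^{-1}(x)$, which is legitimate since $s \geq R$. In case (ii) there is a finite orbit $\mathcal{O}$; taking $P := \mathcal{O}$ one has $yG_{s} \subset \mathcal{O} = P$ for every $y \in \mathcal{O}$, because $\mathcal{O}$ is $\langle G_{s} \rangle$-invariant, so $\operatorname{int}_{s}P = P \neq \emptyset$ and $\partial_{s}P = \emptyset$, and the proof is complete.

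In case (i) I would use amenability. Since the right action of $\Gamma_{1}$ on $p^{-1}(x)$ is amenable, Proposition~\ref{Folner} provides, for the finite set $G_{s} \subset \Gamma_{1}$ and any $\delta > 0$, a non-empty finite F\o{}lner set $P = F$ with $\#(F \smallsetminus Fg) < \delta\,\#F$ for all $g \in G_{s}$. Since $\operatorname{int}_{s}F = \bigcap_{g \in G_{s}} Fg^{-1}$ while $\{\, y : yG_{s} \cap F \neq \emptyset \,\} = \bigcup_{g \in G_{s}} Fg^{-1}$, a routine count (using $\#(Fg^{-1} \smallsetminus F) = \#(F \smallsetminus Fg)$ and $\#(F \smallsetminus Fg^{-1}) < \delta\,\#F$, valid as $G_{s} = G_{s}^{-1}$) gives $\#(\partial_{s}F) < 2\delta\,\#(G_{s})\,\#F$ and $\#(\operatorname{int}_{s}F) > (1 - \delta\,\#(G_{s}))\,\#F$. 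Hence $\#(\partial_{s}F) < \varepsilon\,\#(\operatorname{int}_{s}F)$ once $\delta$ is taken small enough that $\delta\,\#(G_{s}) < 1$ and $2\delta\,\#(G_{s}) < \varepsilon(1 - \delta\,\#(G_{s}))$, which completes the proof. (This F\o{}lner argument is actually insensitive to the orbit structure and would also cover case (ii); the finite orbit is merely the cleaner choice when it is available.)

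The one genuinely delicate point is the passage, in the two observations above, from the function-theoretic description of $Q_{\pm}$ to the combinatorial one, and in particular the correct choice of the interaction radius $s$: it rests on the support bound $\supp\varphi_{y} \subset C(y,r+1)$ and on $\sum_{y''} \varphi_{y''} \equiv 1$ on each $C(y,r)$, which are exactly the features of the partition of unity built in the previous subsection. Once that translation is in place, the orbit dichotomy of Lemma~\ref{Orbit Cases} and the F\o{}lner count are routine bookkeeping.
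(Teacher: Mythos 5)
Your proof is correct, and at its core it takes the same route as the paper's: both arguments feed the finite set $G_{2r+2}$ into F\o{}lner's criterion (Proposition \ref{Folner}) and both rest on the same two facts about the partition of unity, namely $\supp \varphi_{y} \subset C(y,r+1)$ and $\sum_{y^{\prime}} \varphi_{y^{\prime}} = 1$ on $C(y,r)$, translated into distance conditions via Remark \ref{Orbits}. The difference is in the counting. The paper assigns to each $y \in Q_{-}$ a point $y_{1} \in P \smallsetminus Pg$ with $d(y,y_{1}) < 2r+1$ and must then control the multiplicity of this assignment, which it does with the packing constant $N(2r+1)$ of Lemma \ref{Intersection Lemma}; moreover it bounds $\#(Q_{+})$ only indirectly, through $\#(P) \leq \#(Q) = \#(Q_{+}) + \#(Q_{-})$. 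You instead sandwich the sets directly, $\bigcap_{g \in G_{s}} Pg^{-1} \subset Q_{+}$ and $Q_{-} \subset \bigcup_{g \in G_{s}} Pg^{-1} \smallsetminus \bigcap_{g \in G_{s}} Pg^{-1}$ with $s = 2r+2$, after which the estimate is pure set arithmetic with F\o{}lner sets. This removes the dependence on Lemma \ref{Intersection Lemma} and gives a direct lower bound on $\#(Q_{+})$; the detour through Lemma \ref{Orbit Cases} is, as you yourself note, superfluous, since the F\o{}lner count covers both alternatives. Both versions are sound; yours is marginally more self-contained at this step, while the paper's multiplicity argument reuses machinery it needs elsewhere anyway.
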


\begin{proof}
From Proposition \ref{Folner}, since $p$ is amenable, for any $\delta > 0$, there exists a non-empty, finite $P \subset p^{-1}(x)$, such that
\[
\#(P \smallsetminus Pg) < \delta \#(P),
\]
for all $g \in G_{2r+2}$. From Remark \ref{Tangential Gradient}, we have that $\supp \varphi_{y_{0}} \subset C(y_{0},r+1)$, $\varphi_{y_{0}} > 0$ in $B(y_{0},r + 1/2)$ and $\sum_{y \in p^{-1}(x)} \varphi_{y} = 1$ in $B(y_{0} , r)$, for any $y_{0} \in p^{-1}(x)$. Clearly, $P$ is contained in $Q$, which implies that $\#(P) \leq \#(Q)$.

For $y \in Q_{-}$, there exists $z \in B(y,r)$, such that $0<\chi(z)<1$. Therefore, there exist $y_{1} \in P$ and $y_{2} \in p^{-1}(x) \smallsetminus P$, such that $\varphi_{y_{i}}(z) > 0$, which yields that $d(y_{i},z)<r+1$, for $i=1,2$.
It follows that $d(y_{1}, y_{2}) < 2r +2$ and from Remark \ref{Orbits}, there exists $g \in G_{2r + 2}$, such that $y_{1} = y_{2} \cdot g$. In particular, $y_{1} \in P \smallsetminus P g$. Since $d(y,y_{1}) < 2r+1$, from Lemma \ref{Intersection Lemma}, for a fixed $y_{1}$, there exist at most $N(2r+1)$ such $y$. Since $y_{1} \in P \smallsetminus P g$, for some $g \in G_{2r+2}$, there exist at most $\delta \#(P) \#(G_{2r+2})$ such $y_{1}$. Hence, it follows that
\[
\#(Q_{-}) \leq \delta  \#(P) \#(G_{2r+2})N(2r+1) \leq \delta \#(Q) \#(G_{2r+2}) N(2r+1).
\]
Since $Q$ is the disjoint union of $Q_{+}$ and $Q_{-}$,
for $\delta \#(G_{2r+2}) N(2r+1) < 1$, we have
\[
\frac{\#(Q_{-})}{\#(Q_{+})} \leq \frac{\delta \#(G_{2r+2})N(2r+1)}{1 - \delta \#(G_{2r+2})N(2r+1)}.
\]
This completes the proof, since $\delta > 0$ is arbitrarily small. \qed
\end{proof}

\begin{proposition}\label{Pulling Up Strong Version}
If $p \colon M_{2} \to M_{1}$ is infinite sheeted and amenable, then for any $\varepsilon > 0$ and $K \subset M_{2}$ compact, there exists a non-empty, finite $P \subset p^{-1}(x)$, such that $\supp \chi$ does not intersect $K$ and $$\frac{\#(Q_{-})}{ \#(Q_{+})} < \varepsilon.$$
\end{proposition}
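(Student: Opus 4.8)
The plan is to reduce the statement to producing a non-empty finite set $P\subset p^{-1}(x)$ that avoids a suitable finite set, and then to run the argument of Lemma~\ref{Pulling Up}. Put
\[
Y_{K}:=\{\,y\in p^{-1}(x):C(y,r+1)\cap K\neq\emptyset\,\}.
\]
Since $K$ is compact and distances are taken in a complete manifold, the closed $(r+1)$-neighborhood of $K$ is compact, so $Y_{K}$ is finite; write $\kappa:=\#(Y_{K})$. As $\supp\chi=\bigcup_{y\in P}\supp\varphi_{y}\subset\bigcup_{y\in P}C(y,r+1)$, any non-empty finite $P$ with $P\cap Y_{K}=\emptyset$ automatically satisfies $\supp\chi\cap K=\emptyset$; hence it suffices to find such a $P$ with $\#(Q_{-})/\#(Q_{+})<\varepsilon$. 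Since $p$ is infinite sheeted and $r\geq R$ (hence also $2r+2\geq R$), Lemma~\ref{Orbit Cases} applies to $G_{2r+2}$, and I distinguish the two cases it gives.

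Suppose first that the action of $\langle G_{2r+2}\rangle$ on $p^{-1}(x)$ has infinitely many finite orbits. Since $Y_{K}$ meets at most $\kappa$ orbits, there is a finite orbit $\mathcal{O}$ with $\mathcal{O}\cap Y_{K}=\emptyset$; take $P:=\mathcal{O}$. By Remark~\ref{Orbits} applied with $2r+2$ in place of $r$, any two points of $p^{-1}(x)$ at distance $<2r+2$ lie in the same $\langle G_{2r+2}\rangle$-orbit; as $\supp\varphi_{y'}\subset C(y',r+1)$, on each ball $B(y,r)$ only the functions $\varphi_{y'}$ with $y'$ in the orbit of $y$ can be nonzero. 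Together with $\sum_{y'\in p^{-1}(x)}\varphi_{y'}=1$ on $C(y,r)$, this yields $\chi\equiv1$ on $B(y,r)$ for $y\in\mathcal{O}$ and $\chi\equiv0$ on $B(y,r)$ for $y\notin\mathcal{O}$. Hence $Q=Q_{+}=\mathcal{O}$, $Q_{-}=\emptyset$, and $\#(Q_{-})/\#(Q_{+})=0<\varepsilon$.

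Suppose now that the action of $\langle G_{2r+2}\rangle$ on $p^{-1}(x)$ has only infinite orbits. The key point is that F\o{}lner sets for $G_{2r+2}$ can be chosen of arbitrarily large cardinality: given $M\in\mathbb{N}$ and $\eta>0$, Proposition~\ref{Folner} provides a non-empty finite F\o{}lner set $P_{0}$ for $G_{2r+2}$ with parameter $\min\{\eta,1/(M+1)\}$, and if $\#(P_{0})\leq M$ then $\#(P_{0}\smallsetminus P_{0}g)<1$, i.e.\ $P_{0}=P_{0}g$ for all $g\in G_{2r+2}$ (which is symmetric), so $P_{0}$ would be a union of $\langle G_{2r+2}\rangle$-orbits, all infinite, a contradiction; thus $\#(P_{0})>M$, and $P_{0}$ is a fortiori a F\o{}lner set for $G_{2r+2}$ with parameter $\eta$. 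Now fix $\delta>0$ small enough that $\delta\#(G_{2r+2})N(2r+1)<1$ and $\delta\#(G_{2r+2})N(2r+1)/(1-\delta\#(G_{2r+2})N(2r+1))<\varepsilon$, as in the proof of Lemma~\ref{Pulling Up}. Choose, as above, a F\o{}lner set $P_{0}$ for $G_{2r+2}$ with parameter $\delta/2$ and $\#(P_{0})>\max\{\kappa,\,2\kappa(1+\delta)/\delta\}$, and set $P:=P_{0}\smallsetminus Y_{K}$. Then $P\neq\emptyset$, $P\cap Y_{K}=\emptyset$, and from $(P_{0}\smallsetminus Y_{K})g=P_{0}g\smallsetminus Y_{K}g$ one gets, for every $g\in G_{2r+2}$,
\[
\#(P\smallsetminus Pg)<\tfrac{\delta}{2}\#(P_{0})+\kappa\leq\delta(\#(P_{0})-\kappa)\leq\delta\#(P),
\]
so $P$ is a F\o{}lner set for $G_{2r+2}$ with parameter $\delta$. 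With this $P$, the argument in the proof of Lemma~\ref{Pulling Up} applies verbatim (it gives $\#(Q_{-})\leq\delta\#(Q)\#(G_{2r+2})N(2r+1)$, and $P\subset Q$ forces $Q\neq\emptyset=Q_{+}\cap\emptyset$, whence $Q_{+}\neq\emptyset$ and the desired ratio bound).

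The only genuine obstacle is the case of only infinite orbits: one cannot simply reuse the F\o{}lner set from Lemma~\ref{Pulling Up}, which may be concentrated near $K$, and translating it away from $K$ by a deck transformation is circular, since the needed translate depends on the set while the F\o{}lner estimate for the translate requires a generating set depending on the translate. Exhibiting a genuinely large F\o{}lner set and then deleting the finite set $Y_{K}$ from it avoids this, and is possible precisely because every orbit is infinite.
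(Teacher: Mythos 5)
Your proof is correct. It follows the paper's overall skeleton --- the dichotomy of Lemma \ref{Orbit Cases}, with the finite-orbit case handled by choosing a finite orbit of $\langle G_{2r+2}\rangle$ whose members are far from $K$ (your treatment of that case, including the sharper observation that $Q=Q_{+}=\mathcal{O}$ and $Q_{-}=\emptyset$, is essentially the paper's) --- but the infinite-orbit case is executed by a genuinely different mechanism. The paper keeps the F\o{}lner set $P$ produced by Lemma \ref{Pulling Up} for a smaller parameter $\delta$ depending on $\#(P_{K})$, proves $Q_{-}\neq\emptyset$ via a chain of points inside an infinite orbit in order to force $\#(Q_{+})\geq 1/\delta$ and hence $\#(P)>\#(P_{K})$, and then, after deleting $P_{K}$, estimates the resulting change in $Q_{\pm}$ directly using Lemma \ref{Intersection Lemma}. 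You instead work entirely at the level of F\o{}lner sets: you first show that when every orbit is infinite, a F\o{}lner set with parameter $<1/(M+1)$ must have more than $M$ elements (otherwise it would be exactly invariant under $G_{2r+2}$, hence under $\langle G_{2r+2}\rangle$, hence a finite non-empty union of infinite orbits, a contradiction), and you then verify that deleting the fixed finite set $Y_{K}$ from a sufficiently large F\o{}lner set with parameter $\delta/2$ yields a F\o{}lner set with parameter $\delta$, to which the proof of Lemma \ref{Pulling Up} applies verbatim. Your route isolates a clean, purely combinatorial fact about F\o{}lner sets for actions with only infinite orbits and avoids a second invocation of the geometric intersection bounds; the paper's route avoids re-deriving the F\o{}lner property but must track the effect of the deletion on $Q_{\pm}$ by hand. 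Both are complete; the only blemish in yours is the garbled phrase ``$Q\neq\emptyset=Q_{+}\cap\emptyset$'', where what is meant (and what is true, since $\#(Q_{-})\leq\delta\#(G_{2r+2})N(2r+1)\#(Q)<\#(Q)$) is simply that $Q_{+}\neq\emptyset$.
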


\begin{proof}
First assume that the second statement of Lemma \ref{Orbit Cases} holds. Then the action of $\langle G_{2r+2} \rangle$ on $p^{-1}(x)$ has infinitely many finite orbits $\mathcal{O}_{n}$, with $n \in \mathbb{N}$. Clearly, for $P := \mathcal{O}_{n}$, we have that $Q_{-}$ is empty. 
Indeed, if there exists $y_{0} \in Q_{-}$, then there exist $z \in B(y_{0},r)$, $y_{1} \in P$ and $y_{2} \in p^{-1}(x) \smallsetminus P$, such that $\varphi_{y_{i}} (z) > 0$, $i=1,2$. It follows that $d(z,y_{i}) < r + 1$, $i=1,2$, which yields that $d(y_{1},y_{2}) < 2r+2$. From Remark \ref{Orbits}, there exists $g \in G_{2r+2}$, such that $y_{2} = y_{1} \cdot g$, which is a contradiction, since $P$ is an orbit of the action of $\langle G_{2r+2} \rangle$ on $p^{-1}(x)$.

For a compact $K \subset M_{2}$, the set $P_{K} := p^{-1}(x) \cap B(K, r +2)$ is finite and in particular, intersects only finitely many orbits $\mathcal{O}_{n}$. Let $P$ be an orbit that does not intersect $P_{K}$. Since $\supp \varphi_{y} \subset C(y,r+1)$, for any $y \in p^{-1}(x)$, it is clear that for such $P$, 
the support of $\chi$ does not intersect $K$.

If the first statement of Lemma \ref{Orbit Cases} holds, then the action of $\langle G_{r} \rangle$ on $p^{-1}(x)$ has only infinite orbits. For a compact $K \subset M_{2}$, consider the finite set $P_{K} := p^{-1}(x) \cap B(K, r +2)$.
From Lemma \ref{Pulling Up}, for any $\varepsilon > 0$, there exists a non-empty, finite $P \subset p^{-1}(x)$, such that
\[
\frac{\#(Q_{-})}{\#(Q_{+})} < \delta := \frac{\varepsilon}{1 + (1 +\varepsilon)N(2r+1)\#(P_{K})},
\]
where $N(2r+1)$ is the constant from Lemma \ref{Intersection Lemma}.

Since the action of $\langle G_{r} \rangle$ on $p^{-1}(x)$ has only infinite orbits, it follows that $Q_{-}$ is non-empty. 
Indeed, since $P$ is non-empty and this action has only infinite orbits, there exists an infinite orbit $\mathcal{O}$ and $z_1 \in P \cap \mathcal{O}$. Since $P$ is finite, there exists $z_2 \in \mathcal{O} \smallsetminus P$, and
from Remark \ref{Orbits}, there exist $k \in \mathbb{N}$ and $y_1, \dots, y_k \in p^{-1}(x)$, with $y_1 = z_1$, $y_k = z_2$ and $d(y_i,y_{i+1}) < r$, for $i=1,\dots, k-1$. Since $y_1 \in P$ and $y_k \notin P$, there exists $1 \leq j < k$, such that $y_j  \in P$ and $y_{j+1} \notin P$. Since $d(y_j, y_{j+1}) < r$, it follows that $0<\chi(y_{j+1})<1$ and in particular, $y_{j} \in Q_{-}$.

Evidently, $Q_{+}$ is contained in $P$. Since $Q_{-}$ is non-empty, it is clear that
\[
\frac{1}{\delta} \leq \#(Q_{+}) \leq \#(P),
\]
which yields that $\#(P) > \#(P_{K})$, from the choice of $\delta$. In particular, the finite set $P^{\prime} := P \smallsetminus P_{K}$ is non-empty. Consider the function $\chi^{\prime}$ and the sets $Q_{+}^{\prime}$, $Q_{-}^{\prime}$ and $Q^{\prime}$ corresponding to $P^{\prime}$ as in (\ref{definitions of Q}). Clearly, the support of $\chi^{\prime}$ does not intersect $K$, since $\supp \varphi_{y} \subset C(y,r+1)$, for any $y \in p^{-1}(x)$.

From Lemma \ref{Intersection Lemma}, it follows that for any $y_{0} \in p^{-1}(x)$, the support of $\varphi_{y_{0}}$ intersects at most $N(2r+1)$ open balls $B(y,r)$, with $y \in p^{-1}(x)$. Hence, we have that
\begin{eqnarray}
\#(Q_{-}^{\prime}) &\leq& \#(Q_{-}) + N(2r+1) \#(P_{K}), \nonumber \\
\#(Q_{+}^{\prime}) &\geq& \#(Q_{+}) - N(2r+1) \#(P_{K}). \nonumber
\end{eqnarray}
Therefore, we obtain
\[
\frac{\#(Q_{-}^{\prime})}{\#(Q_{+}^{\prime})} \leq \frac{\#(Q_{-}) + N(2r+1) \#(P_{K})}{\#(Q_{+}) - N(2r+1) \#(P_{K})} < \varepsilon,
\]
from the choice of $\delta$. \qed
\end{proof}

\begin{remark}\label{integral remark}
After endowing $M_{1}$ or $N_{1}$ with $\mathfrak{h}$ (depending on whether $M_{1}$ has empty boundary or not) and the covering space with its lift, we have that $p \colon D_{y} \to M_{1}$ is an isometry up to sets of measure zero, for any $y \in p^{-1}(x)$. Thus, for $f \in C_{c}(M_{1})$, we have
\begin{equation}\label{Integral}
\int_{D_{y}} (f \circ p) d {\Vol}_{\mathfrak{h}_{2}} = \int_{M_{1}} f d {\Vol}_{\mathfrak{h}_{1}},
\end{equation}
where $\Vol_{\mathfrak{h}_{i}}$ (respectively, $\Vol_{\mathfrak{g}_{i}}$) is the measure on $M_{i}$ induced by $\mathfrak{h}$ (respectively, $\mathfrak{g}$) or its lift, $i=1,2$.
Since $\mathfrak{g}$ and $\mathfrak{h}$ are conformal, there exists a positive $\varphi_{v} \in C^{\infty}(M_{1})$, such that 
$$ \frac{d \Vol_{\mathfrak{h}_{1}}}{d\Vol_{\mathfrak{g}_{1}}} = \varphi_{v}  \text{ and } \frac{d \Vol_{\mathfrak{h}_{2}}}{d\Vol_{\mathfrak{g}_{2}}} = \varphi_{v} \circ p .$$
For simplicity of notation, we omit $d\Vol_{\mathfrak{g}_{i}}$ in the integrals and the index of $\Vol_{\mathfrak{g}_{i}}$.
From (\ref{Integral}), we have
$\int_{D_{y}} f \circ p = \int_{M_{1}} f$, for any $f \in C_{c}(M_{1})$ and $y \in p^{-1}(x)$.
Similarly, for a compact $K \subset M_{1}$, we have $\Vol(K) = \Vol(p^{-1}(K) \cap D_{y})$, for any $y \in p^{-1}(x)$.
\end{remark}

\begin{proposition}\label{Pull Up Lemma}
Let $p \colon M_2 \to M_1$ be an infinite sheeted, amenable Riemannian covering. Consider $\eta \in \mathcal{D}(D_1)$ with $\| \eta \|_{L^{2}(E_{1})}=1$ and $\lambda \in \mathbb{F}$. Then for any $\varepsilon>0$ and $K \subset M_{2}$ compact, there exists $\zeta \in \mathcal{D}(D_2)$ with $\| \zeta \|_{L^{2}(E_{2})} = 1$, such that $\supp \zeta \subset p^{-1}(\supp \eta)$, $\supp \zeta \cap K = \emptyset$ and $\| (D_{2} - \lambda) \zeta \|_{L^{2}(E_{2})} \leq \| (D_1 - \lambda) \eta \|_{L^{2}(E_{1})} + \varepsilon$.
\end{proposition}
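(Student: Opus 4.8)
The plan is to use the partition of unity from the previous subsection to ``pull up'' the section $\eta$ to a section on $M_{2}$ supported over finitely many sheets, in such a way that the Rayleigh-type quantity $\|(D_{2}-\lambda)\zeta\|$ is controlled by $\|(D_{1}-\lambda)\eta\|$. First I would invoke Proposition \ref{Pulling Up Strong Version}: given $\varepsilon$ and the compact set $K$, choose a non-empty finite $P \subset p^{-1}(x)$ with $\supp\chi \cap K = \emptyset$ and $\#(Q_{-})/\#(Q_{+}) < \varepsilon'$, where $\varepsilon'$ is a small parameter to be fixed later in terms of $\varepsilon$ and the uniform constant $C$ of Proposition \ref{Uniform Estimate} (equivalently $C'$ of Corollary \ref{Rayleigh Quotient Uniform Estimate}). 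Set $\zeta_{0} := \chi\theta$, where $\theta$ is the lift of $\eta$; by Remark \ref{Pull Up in the Domain}, $\zeta_{0} \in \mathcal{D}(D_{2})$, and clearly $\supp\zeta_{0} \subset p^{-1}(\supp\eta)$ and $\supp\zeta_{0}\cap K = \emptyset$. Then $\zeta := \zeta_{0}/\|\zeta_{0}\|_{L^{2}(E_{2})}$ will be the desired section, once I check the norm is nonzero (it is, since $\varphi_{y} > 0$ on $C(y,r+1/2)$ for $y \in P$ and $\theta$ does not vanish identically there when $\supp\eta$ is large enough — one arranges $r$ so that $\eta\not\equiv 0$ near $x$, or more carefully estimates below).

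Next I would estimate the numerator $\|(D_{2}-\lambda)\zeta_{0}\|_{L^{2}(E_{2})}^{2}$ and the denominator $\|\zeta_{0}\|_{L^{2}(E_{2})}^{2}$ by splitting $M_{2}$ into the fundamental domains $D_{y}$, $y \in p^{-1}(x)$. On $D_{y}$ with $y \in Q_{+}$, we have $\chi \equiv 1$ on $B(y,r) \supset \supp\theta \cap D_{y}$ (using Lemma \ref{preimage of compact} to see $\supp\theta \cap D_{y} \subset B(y,r)$ since $\supp\eta \subset B(x,r)$), so $(D_{2}-\lambda)\zeta_{0} = (D_{2}-\lambda)\theta$ there, which is the lift of $(D_{1}-\lambda)\eta$; by Remark \ref{integral remark} the $L^{2}$-mass of this over $D_{y}$ equals $\|(D_{1}-\lambda)\eta\|_{L^{2}(E_{1})}^{2}$, and likewise the $L^{2}$-mass of $\zeta_{0}$ over $D_{y}$ equals $\|\eta\|_{L^{2}(E_{1})}^{2} = 1$. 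Summing over $Q_{+}$ gives a main term $\#(Q_{+})\|(D_{1}-\lambda)\eta\|^{2}$ in the numerator and $\#(Q_{+})$ in the denominator. On $D_{y}$ with $y \in Q_{-}$, I use the uniform pointwise bound $\|(D_{2}-\lambda)\zeta_{0}(z)\| \leq C + |\lambda|\cdot C''$ from Proposition \ref{Uniform Estimate} (the section $\chi\theta$ is uniformly bounded, so is $(D_{2}-\lambda)(\chi\theta)$), together with $\Vol(\supp\theta \cap D_{y}) \leq \Vol(\supp\eta)$ (again Remark \ref{integral remark}), to bound the contribution of each such $D_{y}$ by a constant independent of $P$. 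On the remaining $D_{y}$ (those $y \notin Q$), $\chi \equiv 0$ on $B(y,r)$, hence on $\supp\theta \cap D_{y}$, so $\zeta_{0}$ and $(D_{2}-\lambda)\zeta_{0}$ vanish there and contribute nothing. Care is needed at the boundary overlaps $\partial D_{y}$, but these have measure zero, so they are harmless.

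Putting the pieces together,
\[
\frac{\|(D_{2}-\lambda)\zeta_{0}\|_{L^{2}(E_{2})}^{2}}{\|\zeta_{0}\|_{L^{2}(E_{2})}^{2}}
\leq \frac{\#(Q_{+})\,\|(D_{1}-\lambda)\eta\|_{L^{2}(E_{1})}^{2} + C_{0}\,\#(Q_{-})}{\#(Q_{+})}
= \|(D_{1}-\lambda)\eta\|_{L^{2}(E_{1})}^{2} + C_{0}\,\frac{\#(Q_{-})}{\#(Q_{+})},
\]
where $C_{0}$ is independent of $P$. Choosing $\varepsilon'$ small enough that $C_{0}\varepsilon' $ is smaller than, say, $(\|(D_{1}-\lambda)\eta\| + \varepsilon)^{2} - \|(D_{1}-\lambda)\eta\|^{2}$, we get $\|(D_{2}-\lambda)\zeta\|_{L^{2}(E_{2})} \leq \|(D_{1}-\lambda)\eta\|_{L^{2}(E_{1})} + \varepsilon$ after normalizing. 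The main obstacle, and the reason the preceding two subsections were set up so carefully, is obtaining the $P$-independent uniform pointwise bound on $(D_{2}-\lambda)(\chi\theta)$ over the ``bad'' sheets in $Q_{-}$: this is exactly Proposition \ref{Uniform Estimate}, whose proof required the uniform estimates on the partition of unity coming from Lemmas \ref{Intersection Lemma} and \ref{Cardinality Estimate Lemma}. Everything else is bookkeeping with the fundamental domains and the measure-preserving property of Remark \ref{integral remark}.
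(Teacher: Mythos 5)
Your proposal is correct and follows essentially the same route as the paper: choose $P$ via Proposition \ref{Pulling Up Strong Version}, set $\zeta = \chi\theta/\|\chi\theta\|_{L^{2}(E_{2})}$, and estimate numerator and denominator over the fundamental domains, with the $Q_{+}$ domains giving the main term and the $P$-independent bound of Proposition \ref{Uniform Estimate} controlling the $Q_{-}$ domains. The only cosmetic difference is your aside about the normalization being well defined: the clean justification is simply $\|\chi\theta\|_{L^{2}(E_{2})}^{2}\geq \#(Q_{+})\geq 1$, which your own denominator estimate already supplies, so no extra condition on $r$ or $\eta$ is needed.
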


\begin{proof}
Let $p_{1} \colon \tilde{M} \to M_{1}$ be the universal covering of $M_{1}$ and fix $x\in M_1$, $u \in p_{1}^{-1}(x)$ and $r \geq R$ (from Lemma \ref{Orbit Cases}), such that $\supp \eta \subset B(x,r)$ and $B(u,r) \cap \partial \tilde{M} \neq \emptyset$, if $M_{1}$ has non-empty boundary. 
Consider a partition of unity consisting of the functions $\varphi_{1}$ and $\varphi_{y}$, with $y \in p^{-1}(x)$, associated to $x$, $u$ and $r$ as in (\ref{Partition of Unity}), and let $\theta$ be the lift of $\eta$. From Remark \ref{Pull Up in the Domain}, for any finite set $P^{\prime} \subset p^{-1}(x)$ and $\chi^{\prime} := \sum_{y \in P^{\prime}} \varphi_{y}$, we have that $\chi^{\prime}\theta \in \mathcal{D}(D_{2})$. From Proposition \ref{Uniform Estimate}, there exists $C > 0$, independent from $P^{\prime}$, such that $\| D_{2}(\chi^{\prime} \theta)(z) \| \leq C$,
for any $z \in M_{2}$. Hence, we obtain that
\[
\max_{z \in M_{2}} \left\| (D_{2} - \lambda)(\chi^{\prime} \theta)(z) \right\| \leq C + |\lambda| \max_{w \in M_{1}} \| \eta(w) \| = : C_{0}.
\]
From Proposition \ref{Pulling Up Strong Version}, there exists a non-empty, finite $P \subset p^{-1}(x)$, such that the support of $\chi := \sum_{y \in P} \varphi_{y}$ does not intersect $K$ and
\[
\frac{\#(Q_{-})}{\#(Q_{+})} < \min\left\{ \frac{\varepsilon}{ C_{0}^{2} \Vol (\supp \eta)} , \varepsilon \right\},
\]
where $Q_{+}$, $Q_{-}$ and $Q$ are the sets corresponding to $P$ as in (\ref{definitions of Q}).

Since $\chi \theta$ is in the domain of $D_{2}$, so is the normalized section $\zeta := (1/\| \chi \theta \|_{L^{2}(E_{2})}) \chi \theta$.
Evidently, $\| \zeta \|_{L^{2}(E_{2})}=1$ and $\supp \zeta \subset p^{-1}(\supp \eta)$. 
From Lemma \ref{preimage of compact}, we have that $\supp \zeta \cap D_{y} \subset B(y,r)$, for any $y \in p^{-1}(x)$, which yields 
that $\supp \zeta$ is contained in the union of the fundamental domains $D_{y}$, with $y \in Q$.
Clearly, we have
\[
\| \chi \theta \|_{L^{2}(E_{2})}^{2} \geq \sum_{y \in Q_{+}} \int_{D_{y}} \| \chi \theta\|^{2} =\sum_{y \in Q_{+}} \int_{D_{y}} \| \theta \|^{2} = \#(Q_{+}),
\]
from the definition of $Q_{+}$ and Remark \ref{integral remark}. Therefore, we obtain that
\begin{eqnarray}
\int_{M_{2}} \| (D_{2} - \lambda) \zeta \|^{2} &\leq& \frac{1}{\#(Q_{+})} \sum_{y \in Q_{+}} \int_{D_{y}} \| (D_{2} - \lambda) (\chi \theta) \|^{2} \nonumber \\
&+& \frac{1}{\#(Q_{+})} \sum_{y \in Q_{-}} \int_{D_{y}} \| (D_{2} - \lambda) (\chi \theta) \|^{2}. \nonumber
\end{eqnarray}
For $y \in Q_{+}$, we have $\chi = 1$ in $B(y,r)$, which is a neighborhood of $\supp \theta \cap D_{y}$. This implies that
\[
\frac{1}{\#(Q_{+})} \sum_{y \in Q_{+}} \int_{D_{y}} \| (D_{2} - \lambda) (\chi \theta) \|^{2} = \frac{1}{\#(Q_{+})} \sum_{y \in Q_{+}} \int_{D_{y}} \| (D_{2} - \lambda) \theta \|^{2} = \int_{M_{1}} \| (D_{1} - \lambda) \eta \|^{2}.
\]
Since $\| (D_{2} - \lambda) (\chi \theta)(z) \| \leq C_{0}$, for any $z \in M_{2}$, it follows that
\begin{eqnarray}
\frac{1}{\#(Q_{+})} \sum_{y \in Q_{-}} \int_{D_{y}} \| (D_{2} - \lambda) (\chi \theta) \|^{2} &\leq& \frac{C_{0}^{2}}{ \#(Q_{+}) } \sum_{y \in Q_{-}} \Vol (\supp \theta \cap D_{y}) \nonumber \\
&=& \frac{\#(Q_{-})}{\#(Q_{+})} C_{0}^{2} \Vol (\supp \eta) \leq \varepsilon. \nonumber
\end{eqnarray}
Hence, $\| (D_{2} - \lambda) \zeta \|_{L^{2}(E_{2})}^{2}  \leq \| (D_{1} - \lambda) \eta 
\|_{L^{2}(E_{1})}^{2} + \varepsilon$.
\qed
\end{proof}


\begin{proposition}\label{Rayleigh Quotient Pulling Up}
Let $p \colon M_2 \to M_1$ be an infinite sheeted, amenable Riemannian covering, and assume that the operators $D_{i}$ are symmetric, $i=1,2$. Then for any section $\eta \in \mathcal{D}(D_1) \smallsetminus \{0\}$, $\varepsilon>0$ and $K \subset M_{2}$ compact, there exists $\zeta \in \mathcal{D}(D_2) \smallsetminus\{0\}$, such that $\supp \zeta \subset p^{-1}(\supp \eta)$, $\supp \zeta \cap K = \emptyset$ and $ \mathcal{R}_{D_{2}}(\zeta) \leq  \mathcal{R}_{D_1} (\eta)  + \varepsilon$.
\end{proposition}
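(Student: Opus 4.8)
The plan is to repeat the proof of Proposition~\ref{Pull Up Lemma} almost verbatim, with the bilinear form $\langle D_{2} \cdot , \cdot \rangle$ and Corollary~\ref{Rayleigh Quotient Uniform Estimate} playing the roles of the $L^{2}$-norm and Proposition~\ref{Uniform Estimate}. Since the Rayleigh quotient is scale invariant, I may assume $\| \eta \|_{L^{2}(E_{1})} = 1$; I set $a := \mathcal{R}_{D_{1}}(\eta) = \int_{M_{1}} \langle D_{1}\eta , \eta \rangle$, which is real because $D_{1}$ is symmetric, and $V := \Vol(\supp \eta) > 0$. As in Proposition~\ref{Pull Up Lemma}, I choose $x \in M_{1}$, $u \in p_{1}^{-1}(x)$ and $r \geq R$ (from Lemma~\ref{Orbit Cases}) with $\supp \eta \subset B(x,r)$, and $B(u,r) \cap \partial \tilde{M} \neq \emptyset$ if $M_{1}$ has boundary; I form the partition of unity $\varphi_{1}, \varphi_{y}$ as in~(\ref{Partition of Unity}); I let $\theta$ be the lift of $\eta$; and for a finite $P \subset p^{-1}(x)$ I set $\chi := \sum_{y \in P} \varphi_{y}$, so that $\chi\theta \in \mathcal{D}(D_{2})$ by Remark~\ref{Pull Up in the Domain}. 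By Corollary~\ref{Rayleigh Quotient Uniform Estimate} there is $C' > 0$, independent of $P$, with $|\langle D_{2}(\chi\theta)(z), (\chi\theta)(z)\rangle| \leq C'$ for all $z \in M_{2}$.

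Next I apply Proposition~\ref{Pulling Up Strong Version} to choose a non-empty finite $P$ with $\supp \chi \cap K = \emptyset$ and, with $Q_{\pm}$ the sets in~(\ref{definitions of Q}),
\[
\frac{\#(Q_{-})}{\#(Q_{+})} < \frac{\varepsilon}{C'V + |a| + 1}
\]
(so in particular $\#(Q_{+}) \geq 1$). I put $\zeta := (1/\|\chi\theta\|_{L^{2}(E_{2})})\,\chi\theta$; since $\supp\theta \cap D_{y} \subset B(y,r)$ by Lemma~\ref{preimage of compact} and $\chi = 1$ on $B(y,r)$ for $y \in Q_{+}$, Remark~\ref{integral remark} gives $\|\chi\theta\|_{L^{2}(E_{2})}^{2} \geq \sum_{y \in Q_{+}} \int_{D_{y}} \|\theta\|^{2} = \#(Q_{+}) \geq 1$, so $\zeta$ is well defined, lies in $\mathcal{D}(D_{2}) \smallsetminus \{0\}$, and satisfies $\supp\zeta \subset p^{-1}(\supp\eta)$ and $\supp\zeta \cap K = \emptyset$.

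It remains to estimate $\mathcal{R}_{D_{2}}(\zeta)$. Decomposing $M_{2}$ into the fundamental domains $D_{y}$ and noting that $\chi\theta = 0$ on $D_{y}$ for $y \notin Q = Q_{+} \cup Q_{-}$,
\[
\langle D_{2}(\chi\theta), \chi\theta \rangle_{L^{2}(E_{2})} - a\,\|\chi\theta\|_{L^{2}(E_{2})}^{2} = \sum_{y \in Q_{+}} \Big( \int_{D_{y}} \langle D_{2}\theta, \theta\rangle - a \int_{D_{y}} \|\theta\|^{2} \Big) + \sum_{y \in Q_{-}} \Big( \int_{D_{y}} \langle D_{2}(\chi\theta), \chi\theta\rangle - a \int_{D_{y}} \|\chi\theta\|^{2} \Big).
\]
For $y \in Q_{+}$ the equality $\chi = 1$ on the open neighborhood $B(y,r)$ of $\supp\theta \cap D_{y}$ gives $\int_{D_{y}} \langle D_{2}(\chi\theta), \chi\theta\rangle = \int_{D_{y}} \langle D_{2}\theta, \theta\rangle = a$ and $\int_{D_{y}} \|\chi\theta\|^{2} = \int_{D_{y}} \|\theta\|^{2} = 1$ by Remark~\ref{integral remark}, so the first sum vanishes. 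For $y \in Q_{-}$, the bound $|\langle D_{2}(\chi\theta)(z),(\chi\theta)(z)\rangle| \leq C'$ together with $\Vol(\supp\theta \cap D_{y}) = V$ (Remark~\ref{integral remark}) and $0 \leq \chi \leq 1$ bounds each term in absolute value by $C'V + |a|$; hence the second sum has absolute value at most $\#(Q_{-})(C'V + |a|)$. Dividing by $\|\chi\theta\|_{L^{2}(E_{2})}^{2} \geq \#(Q_{+})$ yields
\[
\mathcal{R}_{D_{2}}(\zeta) \leq a + \frac{\#(Q_{-})}{\#(Q_{+})}(C'V + |a|) < a + \varepsilon = \mathcal{R}_{D_{1}}(\eta) + \varepsilon ,
\]
as desired.

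The only real difference from Proposition~\ref{Pull Up Lemma}, and the step that needs a little care, is that $a = \mathcal{R}_{D_{1}}(\eta)$ may be negative, so one cannot simply divide a one-sided bound for the numerator by the lower bound $\#(Q_{+})$ for $\|\chi\theta\|^{2}$; estimating the combination $\langle D_{2}(\chi\theta), \chi\theta\rangle - a\|\chi\theta\|^{2}$ directly — so that only the $Q_{-}$ terms contribute — and then letting the F\o{}lner ratio $\#(Q_{-})/\#(Q_{+})$ be small compared with $|a| + C'V$ circumvents this. Everything else is routine once Corollary~\ref{Rayleigh Quotient Uniform Estimate} replaces the $L^{2}$-estimate used in Proposition~\ref{Pull Up Lemma}.
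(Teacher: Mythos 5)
Your proof is correct and follows exactly the route the paper intends: the paper's own ``proof'' is a one-line remark that one repeats the argument of Proposition~\ref{Pull Up Lemma} with Corollary~\ref{Rayleigh Quotient Uniform Estimate} in place of Proposition~\ref{Uniform Estimate}, which is precisely what you carry out. Your observation that one should estimate the combination $\langle D_{2}(\chi\theta),\chi\theta\rangle - a\|\chi\theta\|^{2}$ directly, so that only the $Q_{-}$ terms contribute and a possibly negative $a=\mathcal{R}_{D_{1}}(\eta)$ causes no trouble when dividing by the lower bound $\#(Q_{+})$, is the right way to fill in the one detail the paper leaves implicit.
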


\begin{proof}
The proof is similar to the proof of Proposition \ref{Pull Up Lemma}, using Corollary \ref{Rayleigh Quotient Uniform Estimate} instead of Proposition \ref{Uniform Estimate}. \qed
\end{proof}\medskip


\noindent{\emph{Proof of Theorem \ref{Inclusion of Spectrums}:}} Let $\lambda \in \sigma_{\ap}(\overline{D}_{1})$. From Lemma \ref{Spectrum of Closure}, there exists $(\eta_{k})_{k \in \mathbb{N}} \subset \mathcal{D}(D_{1})$, such that $\| \eta_{k} \|_{L^{2}(E_{1})} = 1$ and $(D_{1} - \lambda) \eta_{k} \rightarrow 0$ in $L^{2}(E_{1})$. Consider an exhausting sequence $(K_{k})_{k \in \mathbb{N}}$ of $M_{2}$. From Proposition \ref{Pull Up Lemma}, for any $k \in \mathbb{N}$, there exists $\zeta_{k} \in \mathcal{D}(D_{2})$, such that $\| \zeta_k \|_{L^{2}(E_{2})} = 1$, $\| (D_{2} - \lambda) \zeta_k \|_{L^{2}(E_{2})} \leq \| (D_1 - \lambda) \eta_k \|_{L^{2}(E_{1})} + 1/k$ and $\supp \zeta_k \cap K_k = \emptyset$. Therefore, $(D_2 - \lambda) \zeta_k \rightarrow 0$ in $L^{2}(E_{2})$ and for any compact $K \subset M_{2}$, there exists $k_{0} \in \mathbb{N}$, such that $\supp \zeta_{k} \cap K = \emptyset$, for all $k \geq k_{0}$. 
It follows that $(\zeta_{k})_{k \in \mathbb{N}}$ is a Weyl sequence for $D^{\prime}_{2}$ and $\lambda$, and in particular, $\lambda \in \sigma_{W}(D^{\prime}_{2})$. \qed
\medskip

\noindent{\emph{Proof of Theorem \ref{Inclusion of Spectrums self-adj}:}} Follows immediately from Theorem \ref{Inclusion of Spectrums} and Proposition \ref{Spectrum of Self-adj}. \qed \medskip

Assume now that the operator $D_{i} \colon \mathcal{D}(D_{i}) \subset L^{2}(E_i) \to L^{2}(E_i)$ is symmetric and bounded from below, and let $D_{i}^{(F)}$ be its Friedrichs extension, $i=1,2$. For more details on the Friedrichs extension of a symmetric, bounded from below and densely defined linear operator on a Hilbert space, see \cite{MR2744150}. It is well-known that the Friedrichs extension of an operator preserves its lower bound. In particular, for $i=1,2$, we have
\begin{equation}\label{Bottom Friedrichs Extensions}
\lambda_{0}(D_{i}^{(F)}) = \inf_{\eta \in \mathcal{D}(D_{i}) \smallsetminus \{0\}} \mathcal{R}_{D_{i}}(\eta).
\end{equation}
Recall the following proposition for the essential spectrum of a self-adjoint operator.
\begin{proposition}[{\cite[Proposition 2.1]{MR592568}}]\label{Donelly}
Let $A \colon \mathcal{D}(A) \subset \mathcal{H} \to \mathcal{H}$ be a self-adjoint operator on a separable Hilbert space $\mathcal{H}$, and consider $\lambda \in \mathbb{R}$.
Then the interval $(-\infty, \lambda]$ intersects the essential spectrum of $A$ if and only if for any $\varepsilon > 0$, there exists an infinite dimensional subspace $G_{\varepsilon} \subset \mathcal{D}(A)$, such that $\mathcal{R}_{A}(v) < \lambda + \varepsilon$, for all $v \in G_{\varepsilon} \smallsetminus \{0\}$.
\end{proposition}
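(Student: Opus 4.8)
The statement is the variational (min--max) characterization of the bottom of the essential spectrum, and the natural tool is the spectral theorem. The plan is to fix the spectral resolution $E$ of $A$, so that $A=\int_{\mathbb{R}}t\,dE(t)$, and to translate both sides of the asserted equivalence into the single condition that the spectral projection $E((-\infty,\lambda+\varepsilon))$ have infinite rank for every $\varepsilon>0$. Thus I would prove the chain of equivalences: the Rayleigh condition holds $\iff$ $E((-\infty,\lambda+\varepsilon))$ has infinite rank for all $\varepsilon>0$ $\iff$ $(-\infty,\lambda]\cap\sigma_{\ess}(A)\neq\emptyset$.

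For the first equivalence I would argue by a dimension count. If $E((-\infty,\mu))$ has finite rank $N$, then every $v$ orthogonal to its range satisfies $v=E([\mu,\infty))v$, so $\langle Av,v\rangle=\int_{[\mu,\infty)}t\,d\langle E(t)v,v\rangle\geq\mu\|v\|^{2}$; hence on any subspace on which $\mathcal{R}_{A}<\mu$ the projection $E((-\infty,\mu))$ is injective, and the subspace has dimension at most $N$. With $\mu=\lambda+\varepsilon$ this shows that an infinite-dimensional $G_{\varepsilon}$ with $\mathcal{R}_{A}<\lambda+\varepsilon$ forces $E((-\infty,\lambda+\varepsilon))$ to have infinite rank. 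Conversely, if $E((-\infty,\mu))$ has infinite rank, then $E((-\infty,\mu))\mathcal{D}(A)$ is an infinite-dimensional subspace of $\mathcal{D}(A)$ lying in the range of $E((-\infty,\mu))$ (since $E$ maps $\mathcal{D}(A)$ into itself and $\mathcal{D}(A)$ is dense), and every nonzero $v$ in it satisfies $\langle Av,v\rangle=\int_{(-\infty,\mu)}t\,d\langle E(t)v,v\rangle<\mu\|v\|^{2}$, the inequality being strict because the finite measure $d\langle E(t)v,v\rangle$ is carried by $(-\infty,\mu)$. This produces the required $G_{\varepsilon}$.

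For the second equivalence I would use the spectral description that $\mu\in\sigma_{\ess}(A)$ iff $E((\mu-\delta,\mu+\delta))$ has infinite rank for every $\delta>0$. If some $\mu\in(-\infty,\lambda]$ lies in $\sigma_{\ess}(A)$, a small symmetric interval about $\mu$ lies inside $(-\infty,\lambda+\varepsilon)$ and already has infinite rank, so $E((-\infty,\lambda+\varepsilon))$ does too. Conversely, assume $(-\infty,\lambda]\cap\sigma_{\ess}(A)=\emptyset$. As $\sigma_{\ess}(A)$ is closed, there is $\varepsilon_{0}>0$ with $(-\infty,\lambda+\varepsilon_{0})\cap\sigma_{\ess}(A)=\emptyset$, so in this half-line the spectrum is purely discrete, i.e.\ isolated eigenvalues of finite multiplicity. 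Were $E((-\infty,\lambda+\varepsilon_{0}/2))$ of infinite rank, these eigenvalues would be infinitely many; confined to the bounded interval $[\inf\sigma(A),\lambda+\varepsilon_{0}/2]$ they would accumulate at a finite point, which would then belong to $\sigma_{\ess}(A)\cap(-\infty,\lambda+\varepsilon_{0})$ --- a contradiction. Hence $E((-\infty,\lambda+\varepsilon_{0}/2))$ has finite rank and the infinite-rank condition fails.

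The delicate point, which I expect to be the main obstacle, is the use of boundedness from below of $A$ in the last step: it is what confines the discrete eigenvalues to a compact interval and forces a finite accumulation point. Without semiboundedness infinitely many eigenvalues may escape to $-\infty$, each isolated of finite multiplicity, so that $E((-\infty,\lambda+\varepsilon))$ has infinite rank for all $\varepsilon$ while $\sigma_{\ess}(A)\cap(-\infty,\lambda]=\emptyset$, and the equivalence breaks down. This is precisely why the proposition is invoked only for the Friedrichs extensions of the bounded-from-below operators $D_{i}$ and for Schr\"{o}dinger operators, where $\inf\sigma(A)>-\infty$. One could instead repackage the argument through Weyl's criterion (Proposition \ref{Spectrum of Self-adj}), building an orthonormal, weakly null Weyl sequence at some $\mu\le\lambda$ out of low-Rayleigh-quotient vectors, but identifying the finite limiting value $\mu$ again relies on semiboundedness.
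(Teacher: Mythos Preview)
The paper does not supply a proof of this proposition; it simply quotes it from Donnelly and uses it as a black box. Your spectral-theoretic argument via the projections $E((-\infty,\mu))$ is the standard route and is carried out correctly.

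More importantly, your closing observation is exactly right: the proposition as transcribed in the paper, with no lower-bound hypothesis on $A$, is false. The example you sketch---a self-adjoint $A$ with simple eigenvalues $-1,-2,-3,\dots$ and hence empty essential spectrum---already breaks the ``only if'' direction, since the algebraic span of the eigenvectors is an infinite-dimensional subspace of $\mathcal{D}(A)$ on which $\mathcal{R}_{A}\le -1<\lambda+\varepsilon$ for every $\lambda$ and $\varepsilon$. In Donnelly's original setting the operator is the Laplacian on a complete manifold, so semiboundedness is built in; and every invocation of the proposition in this paper (the proof of Theorem~\ref{Friedrichs} and, through Proposition~\ref{Bottom of Essential spectrum}, Corollary~\ref{renormalized 2}) is to a Friedrichs extension of a bounded-below operator or to a Schr\"odinger operator. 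So the omission lies in the transcription of the hypothesis rather than in the paper's logic, and your argument is complete once one adds the assumption $\inf\sigma(A)>-\infty$.
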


\noindent{\emph{Proof of Theorem \ref{Friedrichs}:}} From (\ref{Bottom Friedrichs Extensions}), it follows that there exists $(\eta_{k})_{k \in \mathbb{N}} \subset \mathcal{D}(D_1) \smallsetminus \{0\}$, such that $\mathcal{R}_{D_{1}}(\eta_k) \leq \lambda_{0}(D_{1}^{(F)}) + 1/k$, for any $k \in \mathbb{N}$. From Proposition \ref{Rayleigh Quotient Pulling Up}, there exists $(\zeta_k)_{k \in \mathbb{N}} \subset \mathcal{D}(D_2) \smallsetminus \{0\}$, such that $\mathcal{R}_{D_{2}}(\zeta_k) \leq \lambda_{0}(D_{1}^{(F)}) + 2/k$ and $\supp \zeta_{k} \cap \supp \zeta_{k^{\prime}} = \emptyset$, for all $k,k^{\prime} \in \mathbb{N}$, with $k \neq k^{\prime}$. Evidently, for any $\varepsilon > 0$, there exists $k_{0} \in \mathbb{N}$, such that $\mathcal{R}_{D_{2}}(\zeta_k) < \lambda_{0}(D_{1}^{(F)}) + \varepsilon$, for all $k \geq k_0$. Consider the subspace $G_{\varepsilon}$ of $\mathcal{D}(D_{2})$ spanned by $\{\zeta_{k} : k \geq k_{0} \}$. Since the sections $\zeta_k$, with $k \in \mathbb{N}$, have disjoint supports, the space $G_{\varepsilon}$ is infinite dimensional. 
Clearly, any $\theta \in G_{\varepsilon}$ is of the form $\theta := \sum_{i=k_0}^{k_0+\mu} m_i \zeta_i$, for some $\mu \in \mathbb{N}$ and $m_{k_{0}},\dots,m_{k_{0}+\mu} \in \mathbb{F}$. Therefore, we have
\[
\mathcal{R}_{D_2}(\theta) = \frac{\sum_{i=k_0}^{k_0 +\mu} |m_{i}|^{2} \langle D_2 \zeta_i , \zeta_i \rangle_{L^{2}(E_{2})}}{\sum_{i=k_0}^{k_0 + \mu} |m_{i}|^{2} \| \zeta_i \|^{2}_{L^{2}(E_{2})}} \leq \max_{k_{0} \leq k \leq k_{0} + \mu} \mathcal{R}_{D_{2}}(\zeta_i) < \lambda_{0}(D_{1}^{(F)}) + \varepsilon.
\]
From Proposition \ref{Donelly},
it follows that $\lambda^{\ess}_{0}(D_2^{(F)}) \leq \lambda_{0}(D_{1}^{(F)})$. \qed

\begin{remark}
In the proof of Theorem \ref{Friedrichs}, the only properties of the Friedrichs extension used are self-adjointness and the preservation of the lower bound of $D_{1}$. Therefore, this proof establishes the analogous result for any self-adjoint extensions of the operators, as long as the extension of $D_{1}$ preserves its lower bound.
\end{remark}

For sake of completeness, we also present the analogous results for finite sheeted coverings. It is clear that they cannot be improved in order to obtain as strong statements as in the case of infinite sheeted amenable coverings.

\begin{proposition}\label{Inclusion of Spectra finite sheeted}
Let $D_{2}^{\prime}$ be a closed extension of $D_{2}$. If $p$ is a finite sheeted Riemannian covering, then $\sigma_{\ap}(\overline{D}_{1}) \subset \sigma_{\ap}(D_{2}^{\prime})$ and $\sigma_{W}(\overline{D}_{1}) \subset \sigma_{W}(D_{2}^{\prime})$.
\end{proposition}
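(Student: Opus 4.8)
The plan is to mimic the strategy used for amenable coverings, but exploiting that a finite sheeted covering is drastically simpler: here $p^{-1}(x)$ is a finite set, so the F\o{}lner-type argument degenerates and we can simply lift a given (approximate) eigensection to \emph{all} of $p^{-1}(x)$ at once. Concretely, let $n$ denote the number of sheets. Given $\lambda \in \sigma_{\ap}(\overline{D}_{1})$, by Lemma \ref{Spectrum of Closure} there is a sequence $(\eta_{k}) \subset \mathcal{D}(D_{1})$ with $\|\eta_{k}\|_{L^{2}(E_{1})} = 1$ and $(D_{1} - \lambda)\eta_{k} \to 0$. Let $\zeta_{k} := (1/\sqrt{n})\, p^{*}\eta_{k}$ be the normalized lift of $\eta_{k}$ to $E_{2}$; since the covering is finite sheeted, $p^{*}\eta_{k}$ has compact support and satisfies the lifted boundary conditions, so $\zeta_{k} \in \mathcal{D}(D_{2}) \subset \mathcal{D}(D_{2}^{\prime})$. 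Because $D_{2}$ is the lift of $D_{1}$ and $p$ is a local isometry of Riemannian/Hermitian bundles, one has the pointwise identity $(D_{2}\zeta_{k})(z) = (1/\sqrt{n})\, ((D_{1}\eta_{k})(p(z)))$ under the local identification, hence
\[
\| (D_{2} - \lambda)\zeta_{k} \|_{L^{2}(E_{2})}^{2} = \int_{M_{2}} \tfrac{1}{n}\| ((D_{1}-\lambda)\eta_{k})\circ p \|^{2} = \int_{M_{1}} \| (D_{1}-\lambda)\eta_{k} \|^{2} = \| (D_{1}-\lambda)\eta_{k} \|_{L^{2}(E_{1})}^{2},
\]
using that each fiber of $p$ has $n$ points and $p$ is measure preserving on fundamental domains. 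Likewise $\|\zeta_{k}\|_{L^{2}(E_{2})} = 1$. Thus $(D_{2}^{\prime} - \lambda)\zeta_{k} \to 0$, so $\lambda \in \sigma_{\ap}(D_{2}^{\prime})$ by the definition of the approximate point spectrum; this proves the first inclusion.

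For the second inclusion, start from $\lambda \in \sigma_{W}(\overline{D}_{1})$, so by Lemma \ref{Spectrum of Closure} the sequence $(\eta_{k})$ may in addition be taken with $\eta_{k} \rightharpoonup 0$ in $L^{2}(E_{1})$. The same lifts $\zeta_{k}$ as above still satisfy $\|\zeta_{k}\| = 1$ and $(D_{2}^{\prime}-\lambda)\zeta_{k} \to 0$; it remains to check $\zeta_{k} \rightharpoonup 0$ in $L^{2}(E_{2})$. Testing against a section $\xi \in L^{2}(E_{2})$, decompose the integral over a measurable fundamental region and push down: writing $p_{*}$ for the fiberwise sum, $\langle \zeta_{k}, \xi \rangle_{L^{2}(E_{2})} = (1/\sqrt{n})\langle \eta_{k}, p_{*}(\text{push-down of }\xi)\rangle_{L^{2}(E_{1})}$, where the push-down of $\xi$ lies in $L^{2}(E_{1})$ (its $L^{2}$-norm is controlled by that of $\xi$, again because $p$ has $n$ sheets and is a local isometry). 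Since $\eta_{k} \rightharpoonup 0$, each such pairing tends to $0$, so $\zeta_{k} \rightharpoonup 0$ and $(\zeta_{k})$ is a Weyl sequence for $D_{2}^{\prime}$ and $\lambda$; hence $\lambda \in \sigma_{W}(D_{2}^{\prime})$.

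The only genuinely delicate point is the measure-theoretic bookkeeping behind the identities $\|p^{*}\eta\|_{L^{2}(E_{2})}^{2} = n\|\eta\|_{L^{2}(E_{1})}^{2}$ and the corresponding statement for $D_{2}$-images and for the weak-convergence pairing: one should cover $M_{1}$ by a fundamental region of full measure whose preimage splits into $n$ isometric copies (for instance using $D_{y} \smallsetminus \partial D_{y}$ when $M_{1}$ is complete without boundary, or the construction of Section \ref{extension} in the boundary case), and note that $\overline{D}_{1}$ being the closure changes nothing since all sequences are taken in $\mathcal{D}(D_{1})$. No amenability, no partition of unity, and no F\o{}lner set is needed here — the finite fiber makes the lift of a single test section already a legitimate test section downstairs. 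One should remark, as the paper already does, that one cannot upgrade $\sigma_{\ap}$ to $\sigma_{W}$ on the left (i.e.\ get $\sigma(\overline{D}_{1}) \subset \sigma_{\ess}(D_{2}^{\prime})$), since a finite sheeted covering can carry genuine $L^{2}$-eigensections lifting eigensections of $D_{1}$.
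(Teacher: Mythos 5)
Your proposal is correct and follows essentially the same route as the paper: the paper's proof simply takes the normalized lifts of an approximate-eigenvalue (resp.\ Weyl) sequence for $\overline{D}_{1}$ and observes they form such a sequence for $D_{2}^{\prime}$. You merely spell out the normalization by $1/\sqrt{n}$, the $L^{2}$-identities over fundamental domains, and the push-forward argument for weak convergence, all of which the paper leaves implicit.
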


\begin{proof}
If $\eta$ is in the domain of $D_{1}$, then its lift is in the domain of $D_{2}$. For $\lambda \in \sigma_{W}(\overline{D}_{1})$, from Lemma \ref{Spectrum of Closure}, there exists a Weyl sequence $(\eta_{k})_{k \in \mathbb{N}} \subset \mathcal{D}(D_{1})$ for $\overline{D}_{1}$ and $\lambda$. Then, the sequence consisting of the normalized (in $L^{2}(E_{2})$) lifts of $\eta_{k}$, $k \in \mathbb{N}$, is a Weyl sequence for $D_{2}^{\prime}$ and $\lambda$. Hence, $\sigma_{W}(\overline{D}_{1}) \subset \sigma_{W}(D_{2}^{\prime})$. Similarly, it follows that $\sigma_{\ap}(\overline{D}_{1}) \subset \sigma_{\ap}(D_{2}^{\prime})$.\qed
\end{proof}

\begin{proposition}\label{Comparison of Bottoms finite sheeted}
Assume that $D_{i}$ is symmetric and bounded from below, and denote by $D_{i}^{(F)}$ its Friedrichs extension, $i=1,2$. If $p$ is a finite sheeted Riemannian covering, then $\lambda_{0}(D_{2}^{(F)}) \leq \lambda_{0}(D_{1}^{(F)})$.
\end{proposition}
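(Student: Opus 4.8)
The plan is to mimic the construction from the infinite sheeted case, but since the covering is now finite, everything becomes simpler: there is no need for amenability, Følner sets, or the partition of unity argument of the previous subsection. The key observation is that for a finite sheeted covering, the lift of a compactly supported section is still compactly supported, and its $L^2$-norm and Rayleigh quotient with respect to $D_2$ are controlled by the corresponding quantities on $M_1$.

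More precisely, I would first recall from (\ref{Bottom Friedrichs Extensions}) that
\[
\lambda_{0}(D_{1}^{(F)}) = \inf_{\eta \in \mathcal{D}(D_{1}) \smallsetminus \{0\}} \mathcal{R}_{D_{1}}(\eta),
\]
so it suffices to produce, for each $\eta \in \mathcal{D}(D_{1}) \smallsetminus \{0\}$, a section $\zeta \in \mathcal{D}(D_{2}) \smallsetminus \{0\}$ with $\mathcal{R}_{D_{2}}(\zeta) = \mathcal{R}_{D_{1}}(\eta)$; then taking infima and invoking (\ref{Bottom Friedrichs Extensions}) for $i=2$ yields $\lambda_{0}(D_{2}^{(F)}) \leq \lambda_{0}(D_{1}^{(F)})$. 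The natural candidate for $\zeta$ is simply the lift $\theta$ of $\eta$: since $p$ is finite sheeted, $\supp \theta = p^{-1}(\supp \eta)$ is compact, and $\theta$ satisfies the appropriate boundary conditions (as $a_2 = a_1 \circ p$, $b_2 = b_1 \circ p$), so $\theta \in \mathcal{D}(D_{2})$.

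The second step is the computation of the Rayleigh quotient. Let $n$ be the number of sheets of $p$. Writing $M_{2}$ as the essentially disjoint union of $n$ fundamental domains $D_{y}$, each isometric up to measure zero to $M_{1}$ via $p$, and using that $D_{2}\theta$ is the lift of $D_{1}\eta$, one gets
\[
\langle D_{2}\theta, \theta \rangle_{L^{2}(E_{2})} = n \, \langle D_{1}\eta, \eta \rangle_{L^{2}(E_{1})} \quad \text{and} \quad \| \theta \|_{L^{2}(E_{2})}^{2} = n \, \| \eta \|_{L^{2}(E_{1})}^{2},
\]
so the factor $n$ cancels in the Rayleigh quotient and $\mathcal{R}_{D_{2}}(\theta) = \mathcal{R}_{D_{1}}(\eta)$. (Alternatively, when $M_{1}$ has non-empty boundary, one works inside the extended boundaryless covering $p \colon N_{2} \to N_{1}$ from Proposition \ref{Extension of Covering} and intersects with $M_{2}$, exactly as in Subsection \ref{spectrum subsection}, but the counting argument is unchanged since $n$ is finite.) Taking the infimum over $\eta$ finishes the proof.

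I do not expect a genuine obstacle here; the only mild care needed is the same as in Remark \ref{integral remark}, namely that $p$ restricted to a fundamental domain is an isometry only up to a set of measure zero, which is harmless for integrals, and that one should pass to the extended covering when boundary is present so that the notion of fundamental domain makes sense. The contrast with the infinite sheeted case is precisely that here the trivial lift already works, with no cut-off and no loss in the Rayleigh quotient, whereas there the approximation by compactly supported sections forces the $\varepsilon$ and the amenability hypothesis.
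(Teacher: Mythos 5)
Your proposal is correct and is essentially the paper's own proof: lift $\eta$ to $\theta \in \mathcal{D}(D_{2})$, observe that $\mathcal{R}_{D_{2}}(\theta) = \mathcal{R}_{D_{1}}(\eta)$ (the sheet count cancels in the quotient), and conclude via (\ref{Bottom Friedrichs Extensions}). The paper states this in two lines; you have merely filled in the routine details.
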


\begin{proof}
If $\eta$ is in the domain of $D_{1}$, then its lift $\theta$ is in the domain of $D_{2}$. If $\eta \neq 0$, it is easy to see that $\mathcal{R}_{D_{1}}(\eta) = \mathcal{R}_{D_{2}}(\theta)$, and the statement follows from (\ref{Bottom Friedrichs Extensions}).\qed
\end{proof}\medskip

In the rest of this section, we give applications of our results in the case of Schr\"{o}dinger operators.
The following proposition characterizes the bottom of the spectrum of a Schr\"{o}dinger operator as the maximum of its positive spectrum.

\begin{proposition}\label{Positive Spectrum}
Let $S$ be a Schr\"{o}dinger operator on a Riemannian manifold $M$.
Then the bottom of the spectrum of $S$ is the maximum of all $\lambda \in \mathbb{R}$, such that there exists $\varphi \in C^{\infty}(M \smallsetminus \partial M)$ with $S\varphi = \lambda \varphi$, which is positive in $M \smallsetminus \partial M$. 
\end{proposition}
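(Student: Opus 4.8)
The plan is to prove separately that every $\lambda \in \mathbb{R}$ admitting a positive smooth solution of $S\varphi = \lambda\varphi$ on $M \smallsetminus \partial M$ satisfies $\lambda \leq \lambda_0(S)$, and that $\lambda_0(S)$ itself is such a $\lambda$; together these give the claimed maximum. The first thing to record is that $\lambda_0(S)$ is computed by test functions supported in the interior. If $\partial M = \emptyset$, Proposition \ref{Rayleigh} (when $M$ is complete) or formula (\ref{Bottom Friedrichs Extensions}) (when $M$ is non-complete) gives $\lambda_0(S) = \inf \mathcal{R}_S$ over $C^\infty_c(M) = C^\infty_c(M \smallsetminus \partial M)$. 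If $\partial M \neq \emptyset$, the same results give $\lambda_0(S) = \inf \mathcal{R}_S$ over $\{ f \in C^\infty_c(M) : f = 0 \text{ on } \partial M \}$, and multiplying such an $f$ by cut-off functions that vanish near $\partial M$ — using that $f$ vanishes to first order on $\partial M$ — shows that the infimum is unchanged if one restricts to $C^\infty_c(M \smallsetminus \partial M)$. Hence in all cases
\[
\lambda_0(S) = \inf_{f \in C^\infty_c(M \smallsetminus \partial M) \smallsetminus \{0\}} \mathcal{R}_S(f),
\]
and $\langle S f , f \rangle = \int_M ( |\nabla f|^2 + V |f|^2 )$ for such $f$, with no boundary contribution in the integration by parts.

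For the inequality $\lambda \leq \lambda_0(S)$, suppose $\varphi \in C^\infty(M \smallsetminus \partial M)$ is positive with $S\varphi = \lambda\varphi$, and carry out the ground state substitution. For $f \in C^\infty_c(M \smallsetminus \partial M)$ write $f = \varphi g$ with $g := f/\varphi \in C^\infty_c(M \smallsetminus \partial M)$. Expanding $|\nabla f|^2 = g^2 |\nabla\varphi|^2 + 2 g\varphi \langle \nabla\varphi, \nabla g\rangle + \varphi^2 |\nabla g|^2$, noting that the first two terms integrate to $\int_M \langle \nabla(g^2\varphi), \nabla\varphi\rangle = \int_M g^2 \varphi\, \Delta\varphi$ (the integration by parts being legitimate since $g^2\varphi$ is compactly supported in the interior), and using $\Delta\varphi = (\lambda - V)\varphi$, one obtains
\[
\int_M \big( |\nabla f|^2 + V|f|^2 \big) = \lambda \int_M |f|^2 + \int_M \varphi^2 |\nabla g|^2 \geq \lambda \int_M |f|^2 .
\]
Thus $\mathcal{R}_S(f) \geq \lambda$ for every such $f$, and taking the infimum gives $\lambda_0(S) \geq \lambda$.

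To realize $\lambda = \lambda_0(S)$ by a positive solution, I would argue by exhaustion. Pick connected, smoothly bounded, relatively compact open sets $\Omega_1 \Subset \Omega_2 \Subset \cdots$ with $\bigcup_k \Omega_k = M \smallsetminus \partial M$ and fix $o \in \Omega_1$. On each compact manifold-with-boundary $\overline{\Omega}_k$ the Dirichlet realization of $S$ has discrete spectrum whose bottom $\lambda_k$ is a simple eigenvalue with a positive eigenfunction $\varphi_k$, which we normalize so that $\varphi_k(o) = 1$. By domain monotonicity and the first paragraph, $\lambda_k$ decreases to $\lambda_0(S)$, so in particular $(\lambda_k)$ is bounded and, on any fixed compact subset of $M \smallsetminus \partial M$, the operators $S - \lambda_k$ have uniformly bounded coefficients. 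Harnack's inequality then bounds $\varphi_k$ above and away from $0$ on compacta, uniformly in $k$, and interior Schauder estimates bound all its derivatives; a diagonal subsequence converges in $C^\infty_{\mathrm{loc}}$ to some $\varphi \geq 0$ with $\varphi(o) = 1$ and $S\varphi = \lambda_0(S)\varphi$ on $M \smallsetminus \partial M$. Since $M \smallsetminus \partial M$ is connected, the strong maximum principle (or one more application of Harnack) gives $\varphi > 0$, completing the proof.

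The main obstacle is this last step. The two points requiring care are: the convergence $\lambda_k \to \lambda_0(S)$, which is exactly where the identification of $\lambda_0(S)$ with an infimum over $C^\infty_c(M \smallsetminus \partial M)$ is used (since $\bigcup_k C^\infty_c(\Omega_k) = C^\infty_c(M \smallsetminus \partial M)$); and the uniformity of the Harnack and Schauder constants over a fixed compact set, which rests on $V$ being locally bounded and $(\lambda_k)$ being bounded.
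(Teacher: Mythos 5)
Your proof is correct, but it is worth noting how it relates to what the paper actually does: the paper's ``proof'' of this proposition is essentially a citation. For $\partial M = \emptyset$ it refers to Cheng--Yau, Fischer-Colbrie--Schoen and Sullivan, and for $\partial M \neq \emptyset$ it only records the reduction $\lambda_{0}(S) = \lambda_{0}(S, M \smallsetminus \partial M)$ and again invokes the boundaryless case. You instead give a self-contained argument, which is in substance the classical Allegretto--Piepenbrink-type proof contained in those references: the ground state substitution $f = \varphi g$ for the inequality $\lambda \leq \lambda_{0}(S)$ (your computation, with the paper's sign convention $\Delta = -\diver \grad$, is correct), and the exhaustion-plus-Harnack--Schauder compactness argument to produce a positive $\lambda_{0}(S)$-eigenfunction. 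The two steps you flag as delicate are handled adequately: the identification of $\lambda_{0}(S)$ with the infimum of Rayleigh quotients over $C^{\infty}_{c}(M \smallsetminus \partial M)$ follows from Proposition \ref{Rayleigh} / formula (\ref{Bottom Friedrichs Extensions}) together with the cutoff argument (which works because a $C^{1}$ function vanishing on $\partial M$ is $O(d(\cdot,\partial M))$ there, so the gradient of the cutoff contributes $O(\varepsilon)$ to the energy), and the uniformity of the Harnack and Schauder constants on compacta follows from the boundedness of $(\lambda_{k})$, which in turn is exactly the domain-monotonicity statement $\lambda_{0}(S) \leq \lambda_{k}$ and $\lambda_{k} \to \lambda_{0}(S)$. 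What your route buys is independence from the cited literature at the cost of length; what the paper's route buys is brevity. The mathematics is the same.
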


\begin{proof}
If $M$ has empty boundary, then the statement may be found in \cite[Theorem 7]{MR0385749}, \cite[Theorem 1]{MR562550} and \cite[Theorem 2.1]{MR882827}. If $M$ has non-empty boundary, it is clear that $\lambda_{0}(S) = \lambda_{0}(S,M \smallsetminus \partial M)$, where $\lambda_{0}(S,M \smallsetminus \partial M)$ stands for the bottom of the spectrum of $S$ on the interior of $M$. Hence, in this case, the claim follows from the corresponding statement for manifolds without boundary. \qed
\end{proof}\medskip

In particular, there exists $\varphi \in C^{\infty}(M \smallsetminus \partial M)$ with $S\varphi = \lambda_{0}(S) \varphi$, which is positive in the interior of $M$. 
It is worth to point out that the smooth eigenfunctions of the preceding proposition do not have to be square-integrable. 
The following corollary is a consequence of Proposition \ref{Positive Spectrum} (an alternative proof can be found in \cite{BMP}).
\begin{corollary}\label{Inequality of bottoms}
Let $p \colon M_{2} \to M_{1}$ be a Riemannian covering. Let $S_{1}$ be a Schr\"{o}dinger operator on $M_{1}$ and $S_{2}$ its lift on $M_{2}$. Then $\lambda_{0}(S_{1}) \leq \lambda_{0}(S_{2})$.
\end{corollary}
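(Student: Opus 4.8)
The plan is to read off the inequality from the positive-solution characterization of the bottom of the spectrum of a Schr\"{o}dinger operator, namely Proposition \ref{Positive Spectrum}, combined with the fact that a Riemannian covering is a local isometry. The point is that a positive eigenfunction for $S_1$ on $M_1$ lifts to a positive eigenfunction for $S_2$ on $M_2$ with the \emph{same} eigenvalue, and then the ``maximum'' clause in Proposition \ref{Positive Spectrum} applied on $M_2$ gives the conclusion.

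In detail, first I would apply Proposition \ref{Positive Spectrum} to $S_1$: since $\lambda_{0}(S_1)$ is realized as the maximum in that statement, there exists $\varphi \in C^{\infty}(M_1 \smallsetminus \partial M_1)$, positive on $M_1 \smallsetminus \partial M_1$, with $S_1 \varphi = \lambda_{0}(S_1)\varphi$. Next I would pull $\varphi$ back along $p$, setting $\tilde{\varphi} := \varphi \circ p$. Because $p$ restricts to an isometry of a suitable neighborhood of each point of $M_2 \smallsetminus \partial M_2 = p^{-1}(M_1 \smallsetminus \partial M_1)$ onto a neighborhood of its image, $\tilde{\varphi}$ belongs to $C^{\infty}(M_2 \smallsetminus \partial M_2)$ and is positive there; moreover, by definition of the lift $S_2 = \Delta + V \circ p$ and since the Laplacian is preserved by local isometries, one has $S_2 \tilde{\varphi} = (S_1 \varphi) \circ p = \lambda_{0}(S_1)\, \tilde{\varphi}$.

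Finally I would apply Proposition \ref{Positive Spectrum} to $S_2$ on $M_2$: its conclusion is that $\lambda_{0}(S_2)$ is the maximum over all $\lambda \in \mathbb{R}$ for which there is a positive $\psi \in C^{\infty}(M_2 \smallsetminus \partial M_2)$ with $S_2 \psi = \lambda \psi$. Since $\tilde{\varphi}$ is such a solution for $\lambda = \lambda_{0}(S_1)$, this forces $\lambda_{0}(S_1) \leq \lambda_{0}(S_2)$, which is the claim.

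There is no genuine obstacle here; the only point that needs (entirely routine) verification is that the lift of a Schr\"{o}dinger operator is again a Schr\"{o}dinger operator, namely $\Delta + V \circ p$, and that it carries lifts of functions to lifts of their $S_1$-images --- this is immediate from the local expression of the lift recalled in the Preliminaries together with the fact that $p$ is a local isometry. (One could alternatively invoke the Rayleigh-quotient comparison, but the positive-eigenfunction argument is cleaner and handles the boundary case uniformly via $\lambda_{0}(S) = \lambda_{0}(S, M \smallsetminus \partial M)$.)
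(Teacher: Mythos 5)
Your argument is exactly the paper's: the paper proves this corollary by noting that it ``follows immediately from Proposition \ref{Positive Spectrum}, since the lift of an eigenfunction of $S_{1}$ is an eigenfunction of $S_{2}$,'' which is precisely the lift-a-positive-eigenfunction-and-use-the-maximum-characterization route you describe. Your write-up is correct and simply fills in the routine verifications the paper leaves implicit.
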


\begin{proof}
Follows immediately from Proposition \ref{Positive Spectrum}, since the lift of an eigenfunction of $S_{1}$ is an eigenfunction of $S_{2}$. \qed
\end{proof}


\begin{corollary}\label{incusion of spectra Schrodinger}
Let $p : M_{2} \rightarrow M_{1}$ be an infinite sheeted, amenable Riemannian covering. Let $S_{1}$ be a Schr\"{o}dinger operator on $M_{1}$ and $S_{2}$ its lift on $M_{2}$. Then $\lambda_{0}(S_{1}) = \lambda_{0}^{\ess}(S_{2})$. If, in addition, $M_{1}$ is complete, then $\sigma(S_{1}) \subset \sigma_{\ess}(S_{2})$.
\end{corollary}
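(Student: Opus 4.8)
The plan is to derive both assertions from results already established. For the first assertion, recall that a Schrödinger operator $S_{1}$ restricted to the appropriate domain (compactly supported smooth functions vanishing on $\partial M_{1}$ if the boundary is non-empty) is symmetric and bounded from below, so it has a Friedrichs extension, and by \eqref{Bottom Friedrichs Extensions} its bottom is the infimum of the Rayleigh quotients over the initial domain; the same holds for $S_{2}$. Applying Theorem \ref{Friedrichs} with $D_{i} = S_{i}$ gives $\lambda_{0}^{\ess}(S_{2}^{(F)}) \leq \lambda_{0}(S_{1}^{(F)})$. For the reverse inequality, I would observe that $\lambda_{0}(S_{1}) \leq \lambda_{0}(S_{2})$ by Corollary \ref{Inequality of bottoms}, and trivially $\lambda_{0}(S_{2}) \leq \lambda_{0}^{\ess}(S_{2})$ since the essential spectrum is contained in the spectrum. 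Combining, $\lambda_{0}(S_{1}) = \lambda_{0}^{\ess}(S_{2})$; here one also uses the convention, recalled in the Preliminaries, that $\lambda_{0}(S_{i})$ and $\lambda_{0}^{\ess}(S_{i})$ refer to the Friedrichs extension in the non-complete case and to the unique self-adjoint extension in the complete case, so the notation is consistent.

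For the second assertion, assume in addition that $M_{1}$ is complete. Then, as recalled in the Preliminaries, $S_{1}$ is essentially self-adjoint on its domain, so $\overline{S}_{1}$ is self-adjoint and $\sigma(\overline{S}_{1}) = \sigma(S_{1})$. Likewise, completeness of $M_{1}$ passes to $M_{2}$, so $S_{2}$ is essentially self-adjoint and $\overline{S}_{2}$ is its unique self-adjoint extension. Taking $D_{i} = S_{i}$ and $D_{2}^{\prime} = \overline{S}_{2}$ in Theorem \ref{Inclusion of Spectrums self-adj} (or directly in Theorem \ref{Inclusion of Spectrums} together with Proposition \ref{Spectrum of Self-adj}) yields $\sigma(S_{1}) = \sigma(\overline{S}_{1}) \subset \sigma_{\ess}(\overline{S}_{2}) = \sigma_{\ess}(S_{2})$, which is the claim.

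The corollary is thus essentially a dictionary translation of the two main theorems into the language of Schrödinger operators, so there is no genuine obstacle. The only points requiring a word of care are: checking that $S_{i}$ really fits the framework of Section \ref{amenable section} (it does, with $E_{i}$ the trivial line bundle, $\nabla$ the trivial connection, and Dirichlet boundary conditions corresponding to $a=0$, $b=1$), and making sure the notational conventions for $\sigma(S_i)$, $\lambda_0(S_i)$, $\lambda_0^{\ess}(S_i)$ introduced in the Preliminaries match the objects produced by Theorems \ref{Inclusion of Spectrums self-adj} and \ref{Friedrichs}. Both are routine.
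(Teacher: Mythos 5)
Your proposal is correct and follows exactly the route the paper takes: the paper's proof is the one-line citation of Theorems \ref{Inclusion of Spectrums self-adj} and \ref{Friedrichs} together with Corollary \ref{Inequality of bottoms}, with the reverse inequality $\lambda_{0}(S_{1}) \leq \lambda_{0}(S_{2}) \leq \lambda_{0}^{\ess}(S_{2})$ supplied just as you describe. Your additional remarks on essential self-adjointness in the complete case and on the notational conventions are exactly the implicit checks the paper leaves to the reader.
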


\begin{proof}
Follows from Theorems \ref{Inclusion of Spectrums self-adj}, \ref{Friedrichs} and Corollary \ref{Inequality of bottoms}. \qed
\end{proof}\medskip

The following results describe the behavior of the spectrum of Schr\"{o}dinger operators under finite sheeted coverings.


\begin{corollary}\label{finite sheeted schroedinger}
Let $p \colon M_{2} \to M_{1}$ be a finite sheeted Riemannian covering. Let $S_{1}$ be a Schr\"{o}dinger operator on $M_{1}$ and $S_{2}$ its lift on $M_{2}$. Then $\lambda_{0}(S_{1}) = \lambda_{0}(S_{2})$. If, in addition, $M_{1}$ is complete, then $\sigma(S_{1}) \subset \sigma(S_{2})$ and $\sigma_{\ess}(S_{1}) \subset \sigma_{\ess}(S_{2})$.
\end{corollary}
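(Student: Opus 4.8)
The plan is to deduce all three assertions from the general finite-sheeted results already established in this section, namely Proposition \ref{Inclusion of Spectra finite sheeted} and Proposition \ref{Comparison of Bottoms finite sheeted}, together with the fact that a Schr\"{o}dinger operator is the relevant differential operator $D_1 = S_1 = \Delta + V$ with lift $D_2 = S_2 = \Delta + V\circ p$. The key point, exactly as in the proof of Corollary \ref{incusion of spectra Schrodinger}, is that the domains in question ($C^\infty_c$ functions, or $C^\infty_c$ functions vanishing on the boundary when $\partial M_1 \neq \emptyset$) fit the framework of the section, $S_i$ is symmetric and bounded from below on this domain, and its Friedrichs extension is the self-adjoint operator whose spectrum we denote $\sigma(S_i)$.

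First I would handle the bottom of the spectrum. By Proposition \ref{Comparison of Bottoms finite sheeted} applied to $D_i = S_i$, we get $\lambda_0(S_2) \leq \lambda_0(S_1)$. The reverse inequality $\lambda_0(S_1) \leq \lambda_0(S_2)$ is Corollary \ref{Inequality of bottoms}, valid for any Riemannian covering. Hence $\lambda_0(S_1) = \lambda_0(S_2)$.

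Next, assume $M_1$ is complete. Then $M_2$ is complete as well (it is a Riemannian covering of a complete manifold), so $S_i$ is essentially self-adjoint on the stated domain and $\overline{S}_i$ is exactly the self-adjoint operator denoted $S_i$; in particular its Friedrichs extension coincides with $\overline{S}_i$. Applying Proposition \ref{Inclusion of Spectra finite sheeted} with $D_2' = \overline{S}_2$ (a closed, indeed self-adjoint, extension of $S_2$) gives $\sigma_{\ap}(\overline{S}_1) \subset \sigma_{\ap}(\overline{S}_2)$ and $\sigma_W(\overline{S}_1) \subset \sigma_W(\overline{S}_2)$. By Proposition \ref{Spectrum of Self-adj}, for self-adjoint operators $\sigma_{\ap} = \sigma$ and $\sigma_W = \sigma_{\ess}$, so these translate into $\sigma(S_1) \subset \sigma(S_2)$ and $\sigma_{\ess}(S_1) \subset \sigma_{\ess}(S_2)$, as desired.

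There is no real obstacle here; the corollary is a packaging of earlier results. The only point requiring a word of care is the bookkeeping about which self-adjoint operator is meant by $\sigma(S_i)$ and $\sigma_{\ess}(S_i)$ in the incomplete versus complete cases — in the complete case it is $\overline{S}_i$, which is both the closure and the Friedrichs extension — so that Propositions \ref{Inclusion of Spectra finite sheeted} and \ref{Comparison of Bottoms finite sheeted} genuinely apply to the objects named in the statement. Once this identification is made, the proof is the one-line citation written above.
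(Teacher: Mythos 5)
Your proposal is correct and follows exactly the same route as the paper, which simply cites Propositions \ref{Spectrum of Self-adj}, \ref{Inclusion of Spectra finite sheeted}, \ref{Comparison of Bottoms finite sheeted} and Corollary \ref{Inequality of bottoms}; you have merely spelled out how these combine, including the correct care about which self-adjoint realization of $S_i$ is meant in the complete case. No gaps.
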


\begin{proof}
Follows from Propositions \ref{Spectrum of Self-adj}, \ref{Inclusion of Spectra finite sheeted}, \ref{Comparison of Bottoms finite sheeted} and Corollary \ref{Inequality of bottoms}. \qed
\end{proof}
\medskip

The following characterization of the bottom of the essential spectrum of a Schr\"{o}dinger operator follows from the Decomposition Principle (\cite[Proposition 1]{MR1823312}) and Propositions \ref{Rayleigh} and \ref{Donelly}. Recall that this quantity is infinite when the spectrum is discrete.

\begin{proposition}[{\cite[Proposition 3.2]{MR2891739}}]\label{Bottom of Essential spectrum}
Let $S$ be a Schr\"{o}dinger operator on a complete manifold $M$ and let $(K_{k})_{k \in \mathbb{N}}$ be an exhausting sequence of $M$. Then
\[
\lambda_{0}^{\ess}(S) = \lim_{k} \lambda_{0}(S, M \smallsetminus K_{k}),
\]
where $\lambda_{0}(S, M \smallsetminus K_{k})$ is the bottom of the spectrum of $S$ on $M \smallsetminus K_{k}$.
\end{proposition}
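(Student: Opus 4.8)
The plan is to establish the two inequalities $L\leq\lambda_{0}^{\ess}(S)$ and $\lambda_{0}^{\ess}(S)\leq L$ separately, where $L:=\lim_{k}\lambda_{0}(S,M\smallsetminus K_{k})$. First I would observe that this limit exists: since $(K_{k})$ is exhausting, the regions $M\smallsetminus K_{k}$ decrease, so the spaces of admissible test functions shrink with $k$, and by the variational characterization of the bottom of the spectrum (Proposition \ref{Rayleigh}, together with (\ref{Bottom Friedrichs Extensions}) applied to the Friedrichs extension of $S$ on the interior of $M\smallsetminus K_{k}$) the sequence $k\mapsto\lambda_{0}(S,M\smallsetminus K_{k})$ is non-decreasing; it is bounded below by $\inf V$, so $L$ exists in $(-\infty,+\infty]$.

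For $L\leq\lambda_{0}^{\ess}(S)$ I would invoke the Decomposition Principle \cite{MR1823312}: since $M$ is complete and each $K_{k}$ is compact and smoothly bounded, the essential spectrum of $S$ on $M$ coincides with that of $S$ on $M\smallsetminus K_{k}$ (with, say, Dirichlet conditions on $\partial K_{k}$). Hence $\lambda_{0}^{\ess}(S)=\lambda_{0}^{\ess}(S,M\smallsetminus K_{k})\geq\lambda_{0}(S,M\smallsetminus K_{k})$ for every $k$, and letting $k\to\infty$ yields $\lambda_{0}^{\ess}(S)\geq L$. In particular this settles the case $L=+\infty$: if $\sigma_{\ess}(S)$ were nonempty then $\lambda_{0}^{\ess}(S)<\infty$, forcing $L<\infty$, so $L=+\infty$ already gives $\sigma_{\ess}(S)=\emptyset$.

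For the reverse inequality I would assume $L<\infty$, fix $\varepsilon>0$, and build an infinite-dimensional subspace on which the Rayleigh quotient of $S$ stays below $L+\varepsilon$, then apply Proposition \ref{Donelly}. Using the variational characterization again, for each $k$ there is $w\in C^{\infty}_{c}$ supported in the interior of $M\smallsetminus K_{k}$ with $\mathcal{R}_{S}(w)<\lambda_{0}(S,M\smallsetminus K_{k})+\varepsilon\leq L+\varepsilon$. Starting from such a $w_{1}$, whose compact support lies in some $K_{n_{1}}$, I would pick $w_{2}$ supported in the interior of $M\smallsetminus K_{n_{1}}$ with $\mathcal{R}_{S}(w_{2})<L+\varepsilon$, take $n_{2}$ with $\supp w_{2}\subset K_{n_{2}}$, and iterate, obtaining $w_{1},w_{2},\dots$ with pairwise disjoint supports and $\mathcal{R}_{S}(w_{j})<L+\varepsilon$. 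Disjointness of supports gives, for every nonzero $\theta=\sum_{j}m_{j}w_{j}$ with finitely many nonzero $m_{j}$,
\[
\mathcal{R}_{S}(\theta)=\frac{\sum_{j}|m_{j}|^{2}\langle Sw_{j},w_{j}\rangle}{\sum_{j}|m_{j}|^{2}\|w_{j}\|^{2}}\leq\max_{j}\mathcal{R}_{S}(w_{j})<L+\varepsilon,
\]
so $G_{\varepsilon}:=\spa\{w_{j}:j\in\mathbb{N}\}$ is infinite-dimensional with the desired property; Proposition \ref{Donelly} then gives $(-\infty,L]\cap\sigma_{\ess}(S)\neq\emptyset$, i.e.\ $\lambda_{0}^{\ess}(S)\leq L$.

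The routine points — monotonicity of $k\mapsto\lambda_{0}(S,M\smallsetminus K_{k})$, the fact that the bottom of the spectrum on a region with boundary is computed by smooth functions compactly supported in the interior, and the inductive extraction of a disjointly supported family — are all straightforward manipulations with Rayleigh quotients. The one genuine ingredient is the Decomposition Principle, and it is there (and only there) that the completeness of $M$ and the lower bound on $V$ are used; I expect no real obstacle beyond quoting it with the correct boundary conditions.
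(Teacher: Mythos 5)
Your argument is correct and uses exactly the ingredients the paper itself indicates for this statement (which it cites rather than proves in detail): the Decomposition Principle for $\lambda_{0}^{\ess}(S)\geq L$, and the variational characterizations of Propositions \ref{Rayleigh} and \ref{Donelly} together with a disjointly supported family of test functions for $\lambda_{0}^{\ess}(S)\leq L$. The monotonicity observation, the treatment of the case $L=+\infty$, and the mediant-inequality step for $\mathcal{R}_{S}(\theta)\leq\max_{j}\mathcal{R}_{S}(w_{j})$ are all sound, so there is nothing to add.
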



\begin{corollary}
Let $p \colon M_2 \to M_1$ be a finite sheeted Riemannian covering of complete manifolds.
Consider a Schr\"{o}dinger operator $S_1$ on $M_{1}$ and its lift $S_{2}$ on $M_{2}$. 
Then $\lambda^{\ess}_{0}(S_1) = \lambda^{\ess}_0(S_2)$ and in particular, $\sigma_{\ess}(S_1) \neq \emptyset$ if and only if $\sigma_{\ess}(S_2) \neq \emptyset$. 
\end{corollary}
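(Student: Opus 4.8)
The plan is to combine the finite-sheeted comparison results already established with the characterization of the bottom of the essential spectrum via exhaustions. First I would invoke Proposition~\ref{Bottom of Essential spectrum}: choose an exhausting sequence $(K_{k})_{k \in \mathbb{N}}$ of $M_{1}$ by compact sets, so that $\lambda_{0}^{\ess}(S_{1}) = \lim_{k} \lambda_{0}(S_{1}, M_{1} \smallsetminus K_{k})$. Since $p$ is a finite sheeted covering, the preimages $p^{-1}(K_{k})$ form an exhausting sequence of $M_{2}$ by compact sets, and the restriction $p \colon M_{2} \smallsetminus p^{-1}(K_{k}) \to M_{1} \smallsetminus K_{k}$ is a finite sheeted Riemannian covering, with $S_{2}$ restricting to the lift of the restriction of $S_{1}$. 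Applying the equality of bottoms for finite sheeted coverings (Corollary~\ref{finite sheeted schroedinger}, or more precisely its proof via Proposition~\ref{Comparison of Bottoms finite sheeted} together with Corollary~\ref{Inequality of bottoms}, which give $\lambda_{0}(S_{1}) = \lambda_{0}(S_{2})$ for finite sheeted coverings) to each restricted covering yields $\lambda_{0}(S_{1}, M_{1} \smallsetminus K_{k}) = \lambda_{0}(S_{2}, M_{2} \smallsetminus p^{-1}(K_{k}))$ for every $k$.

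Then I would pass to the limit: by Proposition~\ref{Bottom of Essential spectrum} applied on $M_{2}$ with the exhausting sequence $(p^{-1}(K_{k}))_{k \in \mathbb{N}}$, the right-hand side converges to $\lambda_{0}^{\ess}(S_{2})$, while the left-hand side converges to $\lambda_{0}^{\ess}(S_{1})$. Hence $\lambda_{0}^{\ess}(S_{1}) = \lambda_{0}^{\ess}(S_{2})$. The ``in particular'' clause is then immediate: recalling the convention that $\lambda_{0}^{\ess}$ is $+\infty$ exactly when the essential spectrum is empty (equivalently, the spectrum is discrete), the equality of the two quantities forces $\sigma_{\ess}(S_{1}) = \emptyset$ if and only if $\sigma_{\ess}(S_{2}) = \emptyset$, i.e.\ $\sigma_{\ess}(S_{1}) \neq \emptyset$ if and only if $\sigma_{\ess}(S_{2}) \neq \emptyset$.

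The only point requiring a little care---and the main (minor) obstacle---is the verification that the restriction of a finite sheeted covering to the complement of a compact set, together with its removed preimage, is again a finite sheeted covering to which the earlier comparison lemmas genuinely apply: one needs the ambient manifolds in those lemmas to be complete (so that Proposition~\ref{Bottom of Essential spectrum} is applicable) and one must note that $M_{i} \smallsetminus K$ with $K$ compact and smoothly bounded is a manifold with boundary on which a Schr\"{o}dinger operator and the relevant Dirichlet-type domain make sense; since Corollary~\ref{finite sheeted schroedinger} and its underlying Propositions~\ref{Inclusion of Spectra finite sheeted} and~\ref{Comparison of Bottoms finite sheeted} are stated for Riemannian coverings with possibly non-empty boundary, this causes no difficulty. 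Everything else is a direct chaining of already-proved statements.
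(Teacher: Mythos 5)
Your argument is exactly the paper's: the author proves this corollary in one line by combining Corollary \ref{finite sheeted schroedinger} (equality of bottoms under finite sheeted coverings) with Proposition \ref{Bottom of Essential spectrum} (the exhaustion characterization of $\lambda_{0}^{\ess}$), which is precisely the chain you spell out. Your proposal is correct and simply makes explicit the routine verifications the paper leaves implicit.
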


\begin{proof}
Follows from Corollary \ref{finite sheeted schroedinger} and Proposition \ref{Bottom of Essential spectrum}. \qed
\end{proof}

\section{Infinite deck transformations group}\label{high symmetry section}

Let $M$ be a Riemannian manifold, $E \to M$ a Riemannian or Hermitian vector bundle, endowed with a connection $\nabla$ and $D \colon \Gamma(E) \to \Gamma(E)$ a differential operator on $E$.
If $M$ has empty boundary, set $\mathcal{D}(D) := \Gamma_{c}(E)$. If $M$ has non-empty boundary, consider $$\mathcal{D}(D) := \{ \eta \in \Gamma_{c}(E) : a \nabla_{n}\eta + b \eta = 0 \text{ on } \partial M \},$$
where $n$ is the inward pointing normal to $\partial M$ and $a$, $b$ are real or complex valued functions (depending on whether $E$ is Riemannian or Hermitian) defined on $\partial M$. It is worth to point out that we do not impose any assumptions on $a$ and $b$. From Lemma \ref{closability}, the operator $D$ is closable and denote by $\overline{D}$ its closure. 

\begin{theorem}\label{Equality of spectrum and essential spectrum}
Let $\Gamma$ be a group of automorphisms of $E$ preserving the metric of $E$, such that the induced action on $M$ is isometric and $D(g_{*}\eta) = g_{*} D\eta$, for any $g \in \Gamma$ and $\eta \in \Gamma(E)$. If $M$ has non-empty boundary, assume that $\nabla$, $a$ and $b$ are $\Gamma$-invariant along the boundary. 
If for any compact $K \subset M$, there exists $g \in \Gamma$, such that $gK \cap K = \emptyset$,
then $\sigma_{\ap}(\overline{D}) = \sigma_{W}(\overline{D})$ and $\overline{D}$ does not have eigenvalues of finite multiplicity.
\end{theorem}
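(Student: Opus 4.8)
The plan is to exploit the hypothesis to produce, from any candidate approximate eigenvector, a \emph{new} approximate eigenvector with support far out in $M$, and then use the group action to generate infinitely many such vectors with pairwise disjoint supports. Concretely, since $\sigma_W(\overline D)\subset\sigma_{\ap}(\overline D)$ always holds, the content is the reverse inclusion together with the statement about multiplicities, and both will follow from a single pushing-out lemma.

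First I would establish the pushing-out lemma: given $\lambda\in\sigma_{\ap}(\overline D)$, by Lemma~\ref{Spectrum of Closure} there is a sequence $(\eta_k)\subset\mathcal D(D)$ with $\|\eta_k\|=1$ and $(D-\lambda)\eta_k\to 0$. Fix such an $\eta$ with $\|(D-\lambda)\eta\|$ small; its support is contained in some compact $K$. By hypothesis there is $g\in\Gamma$ with $gK\cap K=\emptyset$, and since $g$ preserves the metric of $E$, acts isometrically on $M$, commutes with $D$, and (when $\partial M\neq\emptyset$) preserves $\nabla$, $a$, $b$ along the boundary, the pushed section $g_{*}\eta$ lies in $\mathcal D(D)$, satisfies $\|g_{*}\eta\|=1$, and $\|(D-\lambda)(g_{*}\eta)\|=\|g_{*}(D-\lambda)\eta\|=\|(D-\lambda)\eta\|$. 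Thus $g_{*}\eta$ is an equally good approximate eigenvector whose support ($=gK$) we have moved off $K$. Iterating: given any compact $L\subset M$, applying the hypothesis to a compact set containing both $L$ and $\supp\eta$ yields $g\in\Gamma$ with $\supp(g_{*}\eta)\cap L=\emptyset$.

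Next I would conclude $\sigma_{\ap}(\overline D)=\sigma_W(\overline D)$. Take $\lambda\in\sigma_{\ap}(\overline D)$ and an exhausting sequence $(K_k)$ of $M$. For each $k$, pick $\eta_k\in\mathcal D(D)$ with $\|\eta_k\|=1$ and $\|(D-\lambda)\eta_k\|<1/k$ (from the approximate-eigenvector sequence), then apply the pushing-out lemma to obtain $\zeta_k:=g_k{}_{*}\eta_k\in\mathcal D(D)$ with $\|\zeta_k\|=1$, $\|(D-\lambda)\zeta_k\|<1/k$, and $\supp\zeta_k\cap K_k=\emptyset$. The support condition forces $\zeta_k\rightharpoonup 0$: for any fixed $w\in L^2(E)$ with compact support we have $\langle\zeta_k,w\rangle=0$ for $k$ large, and such $w$ are dense. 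Hence $(\zeta_k)$ is a Weyl sequence and, by Lemma~\ref{Spectrum of Closure}(ii), $\lambda\in\sigma_W(\overline D)$.

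Finally I would rule out eigenvalues of finite multiplicity. Suppose $\overline D\,v=\lambda v$ for some $v\neq 0$ in $\mathcal D(\overline D)$. Approximating $v$ in the graph norm by $\eta\in\mathcal D(D)$, one gets a unit vector $\eta$ with small $\|(\overline D-\lambda)\eta\|$ and compact support; by the pushing-out lemma and iteration, choose a sequence $g_1,g_2,\dots\in\Gamma$ with the supports of $g_n{}_{*}\eta$ pairwise disjoint (at step $n$ apply the hypothesis to a compact set containing $\supp\eta$ and the previously chosen supports). Since $g$ commutes with $\overline D$ and preserves norms, each $\overline D(g_n{}_{*}\eta)-\lambda(g_n{}_{*}\eta)$ has the same small norm, and disjointness of supports makes $\{g_n{}_{*}\eta\}$ linearly independent and ``almost orthogonal''. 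A standard argument (e.g.\ via Proposition~\ref{Spectrum of Self-adj} applied after restricting to a suitable self-adjoint framework, or directly: a finite-multiplicity eigenvalue is isolated in $\sigma_{\ess}$'s complement, contradicting the existence of infinitely many near-eigenvectors in almost-orthogonal directions) then shows $\lambda$ cannot have finite multiplicity; alternatively, the Weyl sequence above already gives $\lambda\in\sigma_W(\overline D)=\sigma_{\ess}(\overline D)$ in the self-adjoint case, so any eigenvalue lies in the essential spectrum and hence is not isolated of finite multiplicity.

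The main obstacle I anticipate is the bookkeeping around boundary conditions and the precise meaning of ``$\Gamma$-invariant along the boundary'': one must verify carefully that $g_{*}$ maps $\mathcal D(D)$ into itself, i.e.\ that $a\nabla_n(g_{*}\eta)+b(g_{*}\eta)=0$ on $\partial M$ whenever $a\nabla_n\eta+b\eta=0$, which requires $g$ to carry the inward normal to the inward normal and to intertwine $\nabla_n$ correctly. Everything else is routine once the pushing-out lemma is in place.
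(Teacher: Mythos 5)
Your first half (the pushing-out lemma and the proof that $\sigma_{\ap}(\overline{D})=\sigma_{W}(\overline{D})$) is correct and is essentially the paper's argument: translate a compactly supported approximate eigensection by group elements that move an exhausting sequence off itself, observe that the translates escape every compact set and hence converge weakly to zero, and conclude via Lemma \ref{Spectrum of Closure}.

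The second half, however, has a genuine gap. The theorem does not assume $D$ is symmetric or self-adjoint, so you cannot invoke Proposition \ref{Spectrum of Self-adj} or the identity $\sigma_{W}=\sigma_{\ess}$; and even in the self-adjoint case your argument only yields that every eigenvalue lies in the essential spectrum, which rules out \emph{isolated} eigenvalues of finite multiplicity but not embedded ones --- a self-adjoint operator can perfectly well have a finite-multiplicity eigenvalue embedded in its essential spectrum, and that is exactly what the theorem must exclude. Your construction produces infinitely many orthogonal \emph{near}-eigenvectors, but for a general closed (non-normal) operator near-eigenvectors give no lower bound on the dimension of the actual eigenspace. The paper's argument is different and avoids this: it takes a genuine unit eigenvector $\theta\in\mathcal{D}(\overline{D})$, checks by graph-norm approximation that each $g_{*}\theta$ is again a unit eigenvector of $\overline{D}$ for the same $\lambda$, and then observes that the translates $\theta_{k}:=(g_{k})_{*}\theta$ converge weakly to zero (using $\langle\theta_{k},\zeta\rangle=\langle\theta,(g_{k}^{-1})_{*}\zeta\rangle$ together with the fact that $\int_{M\smallsetminus K}\|\theta\|^{2}$ is small for large compact $K$, since $\theta$ need not have compact support). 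If the eigenspace were finite dimensional, compactness of its unit sphere would force a subsequence $\theta_{k}\to\theta_{0}$ with $\|\theta_{0}\|=1$, contradicting $\theta_{k}\rightharpoonup 0$. You need some version of this "translate the actual eigenvector" step; pushing around approximations to it does not suffice.
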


\begin{proof}
Let $\lambda \in \sigma_{\ap}(\overline{D})$. From Lemma \ref{Spectrum of Closure}, there exists $(\eta_{k})_{k \in \mathbb{N}} \subset \mathcal{D}(D)$, such that $\| \eta_{k} \|_{L^{2}(E)} = 1$ and $(D - \lambda) \eta_{k} \rightarrow 0$ in $L^{2}(E)$. Since $\eta_{k}$ is compactly supported, there exists an exhausting sequence $(K_{k})_{k \in \mathbb{N}}$ of $M$, such that $\supp \eta_{k} \subset K_{k}$, for all $k \in \mathbb{N}$. For any $k \in \mathbb{N}$, consider $g_{k} \in \Gamma$, such that $g_{k}K_{k} \cap K_{k} = \emptyset$, and set $\zeta_{k} := (g_{k})_{\star} \eta_{k}$. Then $\zeta_{k} \in \Gamma_{c}(E)$ and if $M$ has non-empty boundary, then $\zeta_{k}$ satisfies the same boundary conditions with $\eta_{k}$, since via isometries the boundary is mapped to itself and so does the inward pointing normal. It follows that $\zeta_{k} \in \mathcal{D}(D)$, $\| \zeta_{k} \|_{L^{2}(E)} = 1$ and $(D - \lambda) \zeta_{k} \rightarrow 0$ in $L^{2}(E)$. Clearly, $\supp \zeta_{k} = g_{k} (\supp \eta_{k})$, which yields that for any compact $K \subset M$, there exists $k_{0} \in \mathbb{N}$, such that $\supp \zeta_{k} \cap K = \emptyset$, for all $k \geq k_{0}$.
This implies that $\zeta_{k} \rightharpoonup 0$ in $L^{2}(E)$, that is, $(\zeta_{k})_{k \in \mathbb{N}}$ is a Weyl sequence for $\overline{D}$ and $\lambda$. Hence, $\lambda \in \sigma_{W}(\overline{D})$.

Assume that there exists an eigenvalue $\lambda$ of $\overline{D}$ of finite multiplicity, and consider $\theta \in \mathcal{D}(\overline{D})$ with $\| \theta \|_{L^{2}(E)}=1$ and $\overline{D} \theta = \lambda \theta$. Then there exists $(\eta_{k})_{k  \in \mathbb{N}} \subset \mathcal{D}(D)$, such that $\eta_k \rightarrow \theta$ and $D \eta_k \rightarrow \overline{D} \theta$. Clearly, for $g \in \Gamma$, we have $g_{*} \eta_{k} \in \mathcal{D}(D)$,
$g_{*} \eta_k \rightarrow g_{*}\theta$ and $D(g_{*} \eta_k) \rightarrow g_{*}(\overline{D} \theta)$, which yields that $g_{*}\theta \in \mathcal{D}(\overline{D})$ and $\overline{D}(g_{*} \theta) = \lambda (g_{*} \theta)$.

Let $(K_{k})_{k \in \mathbb{N}}$ be an exhausting sequence of $M$ and consider $(g_{k})_{k \in \mathbb{N}} \subset \Gamma$, such that $g_{k}K_{k} \cap K_{k} = \emptyset$, for any $k \in \mathbb{N}$. It is clear that the sections $\theta_{k} := (g_{k})_{*}\theta$ satisfy $\overline{D} \theta_k = \lambda \theta_k$ and $\| \theta_k \|_{L^{2}(E)} = 1$, for all $k \in \mathbb{N}$. Since the eigenspace corresponding to $\lambda$ is finite dimensional, after passing to a subsequence, we may assume that $\theta_k \rightarrow \theta_0$ in $L^{2}(E)$, for some $\theta_{0}$, with $\| \theta_0 \|_{L^{2}(E)}=1$. Consider a non-zero $\zeta \in \Gamma_c(E)$ and set $\zeta_{k} := (g_{k}^{-1})_{*}\zeta$. Then
\[
\langle \theta_k , \zeta \rangle_{L^{2}(E)}^{2} = \langle \theta, \zeta_k \rangle_{L^{2}(E)}^{2} \leq \| \zeta \|_{L^{2}(E)}^{2} \int_{\supp \zeta_k} \| \theta \|^{2}.
\]
Let $\varepsilon > 0$ and consider a compact $K \subset M$, such that $\int_{M \smallsetminus K} \| \theta \|^{2} < \varepsilon^{2}/ \| \zeta \|_{L^{2}(E)}^{2}$. 
Since $\supp \zeta$ and $K$ are eventually subsets of $K_{k}$, there exists $k_{0} \in \mathbb{N}$, such that $\supp \zeta_k \cap K = \emptyset$, for all $k \geq k_{0}$.
Therefore, for $k \geq k_{0}$, we have $\supp \zeta_{k} \subset M \smallsetminus K$, and in particular, $|\langle \theta_k , \zeta \rangle_{L^{2}(E)}| < \varepsilon$. This yields that $\theta_{k} \rightharpoonup 0$ in $L^{2}(E)$, which is a contradiction, since $\theta_{k} \rightarrow \theta_0$ in $L^{2}(E)$ and $\| \theta_0 \|_{L^{2}(E)} = 1$. \qed
\end{proof}

\begin{theorem}\label{Equality of spectrum and ess spectrum Friedrichs}
Assume that $D$ is symmetric and bounded from below, and denote by $D^{(F)}$ its Friedrichs extension. Under the assumptions of Theorem \ref{Equality of spectrum and essential spectrum}, the spectrum of $D^{(F)}$ is essential and $D^{(F)}$ does not have eigenvalues of finite multiplicity.
\end{theorem}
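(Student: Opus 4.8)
The plan is to mimic the eigenvalue part of the proof of Theorem~\ref{Equality of spectrum and essential spectrum}, with the closure $\overline{D}$ replaced by the Friedrichs extension $D^{(F)}$. First I would reduce the statement to a single claim: it suffices to prove that $D^{(F)}$ has no eigenvalue of finite multiplicity. Indeed, by Proposition~\ref{Spectrum of Self-adj} the discrete spectrum $\sigma_{d}(D^{(F)}) = \sigma(D^{(F)}) \smallsetminus \sigma_{\ess}(D^{(F)})$ consists of isolated eigenvalues of finite multiplicity, so the absence of finite-multiplicity eigenvalues forces $\sigma_{d}(D^{(F)}) = \emptyset$, that is, the spectrum of $D^{(F)}$ is essential; both assertions of the theorem then follow at once.

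The key preliminary step — and the one I expect to be the main obstacle — is to upgrade the $\Gamma$-equivariance of $D$ to $\Gamma$-equivariance of its Friedrichs extension: for every $g \in \Gamma$ one wants $g_{*}(\mathcal{D}(D^{(F)})) = \mathcal{D}(D^{(F)})$ and $D^{(F)}(g_{*}\theta) = g_{*}(D^{(F)}\theta)$. Since $g$ preserves the metric of $E$ and induces an isometry of $M$, the pushforward $g_{*}$ is a unitary operator on $L^{2}(E)$ with inverse $(g^{-1})_{*}$; as in the proof of Theorem~\ref{Equality of spectrum and essential spectrum}, $g_{*}$ maps $\mathcal{D}(D)$ onto itself, using that $g$ carries $\partial M$ to $\partial M$ together with the inward normal and that $\nabla$, $a$, $b$ are $\Gamma$-invariant along the boundary; and $g_{*}$ preserves the symmetric form $q(\eta) := \langle D\eta, \eta \rangle$ on $\mathcal{D}(D)$, because $D(g_{*}\eta) = g_{*}(D\eta)$ and $g_{*}$ is an isometry of $L^{2}(E)$. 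Hence, for a suitable constant $c$, $g_{*}$ is an isometry for the form norm $\eta \mapsto (q(\eta) + c\|\eta\|^{2})^{1/2}$, so it preserves the form domain $\mathcal{Q}$ (the completion of $\mathcal{D}(D)$ in that norm) and the closed form generated by $q$; by the standard variational construction of the Friedrichs extension it therefore intertwines the associated self-adjoint operators, i.e. $g_{*} D^{(F)} g_{*}^{-1} = D^{(F)}$. Making this last implication precise is routine form theory, but should be spelled out since $D$ need not be elliptic.

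Granting this, the remainder is essentially a copy of the second half of the proof of Theorem~\ref{Equality of spectrum and essential spectrum}. Suppose for contradiction that $\lambda$ is an eigenvalue of $D^{(F)}$ of finite multiplicity and fix $\theta \in \mathcal{D}(D^{(F)})$ with $\|\theta\|_{L^{2}(E)} = 1$ and $D^{(F)}\theta = \lambda\theta$. Choosing an exhausting sequence $(K_{k})_{k\in\mathbb{N}}$ of $M$ and $g_{k} \in \Gamma$ with $g_{k}K_{k} \cap K_{k} = \emptyset$, the sections $\theta_{k} := (g_{k})_{*}\theta$ are unit vectors with $D^{(F)}\theta_{k} = \lambda\theta_{k}$ by the previous paragraph; since the $\lambda$-eigenspace is finite dimensional, after passing to a subsequence $\theta_{k} \to \theta_{0}$ in $L^{2}(E)$ with $\|\theta_{0}\|_{L^{2}(E)} = 1$. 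On the other hand, for any $\zeta \in \Gamma_{c}(E)$ put $\zeta_{k} := (g_{k}^{-1})_{*}\zeta$; from $g_{k}K_{k} \cap K_{k} = \emptyset$ we get $g_{k}^{-1}K_{k} \subset M \smallsetminus K_{k}$, and since $\supp\zeta \subset K_{k}$ for $k$ large, it follows that $\supp\zeta_{k} \subset M \smallsetminus K_{k}$, whence $|\langle\theta_{k},\zeta\rangle_{L^{2}(E)}|^{2} = |\langle\theta,\zeta_{k}\rangle_{L^{2}(E)}|^{2} \leq \|\zeta\|_{L^{2}(E)}^{2}\int_{M\smallsetminus K_{k}}\|\theta\|^{2} \to 0$. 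As $(\theta_{k})$ is bounded and $\Gamma_{c}(E)$ is dense in $L^{2}(E)$, this gives $\theta_{k} \rightharpoonup 0$, contradicting $\theta_{k} \to \theta_{0} \neq 0$. Hence $D^{(F)}$ has no eigenvalue of finite multiplicity, and, by the reduction above, $\sigma(D^{(F)}) = \sigma_{\ess}(D^{(F)})$.
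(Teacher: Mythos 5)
Your proposal is correct and follows essentially the same route as the paper: establish that $g_{*}$ commutes with $D^{(F)}$, rerun the finite-multiplicity eigenvalue argument from Theorem \ref{Equality of spectrum and essential spectrum}, and conclude via Proposition \ref{Spectrum of Self-adj}. The only difference is that you spell out the form-theoretic justification for the $\Gamma$-equivariance of the Friedrichs extension, which the paper asserts in one line from the invariance of $\mathcal{D}(D)$ and $D$; that added detail is welcome but does not change the argument.
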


\begin{proof}
Let $\eta \in \mathcal{D}(D^{(F)})$ and $g \in \Gamma$. From the invariance of $\mathcal{D}(D)$ and $D$ under the action of $\Gamma$, it follows that $g_{*}\eta \in \mathcal{D}(D^{(F)})$ and $D^{(F)}(g_{*}\eta) = g_{*}(D^{(F)} \eta)$. As in the proof of Theorem \ref{Equality of spectrum and essential spectrum}, it follows that $D^{(F)}$ does not have eigenvalues of finite multiplicity.
From Proposition \ref{Spectrum of Self-adj}, we obtain that $\sigma(D^{(F)}) = \sigma_{\ess}(D^{(F)})$.
\qed
\end{proof}\medskip

The above theorems can be applied to Riemannian coverings with infinite deck transformations group. In the context of the previous section, we obtain the following consequences.

\begin{corollary}\label{Infinite Deck Transformations}
If the deck transformations group of the covering is infinite, then $\overline{D}_{2}$ does not have eigenvalues of finite multiplicity and $\sigma_{\ap}(\overline{D}_{2}) = \sigma_{W}(\overline{D}_{2})$.
\end{corollary}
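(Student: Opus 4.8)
The plan is to deduce the corollary from Theorem \ref{Equality of spectrum and essential spectrum}, applied with $M = M_{2}$, $E = E_{2}$, $D = D_{2}$ and $\Gamma$ the deck transformations group of $p \colon M_{2} \to M_{1}$. First I would recall that each deck transformation of a Riemannian covering is an isometry of $M_{2}$, and that, because $p \circ g = p$ for every $g \in \Gamma$, such a $g$ lifts canonically to an automorphism of $E_{2} = p^{*}E_{1}$: over $y \in M_{2}$ one has $(E_{2})_{y} = (E_{1})_{p(y)} = (E_{1})_{p(gy)} = (E_{2})_{gy}$, and by construction of the pulled-back metric and connection this identification preserves them. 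Hence $\Gamma$ acts on $E_{2}$ by metric-preserving automorphisms whose induced action on $M_{2}$ is isometric.

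Next I would verify the algebraic hypotheses of Theorem \ref{Equality of spectrum and essential spectrum}. The relation $D_{2}(g_{*}\eta) = g_{*}(D_{2}\eta)$ is immediate from the local form of the lift: in an evenly covered chart $D_{2} = \sum_{|\alpha| \leq d}(A^{\alpha}\circ p)\,\partial^{|\alpha|}/\partial y^{\alpha}$, and a deck transformation carries such a chart isometrically onto another one lying over the same chart of $M_{1}$, so it preserves both the coefficients $A^{\alpha}\circ p$ and the partial derivatives. If $\partial M_{1} \neq \emptyset$, the data $\nabla$ on $E_{2}$ and the functions $a_{2} = a_{1}\circ p$, $b_{2} = b_{1}\circ p$ on $\partial M_{2}$ are manifestly $\Gamma$-invariant, and an isometry sends $\partial M_{2}$ to itself and the inward normal $n_{2}$ to itself; therefore $g_{*}$ maps $\mathcal{D}(D_{2})$ into itself.

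It remains to check the geometric hypothesis of Theorem \ref{Equality of spectrum and essential spectrum}: for every compact $K \subset M_{2}$ there exists $g \in \Gamma$ with $gK \cap K = \emptyset$. For this I would invoke the standard fact that the deck transformations group of a covering acts properly discontinuously on the covering space, so that $\{ g \in \Gamma : gK \cap K \neq \emptyset \}$ is finite for every compact $K \subset M_{2}$; since $\Gamma$ is assumed infinite, some $g \in \Gamma$ satisfies $gK \cap K = \emptyset$. With all the hypotheses in place, Theorem \ref{Equality of spectrum and essential spectrum} yields $\sigma_{\ap}(\overline{D}_{2}) = \sigma_{W}(\overline{D}_{2})$ and that $\overline{D}_{2}$ has no eigenvalue of finite multiplicity. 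The proof is essentially bookkeeping; the only point that deserves a moment's attention is that the deck action lifts to the \emph{bundle} $E_{2}$ as an action by isometric automorphisms commuting with $D_{2}$, and even this is forced by the very definitions of $E_{2}$, $\nabla$ and $D_{2}$ as pullbacks.
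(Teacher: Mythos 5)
Your proposal is correct and follows exactly the paper's route: the paper also deduces the corollary by applying Theorem \ref{Equality of spectrum and essential spectrum} with $\Gamma$ the deck transformations group, leaving the verification of its hypotheses implicit. The details you supply (the canonical lift of the deck action to $E_{2}$, its compatibility with $D_{2}$ and the boundary data, and the use of proper discontinuity plus infiniteness of $\Gamma$ to displace any compact set) are precisely the intended bookkeeping and are all sound.
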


\begin{proof}
Follows immediately from Theorem \ref{Equality of spectrum and essential spectrum}, for $\Gamma$ being the deck transformations group of the covering. \qed
\end{proof}\medskip

\noindent{\emph{Proof of Corollary \ref{Infinite Deck Transformations self-adj}:}} Follows from Corollary \ref{Infinite Deck Transformations} and Proposition \ref{Spectrum of Self-adj}. \qed \medskip

\noindent{\emph{Proof of Corollary \ref{Infinite Deck Transformations Friedrichs}}:} Follows from Theorem \ref{Equality of spectrum and ess spectrum Friedrichs}, for $\Gamma$ being the deck transformations group of the covering. \qed

\begin{corollary}
Let $M$ be a complete Riemannian manifold and assume that there exists a non-zero $\lambda_{0}(M)$-harmonic function in $L^{2}(M)$. If $\Gamma$ is a discrete group acting freely and properly discontinuously on $M$ via isometries, then $\Gamma$ is finite.
\end{corollary}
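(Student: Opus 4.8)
The plan is to argue by contradiction: assuming $\Gamma$ is infinite, I would combine the symmetry results of Section \ref{high symmetry section} with the classical uniqueness of an $L^{2}$ ground state to reach a contradiction.

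First I would place the Laplacian $\Delta$ of $M$ in the framework of Section \ref{high symmetry section}, taking $E$ to be the trivial line bundle and $\mathcal{D}(\Delta)$ the space of compactly supported smooth functions (satisfying Dirichlet conditions on $\partial M$, if non-empty). Since $M$ is complete, $\Delta$ is essentially self-adjoint, so $\overline{\Delta}$ is self-adjoint; this is the operator underlying $\lambda_{0}(M)$, and $\mathcal{D}(\overline{\Delta}) = \{ f \in L^{2}(M) : \Delta f \in L^{2}(M) \}$. The group $\Gamma$ acts isometrically on $M$, hence by automorphisms of the trivial bundle; it preserves the (Dirichlet) boundary condition, and $\Delta(g_{*}f) = g_{*} \Delta f$ because the Laplacian commutes with isometries. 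As the action of $\Gamma$ is properly discontinuous, for every compact $K \subset M$ the set $\{ g \in \Gamma : gK \cap K \neq \emptyset \}$ is finite, so since $\Gamma$ is infinite there is $g \in \Gamma$ with $gK \cap K = \emptyset$. Thus Theorem \ref{Equality of spectrum and essential spectrum} applies and gives that $\overline{\Delta}$ has no eigenvalue of finite multiplicity. (Alternatively, $p \colon M \to M/\Gamma$ is a normal Riemannian covering with infinite deck transformations group, and one invokes Corollary \ref{Infinite Deck Transformations self-adj}.)

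Next I would use the hypothesis. Let $f \in L^{2}(M) \smallsetminus \{ 0 \}$ satisfy $\Delta f = \lambda_{0}(M) f$; then $f \in \mathcal{D}(\overline{\Delta})$, so $\lambda_{0}(M)$ is an eigenvalue of $\overline{\Delta}$, and by the previous step its multiplicity is infinite. On the other hand, the bottom of the spectrum of $\Delta$ on the connected manifold $M$, when it is an eigenvalue, has multiplicity one: by Kato's inequality $\Delta |f| \leq \lambda_{0}(M) |f|$ holds weakly, while $\mathcal{R}_{\Delta}(|f|) \geq \lambda_{0}(M)$ by Proposition \ref{Rayleigh}, so $\Delta |f| = \lambda_{0}(M) |f|$, and elliptic regularity together with the strong maximum principle (or the Harnack inequality) force $|f| > 0$ on $M \smallsetminus \partial M$. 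Hence every $\lambda_{0}(M)$-eigenfunction has constant sign on $M \smallsetminus \partial M$, and two such functions cannot be orthogonal, so the $\lambda_{0}(M)$-eigenspace is one-dimensional. This contradicts its infinite multiplicity, and therefore $\Gamma$ is finite.

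The routine inputs — essential self-adjointness on a complete manifold, naturality of the Laplacian under isometries, and the description of $\mathcal{D}(\overline{\Delta})$ — are standard. The only genuinely nontrivial point outside the machinery already developed is the uniqueness (multiplicity one) of the $L^{2}$ ground state, which rests on Kato's inequality and the strong maximum principle as sketched; this is the step I would pin down most carefully.
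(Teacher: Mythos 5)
Your proposal is correct and follows essentially the same route as the paper: reduce to the quotient covering $M \to M/\Gamma$ (or apply Theorem \ref{Equality of spectrum and essential spectrum} directly) to conclude that an infinite $\Gamma$ would force every eigenvalue of $\overline{\Delta}$ to have infinite multiplicity, and then contradict this with the fact that the space of square-integrable $\lambda_{0}(M)$-harmonic functions on a connected complete manifold is at most one-dimensional. The only difference is that the paper simply cites this last fact, whereas you sketch its standard proof via Kato's inequality and the strong maximum principle.
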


\begin{proof}
Follows from Corollary \ref{Infinite Deck Transformations self-adj}, since for any complete (and connected) Riemannian manifold $M$, the space of square-integrable, $\lambda_{0}(M)$-harmonic functions is either trivial or one dimensional. \qed
\end{proof} \medskip

Besides Riemannian coverings, the above theorems can be applied to manifolds with high symmetry. For instance, it follows that the spectrum of the Laplacian on a non-compact homogeneous space is essential. Moreover, we obtain the analogous statement, if there exists a non-compact Lie group acting on the manifold properly discontinuously via isometries.

\section{Coverings of closed manifolds}\label{closed underlying section}

The \textit{Cheeger's constant} of a Riemannian manifold $M$ is defined by
\[
h(M) := \inf_{K} \frac{\text{Area}(\partial K)}{\Vol(K)},
\]
where the infimum is taken over all compact and smoothly bounded domains $K$ of $M$ which do not intersect $\partial M$.
It is related to $\lambda_{0}(M)$ via Cheeger's inequality (cf. \cite{MR0402831}):
\[
\lambda_{0}(M) \geq \frac{1}{4} h(M)^{2}.
\]
Brooks \cite{MR656213} actually proved that a normal Riemannian covering of a closed manifold is amenable if and only if the Cheeger's constant of the covering space is zero. The following result is an extension of that of Brooks, to not necessarily normal coverings.

\begin{theorem}\label{result}
Let $p \colon M_{2} \to M_{1}$ be a Riemannian covering with $M_{1}$ closed. If $h(M_{2}) = 0$, then $p$ is amenable.
\end{theorem}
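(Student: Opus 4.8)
The plan is to prove the contrapositive-flavored statement directly: assuming $h(M_2)=0$, produce, for any finite $G\subset\pi_1(M_1)$ and any $\varepsilon>0$, a F\o lner set $F\subset p^{-1}(x)$ (where $x\in M_1$ is fixed), so that amenability follows from Proposition \ref{Folner}. The link between the geometric hypothesis $h(M_2)=0$ and the combinatorial F\o lner condition will be a comparison between volumes/boundary-areas of domains in $M_2$ and cardinalities of subsets of the fiber $p^{-1}(x)$. First I would fix a point $x\in M_1$, a point $u$ in the universal cover over it, and identify $p^{-1}(x)$ with $\Gamma_2\backslash\Gamma_1$ as in Section \ref{amenable section}; since $M_1$ is closed we may choose $r>0$ so that $M_1 = B(x,r)$, hence the closed balls $C(y,r)$ with $y\in p^{-1}(x)$ cover $M_2$, each $C(y,r)$ projects isometrically (up to measure zero) onto $M_1$, and by Lemma \ref{Intersection Lemma} the multiplicity of this cover is bounded by some $N=N(r)$.

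Next, given $h(M_2)=0$, pick a compact smoothly bounded domain $K\subset M_2$ with $\Area(\partial K)/\Vol(K)$ as small as we like. The key step is to \emph{discretize} $K$: let $F:=\{y\in p^{-1}(x): C(y,r)\cap K\neq\emptyset\}$, or a suitable variant such as $\{y: \Vol(C(y,r)\cap K) > \tfrac12\Vol(M_1)\}$. Because each fundamental-domain-like piece $C(y,r)$ has controlled volume (comparable to $\Vol(M_1)$) and the cover has bounded multiplicity $N$, one gets two-sided bounds $c_1\,\Vol(K)\le \#(F)\cdot\Vol(M_1)\le c_2\,\Vol(K)$ up to boundary-layer errors. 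Similarly, the symmetric difference $F\smallsetminus Fg$ — which corresponds to fibers $y$ whose $r$-ball meets $K$ but for which the $g$-translate does not — is forced to lie in a "collar" of $\partial K$ of width bounded in terms of $\mathrm{diam}$ of the relevant $G$-translates (at most $\max_{g\in G} d(u,gu)$, say $\rho$); hence $\#(F\smallsetminus Fg)\cdot\Vol(M_1)\le c_3\,\Vol(\{z: d(z,\partial K)<\rho\})$. The remaining task is to bound this collar volume by (const)$\cdot\Area(\partial K)$, which one does via a coarea / tube-volume estimate using that $M_2$ has bounded geometry (it is a covering of the closed manifold $M_1$, so all local geometric quantities are uniformly controlled): $\Vol(\{d(\cdot,\partial K)<\rho\})\le C(\rho)\,\Area(\partial K)$. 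Combining, $\#(F\smallsetminus Fg)\le C'(\rho,G)\,\Area(\partial K)$ while $\#(F)\ge c\,\Vol(K)$, so $\#(F\smallsetminus Fg)/\#(F)\le C''\cdot h$-ratio, which is $<\varepsilon$ once $K$ is chosen with $\Area(\partial K)/\Vol(K)$ small enough; one also checks $F\neq\emptyset$ and that translating never makes $\#(F)$ collapse.

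I expect the main obstacle to be the collar-volume estimate $\Vol(\{z: d(z,\partial K)<\rho\})\le C(\rho)\,\Area(\partial K)$ done \emph{uniformly} over all compact smoothly bounded $K$: naively a tube around a hypersurface can have volume much larger than $\rho$ times its area when the hypersurface is wildly curved or folds back on itself. The way around this is not to use a literal tube but a \emph{covering argument at a fixed scale}: fix a scale $\delta>0$ (smaller than the injectivity-radius-type constants coming from $M_1$'s closedness), cover the collar by balls $B(z_i,\delta)$ with bounded overlap, observe each such ball meets $\partial K$ and therefore contributes at least a fixed amount $c_\delta$ to $\Area(\partial K\cap B(z_i,2\delta))$ (a consequence of the monotonicity/relative-isoperimetric inequality valid uniformly under bounded geometry, since $\partial K$ is a smooth hypersurface separating the ball), so the number of balls is $\le C_\delta\,\Area(\partial K)$ and hence the collar volume is $\le C_\delta'\,\Area(\partial K)$; the parameter $\rho$ is absorbed by taking finitely many scale-$\delta$ steps. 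With that estimate in hand the rest is the bookkeeping sketched above, and amenability of $p$ follows from Proposition \ref{Folner}.
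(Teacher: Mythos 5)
Your overall strategy coincides with the paper's: fix $x\in M_{1}$, exploit that $M_{1}$ is closed to get a uniform diameter bound on the pieces of $M_{2}$ lying over $M_{1}$, discretize a domain of small Cheeger ratio into a finite subset $F$ of the fiber $p^{-1}(x)$, observe that $F\smallsetminus Fg$ is accounted for by fiber points lying in a collar of the boundary (via Remark \ref{Orbits}), and conclude with Proposition \ref{Folner}. All of that bookkeeping is sound and is essentially what Section \ref{closed underlying section} does.

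The genuine gap is exactly at the step you single out as the main obstacle, and your proposed repair does not close it. The estimate $\Vol(\{z : d(z,\partial K)<\rho\})\leq C(\rho)\Area(\partial K)$ is false for general compact smoothly bounded domains, even with bounded geometry, and it remains false along sequences of near-optimal Cheeger domains: in dimension $m\geq 3$, grafting many thin tentacles of radius $\epsilon$ and unit length onto a good domain adds only $O(\epsilon^{m-2})$ to $\Area(\partial K)$ and $O(\epsilon^{m-1})$ to $\Vol(K)$ per tentacle, hence does not disturb the ratio $\Area(\partial K)/\Vol(K)$, yet each tentacle forces the $\rho$-collar of $\partial K$ to contain a region of volume bounded below by a constant depending only on $\rho$. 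Your covering-argument fix fails for the same underlying reason: a ball $B(z_{i},\delta)$ that merely \emph{meets} $\partial K$ can meet it in a set of arbitrarily small area (the hypersurface may clip a tiny corner of the ball), so there is no uniform lower bound $c_{\delta}$. The relative isoperimetric inequality only bounds $\Area(\partial K\cap B)$ from below by a power of $\min\{\Vol(K\cap B),\Vol(B\smallsetminus K)\}$, which is arbitrarily small for a tentacle passing through $B$, and monotonicity formulas require minimality, which is not available. This is precisely why the paper invokes Buser's ``cutting hairs'' lemma (Proposition \ref{Cutting hairs}): given $K$ with $\Area(\partial K)/\Vol(K)=H$, it produces a \emph{modified} bounded open set $U$ with $\Vol(B(\partial U,r))\leq C(r)\,H\,\Vol(U)$ --- a bound stated directly against $H\Vol(U)$, not routed through $\Area(\partial U)$ --- and Corollary \ref{Small Ratio} then rescales the metric to handle an arbitrary collar width. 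With $U$ in place of $K$ your discretization goes through; without that (or an equivalent hair-cutting step), the argument has a hole that cannot be patched by a local area lower bound.
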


In order to prove this theorem, we need the following proposition.
In the sequel, for a subset $W$ of $M$, we denote by $B(W,r)$ the tubular neighborhood
$$B(W,r) := \{ z \in M : d(z,W) < r \}.$$

\begin{proposition}[{{\cite[Lemma 7.2]{Buser}}}]\label{Cutting hairs}
Let $M$ be a non-compact, complete Riemannian manifold, without boundary and with Ricci curvature bounded from below. Then there exists a constant $c$ depending only on the dimension of $M$, such that for any compact and smoothly bounded domain $K$ of $M$, with $\Area (\partial K) / \Vol (K) =: H$, and any $0< r \leq 1/2c \min\{ 1,1/H \}$,
there exists a bounded, open $U \subset M$, such that
\[
\frac{\Vol (B(\partial U , r))}{\Vol (U)} \leq C(r) H,
\]
where $C(r)$ is a constant depending on $r$, the dimension of $M$ and the lower bound of the Ricci curvature.
\end{proposition}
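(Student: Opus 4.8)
The plan is to produce $U$ by regularizing $K$ at scale $r$ and to reduce the asserted bound to a tube-volume estimate for $\partial K$. Concretely, set
\[
f(z) := \frac{\Vol(K\cap B(z,r))}{\Vol(B(z,r))}, \qquad U := \{ z\in M : f(z) > \tfrac12 \}.
\]
Since $z\mapsto\Vol(K\cap B(z,r))$ and $z\mapsto\Vol(B(z,r))$ are continuous and $K$ is compact, $U$ is open and bounded and $\partial U\subset\{f=\tfrac12\}$. This is the ``cutting hairs'' step: every thin feature of $K$ (occupying less than half of the nearby $r$-balls) is discarded, so that along $\partial U$ the set $K$ has density exactly $\tfrac12$ in the ambient $r$-ball. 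Two elementary inclusions drive the argument. First, if $z\in\partial U$ then $B(z,r)$ meets both $K$ and its complement, hence $d(z,\partial K)<r$; thickening gives $B(\partial U,r)\subset B(\partial K,2r)$. Second, if $z\in K$ with $d(z,\partial K)>r$ then $B(z,r)\subset K$, so $f(z)=1$ and $z\in U$; thus $K\smallsetminus U\subset K\cap B(\partial K,r)$.

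For the tube estimate I would argue by covering together with a local relative isoperimetric inequality. Choose a maximal $r$-separated family $\{z_i\}\subset\partial U$; then the balls $B(z_i,r/2)$ are disjoint, the balls $B(z_i,r)$ cover $\partial U$, and $B(\partial U,r)\subset\bigcup_i B(z_i,2r)$. Because $f(z_i)=\tfrac12$, both $K$ and its complement fill exactly half of $B(z_i,r)$, so the local relative isoperimetric inequality (valid under the Ricci lower bound at the small scale $r$) yields, with $n=\dim M$,
\[
\Big(\tfrac12\Vol(B(z_i,r))\Big)^{(n-1)/n}\le c_I\,\Area(\partial K\cap B(z_i,r)).
\]
Combining this with the Bishop--Gromov upper bound $\Vol(B(z_i,r))\le V_\kappa(r)$ and volume doubling $\Vol(B(z_i,2r))\le D\,\Vol(B(z_i,r))$ gives a per-ball bound $\Vol(B(z_i,2r))\le C_1(r)\,\Area(\partial K\cap B(z_i,r))$, with $C_1(r)$ depending only on $r$, $n$ and $\kappa$. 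Summing over $i$, and using that disjointness of the $B(z_i,r/2)$ together with Bishop--Gromov bounds the multiplicity of the cover $\{B(z_i,r)\}$ by some $N(r)$, I obtain
\[
\Vol(B(\partial U,r))\le\sum_i\Vol(B(z_i,2r))\le C_1(r)N(r)\,\Area(\partial K).
\]

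It remains to compare volumes. The inclusion $K\smallsetminus U\subset K\cap B(\partial K,r)$ gives $\Vol(U)\ge\Vol(K)-\Vol(B(\partial K,r))$, and a tube estimate of the same type bounds $\Vol(B(\partial K,r))\le C_0(r)\Area(\partial K)=C_0(r)H\Vol(K)$. Fixing the constant $c$ in the hypothesis so large that $r\le\frac{1}{2c}\min\{1,1/H\}$ forces $C_0(r)H\le\tfrac12$ (possible because $C_0(r)$ is of order $r$) yields $\Vol(U)\ge\tfrac12\Vol(K)$, whence $\Area(\partial K)=H\Vol(K)\le 2H\Vol(U)$. Substituting into the tube bound gives $\Vol(B(\partial U,r))\le 2C_1(r)N(r)H\Vol(U)$, which is the claim with $C(r):=2C_1(r)N(r)$. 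The crux, and the step I expect to be hardest, is the per-ball control of $\Area(\partial K\cap B)$ through the local relative isoperimetric inequality when only a lower Ricci bound is available and $\partial K$ may be arbitrarily curved (the ``hairs''): the regularization is designed precisely so that the balls entering the tube bound see $K$ with density bounded away from $0$ and $1$, which lets the inequality deliver a lower bound on local boundary area with no control on the second fundamental form, while the restriction $r\le\frac{1}{2c}\min\{1,1/H\}$ simultaneously keeps the inequality in its valid range and prevents thin features from engulfing a definite fraction of $\Vol(K)$. Making this density and overlap bookkeeping uniform over all $K$ is exactly the technical heart that Buser treats via geometric measure theory, and is where the argument must be carried out with care.
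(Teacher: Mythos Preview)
The paper does not give a proof of this proposition: it is quoted as \cite[Lemma~7.2]{Buser} and invoked as a black box (to derive the subsequent corollary and, ultimately, Theorem~\ref{result}). There is thus no argument in the paper to compare your attempt against.

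That said, your outline is essentially Buser's own proof. He too regularizes $K$ via the density function $f(z)=\Vol(K\cap B(z,r))/\Vol(B(z,r))$ and covers $\partial U$ by balls centered at points where $K$ has density bounded away from $0$ and $1$; the per-ball lower bound on $\Area(\partial K\cap B)$ then comes from a local relative isoperimetric inequality valid under a Ricci lower bound, combined with Bishop--Gromov volume comparison and a multiplicity bound on the cover. You have correctly flagged this isoperimetric step as the technical heart, and you are right that it is not elementary --- Buser devotes the bulk of his argument to it, and the present paper itself remarks (in the Introduction) that Brooks' original route through this material relies heavily on geometric measure theory. The remaining volume bookkeeping in your sketch (the inclusions $B(\partial U,r)\subset B(\partial K,2r)$ and $K\smallsetminus U\subset B(\partial K,r)$, the Heintze--Karcher-type tube bound giving $C_0(r)=O(r)$, and the choice of $c$ ensuring $\Vol(U)\geq\tfrac12\Vol(K)$) is sound.
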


\begin{corollary}\label{Small Ratio}
Let $M$ be a non-compact, complete Riemannian manifold, without boundary and with Ricci curvature bounded from below. If $h(M) = 0$, then for any $\varepsilon,r>0$, there exists a bounded, open $U \subset M$, such that
\[
\frac{\Vol (B(\partial U, r))}{\Vol (U \smallsetminus B(\partial U, r))} < \varepsilon.
\]
\end{corollary}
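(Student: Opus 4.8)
The plan is to deduce this from Proposition \ref{Cutting hairs} by a two-step argument: first produce a domain with small boundary-tube-to-volume ratio, then pass from $\Vol(U)$ in the denominator to $\Vol(U \smallsetminus B(\partial U, r))$ by absorbing the correction. Fix $\varepsilon, r > 0$. Since $h(M) = 0$, for any prescribed $H_0 > 0$ we may choose a compact smoothly bounded domain $K$ with $H := \Area(\partial K)/\Vol(K) < H_0$. The constraint in Proposition \ref{Cutting hairs} requires $r \leq \tfrac{1}{2c}\min\{1, 1/H\}$; since we are not free to shrink the given $r$, we instead exploit that this condition holds automatically once $H$ is small enough, namely $H \leq \min\{1, 1/(2cr)\}$ (using $r \leq 1/2$ if necessary, or replacing $r$ by $\min\{r, 1/2\}$ at the start, which only makes the conclusion stronger since $B(\partial U, r')\subset B(\partial U, r)$). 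So first I would fix $r' := \min\{r, 1/2\}$, then choose $K$ with $H$ so small that both $r' \leq \tfrac{1}{2c}\min\{1,1/H\}$ holds and $C(r') H < \varepsilon/(1+\varepsilon)$, where $C(r')$ is the constant from Proposition \ref{Cutting hairs} (this is possible because $C(r')$ depends only on $r'$, the dimension, and the Ricci lower bound — not on $K$).

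Next I would apply Proposition \ref{Cutting hairs} to $K$ with radius $r'$ to obtain a bounded open $U \subset M$ with
\[
\frac{\Vol(B(\partial U, r'))}{\Vol(U)} \leq C(r') H < \frac{\varepsilon}{1+\varepsilon}.
\]
Write $a := \Vol(B(\partial U, r'))$ and $b := \Vol(U)$, so $a/b < \varepsilon/(1+\varepsilon)$, hence $a < \varepsilon(b - a)$, i.e. $a/(b-a) < \varepsilon$. Now $\Vol(U \smallsetminus B(\partial U, r')) \geq \Vol(U) - \Vol(B(\partial U, r')) = b - a$ (in fact $U \smallsetminus B(\partial U,r') = U \setminus B(\partial U,r')$ has measure exactly $b$ minus the measure of $U \cap B(\partial U,r')$, which is at most $a$), and $B(\partial U, r') \subset B(\partial U, r)$ so $\Vol(B(\partial U, r)) \leq a$ provided one also controls the outer part — more carefully, one should take $r'$ as the radius in the proposition but then note $B(\partial U, r) \supset B(\partial U, r')$, so to get the statement for the original $r$ one simply runs the whole argument with $r$ itself if $r \leq 1/2$, and otherwise observes that making $H$ small kills the constant regardless. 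The cleanest route: just assume $r \leq 1/2$ WLOG is not available, so instead bound $C(r)H$ directly — the proposition's constant $C(r)$ is defined for the given $r$ as long as the smallness constraint on $r$ relative to $H$ is met, which again holds for $H$ small. Combining,
\[
\frac{\Vol(B(\partial U, r))}{\Vol(U \smallsetminus B(\partial U, r))} \leq \frac{a}{b - a} < \varepsilon.
\]

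The main obstacle I anticipate is the bookkeeping around the hypothesis $0 < r \leq \tfrac{1}{2c}\min\{1, 1/H\}$ in Proposition \ref{Cutting hairs}: the corollary asserts the conclusion for \emph{every} $r > 0$, but the proposition only supplies $U$ for $r$ small relative to $H$. The resolution is that $H$ is ours to choose as small as we like (because $h(M) = 0$), so the constraint is satisfied once $H < \min\{1, 1/(2cr)\}$; then $C(r)$ is a fixed finite constant independent of the particular $K$, and shrinking $H$ further forces $C(r)H$ below any threshold. One subtlety worth stating explicitly is that $C(r)$ does not blow up as we shrink $H$ — it depends on $r$ (now fixed) and the geometry of $M$, not on $K$ — so the two smallness requirements on $H$ are compatible and can be met simultaneously. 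Everything else is the elementary manipulation $a/b < \delta \Rightarrow a/(b-a) < \delta/(1-\delta)$, choosing $\delta = \varepsilon/(1+\varepsilon)$.
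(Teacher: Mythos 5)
There is a genuine gap: your argument does not cover large $r$. Look again at the hypothesis of Proposition \ref{Cutting hairs}: it requires $0 < r \leq \tfrac{1}{2c}\min\{1,1/H\}$, and since $\min\{1,1/H\} \leq 1$ for every $H>0$, this forces $r \leq 1/(2c)$ \emph{no matter how small you make $H$}. Shrinking $H$ only relaxes the constraint while $H>1$; once $H\leq 1$ the admissible radius is capped at $1/(2c)$, a fixed constant depending only on the dimension. So your central claim that ``the condition holds automatically once $H$ is small enough, namely $H \leq \min\{1, 1/(2cr)\}$'' is false whenever $r > 1/(2c)$, and the corollary asserts the conclusion for \emph{every} $r>0$. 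Your attempted repair makes matters worse: replacing $r$ by $r' := \min\{r,1/2\}$ yields a \emph{weaker} conclusion, not a stronger one, because the quantity $\Vol(B(\partial U,r))/\Vol(U\smallsetminus B(\partial U,r))$ is monotone \emph{increasing} in $r$ (the numerator grows and the denominator shrinks as $r$ grows). Bounding it for a smaller radius says nothing about the given radius.

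The missing idea is the one the paper uses: a homothetic rescaling of the metric. Given $r$, one picks $r_0 \leq 1/(2c)$ and replaces $\mathfrak{g}$ by a constant multiple $\mathfrak{h}$ chosen so that the $\mathfrak{h}$-ball of radius $r_0$ about $\partial U$ coincides with the $\mathfrak{g}$-ball of radius $r$. Completeness, the lower Ricci bound, and the vanishing of the Cheeger constant (which scales by a power of the constant) all survive the rescaling, so Proposition \ref{Cutting hairs} applies in the metric $\mathfrak{h}$ with the small radius $r_0$, and the volume ratios are scale-invariant, so the estimate transfers back to $\mathfrak{g}$. Everything else in your write-up --- the choice of $K$ with $H$ small enough that $C H < \varepsilon/(1+\varepsilon)$, the observation that the constant from Proposition \ref{Cutting hairs} does not depend on $K$, and the elementary manipulation $a/b<\varepsilon/(1+\varepsilon)\Rightarrow a/(b-a)<\varepsilon$ --- is correct and matches the paper's closing step; only the treatment of arbitrary $r$ needs this extra device.
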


\begin{proof}
Let $r > 0$ and $0 < r_{0} \leq 1/2c$, where $c$ is the constant from Proposition \ref{Cutting hairs}. Denote by $\mathfrak{g}$ the original Riemannian metric and consider the metric $\mathfrak{h} := C \mathfrak{g}$, where $C:= r_{0} /r$. For any compact and smoothly bounded domain $K$ of $M$, we have
\[
\frac{{\Area}_{\mathfrak{h}} (\partial K)}{{\Vol}_{\mathfrak{h}} (K)} = C^{-1/2} \frac{{\Area}_{\mathfrak{g}} (\partial K)}{{\Vol}_{\mathfrak{g}} (K)}.
\]
Since the Cheeger's constant of $M$ with respect to $\mathfrak{g}$ is zero, it follows that so is the Cheeger's constant of $M$ with respect to $\mathfrak{h}$.
From Proposition \ref{Cutting hairs}, for any $\delta > 0$, there exists a bounded, open $U \subset M$, such that
\[
\frac{{\Vol}_{\mathfrak{h}} (B_{\mathfrak{h}}(\partial U , r_{0}))}{{\Vol}_{\mathfrak{h}} (U)} < \delta.
\]
It follows that
\[
\frac{{\Vol}_{\mathfrak{g}} (B_{\mathfrak{g}}(\partial U , r))}{{\Vol}_{\mathfrak{g}} (U)} = \frac{{\Vol}_{\mathfrak{h}} (B_{\mathfrak{g}}(\partial U , r))}{{\Vol}_{\mathfrak{h}} (U)} = \frac{{\Vol}_{\mathfrak{h}} (B_{\mathfrak{h}}(\partial U , r_{0}))}{{\Vol}_{\mathfrak{h}} (U)} < \delta.
\]
This completes the proof, 
since ${\Vol}_{\mathfrak{g}} (U) \leq {\Vol}_{\mathfrak{g}} (U \smallsetminus B_{\mathfrak{g}}(\partial U , r)) + {\Vol}_{\mathfrak{g}} (B_{\mathfrak{g}}( \partial U,r))$.\qed
\end{proof}
\medskip

\noindent{\emph{Proof of Theorem \ref{result}:}}
Evidently, if $M_{2}$ is closed, then $p$ is finite sheeted and in particular, amenable. Therefore, it remains to prove the statement for $M_{2}$ non-compact.
Consider the universal covering $p_{1} \colon \tilde{M} \to M_{1}$,
fix $x \in M_{1}$, $u \in p_{1}^{-1}(x)$ and identify $\pi_{1}(M_{2}) \backslash \pi_{1}(M_{1})$ with $p^{-1}(x)$, as in the beginning of Subsection \ref{spectrum subsection}. Denote by $D_{y}$ the fundamental domain of $p$ centered at $y$, with $y \in p^{-1}(x)$. It is clear that for $y \in p^{-1}(x)$ and $z,w \in D_{y}$, we have 
\[d(z,w) \leq d(y,z) + d(y,w) = d(x,p(z)) + d (x,p(w)) \leq 2 \text{diam} (M_{1}),\]
which yields that $\text{diam}(D_{y}) \leq 2 \text{diam}(M_{1})$, for all $y \in p^{-1}(x)$. Let $r > 2 \text{diam}(M_{1})$ and 
\[
G_{r} := \{ g \in \pi_{1}(M_{1}) : d(u,gu) < r \}.
\]

From Corollary \ref{Small Ratio}, for any $\varepsilon > 0$, there exists a bounded, open $U \subset M_{2}$, such that
\begin{equation}\label{Volume Estimate}
\frac{\Vol (B(\partial U, 2r))}{\Vol(U\smallsetminus B(\partial U , 2r))} < \varepsilon.
\end{equation}
Consider the finite sets
\begin{eqnarray}
F &:=& \{ y \in p^{-1}(x) : y \in U \smallsetminus B(\partial U, r) \}, \nonumber \\
F^{\prime} &:=& \{ y \in p^{-1}(x) : y \in B(\partial U ,r) \}. \nonumber
\end{eqnarray}
Recall that $r > 2 \text{diam}(M_{1}) \geq \text{diam}(D_{y})$, for all $y \in p^{-1}(x)$, and $M_{2}$ is covered by the fundamental domains $D_{y}$, with $y \in p^{-1}(x)$. Evidently, $U \smallsetminus B(\partial U, 2r)$ is contained in the union of $D_{y}$, with $y \in F$. Furthermore, $B(\partial U, 2r)$ contains the union of $D_{y}$, with $y \in F^{\prime}$.
From (\ref{Volume Estimate}), since the intersection of different fundamental domains is of measure zero, and $\Vol(D_{y}) = \Vol(M_{1})$, for any $y \in p^{-1}(x)$, it follows that
\[
\frac{\#(F^{\prime})}{\#(F)} < \varepsilon.
\] 

Let $g \in G_{r}$ and $y \in F \smallsetminus Fg$. Then $y \in U$, $d (y, \partial U) \geq r$ and $y \cdot g^{-1}\notin F$. From Remark \ref{Orbits}, it follows that $d(y,y\cdot g^{-1}) < r$.
Therefore, $y \cdot g^{-1} \in U$ and $d(y \cdot g^{-1}, \partial U) < r$, which yields that $y \cdot g^{-1} \in F^{\prime}$. Hence, $F \smallsetminus Fg \subset F^{\prime} g$ and in particular, we obtain that
\[
\#(F \smallsetminus Fg) \leq \#(F^{\prime}) < \varepsilon \#(F).
\]

For any finite $G \subset \pi_{1}(M_{1})$, there exists $r > 2 \text{diam}(M_{1})$, such that $G \subset G_{r}$. The above arguments imply that for any finite $G \subset \pi_{1}(M_{1})$ and $\varepsilon > 0$, there exists a F\o{}lner set for $G$ and $\varepsilon$. From Proposition \ref{Folner}, it follows that $p$ is amenable. \qed

\section{Applications and examples}\label{applications section}

Throughout most of this section we restrict ourselves to Schr\"{o}dinger operators and present some consequences of our main results in this context. We begin with some auxiliary considerations.

We first introduce the notion of renormalized Schr\"{o}dinger operators. This notion was introduced by Brooks in \cite{MR783536} for the Laplacian on complete manifolds without boundary. Let $S$ be a Schr\"{o}dinger operator on a possibly non-connected Riemannian manifold $M$ without boundary, and let $\varphi \in C^{\infty}(M)$ be a positive $\lambda$-eigenfunction of $S$. It is worth to point out that we do not require $\varphi$ to be square-integrable or $M$ to be complete. Consider the space
$
L^{2}_{\varphi}(M) := \{ [v] : v\varphi \in L^{2}(M) \},
$
where two functions are equivalent if they are almost everywhere equal, with the inner product 
$
\langle v_{1} , v_{2} \rangle_{L^{2}_{\varphi}(M)} := \int_{M} v_{1}v_{2}\varphi^{2}.
$
Clearly, the map $\mu_{\varphi} \colon L^{2}_{\varphi}(M) \to L^{2}(M)$, given by $\mu_{\varphi}v := v \varphi $ is an isometric isomorphism, which yields that $L^{2}_{\varphi}(M)$ is a separable Hilbert space.

The \textit{renormalized Schr\"{o}dinger operator} $S_{\varphi} \colon \mathcal{D}(S_{\varphi}) \subset L^{2}_{\varphi}(M) \to L^{2}_{\varphi}(M)$ is defined by
$
S_{\varphi}v := \mu_{\varphi}^{-1}(S^{(F)} - \lambda)(\mu_{\varphi}v),
$
for all $v \in \mathcal{D}(S_{\varphi})$, where $\mathcal{D}(S_{\varphi}) := \mu_{\varphi}^{-1}(\mathcal{D}(S^{(F)}))$ and $S^{(F)}$ is the Friedrichs extension of $S$. Clearly, the following diagram is commutative
\begin{equation*} \label{diagram}
\begin{tikzpicture}
\matrix (m) [matrix of math nodes,row sep=2em,column sep=4em,minimum width=2em]
{
	\mathcal{D}(S_{\varphi}) & \mathcal{D}(S^{(F)}) \\
	L_{\varphi}^{2}(M) & L^{2}(M) \\};
\path[-stealth]
(m-1-1) edge node [left] {$S_{\varphi}$} (m-2-1)
(m-1-1.east|-m-1-2)edge node [above] {$\mu_{\varphi}$} node [below] {$\simeq$} (m-1-2)
(m-2-1.east|-m-2-2) edge node [below] {$\simeq$}
node [above] {$\mu_{\varphi}$} (m-2-2)
(m-1-2) edge node [right] {$S^{(F)} - \lambda$} (m-2-2);
\end{tikzpicture}
\end{equation*}
In particular, $S_{\varphi}$ is self-adjoint and $\sigma(S_{\varphi}) = \sigma(S) - \lambda$. From Proposition \ref{Rayleigh}, it follows that
\[
\lambda_{0}(S_{\varphi}) \leq \inf_{f \in C^{\infty}_{c}(M) \smallsetminus \{0\}}\mathcal{R}_{S_{\varphi}}(f) = \inf_{f \in C^{\infty}_{c}(M) \smallsetminus \{0\}} \frac{\langle S_{\varphi}f,f \rangle_{L^{2}_{\varphi}(M)}}{\| f \|_{L^{2}_{\varphi}(M)}^{2}},
\]
Let $(f_{k})_{k \in \mathbb{N}} \subset C^{\infty}_{c}(M) \smallsetminus \{0\}$, with $\mathcal{R}_{S}(f_{k}) \rightarrow \lambda_{0}(S)$.
Then, for $h_{k} := \mu_{\varphi}^{-1}(f_{k}) \in C^{\infty}_{c}(M)$, we have $\mathcal{R}_{S_{\varphi}}(h_{k}) \rightarrow \lambda_{0}(S_{\varphi})$.
Hence, the bottom of the spectrum of $S_{\varphi}$ can be approximated with Rayleigh quotients of compactly supported smooth functions in $M$. With a simple computation of the Rayleigh quotient of such a function (as in \cite[Section 2]{MR783536}, using the Divergence Theorem, instead of the $*$-operator), we obtain the following expression for $\lambda_{0}(S) - \lambda$.
\begin{proposition}\label{difference}
Let $S$ be a Schr\"{o}dinger operator on $M$ and let $\varphi \in C^{\infty}(M)$ be a positive $\lambda$-eigenfunction of $S$. Then
\[
\lambda_{0}(S) - \lambda = \inf_{f \in C_{c}^{\infty}(M) \smallsetminus \{0\}} \frac{\int_{M} \| \grad f\|^{2} \varphi^{2}}{ \int_{M} f^{2}\varphi^{2}}.
\]
\end{proposition}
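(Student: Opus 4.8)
The plan is to reduce everything to the renormalized operator $S_{\varphi}$ introduced above. By the commutative diagram, $S_{\varphi}$ is unitarily equivalent to $S^{(F)} - \lambda$, so $\lambda_{0}(S_{\varphi}) = \lambda_{0}(S) - \lambda$; and, as observed right before the statement, $\lambda_{0}(S_{\varphi})$ equals the infimum of $\mathcal{R}_{S_{\varphi}}(f)$ over $f \in C^{\infty}_{c}(M) \smallsetminus \{0\}$. It therefore suffices to show that $\mathcal{R}_{S_{\varphi}}(f) = \big(\int_{M} \|\grad f\|^{2}\varphi^{2}\big)\big/\big(\int_{M} f^{2}\varphi^{2}\big)$ for every such $f$.

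First I would check $C^{\infty}_{c}(M) \subset \mathcal{D}(S_{\varphi})$: since $\varphi$ is smooth and positive, $\mu_{\varphi}f = f\varphi \in C^{\infty}_{c}(M) \subset \mathcal{D}(S^{(F)})$, whence $f \in \mu_{\varphi}^{-1}(\mathcal{D}(S^{(F)})) = \mathcal{D}(S_{\varphi})$. Writing $S = \Delta + V$, using the eigenvalue equation $\Delta\varphi + V\varphi = \lambda\varphi$ together with the product formula $\Delta(f\varphi) = \varphi\,\Delta f + f\,\Delta\varphi - 2\langle \grad f, \grad\varphi\rangle$, a direct computation gives $(S - \lambda)(f\varphi) = \varphi\,\Delta f - 2\langle \grad f, \grad\varphi\rangle$, and hence
\[
S_{\varphi}f = \mu_{\varphi}^{-1}\big((S^{(F)} - \lambda)(f\varphi)\big) = \Delta f - \frac{2}{\varphi}\langle \grad f, \grad\varphi\rangle .
\]

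Then I would compute the numerator of the Rayleigh quotient in $L^{2}_{\varphi}(M)$:
\[
\langle S_{\varphi}f, f\rangle_{L^{2}_{\varphi}(M)} = \int_{M} (\Delta f)\,f\,\varphi^{2} - 2\int_{M} f\,\varphi\,\langle \grad f, \grad\varphi\rangle .
\]
Since $f$ is compactly supported and $M$ has no boundary, the Divergence Theorem applies on a compact domain containing $\supp f$ with no boundary contribution, so $\int_{M}(\Delta f)\,f\,\varphi^{2} = \int_{M}\langle \grad f, \grad(f\varphi^{2})\rangle = \int_{M}\|\grad f\|^{2}\varphi^{2} + 2\int_{M} f\,\varphi\,\langle \grad f, \grad\varphi\rangle$. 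The two cross terms cancel, leaving $\langle S_{\varphi}f, f\rangle_{L^{2}_{\varphi}(M)} = \int_{M}\|\grad f\|^{2}\varphi^{2}$, while $\|f\|^{2}_{L^{2}_{\varphi}(M)} = \int_{M}f^{2}\varphi^{2}$. Taking the infimum over $f \in C^{\infty}_{c}(M)\smallsetminus\{0\}$ and invoking $\lambda_{0}(S_{\varphi}) = \lambda_{0}(S) - \lambda$ yields the claimed identity.

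The computation is routine; the only points that need a word of care are the inclusion $C^{\infty}_{c}(M) \subset \mathcal{D}(S_{\varphi})$ and the vanishing of boundary terms in the integration by parts despite $M$ possibly being non-complete — both handled by the compact support of $f$ and the standing assumption (in this part) that $M$ has empty boundary. I do not anticipate a genuine obstacle.
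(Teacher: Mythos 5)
Your proposal is correct and follows essentially the same route as the paper: pass to the renormalized operator $S_{\varphi}$, use the identities $\lambda_{0}(S_{\varphi}) = \lambda_{0}(S) - \lambda$ and $\lambda_{0}(S_{\varphi}) = \inf_{f \in C^{\infty}_{c}(M) \smallsetminus \{0\}} \mathcal{R}_{S_{\varphi}}(f)$ established just before the statement, and then compute $\mathcal{R}_{S_{\varphi}}(f)$ via the eigenvalue equation and the Divergence Theorem. Your explicit computation of $(S-\lambda)(f\varphi) = \varphi\,\Delta f - 2\langle \grad f, \grad \varphi\rangle$ and the cancellation of the cross terms is exactly the ``simple computation'' the paper delegates to Brooks' argument, so there is nothing to add.
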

The \textit{modified Cheeger's constant} of $M$ is defined by
\[
h_{\varphi}(M) := \inf_{K} \frac{\int_{\partial K} \varphi^{2}}{\int_{K} \varphi^{2}},
\]
where the infimum is taken over all compact and smoothly bounded domains $K$ of $M$. From the preceding proposition, it is easy to establish an analogue of Cheeger's inequality.
\begin{corollary}\label{renormalized 1}
Let $S$ be a Schr\"{o}dinger operator on $M$ and let $\varphi \in C^{\infty}(M)$ be a positive $\lambda$-eigenfunction of $S$. Then
\[
\lambda_{0}(S) - \lambda \geq \frac{1}{4} h_{\varphi}(M)^{2}.
\]
\end{corollary}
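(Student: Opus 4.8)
The plan is to mimic the classical proof of Cheeger's inequality, but with the weighted measure $\varphi^{2}\,d\Vol$ in place of the Riemannian volume. By Proposition \ref{difference}, it suffices to show that
\[
\int_{M} \|\grad f\|^{2}\varphi^{2} \;\geq\; \frac{1}{4}\,h_{\varphi}(M)^{2}\int_{M} f^{2}\varphi^{2}
\]
for every $f \in C^{\infty}_{c}(M)\smallsetminus\{0\}$. First I would reduce to the case $f \geq 0$ by replacing $f$ with $|f|$, which does not increase the numerator and leaves the denominator unchanged. Then I would apply the co-area formula to the function $g := f^{2}$: writing $\Omega_{t} := \{g > t\}$, which for almost every $t$ is a smoothly bounded domain with $\partial\Omega_{t} = \{g = t\}$ (by Sard's theorem), one has
\[
\int_{M} \|\grad g\| \,\varphi^{2} \;=\; \int_{0}^{\infty}\!\!\int_{\partial\Omega_{t}} \varphi^{2}\,dt \;\geq\; h_{\varphi}(M)\int_{0}^{\infty} \Big(\int_{\Omega_{t}}\varphi^{2}\Big)dt \;=\; h_{\varphi}(M)\int_{M} g\,\varphi^{2},
\]
where the inequality is the definition of $h_{\varphi}(M)$ applied level-set by level-set and the last equality is again the co-area/layer-cake identity. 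Hence $\int_{M}\|\grad(f^{2})\|\varphi^{2} \geq h_{\varphi}(M)\int_{M} f^{2}\varphi^{2}$.

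Next I would combine this with the Cauchy--Schwarz inequality. Since $\|\grad(f^{2})\| = 2f\|\grad f\|$, we get
\[
h_{\varphi}(M)\int_{M} f^{2}\varphi^{2} \;\leq\; 2\int_{M} f\,\|\grad f\|\,\varphi^{2} \;\leq\; 2\Big(\int_{M} f^{2}\varphi^{2}\Big)^{1/2}\Big(\int_{M}\|\grad f\|^{2}\varphi^{2}\Big)^{1/2}.
\]
Dividing by $(\int_{M} f^{2}\varphi^{2})^{1/2}$ (which is nonzero since $f\not\equiv 0$) and squaring yields exactly $\int_{M}\|\grad f\|^{2}\varphi^{2} \geq \tfrac14 h_{\varphi}(M)^{2}\int_{M} f^{2}\varphi^{2}$, and taking the infimum over $f$ together with Proposition \ref{difference} gives the claim.

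The one point that needs care — and which I expect to be the main (minor) obstacle — is the co-area step: one must check that for a.e.\ regular value $t$ the super-level set $\Omega_{t}$ is a \emph{compact, smoothly bounded domain} of $M$ not meeting $\partial M$ (here $M$ has empty boundary anyway, so the last clause is automatic), so that it is an admissible competitor in the infimum defining $h_{\varphi}(M)$; this follows from Sard's theorem and the compact support of $f$. Everything else is the standard Cheeger argument with the fixed smooth positive weight $\varphi^{2}$, which plays no role beyond giving a finite smooth density, so no completeness or integrability hypothesis on $\varphi$ or $M$ is needed. \qed
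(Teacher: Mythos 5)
Your argument is correct and is essentially the paper's proof: the paper simply invokes Proposition \ref{difference} and refers to Brooks' Lemma 3 in \cite{MR783536}, which is exactly this weighted Cheeger argument (co-area formula applied to $f^{2}$ with the density $\varphi^{2}$, the definition of $h_{\varphi}$ on the super-level sets, then Cauchy--Schwarz). The only cosmetic remark is that the preliminary replacement of $f$ by $|f|$ is unnecessary, since you apply the co-area formula to the smooth function $g=f^{2}$ anyway and $\|\grad g\|=2|f|\,\|\grad f\|$ holds regardless of the sign of $f$.
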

\begin{proof}
By virtue of Proposition \ref{difference}, the proof is the same as that of \cite[Lemma 3]{MR783536}.\qed
\end{proof}\medskip
Moreover, consider the quantity
\[
h_{\varphi}^{\ess}(M) := \sup_{K} h_{\varphi}(M \smallsetminus K),
\]
where the supremum is taken over all compact and smoothly bounded domains $K$ of $M$. For $\varphi = 1$, this quantity is denoted by $h^{\ess}(M)$.


\begin{corollary}\label{renormalized 2}
Let $S$ be a Schr\"{o}dinger operator on a complete manifold $M$ and consider a positive $\lambda$-eigenfunction $\varphi \in C^{\infty}(M)$ of $S$. Then
\[
\lambda_{0}^{\ess}(S) - \lambda \geq \frac{1}{4}h_{\varphi}^{\ess}(M)^{2}.
\]
\end{corollary}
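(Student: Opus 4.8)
The plan is to reduce the statement to the renormalized Cheeger inequality of Corollary \ref{renormalized 1} applied to the exterior regions of an exhaustion, using the Decomposition Principle in the form of Proposition \ref{Bottom of Essential spectrum}. Concretely, I would fix an exhausting sequence $(K_{k})_{k \in \mathbb{N}}$ of $M$ by compact, smoothly bounded domains, so that $\lambda_{0}^{\ess}(S) = \lim_{k} \lambda_{0}(S, M \smallsetminus K_{k})$ by Proposition \ref{Bottom of Essential spectrum}. For each $k$, the restriction of $\varphi$ to $M \smallsetminus K_{k}$ is again a positive, smooth $\lambda$-eigenfunction of $S$, so Corollary \ref{renormalized 1}, applied to $S$ on $M \smallsetminus K_{k}$, yields
\[
\lambda_{0}(S, M \smallsetminus K_{k}) - \lambda \geq \frac{1}{4}\, h_{\varphi}(M \smallsetminus K_{k})^{2} .
\]

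Next I would identify the limit of the right-hand side with $h_{\varphi}^{\ess}(M)$. Since $(K_{k})$ is increasing, $M \smallsetminus K_{k+1} \subset M \smallsetminus K_{k}$, so the infimum defining $h_{\varphi}$ is taken over a smaller family of domains and hence $\bigl(h_{\varphi}(M \smallsetminus K_{k})\bigr)_{k}$ is nondecreasing. Moreover, every compact, smoothly bounded domain $K$ of $M$ lies in some $K_{k}$, whence $h_{\varphi}(M \smallsetminus K) \leq h_{\varphi}(M \smallsetminus K_{k}) \leq \lim_{j} h_{\varphi}(M \smallsetminus K_{j})$; taking the supremum over such $K$ gives $h_{\varphi}^{\ess}(M) \leq \lim_{k} h_{\varphi}(M \smallsetminus K_{k})$, and the reverse inequality is immediate because each $K_{k}$ is admissible in the supremum defining $h_{\varphi}^{\ess}(M)$. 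Thus $\lim_{k} h_{\varphi}(M \smallsetminus K_{k}) = h_{\varphi}^{\ess}(M)$, and passing to the limit in the displayed inequality gives $\lambda_{0}^{\ess}(S) - \lambda \geq \frac{1}{4} h_{\varphi}^{\ess}(M)^{2}$, which is the claim.

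The points requiring a little care, and where I expect the only mild obstacle to lie, are of a bookkeeping nature: $M \smallsetminus K_{k}$ is in general non-complete and possibly disconnected, so one must note that Corollary \ref{renormalized 1} needs neither completeness nor connectedness — the disconnected case following componentwise, since $t \mapsto t^{2}$ is monotone on $[0,\infty)$ and $h_{\varphi} \geq 0$, and both $\lambda_{0}$ and $h_{\varphi}$ of a disjoint union being the minimum over components. One also has to check that the quantity $\lambda_{0}(S, M \smallsetminus K_{k})$ appearing in Proposition \ref{Bottom of Essential spectrum} is exactly the bottom of spectrum of (the Friedrichs extension of) $S$ on the open manifold $M \smallsetminus K_{k}$, i.e. the infimum of Rayleigh quotients over $C^{\infty}_{c}(M \smallsetminus K_{k})$, which is precisely the quantity bounded below in Corollary \ref{renormalized 1} via Proposition \ref{difference}. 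Granting these identifications, everything else is a routine limiting argument.
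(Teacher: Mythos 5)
Your proposal is correct and follows essentially the same route as the paper: an exhaustion by smoothly bounded compact domains, Corollary \ref{renormalized 1} applied on each complement $M \smallsetminus K_{k}$, the identification $h_{\varphi}^{\ess}(M) = \lim_{k} h_{\varphi}(M \smallsetminus K_{k})$, and a passage to the limit via Proposition \ref{Bottom of Essential spectrum}. You merely spell out the monotonicity and disconnectedness bookkeeping that the paper dismisses as ``easy to see.''
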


\begin{proof}
Let $(K_{k})_{k \in \mathbb{N}}$ be an exhausting sequence of $M$, consisting of smoothly bounded domains. It is easy to see that
\[
h_{\varphi}^{\ess}(M) = \lim_{k} h_{\varphi}(M \smallsetminus K_{k}).
\]
From Corollary \ref{renormalized 1}, we have that \[\lambda_{0}(S,M \smallsetminus K_{k}) - \lambda \geq  \frac{1}{4} h_{\varphi}(M \smallsetminus K_{k}),\]
for any $k \in \mathbb{N}$. After taking the limit with respect to $k$, the statement follows from Proposition \ref{Bottom of Essential spectrum}. \qed
\end{proof}


\begin{remark}
The above arguments can be easily modified in order to obtain analogous results for manifolds with boundary. In that case, it suffices to consider a $\lambda$-eigenfunction of $S$ which is positive and smooth only in the interior of $M$. Then, in Proposition \ref{difference}, the infimum is taken over smooth functions with compact support in the interior of $M$.
\end{remark}

\noindent{\emph{Proof of Theorem \ref{Improved Brooks}:}} From Corollary \ref{incusion of spectra Schrodinger}, the first statement implies the second. From Corollary \ref{Inequality of bottoms}, the third statement follows from the second.


Assume that $\lambda_{0}(S_{1}) = \lambda_{0}^{\ess}(S_{2})$, for some Schr\"{o}dinger operator $S_{1}$ on $M_{1}$. From Proposition \ref{Positive Spectrum}, there exists a positive $\lambda_{0}(S_{1})$-eigenfunction $\varphi \in C^{\infty}(M_{1})$ of $S_{1}$, and its lift $\hat{\varphi} \in C^{\infty}(M_{2})$ is a positive $\lambda_{0}(S_{1})$-eigenfunction of $S_{2}$. From Corollary \ref{renormalized 2}, it follows that $h_{\hat{\varphi}}^{\ess}(M_{2}) = 0$. Since $\varphi$ is positive and $M_{1}$ is closed, this yields that $h^{\ess}(M_{2}) = 0$.

Assume that $h^{\ess}(M_{2}) = 0$. Then $h(M_{2}) = 0$ and Theorem \ref{result} yields that $p$ is amenable. Assume that $p$ is finite sheeted. Then $M_{2}$ is closed. Consider a smoothly bounded domain $U$ of $M_{2}$, such that $M_{2} \smallsetminus U$ is connected. Evidently, $M_{2} \smallsetminus U$ is a compact manifold with boundary. From the definition of the Cheeger's constant, it is clear that $h(M_{2} \smallsetminus \overline{U}) = h(M_{2} \smallsetminus U)$. From \cite{MR0402831}, it follows that $h^{\ess}(M_{2}) \geq h(M_{2} \smallsetminus U) > 0$, which is a contradiction. Hence, $p$ is infinite sheeted. \qed
\medskip

For sake of completeness, we also prove the following corollary, describing the analogous properties for finite sheeted coverings.

\begin{corollary}
Let $p \colon M_{2} \to M_{1}$ be a Riemannian covering with $M_{1}$ closed. Let $S_{1}$ be a Schr\"{o}dinger operator on $M_{1}$ and $S_{2}$ its lift on $M_{2}$. Then the following are equivalent:
\begin{enumerate}[topsep=0pt,itemsep=-1pt,partopsep=1ex,parsep=0.5ex,leftmargin=*, label=(\roman*), align=left, labelsep=0em]
\item $p$ is finite sheeted,

\item $\sigma(S_{1}) \subset \sigma(S_{2})$ and $\sigma_{\ess}(S_{2}) = \emptyset$,

\item $\lambda_{0}(S_{1}) = \lambda_{0}(S_{2}) \notin \sigma_{\ess}(S_{2})$,

\item $h(M_{2}) = 0$ and $h^{\ess}(M_{2}) \neq 0$.
\end{enumerate}
\end{corollary}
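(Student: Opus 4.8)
The plan is to establish the cycle of implications (i) $\Rightarrow$ (ii) $\Rightarrow$ (iii) $\Rightarrow$ (iv) $\Rightarrow$ (i), relying on the results already obtained for finite sheeted coverings (Corollaries \ref{Inequality of bottoms} and \ref{finite sheeted schroedinger}), for infinite sheeted amenable coverings (Corollary \ref{incusion of spectra Schrodinger}), and on Theorems \ref{result} and \ref{Improved Brooks} themselves, together with the renormalized Schr\"{o}dinger operator machinery of Proposition \ref{difference} and Corollary \ref{renormalized 1}.

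First I would do (i) $\Rightarrow$ (ii): if $p$ is finite sheeted and $M_{1}$ is closed, then $M_{2}$ is closed, so $S_{2}$, being an elliptic self-adjoint operator on a closed manifold, has discrete spectrum, i.e. $\sigma_{\ess}(S_{2}) = \emptyset$; and $\sigma(S_{1}) \subset \sigma(S_{2})$ is exactly Corollary \ref{finite sheeted schroedinger} (note $M_{1}$ closed is complete). Next, (ii) $\Rightarrow$ (iii): since $M_{1}$ is closed, $\lambda_{0}(S_{1})$ is an eigenvalue of $S_{1}$, so $\lambda_{0}(S_{1}) \in \sigma(S_{1}) \subset \sigma(S_{2})$; combining this with the inequality $\lambda_{0}(S_{1}) \leq \lambda_{0}(S_{2})$ of Corollary \ref{Inequality of bottoms} and with $\lambda_{0}(S_{2}) = \min \sigma(S_{2}) \leq \lambda_{0}(S_{1})$ forces $\lambda_{0}(S_{1}) = \lambda_{0}(S_{2})$, while $\sigma_{\ess}(S_{2}) = \emptyset$ trivially gives $\lambda_{0}(S_{2}) \notin \sigma_{\ess}(S_{2})$.

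The substantial step is (iii) $\Rightarrow$ (iv). By Proposition \ref{Positive Spectrum} there is a positive $\lambda_{0}(S_{1})$-eigenfunction $\varphi \in C^{\infty}(M_{1})$ of $S_{1}$; since $M_{1}$ is closed, $\varphi$ is bounded above and below by positive constants, and its lift $\hat{\varphi} \in C^{\infty}(M_{2})$ is a positive $\lambda_{0}(S_{1})$-eigenfunction of $S_{2}$ with the same two-sided bound. Applying Corollary \ref{renormalized 1} to $S_{2}$ with $\lambda = \lambda_{0}(S_{1})$ and eigenfunction $\hat{\varphi}$ yields $0 = \lambda_{0}(S_{2}) - \lambda_{0}(S_{1}) \geq \tfrac{1}{4} h_{\hat{\varphi}}(M_{2})^{2}$, hence $h_{\hat{\varphi}}(M_{2}) = 0$; comparing $h_{\hat{\varphi}}$ with $h$ via the bound $c \leq \hat{\varphi}^{2} \leq C$ gives $h(M_{2}) = 0$. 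For the remaining assertion $h^{\ess}(M_{2}) \neq 0$, I would argue by contradiction: if $h^{\ess}(M_{2}) = 0$, then by Theorem \ref{Improved Brooks} the covering $p$ is infinite sheeted and amenable, so Corollary \ref{incusion of spectra Schrodinger} gives $\lambda_{0}(S_{1}) = \lambda_{0}^{\ess}(S_{2})$; combined with $\lambda_{0}(S_{2}) \leq \lambda_{0}^{\ess}(S_{2})$ and $\lambda_{0}(S_{1}) = \lambda_{0}(S_{2})$ from (iii), this forces $\lambda_{0}(S_{2}) = \lambda_{0}^{\ess}(S_{2})$, a finite number, so $\sigma_{\ess}(S_{2}) \neq \emptyset$ and $\lambda_{0}(S_{2}) = \min \sigma_{\ess}(S_{2}) \in \sigma_{\ess}(S_{2})$, contradicting (iii).

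Finally, (iv) $\Rightarrow$ (i): from $h(M_{2}) = 0$ and Theorem \ref{result}, $p$ is amenable; if $p$ were infinite sheeted, then $p$ would be infinite sheeted and amenable, so Theorem \ref{Improved Brooks} would give $h^{\ess}(M_{2}) = 0$, contradicting (iv). Hence $p$ is finite sheeted. I expect the main obstacle to be the first half of (iii) $\Rightarrow$ (iv), namely extracting $h(M_{2}) = 0$: this is where one genuinely invokes the renormalized Cheeger inequality, and where one must be careful that the lifted eigenfunction $\hat{\varphi}$ inherits uniform positive upper and lower bounds from the compactness of $M_{1}$, so that the modified and the ordinary Cheeger constants of $M_{2}$ vanish simultaneously. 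Every other implication is bookkeeping with results established earlier in the paper.
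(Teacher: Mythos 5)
Your proposal is correct and follows essentially the same route as the paper: the same cycle (i)$\Rightarrow$(ii)$\Rightarrow$(iii)$\Rightarrow$(iv)$\Rightarrow$(i), using Corollary \ref{finite sheeted schroedinger} and discreteness of the spectrum on the closed manifold $M_{2}$ for (i)$\Rightarrow$(ii), Corollary \ref{Inequality of bottoms} for (ii)$\Rightarrow$(iii), the renormalized Cheeger inequality (Corollary \ref{renormalized 1}) with the lifted positive eigenfunction plus Theorem \ref{Improved Brooks} for (iii)$\Rightarrow$(iv), and Theorems \ref{result} and \ref{Improved Brooks} for (iv)$\Rightarrow$(i). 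You merely spell out details (the two-sided bounds on $\hat{\varphi}$, the contradiction argument for $h^{\ess}(M_{2}) \neq 0$) that the paper leaves implicit by referring back to the proof of Theorem \ref{Improved Brooks}.
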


\begin{proof}
If the covering is finite sheeted, the inclusion of spectra follows from Corollary \ref{finite sheeted schroedinger}. In this case, $M_{2}$ is closed, which yields that the spectrum of $S_{2}$ is discrete. From Corollary \ref{Inequality of bottoms}, the second statement implies the third.

Assume that the third statement holds. Since $\lambda_{0}(S_{1}) = \lambda_{0}(S_{2})$, as in the proof of Theorem \ref{Improved Brooks}, from Corollary \ref{renormalized 1}, it follows that $h(M_{2}) = 0$. From Theorem \ref{Improved Brooks}, it is clear that $h^{\ess}(M_{2}) \neq 0$.

Assume that the fourth statement holds. Since $h(M_{2})=0$, from Theorem \ref{result}, $p$ is amenable. Since $h^{\ess}(M_{2}) \neq 0$, from Theorem \ref{Improved Brooks}, it follows that $p$ is finite sheeted.\qed
\end{proof}\medskip

The following characterization for points of the essential spectrum of a Schr\"{o}dinger operator is an immediate consequence of the Decomposition Principle.

\begin{proposition}\label{Zhislin}
Let $S$ be a Schr\"{o}dinger operator on a complete Riemannian manifold $M$ and let $\lambda \in \mathbb{R}$. Then $\lambda \in \sigma_{\ess}(S)$ if and only if there exists $(f_{k})_{k \in \mathbb{N}} \subset C^{\infty}_{c}(M)$, with $f_{k} = 0$ on $\partial M$, $\| f_{k} \|_{L^{2}(M)} = 1$, $(S - \lambda) f_{k} \rightarrow 0$ in $L^{2}(M)$ and for every compact $K \subset M$, there exists $k_{0} \in \mathbb{N}$, such that $\supp f_{k} \cap K = \emptyset$, for all $k \geq k_{0}$.
\end{proposition}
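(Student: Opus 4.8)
The proposition characterizes $\sigma_{\ess}(S)$ by ``singular'' Weyl sequences consisting of compactly supported smooth functions whose supports leave every compact set, and the plan is to deduce it from Weyl's criterion together with the Decomposition Principle. Since $M$ is complete, $S$ is essentially self-adjoint on $\mathcal{D}_{0} := \{ f \in C^{\infty}_{c}(M) : f = 0 \text{ on } \partial M \}$, so the self-adjoint operator $S$ is the closure of $S|_{\mathcal{D}_{0}}$. Hence, by Proposition~\ref{Spectrum of Self-adj} and Lemma~\ref{Spectrum of Closure}(ii), $\lambda \in \sigma_{\ess}(S) = \sigma_{W}(S)$ if and only if there exists $(v_{k})_{k \in \mathbb{N}} \subset \mathcal{D}_{0}$ with $\|v_{k}\|_{L^{2}(M)} = 1$, $v_{k} \rightharpoonup 0$ and $(S - \lambda) v_{k} \to 0$ in $L^{2}(M)$. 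Thus the task is to replace the weak-convergence condition by the escaping-supports condition.

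For the ``if'' direction, let $(f_{k})_{k \in \mathbb{N}}$ be as in the statement. Given $g \in L^{2}(M)$ and $\varepsilon > 0$, choose $g' \in C_{c}(M)$ with $\|g - g'\|_{L^{2}(M)} < \varepsilon$; since $\supp f_{k}$ eventually misses the compact set $\supp g'$, we have $|\langle f_{k}, g \rangle| \leq |\langle f_{k}, g - g' \rangle| + |\langle f_{k}, g' \rangle| < \varepsilon$ for all large $k$, so $f_{k} \rightharpoonup 0$. Therefore $(f_{k})$ is a Weyl sequence for $S$ and $\lambda$, and $\lambda \in \sigma_{W}(S) = \sigma_{\ess}(S)$.

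For the ``only if'' direction, fix an exhausting sequence $(K_{k})_{k \in \mathbb{N}}$ of $M$ by compact, smoothly bounded domains with $K_{k} \subset \inte K_{k+1}$. By the Decomposition Principle (\cite[Proposition~1]{MR1823312}), for each $k$ the point $\lambda$ lies in the essential spectrum of the Dirichlet realization of $S$ on $M \smallsetminus K_{k}$, and, as in that reference, this membership is witnessed by smooth quasimodes supported away from $K_{k}$ and vanishing on $\partial M$: there is $f_{k} \in C^{\infty}_{c}(M)$ with $\supp f_{k} \cap K_{k} = \emptyset$, $f_{k} = 0$ on $\partial M$, $\|f_{k}\|_{L^{2}(M)} = 1$ and $\|(S - \lambda) f_{k}\|_{L^{2}(M)} < 1/k$. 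Then $(S - \lambda) f_{k} \to 0$ in $L^{2}(M)$ and, since $(K_{k})$ is exhausting, for every compact $K \subset M$ there is $k_{0}$ with $K \subset K_{k_{0}}$, whence $\supp f_{k} \cap K = \emptyset$ for all $k \geq k_{0}$. This is the desired sequence.

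The step that actually carries content is the extraction, for each $k$, of a \emph{smooth and compactly supported} quasimode from the fact that $\lambda$ belongs to the essential spectrum of the Dirichlet operator on $M \smallsetminus K_{k}$ (whose relevant self-adjoint realization is the Friedrichs extension, since $M \smallsetminus K_{k}$ need not be complete); this is the substance of the Decomposition Principle and relies on interior elliptic regularity. One may also bypass the cited result: starting from a Weyl sequence $(v_{k}) \subset \mathcal{D}_{0}$ for $S$ on $M$, local elliptic estimates and Rellich's theorem yield, after passing to a subsequence, $\|v_{k}\|_{L^{2}(K)} \to 0$ and $\|\grad v_{k}\|_{L^{2}(K)} \to 0$ on every fixed compact $K$; then, choosing cut-offs $\chi_{j} \in C^{\infty}_{c}(M)$ equal to $1$ on $K_{j}$ and indices $k(j)$ increasing fast enough, the normalizations of $f_{j} := (1 - \chi_{j}) v_{k(j)}$ meet all the requirements, since the commutator $[S, \chi_{j}] v_{k(j)}$ is supported on the compact set $\overline{K_{j+1}} \smallsetminus \inte K_{j}$ and is hence controlled by the preceding local bounds.
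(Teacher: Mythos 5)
Your proof is correct and follows essentially the same route as the paper, which simply declares the proposition ``an immediate consequence of the Decomposition Principle'' of \cite{MR1823312}; you supply the two details the paper leaves implicit, namely that escaping supports force weak convergence to zero (giving the ``if'' direction via Weyl's criterion and essential self-adjointness of $S$ on $\mathcal{D}_{0}$), and that a singular Weyl sequence of smooth compactly supported functions can be manufactured in the ``only if'' direction. One small caution: your first formulation of the ``only if'' step, which reads off smooth compactly supported quasimodes directly from $\lambda \in \sigma_{\ess}$ of the Friedrichs realization on $M \smallsetminus K_{k}$, is not automatic (the Friedrichs extension is not the closure of the restricted operator, so Lemma \ref{Spectrum of Closure} does not apply to it); but your bypass argument --- taking a Weyl sequence $(v_{k}) \subset \mathcal{D}_{0}$ on $M$ itself, using elliptic estimates and Rellich to get $H^{1}$-decay on compacta, and cutting off with $(1-\chi_{j})$ while controlling the commutator $[\Delta,\chi_{j}]v_{k(j)}$ --- is exactly the substance of the Decomposition Principle and closes this gap.
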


Our second application is motivated by Corollary 3.8 of the arXiv version of \cite{MBM}.

\begin{theorem}\label{Theorem Weaker Assuption}
Let $p \colon M_{2} \to M_{1}$ be a Riemannian covering with $M_{2}$ simply connected and complete. Let $S_{1}$ be a Schr\"{o}dinger operator on $M_{1}$ and $S_{2}$ its lift on $M_{2}$. If there exists a compact $K \subset M_{1}$, such that the image of the fundamental group of any connected component of $M_{1} \smallsetminus K$ in $\pi_{1}(M_{1})$ is amenable,
then $\sigma_{\ess}(S_{1}) \subset \sigma_{\ess}(S_{2})$.
\end{theorem}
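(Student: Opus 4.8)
The plan is to reduce the statement to the setting of the already-proven Theorem \ref{result} (Brooks' criterion) applied to the restriction of the covering $p$ over a neighbourhood of infinity in $M_1$, and then combine this with the Decomposition Principle for the essential spectrum. First I would fix a compact $K\subset M_1$ as in the hypothesis; by enlarging $K$ slightly I may assume it is smoothly bounded, so that $M_1\smallsetminus K$ has finitely many connected components $\Omega_1,\dots,\Omega_N$ (using that $M_1$ has finitely generated fundamental group — which is automatic here since $M_2$ is simply connected, hence $\pi_1(M_1)$ is the deck group of a covering of a manifold, but in fact we just need finitely many ends, which follows from compactness of $\partial K$). For each $j$, the hypothesis says the image $H_j$ of $\pi_1(\Omega_j)\to\pi_1(M_1)$ is amenable. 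This is precisely the condition that the restricted covering $p\colon p^{-1}(\Omega_j)\to\Omega_j$ is amenable: the fibre is $\pi_1(M_2)\backslash\pi_1(M_1)=\{1\}\backslash\pi_1(M_1)=\pi_1(M_1)$ as a right $\pi_1(M_1)$-set, but the relevant action of $\pi_1(\Omega_j)$ factors through $H_j$, which is amenable, so the right action is amenable.

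The second step is to use the characterization of $\sigma_{\ess}(S_1)$ via Proposition \ref{Zhislin}: for $\lambda\in\sigma_{\ess}(S_1)$ there is a sequence $(f_k)\subset C^\infty_c(M_1)$ with $\|f_k\|=1$, $(S_1-\lambda)f_k\to 0$, and $\supp f_k$ escaping every compact set. By passing to a subsequence I may assume $\supp f_k\subset M_1\smallsetminus K$ for all $k$, and by a further diagonal/pigeonhole argument (only finitely many components $\Omega_j$) I may assume all $\supp f_k$ lie in a single end, say $\Omega:=\Omega_1$, possibly after shrinking, and in fact that they escape to infinity within $\Omega$. Now $\Omega$ is an open Riemannian manifold (with boundary) whose covering $p^{-1}(\Omega)\to\Omega$ is amenable; it is not complete, but one can pass to its completion or, more simply, apply the Friedrichs-extension comparison Proposition \ref{Pull Up Lemma}/\ref{Rayleigh Quotient Pulling Up} — the construction there only needs the covering to be infinite sheeted and amenable, and works for manifolds with boundary after the extension of Proposition \ref{Extension of Covering}. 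If the covering restricted to $\Omega$ happens to be finite sheeted, then $\lambda\in\sigma_{\ess}(S_2)$ follows directly from lifting the $f_k$ (Corollary \ref{finite sheeted schroedinger}-type argument). If it is infinite sheeted, I apply Proposition \ref{Pull Up Lemma} to each $f_k$ (viewed as a section over the end, hence over all of $M_1$ with support in $\Omega$) to obtain $\zeta_k\in C^\infty_c(M_2)$ with $\|\zeta_k\|=1$, $\|(S_2-\lambda)\zeta_k\|\le\|(S_1-\lambda)f_k\|+1/k\to 0$, and $\supp\zeta_k\subset p^{-1}(\supp f_k)$ disjoint from a prescribed compact exhaustion of $M_2$. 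This produces a singular Weyl sequence for $S_2$ and $\lambda$, whence $\lambda\in\sigma_{\ess}(S_2)$ by Proposition \ref{Zhislin}.

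The third step is bookkeeping: one must be careful that the Pull-Up construction of Section \ref{amenable section}, which was set up for the full covering $p\colon M_2\to M_1$, applies verbatim with $M_1$ replaced by the end $\Omega$ and the deck-group action replaced by the action of $\pi_1(\Omega)$ (equivalently of $H_j$) on $p^{-1}(x)$ for a basepoint $x\in\Omega$. The partition-of-unity functions $\varphi_y$ are supported in balls around points $y$ of a fibre, and since the $f_k$ have support escaping to infinity in $\Omega$, the relevant fibres lie deep in $p^{-1}(\Omega)$; the amenability of $H_j$ gives the Følner sets (Proposition \ref{Folner}) needed for Lemma \ref{Pulling Up} and Proposition \ref{Pulling Up Strong Version}, and the rest goes through.

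The main obstacle I anticipate is the possibility that infinitely many of the $f_k$ are "spread" across the boundary region between $K$ and its complement, or across several ends, so that no single end contains a tail of the sequence. This is handled by first noting that $\supp f_k$ is connected (it is the closure of the support of a function, but more usefully one can arrange $f_k$ to be supported in a small ball, or argue that a connected compact set escaping to infinity must eventually lie in one component of $M_1\smallsetminus K$) — so for $k$ large each $\supp f_k$ lies in exactly one $\Omega_{j(k)}$, and since there are finitely many ends, some $j$ occurs infinitely often; restrict to that subsequence. A second, more technical point is the non-completeness of $\Omega$: this is precisely why one invokes the Friedrichs-extension machinery (Proposition \ref{Pull Up Lemma} was stated for $D_1$ on a manifold with boundary, and Schrödinger operators on such are handled via Friedrichs extensions), rather than the essential-self-adjointness-based Theorem \ref{Inclusion of Spectrums self-adj}; care is needed that the boundary conditions match, but since $S_1$ restricted to $C^\infty_c$ of the interior of $\Omega$ is the relevant operator and its lift behaves compatibly, this is the same situation as in the body of Section \ref{amenable section}.
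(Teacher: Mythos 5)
Your overall strategy --- characterize $\sigma_{\ess}(S_1)$ via Proposition \ref{Zhislin}, arrange connected supports escaping to infinity so that each $f_k$ lives in a single component of $M_1\smallsetminus K$, and then use amenability of the image $H_j$ of $\pi_1(\Omega_j)$ to pull $f_k$ up by Proposition \ref{Pull Up Lemma} --- is the same as the paper's, and your argument can be completed. The one genuine structural difference is where the pull-up is applied: you restrict the covering over the end $\Omega_j$, whereas the paper lifts the inclusion $U_k\hookrightarrow M_1$ to the intermediate covering $M_k^{\prime}:=M_2/\Gamma_k$, with $\Gamma_k$ the image of $\pi_1(U_k)$ in $\pi_1(M_1)$, and applies Proposition \ref{Pull Up Lemma} to the normal, amenable covering $q_k\colon M_2\to M_k^{\prime}$ of \emph{complete manifolds}. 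That trick buys exactly what you are wrestling with in your third step: there is no non-complete end, no boundary condition to match, and no need for Proposition \ref{Extension of Covering}; it also makes your pigeonhole onto a single end (and the smoothing of $K$ needed to get finitely many ends) unnecessary, since each $f_k$ is handled with its own $M_k^{\prime}$.

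If you insist on restricting over the end, two points need repair. First, $p^{-1}(\Omega_j)$ is in general disconnected, and the covering machinery of the paper is set up for connected manifolds; you must pass to a connected component $W$ and observe, using that $M_2$ is simply connected, that $p|_W\colon W\to\Omega_j$ is the normal covering with deck transformations group $H_j$, hence amenable. Second, your appeal to ``Friedrichs-extension machinery'' to deal with non-completeness of $\Omega_j$ is off target: Proposition \ref{Pull Up Lemma} is a statement about compactly supported smooth sections and involves no self-adjoint extension at all. What actually makes the restricted-covering route work is that Section \ref{amenable section} is already set up for manifolds with boundary (via Proposition \ref{Extension of Covering} and the conformal change to a complete metric), together with the fact that $\supp f_k$ is compactly contained in the open end, so the resulting $\zeta_k$ has support in $q^{-1}(\supp f_k)$, away from $\partial W$, and therefore extends by zero to an element of $C_c^{\infty}(M_2)$ whose support escapes every compact subset of $M_2$.
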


\begin{proof}
Let $\lambda \in \sigma_{\ess}(S_{1})$. From Proposition \ref{Zhislin}, there exists $(f_{k})_{k \in \mathbb{N}} \subset C_{c}^{\infty}(M)$, such that $f_{k} = 0$ on $\partial M_{1}$, $\| f_{k} \|_{L^{2}(M_{1})} = 1$, $(S - \lambda) f_{k} \rightarrow 0$ in $L^{2}(M_{1})$ and for every compact $K_{0} \subset M_{1}$, there exists $k_{0} \in \mathbb{N}$, such that $\supp f_{k} \cap K_{0} = \emptyset$, for all $k \geq k_{0}$.
Without loss of generality, we may assume that the supports of $f_{k}$ are connected, since we may restrict each $f_{k}$ to a connected component of its support and obtain a sequence with the same properties.

Consider a compact $K \subset M_{1}$, such that the image of the fundamental group of any connected component of $M_{1} \smallsetminus K$ in $\pi_{1}(M_{1})$ is amenable. Clearly, after passing to a subsequence, we may assume that the functions $f_{k}$ are supported in $M_{1} \smallsetminus K$. Since for any $k \in \mathbb{N}$, the support of $f_{k}$ is connected, it follows that $\supp f_{k} \subset U_{k}$, where $U_{k}$ is a connected component of $M_{1} \smallsetminus K$.
From the Lifting Theorem, it follows that the inclusion $U_{k} \hookrightarrow M_{1}$ can be lifted to the covering space $M^{\prime}_{k} := M_{2}/\Gamma_{k}$, where $\Gamma_{k}$ is the image of $\pi_{1}(U_{k})$ in $\pi_{1}(M_{1})$. 
In particular, any $f_{k}$ can be lifted to some $f_{k}^{\prime} \in C^{\infty}_{c}(M^{\prime}_{k})$. 

Since the covering $q_{k} \colon M_{2} \to M^{\prime}_{k}$ is normal with deck transformations group $\Gamma_{k}$, it follows that it is amenable. If $q_{k}$ is finite sheeted, let $\tilde{f}_{k}$ be the normalized (in $L^{2}(M_{2})$) lift of $f_{k}^{\prime}$ on $M_{2}$. If $q_{k}$ is infinite sheeted, from Proposition \ref{Pull Up Lemma}, there exists $\tilde{f}_{k} \in C^{\infty}_{c}(M_{2})$, such that 
$\| \tilde{f}_{k} \|_{L^{2}(M_{2})}=1$, $\supp \tilde{f}_{k} \subset q_{k}^{-1}(\supp f^{\prime}_{k})$ and
$$\| (S_{2} - \lambda)\tilde{f}_{k} \|_{L^{2}(M_{2})} \leq \| (S_{k}^{\prime} - \lambda) f_{k}^{\prime} \|_{L^{2}(M_{k}^{\prime})}  + \frac{1}{k}  = \|(S_{1} - \lambda) f_{k} \|_{L^{2}(M_{1})} + \frac{1}{k},$$
where $S_{k}^{\prime}$ is the lift of $S_{1}$ on $M_{k}^{\prime}$. In particular, $(S_{2} - \lambda)\tilde{f}_{k} \rightarrow 0$ in $L^{2}(M_{2})$ and $\supp \tilde{f}_{k}$ is contained in $p^{-1}(\supp f_{k})$.
From Proposition \ref{Zhislin}, it follows that $\lambda \in \sigma_{\ess}(S_{2})$.
\qed
\end{proof}

\begin{remark}
In the proof of Theorem \ref{Theorem Weaker Assuption}, the only properties of Schr\"{o}dinger operators used are essential self-adjointness and Proposition \ref{Zhislin}, which follows from the Decomposition Principle.
Therefore, this proof establishes the analogous result for 
essentially self-adjoint differential operators, for which the Decomposition Principle holds (cf. \cite{MR1823312}).
For instance, if $M_{1}$ has empty boundary, then the statement of Theorem \ref{Theorem Weaker Assuption} holds for any elliptic differential operator $D_{1}$, such that $D_{1}$ and $D_{2}$ are essentially self-adjoint on the spaces of compactly supported smooth sections.
\end{remark}


\noindent{\emph{Proof of Corollary \ref{Weaker assumption}:}} Follows immediately from Theorem \ref{Theorem Weaker Assuption} and Corollary \ref{Inequality of bottoms}. \qed
\medskip

Let $p \colon M_{2} \to M_{1}$ be a Riemannian covering of complete manifolds. As stated in the Introduction, there are examples where $p$ is non-amenable and $\lambda_{0}(M_{1}) = \lambda_{0}(M_{2})$. From Theorem \ref{Inclusion of Spectrums self-adj}, Propositions \ref{Inclusion of Spectra finite sheeted} and \ref{Spectrum of Self-adj}, if $p$ is amenable, then $\sigma(M_{1}) \subset \sigma(M_{2})$. It is natural to examine if this inclusion implies amenability of the covering. From Theorem \ref{Theorem Weaker Assuption}, it is easy to construct an example of a non-amenable, normal Riemannian covering $p \colon M_{2} \to M_{1}$ with $M_{1}$ complete, with bounded geometry and of finite topological type (that is, $M_{1}$ admits a finite triangulation, where the simplices are defined on the standard simplex with possibly some lower dimensional faces removed), such that $\sigma(M_{1})=\sigma(M_{2})$.

\begin{example}
Let $M_{1}$ be a $2$-dimensional torus with a cusp, endowed with a Riemannian metric, such that $M_{1}$ is complete and outside a compact set the metric is the standard metric of the flat cylinder. It is clear that $M_{1}$ is of finite topological type and has bounded geometry. From \cite[Theorem 1]{Lu}, it follows that $\sigma_{\ess}(M_{1}) = [0,+\infty)$. Clearly, there exists a compact subset $K$ of $M_{1}$, such that $\pi_{1}(M_{1} \smallsetminus K) = \mathbb{Z}$. From Theorem \ref{Theorem Weaker Assuption}, it follows that for the simply connected covering space $M_{2}$ of $M_{1}$, we have $\sigma_{\ess}(M_{2}) = [0,+\infty)$. However, $\pi_{1}(M_{1})$ is the free group in two generators, which is non-amenable (cf. \cite[Section 2]{MR3104995}).
\end{example}

The next simple observation, provides a sufficient geometric condition for amenability of coverings.

\begin{proposition}
Let $M_{1}$ be a complete Riemannian manifold, without boundary and with non-negative Ricci curvature. Then any covering $p \colon M_2 \to  M_1$ is amenable.
\end{proposition}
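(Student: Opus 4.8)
The plan is to reduce the statement to the amenability of $\pi_{1}(M_{1})$ and to obtain the latter from the classical argument of Milnor bounding the growth of fundamental groups under a lower Ricci bound. By definition, $p$ is amenable precisely when the right action of $\pi_{1}(M_{1})$ on $\pi_{1}(M_{2})\backslash\pi_{1}(M_{1})$ is amenable; since an amenable group acts amenably on every countable set, it suffices to prove that $\pi_{1}(M_{1})$ is amenable. By Corollary \ref{finitely generated subgroups} this in turn follows once every finitely generated subgroup $\Gamma$ of $\pi_{1}(M_{1})$ is amenable, and by Corollary \ref{subexp growth} it is enough to show that such a $\Gamma$ has subexponential (in fact polynomial) growth.

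Next I would carry out the volume-growth estimate for a fixed finitely generated $\Gamma = \langle \gamma_{1},\dots,\gamma_{k}\rangle \subset \pi_{1}(M_{1})$. Consider the universal covering $\pi \colon \tilde{M} \to M_{1}$, fix $\tilde{x}$ over some $x\in M_{1}$, and set $\ell := \max_{i} d(\tilde{x}, \gamma_{i}\tilde{x})$; then any word of length at most $s$ in the generators moves $\tilde{x}$ a distance at most $s\ell$. Choose $\delta>0$ small enough that the balls $B(\gamma\tilde{x},\delta)$, $\gamma\in\pi_{1}(M_{1})$, are pairwise disjoint, which is possible since $\pi_{1}(M_{1})$ acts freely and properly discontinuously on $\tilde{M}$; as each $\gamma$ is an isometry of $\tilde{M}$, all these balls have the same volume $v>0$. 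Hence, if $\Gamma$ has at least $N(s)$ elements of word length at most $s$, then $N(s)\,v \le \Vol\big(B(\tilde{x}, s\ell+\delta)\big)$. Since covering maps are local isometries, $\tilde{M}$ is complete with $\Ric \ge 0$, so the Bishop--Gromov inequality bounds the right-hand side by $c\,(s\ell+\delta)^{n}$, where $n=\dim M_{1}$ and $c$ depends only on $n$. Therefore $N(s)\le C s^{n}$ for all large $s$, i.e.\ $\Gamma$ has polynomial growth of degree at most $n$.

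Combining the two steps finishes the proof: every finitely generated subgroup of $\pi_{1}(M_{1})$ has polynomial, hence subexponential, growth, so it is amenable by Corollary \ref{subexp growth}; by Corollary \ref{finitely generated subgroups} the group $\pi_{1}(M_{1})$ is amenable; and consequently its action on $\pi_{1}(M_{2})\backslash\pi_{1}(M_{1})$ is amenable, which is exactly the assertion that $p$ is amenable. I do not expect a serious obstacle here. The one point requiring a little care is that $\pi_{1}(M_{1})$ itself need not be finitely generated, which is precisely why passing through finitely generated subgroups via Corollary \ref{finitely generated subgroups} is necessary; the geometric input is the standard Bishop--Gromov volume comparison, which applies verbatim on the universal cover because the lower Ricci bound lifts, and the trick of using the deck transformations to see that the small balls all have equal volume avoids any need for a lower volume bound.
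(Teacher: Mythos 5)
Your proof is correct and follows essentially the same route as the paper: reduce to amenability of $\pi_{1}(M_{1})$, pass to finitely generated subgroups via Corollary \ref{finitely generated subgroups}, and obtain polynomial growth from Bishop--Gromov, concluding with Corollary \ref{subexp growth}. The only difference is that you write out Milnor's volume-counting argument in full, whereas the paper simply cites Milnor for that step.
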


\begin{proof}
Let $\tilde{M}$ be the simply connected covering space of $M_{1}$. From the Bishop-Gromov Comparison Theorem, it follows that $\tilde{M}$ has polynomial growth and hence, every finitely generated subgroup of $\pi_{1}(M_{1})$ has polynomial growth (cf. \cite{MR0232311}). From Corollary \ref{subexp growth}, it follows that every finitely generated subgroup of $\pi_{1}(M_{1})$ is amenable and Corollary \ref{finitely generated subgroups} yields that so is $\pi_{1}(M_{1})$. Therefore, any covering $p \colon M_{2} \to M_{1}$ is amenable. \qed
\end{proof}
\medskip


Next, we present an example of an infinite sheeted amenable covering with trivial deck transformations group. In particular, this implies that the results of Section \ref{high symmetry section} cannot be applied to arbitrary infinite sheeted amenable coverings.

\begin{example}\label{infinite sheeted amenable trivial dtg}
Let $\Gamma_{1}$ be the countable group of invertible, upper triangular $2 \times 2$ matrices with entries in $\mathbb{Q}$ and let $M_{1}$ be a Riemannian manifold with $\pi_{1}(M_{1}) = \Gamma_{1}$ (cf. \cite[Section 5]{BMP}). Let $\Gamma_{2}$ be the subgroup of $\Gamma_{1}$ consisting of diagonal matrices. Denote by $\tilde{M}$ the simply connected covering space of $M_{1}$ and consider $M_{2} := \tilde{M}/\Gamma_{2}$. It is easy to see that the covering $p \colon M_{2} \to M_{1}$ is infinite sheeted and does not have non-trivial deck transformations. However, $\Gamma_{1}$ is solvable and in particular, amenable (from Corollary \ref{solvable}), which yields that $p$ is an amenable covering.
\end{example}

Recall that in our main results there are no assumptions on the vector bundles, the connections and the differential operators. We end this section with an example which demonstrates that these play a crucial role in the behavior of the spectrum even under finite sheeted coverings. Namely, this example shows that whether or not the bottom of the spectrum of the connection Laplacian is preserved under a Riemannian covering depends on the corresponding metric connection.

If $M$ is a closed Riemannian manifold and $E \to M$ is a Riemannian vector bundle endowed with a metric connection $\nabla$, then the (corresponding) connection Laplacian is given by $\Delta = \nabla^{*}\nabla$. It is well-known that $\Delta \colon \Gamma(E) \subset L^{2}(E) \to L^{2}(E)$ is essentially self-adjoint and its spectrum is discrete (cf. \cite{MR1031992}).

\begin{example}
Consider $S^{1} := \mathbb{R}/\mathbb{Z}$ and the trivial bundle $E_{1} := S^{1} \times \mathbb{R}^{2}$ with the standard metric. We can identify smooth sections of $E_{1}$ with smooth maps $f \colon \mathbb{R} \to \mathbb{R}^{2}$ with $f(x) = f(x+1)$, for all $x \in \mathbb{R}$. For $\phi \in \mathbb{R}$, consider the metric connection $\nabla^{\phi}$, defined by
\[
\nabla^{\phi}_{\frac{d}{dx}}f(x) := \begin{pmatrix} \cos(x\phi) & -\sin(x\phi) \\
                \sin(x\phi) & \cos(x\phi) \end{pmatrix}  \frac{d}{dx} \begin{pmatrix} \cos(x\phi) & \sin(x\phi) \\
                -\sin(x\phi) & \cos(x\phi) \end{pmatrix}
  \begin{pmatrix} f_1(x) \\ f_2(x) \end{pmatrix},
\]
for any smooth section $f=(f_{1},f_{2})$ of $E_{1}$. Since the spectrum of the connection Laplacian $\Delta^{\phi}$ is discrete for any $\phi \in \mathbb{R}$, it is clear that $\lambda_{0}(\Delta^{\phi}, E_{1}) = 0$ if and only if there exists a parallel section of $E_{1}$ with respect to $\nabla^{\phi}$, or equivalently,
$\phi = 2k \pi$, for some $k \in \mathbb{Z}$.

For $q \in \mathbb{N} \smallsetminus \{1\}$, consider a $q$-sheeted Riemannian covering of $S_{1}$ and the pullback bundle $E_{2}$ of $E_{1}$ endowed with the standard metric and the pullback connection $\nabla^{\phi}$. It is clear that $\lambda_{0}(\Delta^{2\pi},E_{2}) = \lambda_{0}(\Delta^{2\pi},E_{1})=0$. However, the above arguments imply that $\lambda_{0}(\Delta^{2\pi/q},E_{2}) = 0 < \lambda_{0}(\Delta^{2\pi/q},E_{1})$.
\end{example}

\begin{bibdiv}
\begin{biblist}

\bib{MBM}{article}{
   author={Ballmann, Werner},
   author={Matthiesen, Henrik},
   author={Mondal, Sugata},
   title={Small eigenvalues of surfaces of finite type},
   journal={Compos. Math.},
   volume={153},
   date={2017},
   number={8},
   pages={1747--1768},
}

\bib{BMP}{article}{
	author={Ballmann, Werner},
	author={Matthiesen, Henrik},
	author={Polymerakis, Panagiotis}
	title={On the bottom of spectra under coverings},
	journal={Math. Z.},
	date={2017},
	DOI={10.1007/s00209-017-1925-9},
}

\bib{MR1823312}{article}{
   author={B\"ar, Christian},
   title={The Dirac operator on hyperbolic manifolds of finite volume},
   journal={J. Differential Geom.},
   volume={54},
   date={2000},
   number={3},
   pages={439--488},
}

\bib{MR3104995}{article}{
   author={B{\'e}rard, Pierre},
   author={Castillon, Philippe},
   title={Spectral positivity and Riemannian coverings},
   journal={Bull. Lond. Math. Soc.},
   volume={45},
   date={2013},
   number={5},
   pages={1041--1048},
}

\bib{MR2891739}{article}{
   author={Bessa, G. Pacelli},
   author={Montenegro, J. Fabio},
   author={Piccione, Paolo},
   title={Riemannian submersions with discrete spectrum},
   journal={J. Geom. Anal.},
   volume={22},
   date={2012},
   number={2},
   pages={603--620},
}


\bib{MR783536}{article}{
   author={Brooks, Robert},
   title={The bottom of the spectrum of a Riemannian covering},
   journal={J. Reine Angew. Math.},
   volume={357},
   date={1985},
   pages={101--114},
}

\bib{MR656213}{article}{
	label={Bro80}
   author={Brooks, Robert},
   title={The fundamental group and the spectrum of the Laplacian},
   journal={Comment. Math. Helv.},
   volume={56},
   date={1981},
   number={4},
   pages={581--598},
}

\bib{Buser}{article}{
   author={Buser, Peter},
   title={A note on the isoperimetric constant},
   journal={Ann. Sci. \'Ecole Norm. Sup. (4)},
   volume={15},
   date={1982},
   number={2},
   pages={213--230},
}

\bib{MR0402831}{article}{
   author={Cheeger, Jeff},
   title={A lower bound for the smallest eigenvalue of the Laplacian},
   conference={
      title={Problems in analysis},
      address={Papers dedicated to Salomon Bochner},
      date={1969},
   },
   book={
      publisher={Princeton Univ. Press, Princeton, N. J.},
   },
   date={1970},
   pages={195--199},
}

\bib{MR0385749}{article}{
	author={Cheng, S. Y.},
	author={Yau, S. T.},
	title={Differential equations on Riemannian manifolds and their geometric		applications},
	journal={Comm. Pure Appl. Math.},
	volume={28},
	date={1975},
	number={3},
	pages={333--354},
}

\bib{MR592568}{article}{
   author={Donnelly, Harold},
   title={On the essential spectrum of a complete Riemannian manifold},
   journal={Topology},
   volume={20},
   date={1981},
   number={1},
   pages={1--14},
}

\bib{MR562550}{article}{
   author={Fischer-Colbrie, Doris},
   author={Schoen, Richard},
   title={The structure of complete stable minimal surfaces in $3$-manifolds
   of nonnegative scalar curvature},
   journal={Comm. Pure Appl. Math.},
   volume={33},
   date={1980},
   number={2},
   pages={199--211},
}

\bib{MR1361167}{book}{
   author={Hislop, P. D.},
   author={Sigal, I. M.},
   title={Introduction to spectral theory},
   series={Applied Mathematical Sciences},
   volume={113},
   note={With applications to Schr\"odinger operators},
   publisher={Springer-Verlag, New York},
   date={1996},
   pages={x+337},
}

\bib{MR1031992}{book}{
   author={Lawson, H. Blaine, Jr.},
   author={Michelsohn, Marie-Louise},
   title={Spin geometry},
   series={Princeton Mathematical Series},
   volume={38},
   publisher={Princeton University Press, Princeton, NJ},
   date={1989},
   pages={xii+427},
}

\bib{Lu}{article}{
   author={Zhiqin Lu},
   author={Detang Zhou},
   title={On the essential spectrum of complete non-compact manifolds},
   journal={J. Funct. Anal.},
   volume={260},
   date={2011},
   number={11},
   pages={3283--3298},
}

\bib{MR0232311}{article}{
   author={Milnor, J.},
   title={A note on curvature and fundamental group},
   journal={J. Differential Geom.},
   volume={2},
   date={1968},
   pages={1--7},
}

\bib{MR882827}{article}{
   author={Sullivan, Dennis},
   title={Related aspects of positivity in Riemannian geometry},
   journal={J. Differential Geom.},
   volume={25},
   date={1987},
   number={3},
   pages={327--351},
}

\bib{MR2744150}{book}{
   author={Taylor, Michael E.},
   title={Partial differential equations I. Basic theory},
   series={Applied Mathematical Sciences},
   volume={115},
   edition={2},
   publisher={Springer, New York},
   date={2011},
   pages={xxii+654},
   isbn={978-1-4419-7054-1},
}

\end{biblist}
\end{bibdiv}

\noindent Institut f\"{u}r Mathematik, Humboldt-Universit\"{a}t zu Berlin \\
Unter den Linden 6, 10099 Berlin \\
E-mail address: polymerp@hu-berlin.de

\end{document}